\newcommand{\ifundef}[1]{\expandafter\ifx\csname#1\endcsname\relax}
\DeclareMathAlphabet{\mathbbe}{U}{bbold}{m}{n}
\def\re@DeclareMathSymbol#1#2#3#4{%
    \let#1=\undefined
    \DeclareMathSymbol{#1}{#2}{#3}{#4}}
  \DeclareSymbolFont{tcSyC}{U}{txsyc}{m}{n}
  \re@DeclareMathSymbol{\Top}{\mathord}{tcSyC}{120}
  \re@DeclareMathSymbol{\Bot}{\mathord}{tcSyC}{121}
  \DeclareFontFamily{U}{MnSymbolC}{}
  \DeclareSymbolFont{mnSyC}{U}{MnSymbolC}{m}{n}
  \DeclareFontShape{U}{MnSymbolC}{m}{n}{
      <-6>  MnSymbolC5
     <6-7>  MnSymbolC6
     <7-8>  MnSymbolC7
     <8-9>  MnSymbolC8
     <9-10> MnSymbolC9
    <10-12> MnSymbolC10
    <12->   MnSymbolC12}{}
  \DeclareFontShape{U}{MnSymbolC}{b}{n}{
      <-6>  MnSymbolC-Bold5
     <6-7>  MnSymbolC-Bold6
     <7-8>  MnSymbolC-Bold7
     <8-9>  MnSymbolC-Bold8
     <9-10> MnSymbolC-Bold9
    <10-12> MnSymbolC-Bold10
    <12->   MnSymbolC-Bold12}{}
  \re@DeclareMathSymbol{\righthalfcup}{\mathord}{mnSyC}{184}
  \re@DeclareMathSymbol{\lefthalfcap}{\mathord}{mnSyC}{185}
\DeclareFontFamily{U}{MnSymbolA}{}
\DeclareSymbolFont{mnSyA}{U}{MnSymbolA}{m}{n}
\DeclareFontShape{U}{MnSymbolA}{m}{n}{
    <-6>  MnSymbolA5
   <6-7>  MnSymbolA6
   <7-8>  MnSymbolA7
   <8-9>  MnSymbolA8
   <9-10> MnSymbolA9
  <10-12> MnSymbolA10
  <12->   MnSymbolA12}{}
\DeclareFontShape{U}{MnSymbolA}{b}{n}{
    <-6>  MnSymbolA-Bold5
   <6-7>  MnSymbolA-Bold6
   <7-8>  MnSymbolA-Bold7
   <8-9>  MnSymbolA-Bold8
   <9-10> MnSymbolA-Bold9
  <10-12> MnSymbolA-Bold10
  <12->   MnSymbolA-Bold12}{}
\re@DeclareMathSymbol{\twoheadedswarrow}{\mathord}{mnSyA}{30}
\newcommand{\mlaux}[3]{\setbox0=\hbox{$\mathsurround=0pt #2{#3}$}%
  \dimen0=\dp0\advance\dimen0 by \ht0\lower#1\dimen0\box0}
\newcommand{\makellapm}[2]{\hbox to 0pt{\hss$\mathsurround=0pt #1{#2}$}}
\newcommand{\llapm}{\relax\mathpalette\makellapm}
\newcommand{\makerlapm}[2]{\hbox to 0pt{$\mathsurround=0pt #1{#2}$\hss}}
\newcommand{\makelapm}[2]{\hbox to 0pt{\hss$\mathsurround=0pt #1{#2}$\hss}}
\newcommand{\makeushort}[3]{%
  \setbox0=\hbox{$\mathsurround=0pt #2{#3}$}%
  \hbox to 1\wd0{\hss\underbar{\hbox to #1\wd0{\hss\box0\hss}}\hss}}
\def\makebigger#1#2#3{\scalebox{#1}{$\mathsurround=0pt #2{#3}$}}
\def\bigger#1#2{{\relax\mathpalette{\makebigger{#1}}{#2}}}
\def\scaleuphalf{1.0954}
\newcommand{\op}{^{\mathord{\text{\rm op}}}}
\newcommand{\co}{^{\mathord{\text{\rm co}}}}
\renewcommand{\th}{^{\text{th}}}
\newcommand{\defeq}{\mathrel{:=}}
\newcommand{\eqdef}{\mathrel{=:}}
\def\newmop{\@ifstar{\@newmop m}{\@newmop o}}
\def\@newmop#1{\@ifnextchar[{\@@newmop #1}{\@@@newmop #1}}
\def\@@newmop#1[#2]{\@declmathop #1#2}
\def\@@@newmop#1#2{\expandafter\@declmathop\expandafter #1\csname #2\endcsname{#2}}
\newcommand{\comma}{\mathbin{\downarrow}}
\newcommand{\rotatemath}[2]{\rotatebox[origin=c]{180}{$\m@th #1{#2}$}}
\newcommand{\yoneda}{\mathscr{Y}\!}
\newcommand{\pocorner}{\hbox to 8pt{{\vrule height8pt depth0pt width0.5pt}%
    \vbox to 8pt{{\hrule height0.5pt width7.5pt depth0pt}\vfill}}}
\newcommand{\pbcorner}{\vbox to 0pt{\kern 4pt\hbox to 0pt{\kern 4pt%
      \vbox{{\hrule height0.5pt width7.5pt depth0pt}}%
      {\vrule height8pt depth0pt width0.5pt}\hss}\vss}}
\newcommand{\pbexcursion}{\save[]+<5pt,-5pt>*{\pbcorner}\restore}
\newcommand{\pwr}{\mathbin\pitchfork}
\newcommand{\wcolim}{\circledast}
\newcommand{\wlim}[2]{\{#1,#2\}}
\newcommand{\leib}[1]{\mathbin{\widehat{#1}}}
\newcommand{\category}[1]{\underline{\smash[b]{\text{\rm{#1}}}}}
\newcommand{\cattwo}{{\bigger{1.12}{\mathbbe{2}}}}
\newcommand{\catone}{{\bigger{1.16}{\mathbbe{1}}}}
\newcommand{\iso}{{\mathbb{I}}}
\def\Del@Sym{{\bigger\scaleuphalf{\mathbbe{\Delta}}}}
\def\del@fn{\futurelet\del@next}
\def\del@dn{\def\del@next}
\def\parsedel@{%
  \ifx +\del@next \del@dn+{\Del@Sym_{\mathord{+}}}%
  \else \del@dn {\del@fn\parsedel@@}%
  \fi\del@next}
\def\parsedel@@{%
  \ifx\space@\del@next \expandafter\del@dn\space{\del@fn\parsedel@@}%
  \else\ifx [\del@next \del@dn[{\del@fn\parsedel@@@}%
  \else\ifx _\del@next \del@dn{\Delta}%
  \else\ifx ^\del@next \del@dn{\Delta}%
  \else \del@dn{\Del@Sym}%
  \fi\fi\fi\fi\del@next}
\def\parsedel@@@{%
  \ifx\space@\del@next \expandafter\del@dn\space{\del@fn\parsedel@@@}%
  \else\ifx t\del@next \del@dn t{\Del@Sym_\infty\del@fn\parsedel@@@@}%
  \else\ifx b\del@next \del@dn b{\Del@Sym_{-\infty}\del@fn\parsedel@@@@}%
  \else \del@dn{\errmessage{unexpected modifier}}%
  \fi\fi\fi\del@next}
\def\parsedel@@@@{%
  \ifx\space@\del@next \expandafter\del@dn\space{\del@fn\parsedel@@@@}%
  \else\ifx ]\del@next \del@dn]{}%
  \else \del@dn{\errmessage{expecting close of option block}}%
  \fi\fi\del@next}
\def\Del{\del@fn\parsedel@}
\newcommand{\Horn}{\Lambda}
\newcommand{\Set}{\category{Set}}
\newcommand{\Cat}{\category{Cat}}
\newcommand{\sCat}{\sSet\text{-}\Cat}
\newcommand{\sSet}{\category{sSet}}
\newcommand{\qCat}{\category{qCat}}
\newcommand{\dmod}[3]{\xymatrix@=1.25em{{#2} \ar[r]|\mid^{ {#1}} & {#3}}}
\newcommand{\fbv}[1]{\{{#1}\}}
\newcommand{\join}{\mathbin\star}
\newcommand{\slice}{/}
\newcommand{\slicel}[2]{\vphantom{#2}^{{#1}\slice{}}\mkern-2mu{#2}}
\newcommand{\slicer}[2]{{#1\mkern-1mu}_{{}\slice{#2}}}
\newcommand{\nrv}{N}
\newcommand{\ho}{h}
\newcommand{\nrvhc}{\nrv}
\newcommand{\hN}{\nrv}
\newcommand{\gC}{\mathfrak{C}}
\newcommand{\boundary}{\partial}
\def\reedyfilt#1_#2{#1_{\leq #2}}
\newcommand{\Kan}{\category{Kan}}
\def\makeslashed#1#2#3#4#5{#1{\mathpalette{\sla@{#2}{#3}{#4}}{#5}}}
\def\@mathlower#1#2#3{\setbox0=\hbox{$\m@th#2#3$}\lower#1\ht0\box0}
\def\mathlower#1#2{\mathpalette{\@mathlower{#1}}{#2}}
\newcommand{\inc}{\hookrightarrow}
\newcommand{\tfib}{\twoheadrightarrow}
\newcommand{\longtwoheadrightarrow}{\mathrel{\mathord{-}\mkern-3mu\mathord\twoheadrightarrow}}
\newcommand{\we}{\xrightarrow{\mkern10mu{\smash{\mathlower{0.6}{\sim}}}\mkern10mu}}
\newcommand{\trvfib}{\stackrel{\smash{\mkern-2mu\mathlower{1.5}{\sim}}}\longtwoheadrightarrow}
\newcommand{\To}{\Rightarrow}
\def\tens@fn{\futurelet\tens@next}
\def\tens@dn{\def\tens@nextcont}
\newtoks\tens@toks
\def\addtotens@toks#1{\tens@toks=\expandafter{\the\tens@toks#1}}
\def\parsetens@@{%
    \ifx\space@\tens@next \expandafter\tens@dn\space{\tens@fn\parsetens@@}%
    \else\ifx ^\tens@next \tens@dn ^##1{\parsetens@procsep^\addtotens@toks{##1}%
      \tens@fn\parsetens@@}%
    \else\ifx _\tens@next \tens@dn _##1{\parsetens@procsep_\addtotens@toks{##1}%
      \tens@fn\parsetens@@}%
    \else\tens@dn{\ifx *\tens@last \else\addtotens@toks\egroup\fi\the\tens@toks}%
    \fi\fi\fi\tens@nextcont}
\def\parsetens@procsep#1{%
  \ifx *\tens@last \addtotens@toks{#1}\addtotens@toks\bgroup%
  \else\ifx \tens@last\tens@next \addtotens@toks,%
  \else \addtotens@toks\egroup\addtotens@toks\bgroup%
    \addtotens@toks\egroup\addtotens@toks{#1}\addtotens@toks\bgroup%
  \fi\fi\let\tens@last\tens@next}
\newcommand{\tn}[1]{\let\tens@last=*\tens@toks={#1}\tens@fn\parsetens@@}
\def\adjdisplay#1-|#2:#3->#4.{{%
    \xymatrix@R=0em@!C=2.5em{%
      *+[l]{#3} \ar@/_0.55pc/[rr]_-{#2} & {\bot} &
      *+[r]{#4}\ar@/_0.55pc/[ll]_-{#1}}}}
\def\adjdisplaytwo#1-|#2:#3->#4.{{%
\xymatrix@=1.2em{
      {#3}\ar@/_1.5ex/[rr]_-{#2}^-{}="one"
      & & {#4}
      \ar@/_1.5ex/[ll]_-{#1}^-{}="two"
      \ar@{}"one";"two"|{\bot}
    }}}
\def\tripleadjdisplay#1-|#2-|#3:#4->#5.{{%
\xymatrix@=2.4em{
{#4}\ar[r]|{#2} &
{#5} \ar@/_3ex/[l]_{#1}^{\bot} \ar@/^3ex/[l]_{\bot}^{#3}}
}}
\def\adjinline#1-|#2:#3->#4.{{#1}\dashv{#2}:#3\to #4}
\newcommand{\pent}[1]{
  \xybox{
    \POS (0,-15)*+{\a}="0",
         (-14,-5)*+{\b}="1",
         (-9,12)*+{\c}="2",
         (9,12)*+{\d}="3",
         (14,-5)*+{\e}="4"
    \POS"0" \ar "1"^{\labelstyle \ab}|{}="01"
    \POS"1" \ar "2"^{\labelstyle \bc}|{}="12"
    \POS"2" \ar "3"^{\labelstyle \cd}|{}="23"
    \POS"3" \ar "4"^{\labelstyle \de}|{}="34"
    \POS"0" \ar "4"_{\labelstyle \ae}|{}="04"
    \ifcase #1
    \POS"0" \ar "2"|{\labelstyle \ac}="02"
    \POS"0" \ar "3"|{\labelstyle \ad}="03"
    \POS"02";"1"**{}, ?(0.3) \ar@{=>} ?(0.7)^{\labelstyle \abc}
    \POS"03";"2"**{}, ?(0.25) \ar@{=>} ?(0.5)_{\labelstyle \acd}
    \POS"04";"3"**{}, ?(0.2) \ar@{=>} ?(0.4)_{\labelstyle \ade}
    \or
    \POS"1" \ar "3"|{\labelstyle \bd}="13"
    \POS"1" \ar "4"|{\labelstyle \be}="14"
    \POS"13";"2"**{}, ?(0.3) \ar@{=>} ?(0.7)_{\labelstyle \bcd}
    \POS"14";"3"**{}, ?(0.25) \ar@{=>} ?(0.5)_{\labelstyle \bde}
    \POS"04";"1"**{}, ?(0.25) \ar@{=>} ?(0.5)_{\labelstyle \abe}
    \or
    \POS"2" \ar "4"|{\labelstyle \ce}="24"
    \POS"0" \ar "2"|{\labelstyle \ac}="02"
    \POS"02";"1"**{}, ?(0.3) \ar@{=>} ?(0.7)^{\labelstyle \abc}
    \POS"04";"2"**{}, ?(0.2) \ar@{=>} ?(0.35)_{\labelstyle \ace}
    \POS"24";"3"**{}, ?(0.2) \ar@{=>} ?(0.6)^{\labelstyle \cde}
    \or
    \POS"1" \ar "3"|{\labelstyle \bd}="13"
    \POS"0" \ar "3"|{\labelstyle \ad}="03"
    \POS"04";"3"**{}, ?(0.2) \ar@{=>} ?(0.4)_{\labelstyle \ade}
    \POS"13";"2"**{}, ?(0.3) \ar@{=>} ?(0.7)_{\labelstyle \bcd}
    \POS"03";"1"**{}, ?(0.25) \ar@{=>} ?(0.5)^{\labelstyle \abd}
    \or
    \POS"2" \ar "4"|{\labelstyle \ce}="24"
    \POS"1" \ar "4"|{\labelstyle \be}="14"
    \POS"24";"3"**{}, ?(0.2) \ar@{=>} ?(0.6)^{\labelstyle \cde}
    \POS"04";"1"**{}, ?(0.25) \ar@{=>} ?(0.5)_{\labelstyle \abe}
    \POS"14";"2"**{}, ?(0.25) \ar@{=>} ?(0.5)^{\labelstyle \bce}
    \else\fi
  }
}
\newcommand{\pentofpent}[1]{
  \def\baselen{#1}
  \begin{xy}
    0;<\baselen,0mm>:
    *{\xybox{
        \POS(0,-4)*[o]{\pent 0}="zero"
        \POS(16,40)*[o]{\pent 3}="three"
        \POS(72,40)*[o]{\pent 1}="one"
        \POS(88,-4)*[o]{\pent 4}="four"
        \POS(44,-36)*[o]{\pent 2}="two"
        \ar@<1ex>"zero";"three"^-{\objectstyle\abcd}
        \ar@<1ex>"three";"one"^-{\objectstyle\abde}
        \ar@<1ex>"one";"four"^-{\objectstyle\bcde}
        \ar@<-1ex>"zero";"two"_-{\objectstyle\acde}
        \ar@<-1ex>"two";"four"_-{\objectstyle\abce}
        \ar@{=>}(44,-5);(44,+15)^{\objectstyle\abcde}
     }}
  \end{xy}
}
\newcommand{\qc}[1]{\mathord{\text{\normalfont{\textsf{#1}}}}}
\newcommand{\qop}[1]{\mathord{\qc{#1}}}
\def\ec@#1#2<.>{{\mathcal{#1}\mkern-2mu\text{\normalfont{%
      \textsf{\slshape #2}}}\mkern2mu}}
\newcommand{\ec}[1]{\mathord{\ec@#1<.>}}
\newcommand{\eop}[1]{\mathord{\ec{#1}}}
\newcommand{\SSet}{\eop{SSet}}
\newcommand{\Flex}{\eop{Flex}}
\newcommand{\gr}{^{\mathrm{gr}}}
\newcommand{\ep}{_{\mathrm{epi}}}
\newcommand{\qA}{\qc{A}}
\newcommand{\qB}{\qc{B}}
\newcommand{\qC}{\qc{C}}
\newcommand{\qD}{\qc{D}}
\newcommand{\qK}{\qc{K}}
\newcommand{\qM}{\qc{M}}
\newcommand{\qS}{\qc{S}}
\newcommand{\qX}{\qc{X}}
\newcommand{\eK}{\ec{K}}
\newcommand{\eM}{\ec{M}}
\newcommand{\eA}{\ec{A}}
\newcommand{\eB}{\ec{B}}
\newcommand{\eC}{\ec{C}}
\newcommand{\eD}{\ec{D}}
\newcommand{\eE}{\ec{E}}
\newcommand{\Fun}{\qop{Fun}}
\newcommand{\hFun}{\ho\Fun}
\renewcommand{\Map}{\qop{Map}}
\renewcommand{\qCat}{\eop{QCat}}
\renewcommand{\Kan}{\eop{Kan}}
\newcommand{\Graph}{\category{Graph}}
\newcommand{\sCptd}{\sSet\text{-}\category{Cptd}}
\newcommand{\Cptd}{\category{Cptd}}
\newcommand{\Cube}{\mathord{\sqcap\llapm\sqcup}}
\newcommand{\CHorn}{\mathord{\sqcap\llapm\sqcap}}
\newcommand{\extRef}[3]{%
  {\protect\IfBeginWith{#3}{itm:}{}{#2.}}\ref*{#1:#3}}
\newcommand{\refI}{\extRef{found}{I}}
\newcommand{\refII}{\extRef{cohadj}{II}}
\newcommand{\refIV}{\extRef{yoneda}{IV}}
\newcommand{\refV}{\extRef{equipment}{V}}
\newcommand{\refVI}{\extRef{comprehend}{VI}}
\setlist{}
\theoremstyle{plain}
\newtheorem{thm}{Theorem}[subsection]
\newtheorem{lem}[thm]{Lemma}
\newtheorem{cor}[thm]{Corollary}
\newtheorem{prop}[thm]{Proposition}
\theoremstyle{definition}
\newtheorem{defn}[thm]{Definition}
\newtheorem{ex}[thm]{Example}
\newtheorem{ntn}[thm]{Notation}
\theoremstyle{remark}
\newtheorem{obs}[thm]{Observation}
\newtheorem{rmk}[thm]{Remark}
\let\c@equation\c@thm
\numberwithin{equation}{subsection}
\title[Recognizing quasi-categorical limits in homotopy coherent nerves]{Recognizing quasi-categorical limits and colimits in homotopy coherent nerves}
\author[Riehl]{Emily Riehl}
\address{
  Department of Mathematics \\
Johns Hopkins University \\
Baltimore, MD 21218\\
  USA
}
\email{eriehl@math.jhu.edu}
\author[Verity]{Dominic Verity}
\address{
  Centre of Australian Category Theory \\
  Macquarie University \\
  NSW 2109 \\
  Australia
}
\email{dominic.verity@mq.edu.au}
\date{\today}
\subjclass[2010]{%
  Primary  18A30, 18G55, 55U35, 55U40; %
  Secondary 18A05, 18G30, 55U10
}
\begin{document}

  \ifpdf
  \DeclareGraphicsExtensions{.pdf, .jpg, .tif}
  \else
  \DeclareGraphicsExtensions{.eps, .jpg}
  \fi

  \begin{abstract} In this paper we prove that various quasi-categories whose objects are $\infty$-categories in a very general sense are \emph{complete}: admitting limits indexed by all simplicial sets. This result and others of a similar flavor follow from a general theorem in which we characterize the data that is required to define a limit cone in a quasi-category constructed as a homotopy coherent nerve. Since all quasi-categories arise this way up to equivalence, this analysis covers the general case. Namely, we show that quasi-categorical limit cones may be modeled at the point-set level by \emph{pseudo homotopy limit cones}, whose shape is governed by the weight for pseudo limits over a homotopy coherent diagram but with the defining universal property up to equivalence, rather than isomorphism, of mapping spaces. Our applications follow from the fact that the $(\infty,1)$-categorical core of an $\infty$-cosmos admits weighted homotopy limits for all flexible weights, which includes in particular the weight for pseudo cones.
  \end{abstract}

  \maketitle
  \tableofcontents

\section{Introduction}

Previous work \cite{RiehlVerity:2012tt, RiehlVerity:2012hc, RiehlVerity:2013cp,RiehlVerity:2015fy,RiehlVerity:2015ke,RiehlVerity:2017cc}  has shown that much of the fundamental theory of $(\infty,1)$-categories may be developed in a natively model-independent fashion by applying techniques of formal category theory to the categorical universes in which $(\infty,1)$-categories live as objects. Our approach is ``synthetic'' in the sense that our proofs do not depend on precisely what these $(\infty,1)$-categories \emph{are}, but rather rely upon a relatively sparse axiomatisation of the universe in which they \emph{live}. To describe an appropriate ``universe,'' we introduce the notion of an $\infty$-\emph{cosmos}, a (large) quasi-categorically enriched category $\eK$ satisfying certain axioms recalled in Definition \ref{defn:cosmos}: roughly, these ask that an $\infty$-cosmos is an ``$(\infty,2)$-category with \emph{flexible} limits'' about which we will have more to say below. In contrast with the work of \cite{Toen:2005vu, BSP:2011ot}, the axioms for an $\infty$-cosmos do not characterize ``categories whose objects are $(\infty,1)$-categories.'' Nonetheless, so that our statements about $\infty$-cosmoi suggest their most natural interpretation, we refer to the objects of any $\infty$-cosmos as  $\infty$-\emph{categories}. 

The prototypical example of an $\infty$-cosmos is the $\infty$-cosmos whose objects are \emph{quasi-categories}, a model of $(\infty,1)$-categories as simplicial sets satisfying the weak Kan condition, and whose function complexes are the quasi-categories of functors between them; this example is reviewed in Example \ref{ex:qcat-cosmos}.  But there are other $\infty$-cosmoi whose objects are complete Segal spaces, Segal categories, or 1-complicial sets, each of these being models of $(\infty,1)$-\emph{categories}. These $\infty$-cosmoi are \emph{biequivalent} to $\qCat$, meaning that the ``underlying quasi-category functor $\Fun_{\eK}(1,-) \colon \eK \to \qCat$ is surjective on objects up to equivalence and induces a local equivalence on functor spaces.

The axioms of an $\infty$-cosmos are not intended to only describe categorical universes for $(\infty,1)$-categories; several models of $(\infty,n)$-categories form the objects of an $\infty$-cosmos for instance.\footnote{Known $\infty$-cosmoi of $(\infty,n)$-categories include  $\theta_n$-spaces,  iterated complete Segal spaces,  $n$-complicial sets, and $n$-quasi-categories.} In this work, we will make use of various ``exotic'' $\infty$-cosmoi that can be constructed from a given $\infty$-cosmos $\eK$, including the slice category $\eK_{/B}$ over an $\infty$-category in $B$, two $\infty$-cosmoi of arrows that we introduce for the first time in Propositions \ref{prop:isofib-cosmoi} and \ref{prop:K^2_eq-cosmos}, and the $\infty$-cosmos of groupoidal ``spaces'' in $\eK$ established in Proposition \ref{prop:gpdal-infty-cosmos}.

Using only the axioms of an $\infty$-cosmos, we can develop a fairly comprehensive theory of limits and colimits of diagrams valued in an $\infty$-category, defining these notions in a variety of equivalent ways and proving, for instance, that right adjoints preserve limits. But when it comes to constructing examples of $(\infty,1)$-categories or computing limits or colimits therein, ``analytic'' techniques associated to a specific model of $(\infty,1)$-categories are appropriate. As most practitioners already believe, and a future paper will justify, it makes no essential difference which model of $(\infty,1)$-categories is chosen as long as the associated $\infty$-cosmos $\eK$ is biequivalent to the $\infty$-cosmos for quasi-categories. For simplicity we choose to work in $\qCat$ itself.

At the conclusion of the previous work in this series \cite{RiehlVerity:2017cc}, we show that any quasi-category $\qC$ is equivalent to the homotopy coherent nerve of a Kan-complex-enriched category $\eC$, namely, the homotopy coherent nerve of the full simplicial subcategory of $\qCat_{\!/\qC}$ spanned by the \emph{representable cartesian fibrations} $p_0 \colon \qC\comma c \tfib \qC$ indexed by its elements $c \colon 1 \to \qC$.  So in this paper we answer the question of constructing limits and colimits of diagrams valued in a quasi-category $f \colon X \to \qC$ by considering the corresponding homotopy coherent diagram $F\colon\gC{X} \to \eC$ valued in a Kan-complex-enriched category. Our main theorem gives a condition that characterizes quasi-categorical limits or colimits in this context.

{
\renewcommand{\thethm}{\ref{thm:nerve-completeness}, \ref{thm:nerve-completeness-converse}}
\begin{thm}
For any Kan-complex-enriched category $\eC$ and simplicial set $X$, if a homotopy coherent diagram $D \colon \gC[X] \to \eC$ admits a pseudo homotopy limit in $\eC$, then the corresponding limit cone $\gC[\Del^0\join X] \to \eC$ transposes to define a limit cone over the transposed diagram $d \colon X \to \qC$ in the homotopy coherent nerve of $\eC$. Conversely, if the diagram $d$ admits a limit in the quasi-category $\qC$, then the limit cone $\Del^0\join X \to \qC$ transposes to define a pseudo homotopy limit cone over $D$ in $\eC$. 

  Consequently, the quasi-category $\qC$ is complete if and only if $\eC$ admits pseudo homotopy limits for all simplicial sets $X$.
\end{thm}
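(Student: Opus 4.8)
The plan is to deduce this biconditional formally from Theorems~\ref{thm:nerve-completeness} and~\ref{thm:nerve-completeness-converse}, the only additional ingredient being the adjunction $\gC \dashv \hN$ that defines the homotopy coherent nerve. Since $\qC = \hN\eC$ by hypothesis, transposition across this adjunction supplies, for each simplicial set $X$, a bijection between diagrams $d\colon X \to \qC$ of shape $X$ in the quasi-category $\qC$ and homotopy coherent diagrams $D \colon \gC[X] \to \eC$ valued in $\eC$. The key observation is that this correspondence is exhaustive: every $X$-indexed diagram in $\qC$ is the transpose of a unique such $D$, so that quantifying over all homotopy coherent diagrams of shape $\gC[X]$ is the same as quantifying over all diagrams of shape $X$ in $\qC$.

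With this bijection in hand I would argue the two implications separately. Suppose first that $\eC$ admits pseudo homotopy limits for all simplicial sets. Given an arbitrary $X$ and an arbitrary diagram $d \colon X \to \qC$, I would pass to its transpose $D$, invoke the hypothesis to obtain a pseudo homotopy limit of $D$, and apply Theorem~\ref{thm:nerve-completeness} to transpose the resulting cone $\gC[\Del^0 \join X] \to \eC$ back into a limit cone over $d$ in $\qC$. As $X$ and $d$ were arbitrary, $\qC$ admits limits indexed by every simplicial set and so is complete. Conversely, suppose $\qC$ is complete. Given an arbitrary homotopy coherent diagram $D \colon \gC[X] \to \eC$, its transpose $d \colon X \to \qC$ admits a limit by completeness, and Theorem~\ref{thm:nerve-completeness-converse} transposes the limit cone $\Del^0 \join X \to \qC$ into a pseudo homotopy limit cone over $D$. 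Quantifying over all $X$ and all $D$ then shows that $\eC$ admits pseudo homotopy limits for every simplicial set $X$.

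I do not expect a genuine obstacle here: all of the homotopy-theoretic content, namely the identification of the point-set data that models a quasi-categorical limit cone and the verification that transposition carries pseudo homotopy limit cones to quasi-categorical limit cones and back, has already been discharged in the two cited theorems. The only matter requiring attention is the bookkeeping that ensures the diagram-level transposition and the cone-level transposition are compatible, so that the two pointwise equivalences may be universally quantified over all diagram shapes $X$; but this compatibility is precisely what the statements of Theorems~\ref{thm:nerve-completeness} and~\ref{thm:nerve-completeness-converse} assert.
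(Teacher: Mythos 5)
Your proposal establishes only the final sentence of the theorem --- the ``Consequently'' biconditional --- and that part is fine: the adjunction $\gC \dashv \hN$ gives a bijection between diagrams $d \colon X \to \qC$ and homotopy coherent diagrams $D \colon \gC[X] \to \eC$, and once the two per-diagram transposition claims are in hand, quantifying over all $X$ and all diagrams yields the equivalence between completeness of $\qC$ and existence of pseudo homotopy limits in $\eC$. That bookkeeping is exactly how the paper extracts the completeness statement from its two main theorems.

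The genuine gap is circularity: the first two sentences of the statement under review \emph{are} Theorems \ref{thm:nerve-completeness} and \ref{thm:nerve-completeness-converse} --- this statement is the paper's introductory announcement of those results (it is even numbered by reference to them) --- so citing those theorems assumes precisely what is to be proved. All of the mathematical content lies in proving the two transposition claims, and none of it appears in your proposal. For the forward direction, the paper characterizes a limit of $d \colon Y \to \qC$ by the lifting property against the inclusions $\boundary\Del^n \join Y \inc \Del^n \join Y$, transposes this to a lifting problem for simplicial functors out of $\gC[\Del^n\join Y]$, and solves it using the isomorphism $\gC[X \join Y] \cong \gC[X\join\Del^0] \times_{\cattwo} \gC[\Del^0\join Y]$ of Theorem \ref{thm:Phi-iso}, the collage description of the weight $W_Y$, and a homotopy-pullback criterion for squares of Kan fibrations (Lemmas \ref{lem:coherent-horn-lifting} and \ref{lem:coherent-horn-lifting-suff}, Corollaries \ref{cor:coherent-horn-lifting} and \ref{cor:cone-lift}). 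For the converse, the paper embeds $\eC$ by the simplicial Yoneda embedding into the category $\hat\eC$ of projectively fibrant-cofibrant simplicial presheaves, invokes Proposition \ref{prop:qcat-simplicial-model} to produce pseudo homotopy limits there, and transports the quasi-categorical limit cone back along this homotopically fully faithful and homotopically strongly generating embedding to build the pseudo homotopy limit cone in $\eC$. A proof of this theorem must supply these arguments (or genuine alternatives); deferring them to the theorems being announced leaves the statement unproved.
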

\addtocounter{thm}{-1}
}

The statement requires some explanation. The limit notions appropriate to simplicially enriched category theory are \emph{weighted} by a particular functor, which describes the ``shape'' of cones over the given diagram; the idea is that in the context of an ambient simplicial enrichment, these cone legs may contain higher dimensional simplices. If the weight is ``fat enough,'' then the corresponding weighted limit notion is homotopically well behaved. For instance, we show in Proposition \ref{prop:flexible-weights-are-htpical} that every $\infty$-cosmos admits all limits with \emph{flexible} weights, and such limits are invariant under pointwise equivalence between diagrams.

The quasi-category $\qK$ of $\infty$-categories in an $\infty$-cosmos $\eK$ is defined not as the homotopy coherent nerve of the $\infty$-cosmos itself but rather as the homotopy coherent nerve of its $(\infty,1)$-\emph{categorical core}. The inclusion of the $(\infty,1)$-categorical core creates those flexible limits whose weights are valued in Kan complexes but does not create those flexible limits whose weights are valued in general simplicial sets, at least not strictly. But if we relax the defining universal property of a weighted limit to demand an equivalence rather than an isomorphism of quasi-categories, then the $(\infty,1)$-categorical core of an $\infty$-cosmos admits all \emph{flexible weighted homotopy limits}, as we demonstrate in Corollary \ref{cor:infty-one-core-flex}. Similarly, the full subcategory of fibrant-cofibrant objects in a simplicial model category admits all flexible weighted homotopy limits or colimits.

The \emph{pseudo homotopy limits} in the statement of Theorem \ref{thm:nerve-completeness} refer to a flexible weighted homotopy limit with a particular weight, namely the weight $W_X$ for pseudo cones over a homotopy coherent diagram of shape $X$, which we define in \ref{defn:weight-for-pseudo-limits}. Since $\infty$-cosmoi and their $(\infty,1)$-categorical cores admit pseudo-weighted homotopy limits, we deduce the following completeness results as corollaries of Theorem \ref{thm:nerve-completeness}


{
\renewcommand{\thethm}{\ref{prop:qcat-of-cosmos-complete}}
\begin{prop} For any $\infty$-cosmos $\eK$, the large quasi-category $\qK$ of $\infty$-categories in $\eK$ is small complete.
\end{prop}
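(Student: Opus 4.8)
The plan is to deduce this directly from Theorem~\ref{thm:nerve-completeness} together with the fact, recorded in Corollary~\ref{cor:infty-one-core-flex}, that the $(\infty,1)$-categorical core of an $\infty$-cosmos admits all flexible weighted homotopy limits. First I would recall that by definition the large quasi-category $\qK$ is the homotopy coherent nerve of the $(\infty,1)$-categorical core of $\eK$, which is a Kan-complex-enriched category. This places us precisely in the situation $\qC = N\eC$ of Theorem~\ref{thm:nerve-completeness}, with $\eC$ taken to be the $(\infty,1)$-categorical core. The ``Consequently'' clause of that theorem then reduces small completeness of $\qK$ to a single assertion: that its $(\infty,1)$-categorical core admits pseudo homotopy limits for every small simplicial set $X$.

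Next I would unwind the definition of a pseudo homotopy limit. By \ref{defn:weight-for-pseudo-limits}, a pseudo homotopy limit over a homotopy coherent diagram of shape $X$ is exactly the flexible weighted homotopy limit for the weight $W_X$ for pseudo cones. Since this weight is flexible, Corollary~\ref{cor:infty-one-core-flex} applies and guarantees that the $(\infty,1)$-categorical core of $\eK$ admits the limit weighted by $W_X$, for each small $X$. Feeding this back through Theorem~\ref{thm:nerve-completeness} shows that every diagram $d \colon X \to \qK$ indexed by a small simplicial set admits a limit in $\qK$, which is precisely the statement that $\qK$ is small complete.

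The only points that require genuine care are bookkeeping ones rather than analytic ones: verifying that each weight $W_X$ for pseudo cones is in fact a flexible weight, so that Corollary~\ref{cor:infty-one-core-flex} is applicable, and confirming that the smallness hypotheses align, namely that ``small complete'' refers to limits indexed by small simplicial sets $X$, which are exactly those for which $W_X$ is a small flexible weight. I expect no further obstacle, since all of the substantive homotopical content—the passage between point-set pseudo homotopy limit cones and quasi-categorical limit cones, and the existence of flexible weighted homotopy limits in the $(\infty,1)$-categorical core—has already been established in Theorem~\ref{thm:nerve-completeness} and Corollary~\ref{cor:infty-one-core-flex}.
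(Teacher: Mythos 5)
Your proposal is correct and follows essentially the same route as the paper's own proof: both identify $\qK$ as the homotopy coherent nerve of the Kan-complex-enriched $(\infty,1)$-core $g_*\eK$, invoke Corollary~\ref{cor:infty-one-core-flex} to obtain all flexible weighted homotopy limits there (which includes the pseudo homotopy limits, since each weight $W_X$ is flexible by Lemma~\ref{lem:pseudo-is-flexible}), and then conclude via Theorem~\ref{thm:nerve-completeness}. The paper's proof is just a more compressed version of yours, leaving the flexibility of $W_X$ implicit.
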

\addtocounter{thm}{-1}
}

{
\renewcommand{\thethm}{\ref{prop:qcat-of-space-complete}}
\begin{prop} For any $\infty$-cosmos $\eK$, the large quasi-category $\qS_{\eK}$ of groupoidal $\infty$-categories in $\eK$ is small complete and closed under small limits in the quasi-category $\qK$.
\end{prop}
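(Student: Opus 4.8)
The plan is to obtain both claims by applying the completeness theorem for $\infty$-cosmoi, Proposition \ref{prop:qcat-of-cosmos-complete}, to the $\infty$-cosmos of groupoidal objects and then comparing the two limit constructions. Recall from Proposition \ref{prop:gpdal-infty-cosmos} that the groupoidal $\infty$-categories in $\eK$ form an $\infty$-cosmos $\eK_{\mathrm{gr}}$, arising as a full sub-$\infty$-cosmos of $\eK$ whose flexible limits are created by the inclusion $\eK_{\mathrm{gr}} \hookrightarrow \eK$. By definition $\qS_{\eK}$ is the homotopy coherent nerve of the $(\infty,1)$-categorical core of $\eK_{\mathrm{gr}}$, so small completeness of $\qS_{\eK}$ follows at once by applying Proposition \ref{prop:qcat-of-cosmos-complete} with $\eK$ replaced by $\eK_{\mathrm{gr}}$.

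For the second assertion, note that because groupoidal objects are closed under equivalence and the function complexes of $\eK_{\mathrm{gr}}$ are already Kan complexes, the cosmological inclusion induces a fully faithful, replete embedding $\qS_{\eK} \hookrightarrow \qK$ of quasi-categories. It therefore suffices to show that the $\qK$-limit of a small diagram $d \colon X \to \qS_{\eK}$, regarded in $\qK$ along this embedding, is again represented by a groupoidal object. First I would present $d$ by a homotopy coherent diagram $D \colon \gC[X] \to \eC_{\mathrm{gr}}$ valued in the Kan-complex-enriched core $\eC_{\mathrm{gr}}$ of $\eK_{\mathrm{gr}}$, so that its composite with the core of the inclusion, $\gC[X] \to \eC_{\mathrm{gr}} \hookrightarrow \eC$, presents the corresponding diagram in $\qK$.

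By small completeness $D$ admits a pseudo homotopy limit in $\eC_{\mathrm{gr}}$; since the pseudo limit weight $W_X$ is flexible, this is a flexible weighted homotopy limit in the sense of Corollary \ref{cor:infty-one-core-flex} and Proposition \ref{prop:flexible-weights-are-htpical}. The crux is then to verify that the inclusion carries this pseudo homotopy limit cone to a pseudo homotopy limit cone over the transposed diagram in $\eC$. Granting this, the forward direction of Theorem \ref{thm:nerve-completeness}, applied once in $\eK_{\mathrm{gr}}$ and once in $\eK$, exhibits a single cone $\Del^0 \join X \to \qK$ that is simultaneously a limit cone in $\qS_{\eK}$ and in $\qK$; since limits in a quasi-category are unique up to equivalence and the embedding is replete, every $\qK$-limit of $d$ then lies in $\qS_{\eK}$ and restricts to the limit computed there.

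The main obstacle is precisely this compatibility claim: that after passing to $(\infty,1)$-categorical cores and homotopy coherent nerves, the inclusion $\eK_{\mathrm{gr}} \hookrightarrow \eK$ preserves the flexible weighted homotopy limit with weight $W_X$ at the point-set level, so that a pseudo homotopy limit cone in $\eC_{\mathrm{gr}}$ remains one in $\eC$. This should reduce to the closure of groupoidal objects under $W_X$-weighted flexible limits, which is part of the content of Proposition \ref{prop:gpdal-infty-cosmos}, together with compatibility of the $(\infty,1)$-core functor with the inclusion; the delicate bookkeeping lies in matching up the two nerve constructions and tracking the transposition of cones across them.
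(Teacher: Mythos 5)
Your overall strategy --- realise $\qS_{\eK}$ as the homotopy coherent nerve of the $\infty$-cosmos $\eK\gr$ of Proposition \ref{prop:gpdal-infty-cosmos}, then apply Theorem \ref{thm:nerve-completeness} twice, once in $\eK\gr$ and once in $\eK$, to a single pseudo homotopy limit cone --- is exactly the paper's strategy, and your first paragraph is correct: since the hom-spaces of $\eK\gr$ are already Kan complexes we have $g_*(\eK\gr)=\eK\gr$, so Proposition \ref{prop:qcat-of-cosmos-complete} applied to the cosmos $\eK\gr$ gives completeness of $\qS_{\eK}$. However, the step you flag as ``the main obstacle'' is a genuine gap as you have set things up, and it is the entire content of the second assertion. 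You construct the pseudo homotopy limit inside $\eK\gr$ and then need the inclusion into $g_*\eK$ to \emph{preserve} it. That preservation does not follow formally from fullness, repleteness, or closure: the universal property you possess quantifies only over groupoidal test objects $Z$, whereas a pseudo homotopy limit in $g_*\eK$ requires the comparison map $\Fun_{\eK}(Z,L)\to\wlim{W_X}{\Fun_{\eK}(Z,D-)}$ to be an equivalence for \emph{every} object $Z$ of $\eK$, and ``closure of $\eK\gr$ under $W_X$-weighted flexible limits'' says nothing about test objects outside $\eK\gr$. The only way to get the stronger property is to unwind the explicit construction of these limits, which is what your sketch omits.

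The gap closes if you run the construction in the opposite direction, which is what the paper's proof does. Given $D\colon\gC[X]\to\eK\gr\subseteq g_*\eK$, first form its pseudo homotopy limit in $g_*\eK$: by Proposition \ref{prop:infty-one-core-flex} this is computed as the \emph{strict} limit $\wlim{W_X'}{D}$ in $\eK$ weighted by the levelwise Kan replacement $W_X'$ of $W_X$ (Lemma \ref{lem:levelwise-Kan-replacement}), so its universal property holds against all $Z\in\eK$ by construction. Since flexible weighted limits are assembled from the cosmological limits (Proposition \ref{prop:flexible-weights-are-htpical}) and those are created by the inclusion $\eK\gr\inc\eK$ (Proposition \ref{prop:gpdal-infty-cosmos}), the apex $\wlim{W_X'}{D}$ is groupoidal. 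Finally, because $\Fun_{\eK}(Z,E)$ is already a Kan complex whenever $E$ is groupoidal (Lemma \ref{lem:groupoidal-object}), $\eK\gr$ is a \emph{full} simplicial subcategory of $g_*\eK$, so the same cone is a pseudo homotopy limit cone in $\eK\gr$ simply by restricting its universal property. With the limit produced in this order, both of your applications of Theorem \ref{thm:nerve-completeness} are justified, and your concluding uniqueness-plus-repleteness argument goes through unchanged.
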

\addtocounter{thm}{-1}
}

{
\renewcommand{\thethm}{\ref{prop:qcat-simplicial-model}}
\begin{prop}
  If $\eM$ is a simplicial model category then the quasi-category $\qM$, defined as the homotopy coherent nerve of the full simplicial subcategory of fibrant-cofibrant objects, is small complete and cocomplete.
\end{prop}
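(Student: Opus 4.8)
The plan is to deduce both halves from the completeness criterion supplied by Theorems~\ref{thm:nerve-completeness} and~\ref{thm:nerve-completeness-converse}. First I would observe that in any simplicial model category $\eM$ the function complex between a cofibrant object and a fibrant object is a Kan complex, a standard consequence of the pushout--product axiom; hence the full simplicial subcategory $\eM_{cf}$ of fibrant--cofibrant objects is Kan-complex-enriched, and $\qM$ is by definition its homotopy coherent nerve, so the hypotheses of Theorem~\ref{thm:nerve-completeness} hold with $\eC = \eM_{cf}$. The stated consequence of that theorem then reduces ``$\qM$ is small complete'' to the assertion that $\eM_{cf}$ admits a pseudo homotopy limit of every homotopy coherent diagram $D\colon \gC[X]\to\eM_{cf}$ indexed by a small simplicial set $X$.

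Next I would recognise these pseudo homotopy limits as flexible weighted homotopy limits. By Definition~\ref{defn:weight-for-pseudo-limits} a pseudo homotopy limit is precisely the flexible weighted homotopy limit whose weight is $W_X$, the weight for pseudo cones; since $W_X$ is flexible, the simplicial-model-category analogue of Corollary~\ref{cor:infty-one-core-flex} --- the fact, recorded in the introduction, that the fibrant--cofibrant objects of a simplicial model category admit all flexible weighted homotopy limits --- supplies the required limits. Concretely one forms the point-set weighted limit $\{W_X', D'\}$ of projectively cofibrant replacements of the weight and diagram, using that $\eM$ is complete and that its cotensors and pullbacks along fibrations are homotopical; fibrancy of the result together with the equivalence of the relevant mapping spaces are what upgrade this to a flexible \emph{homotopy} limit. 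This proves small completeness of $\qM$.

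For cocompleteness I would dualise. The opposite $\eM\op$ is again a simplicial model category, with cofibrations and fibrations interchanged and tensors and cotensors swapped, and its fibrant--cofibrant objects are exactly those of $\eM$, so $(\eM\op)_{cf} = (\eM_{cf})\op$ as simplicial categories; since the homotopy coherent nerve carries opposites to opposites, $\qM\op$ is the quasi-category associated to $\eM\op$, which is small complete by the case already proved, whence $\qM$ is small cocomplete. The main obstacle is the second paragraph: one must check that the general flexible-weighted-homotopy-limit result applies to the particular weight $W_X$ and delivers a cone with the \emph{up-to-equivalence} universal property that pseudo homotopy limits demand --- exactly the input Theorem~\ref{thm:nerve-completeness} is built to consume --- after which the dualisation of the third paragraph is purely formal.
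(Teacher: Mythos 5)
Your overall architecture is the same as the paper's: observe that $\eM_{\mathrm{cf}}$ is Kan-complex-enriched, use Theorem \ref{thm:nerve-completeness} to reduce completeness of $\qM$ to the existence of pseudo homotopy limits in $\eM_{\mathrm{cf}}$, and dualise to $\eM\op$ for cocompleteness (your third paragraph is exactly the paper's closing argument, and your appeal to Theorem \ref{thm:nerve-completeness-converse} is never actually used). The gap is in your second paragraph, which is the mathematical heart of the proposition. The statement that the fibrant-cofibrant objects of a simplicial model category admit all flexible weighted homotopy limits is not an independently citable result: the sentence in the introduction is a preview of precisely this proposition, and Corollary \ref{cor:infty-one-core-flex} concerns the $(\infty,1)$-core of an $\infty$-cosmos, not $\eM_{\mathrm{cf}}$. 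So invoking that ``fact'' is circular; it is what you must prove.

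Your concrete sketch does not prove it, because it replaces the wrong things. The weight $W_X$ is already projectively cofibrant --- that is what flexibility means (Lemma \ref{lem:pseudo-is-flexible}) --- and the diagram $D\colon\gC[X]\to\eM_{\mathrm{cf}}$ is already pointwise fibrant-cofibrant, so ``projectively cofibrant replacements of the weight and diagram'' gain nothing. The real difficulty lies elsewhere: by the argument of Proposition \ref{prop:flexible-weights-are-htpical}, the strict weighted limit $\{W_X,D\}$ exists and is fibrant (it is built from cotensors, products, pullbacks of fibrations, and countable inverse limits of fibrations), but it need not be \emph{cofibrant}, so it need not be an object of $\eM_{\mathrm{cf}}$ at all. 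The paper's fix, which your sketch omits, is to take a cofibrant replacement $e\colon \{W_X,D\}_{\mathrm{c}}\we \{W_X,D\}$ of the \emph{limit object} and then to check that this replacement still enjoys the up-to-equivalence universal property: for every cofibrant $C$ the representable $\Map_{\eM}(C,-)$ is right Quillen, so by Ken Brown's lemma it carries the weak equivalence $e$ between fibrant objects to an equivalence of Kan complexes
\[ \Map_{\eM}(C,\{W_X,D\}_{\mathrm{c}}) \we \Map_{\eM}(C,\{W_X,D\}) \cong \{W_X,\Map_{\eM}(C,D-)\}, \]
which is exactly the statement that $\{W_X,D\}_{\mathrm{c}}$ is a pseudo homotopy limit of $D$ in $\eM_{\mathrm{cf}}$. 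With this step supplied, the rest of your argument, including the dualisation, goes through as written.
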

\addtocounter{thm}{-1}
}
The last of these corollaries recovers a result  first proven by Barnea, Harpaz, and Horel in \cite[2.5.9]{BarneaHarpazHorel:2017pc}, while Szumi\l{}o proves a result similar to the first pair of results in the context of (unenriched) cofibration categories  \cite{Szumilo:2014tm}.

This paper contains all of the background needed to fill in the details of this outline, with the proofs of these results appearing in  \S\ref{sec:complete}.  To concisely cite previous work in this program, we refer to the results of \cite{RiehlVerity:2012tt, RiehlVerity:2012hc, RiehlVerity:2013cp,RiehlVerity:2015fy,RiehlVerity:2015ke,RiehlVerity:2017cc}  as I.x.x.x., II.x.x.x, III.x.x.x, IV.x.x.x, V.x.x.x, or VI.x.x.x respectively, though the statements of the most important results are reproduced here for ease of reference. When an external reference accompanies a restated result, this generally indicates that more expository details can be found there.

In \S\ref{sec:cosmoi} we review the axioms of an $\infty$-cosmos and construct several new examples of $\infty$-cosmoi that will be used in this paper. We also introduce and investigate the $\infty$-cosmos of \emph{groupoidal objects} or ``spaces'' in an $\infty$-cosmos and explore the construction of the $(\infty,1)$-categorical core of an $\infty$-cosmos. In \S\ref{sec:limits}, we review the synthetic theory of limits in an $\infty$-cosmos, presenting only the minimal details that we will require here to prove our main theorem. In \S\ref{sec:flexible} we introduce the class of flexible weighted limits and flexible weighted homotopy limits, proving the existence results mentioned above. In \S\ref{sec:computads}, we develop the technical tools needed to prove our main theorems. We review the homotopy coherent nerve functor and its left adjoint, \emph{homotopy coherent realisation}, which describes the shape of homotopy coherent diagrams. We also define the weight for pseudo limits of homotopy coherent diagrams as a \emph{collage}, a simplicial category that indexes pseudo limit cones. With this theory in place, the proof of Theorem \ref{thm:nerve-completeness} in \S\ref{sec:complete} is just a matter of following one's nose.

A paper on the subject of computing limits and colimits in quasi-categories might have lead the reader to anticipate construction results of a different flavor, reducing limits or colimits of arbitrary shaped diagrams to simpler ones. In a sequel \cite{RiehlVerity:2018oc}, which depends in a crucial way on Theorem \ref{thm:nerve-completeness}, we prove a dual pair of such results, reducing limits indexed by a simplicial set $X$ to products and pullbacks. Using Yoneda lemma techniques, we liberate ourselves from the $\infty$-cosmos of quasi-categories and prove these results in the fully general context of $\infty$-categories in any $\infty$-cosmos.

The reader will note that a number of the results in this paper are well known in one form or another. In particular, this includes our main theorem, a close analogue of which was proven by Lurie in \cite[4.2.4.1]{Lurie:2009fk}.  The novelty of our narrative is that our constructions of these limits and colimits are concrete and direct, in that  we compute the required universal properties via direct analysis of homotopy coherent structures, rather than by constructing appropriate model structures on diagram categories. We find it aesthetically pleasing that the formulation of this result given here directly transposes a diagram $f \colon X \to \qC$ and a quasi-categorical cone into a homotopy coherent diagram $F \colon \gC[X] \to \eC$ and a pseudo cone and observes that these cones enjoy corresponding universal properties, without needing to ``straighten'' a quasi-categorical into an equivalent functor between Kan-complex-enriched categories; see \cite[4.2.4.7]{Lurie:2009fk}. These technical details are helpful when we apply Theorem \ref{thm:nerve-completeness} to the meta-theory of our $\infty$-cosmos framework, a topic we expand upon further in the immediate sequel to this paper.

For us a further motivation arises from the theory of (nee.~weak) \emph{complicial sets\/} \cite{Verity:2007:wcs1,Verity:2007rm}. The concrete arguments presented here generalise routinely to apply in the complicial context; all they require is a certain delicacy in tracking the \emph{markings} (or \emph{stratification}) placed on simplices in our homotopy coherent structures. In that way, we obtain constructions of higher $(\infty,\infty)$-limit types in the homotopy coherent nerves of complicially enriched categories. When we first started thinking about such issues, we went looking for a concrete presentation of the corresponding results in the quasi-categorical literature. We were unable to find one that generalised easily to complicial sets; hence the current paper. We could have proceeded directly to give the fully complicial arguments but felt it prudent, from an expository perspective, to expose the simpler unmarked case first. 

\subsection{Size conventions}

The quasi-categories defined as homotopy coherent nerves are typically large. All other quasi-categories or simplicial sets, particularly those used to index homotopy coherent diagrams, are assumed to be small. In particular,    when discussing the existence of limits and colimits we shall implicitly assume that these are indexed by small categories, and correspondingly, completeness and cocompleteness properties will implicitly reference the existence of small limits and small colimits.  Here, as is typical, ``small'' sets will usually refer to those members of a Grothendieck universe defined relative to a fixed inaccessible cardinal.

Our intent here is simply to provide a size classification which allows us state and prove results that require such a distinction for non-triviality, principally those of the form ``such and such a \emph{large} category admits all \emph{small} limits''. Our arguments mostly comprise elementary constructions, so in applications this size distinction need not invoke the full force of a Grothendieck universe, indeed it might be as simple as that between the finite and the infinite. At the other extreme it might involve the choice of two Grothendieck universes to prove results about large categories. One such result is Theorem~\ref{thm:nerve-completeness-converse} which, on interpreting its size distinction relative to a second larger Grothendieck universe, provides a result which applies to large simplicial categories. Indeed, with a little more care, the proof of that result may be adapted to apply to large and locally small simplicial categories without the introduction of a second universe (by restricting arguments only to \emph{accessible} simplicial presheaves) although we choose not to worry the reader with such arcana here. 

We use a common typeface --- e.g.,~$\qC$, $\qK$,  --- to differentiate small and large quasi-categories from generic $\infty$-categories $A$ and simplicial sets $X$. Throughout, we attempt to distinguish between enriched and unenriched categorical settings. In particular, we write $\SSet$ for the cartesian closed (and thus simplicially enriched) category of simplicial sets and $\sSet$ for the underlying 1-category of simplicial sets and simplicial maps.
 
\subsection{Acknowledgements}

The authors are grateful for support from the National Science Foundation (DMS-1551129 and DMS-1652600) and from the Australian Research Council (DP160101519). This work was commenced when the second-named author was visiting the first at Harvard and then at Johns Hopkins, continued while the first-named author was visiting the second at Macquarie, and completed after everyone finally made their way home. We thank all three institutions for their assistance in procuring the necessary visas as well as for their hospitality. The final published manuscript benefitted greatly from the  astute suggestions of an eagle-eyed referee. We are also grateful to the referee for the sequel \cite{RiehlVerity:2018oc} who noticed that the proof of the converse result Theorem \ref{thm:nerve-completeness-converse} originally given there could be simplified enough so that we could include it here.


\section{\texorpdfstring{$\infty$}{infinity}-cosmoi}\label{sec:cosmoi}

An $\infty$-\emph{cosmos} is a type of $(\infty,2)$-category satisfying a very sparse list of axioms appropriate for  the ``universe'' in which  $\infty$-categories live as objects. In \S\ref{ssec:cosmoi-background}, we briefly review this notion and then construct a number of new $\infty$-cosmoi from a given $\infty$-cosmos $\eK$ that we will make use of here and elsewhere. 

In \S\ref{ssec:core}, we consider two relevant substructures of an $\infty$-cosmos. The first is the full subcategory of ``spaces'' in $\eK$, objects that satisfy a condition of being \emph{groupoidal} that we introduce in several equivalent forms. The second construction is of the maximal $(\infty,1)$-category contained within an $\infty$-cosmos, this having the same objects but with the quasi-categorical homs replaced by their maximal Kan complex ``groupoid cores.''

\subsection{\texorpdfstring{$\infty$}{infinity}-cosmoi and their homotopy 2-categories}\label{ssec:cosmoi-background}

An $\infty$-cosmos is a category $\eK$ whose objects $A, B$ we call $\infty$-\emph{categories} and whose function complexes $\Fun_{\eK}(A,B)$ are quasi-categories of \emph{functors} between them. The handful of axioms imposed on the ambient quasi-categorically enriched category $\eK$ permit the development of a general theory of $\infty$-categories ``synthetically,'' i.e., only in reference to this axiomatic framework. We work in an $\infty$-cosmos $\eK$ with all objects cofibrant, in contrast to the more general notion first introduced in \S\refIV{sec:cosmoi}.

\begin{defn}[$\infty$-cosmos]\label{defn:cosmos}
An $\infty$-\emph{cosmos} is a simplicially enriched category $\eK$ whose 
\begin{itemize}
\item objects we refer to as the \emph{$\infty$-categories} in the $\infty$-cosmos, whose
\item hom simplicial sets $\Fun_{\eK}(A,B)$ are  quasi-categories, 
\end{itemize} and that is equipped with a specified subcategory of \emph{isofibrations}, denoted by ``$\tfib$'',
satisfying the following axioms:
 \begin{enumerate}[label=(\alph*),series=defn:cosmos]
    \item\label{defn:cosmos:a} (completeness) As a simplicially enriched category,  $\eK$ possesses a terminal object $1$, small products, cotensors $A^U$ of  objects $A$ by all small simplicial sets $U$, inverse limits of countable sequences of isofibrations, and pullbacks of isofibrations along any functor.
    \item\label{defn:cosmos:b} (isofibrations) The class of isofibrations contains the isomorphisms and all of the functors $!\colon A \tfib 1$ with codomain $1$; is stable under pullback along all functors; is closed under inverse limit of countable sequences; and if $p\colon E\tfib B$ is an isofibration in $\eK$ and $i\colon U\inc V$ is an inclusion of  simplicial sets then the Leibniz cotensor $i\leib\pwr p\colon E^V\tfib E^U\times_{B^U} B^V$ is an isofibration. Moreover, for any object $X$ and isofibration $p \colon E \tfib B$, $\Fun_{\eK}(X,p) \colon \Fun_{\eK}(X,E) \tfib \Fun_{\eK}(X,B)$ is an isofibration of quasi-categories.
\end{enumerate}
\end{defn}

For ease of reference, we refer to the limit types listed in axiom \ref{defn:cosmos:a} as the \emph{cosmological limit types}, these referring to diagrams of a particular shape with certain maps given by isofibrations.

\begin{rmk} As is revealed by our previous papers in this series, and suggested by the constructions appearing in \S\ref{ssec:commas}, for much of the development of the formal theory of $\infty$-categories only finite flexible weighted limits are needed. For this reason, the original definition of an $\infty$-cosmos only asks for the finite instances of \ref{defn:cosmos}\ref{defn:cosmos:a}. In the present treatment, we find it convenient to allow for cotensors with all simplicial sets, not just the finitely presented ones, and employ inductive arguments, such as appearing in the proof of Proposition \ref{prop:flexible-weights-are-htpical}, that make use of arbitrary small products and countable inverse limits of sequences of isofibrations, but we trust that the reader will have no difficulty adapting these results to their finite or countable variants in an $\infty$-cosmos admitting a more restricted class of weighted limits.
\end{rmk}

The underlying category of an $\infty$-cosmos $\eK$ has a canonical subcategory of representably-defined equivalences, denoted by ``$\we$'', satisfying the 2-of-6 property: a functor $f \colon A \to B$ is an \emph{equivalence} just when the induced functor $\Fun_{\eK}(X,f) \colon \Fun_{\eK}(X,A) \to \Fun_{\eK}(X,B)$ is an equivalence of quasi-categories for all objects $X \in \eK$.  The  \emph{trivial fibrations}, denoted by ``$\trvfib$'', are those functors that are both equivalences and isofibrations. These axioms imply that the underlying 1-category of an $\infty$-cosmos is a category of fibrant objects in the sense of Brown. Consequently, many familiar homotopical properties follow from \ref{defn:cosmos:a} and \ref{defn:cosmos:b}.   In particular it follows, from~\ref{defn:cosmos}\ref{defn:cosmos:b}, that if $p\colon E\trvfib B$ is a trivial fibration in $\eK$ then for all objects $A$ the simplicial map $\Fun_{\eK}(A,p)\colon\Fun_{\eK}(A,E)\to\Fun_{\eK}(A,B)$ is a trivial fibrations of quasi-categories. Consequently we have:  
   \begin{enumerate}[label=(\alph*), resume=defn:cosmos]
   \item\label{defn:cosmos:c} (cofibrancy) All objects are \emph{cofibrant}, in the sense that they enjoy the left lifting property with respect to all trivial fibrations in $\eK$.
     \[
       \xymatrix{
         & E \ar@{->>}[d]^{\rotatebox{90}{$\displaystyle\sim$}} \\
         A \ar[r] \ar@{-->}[ur]^{\exists} & B}
     \] 
  \end{enumerate}
  which was asserted as a (redundant) axiom in Definition \refV{qcat.ctxt.cof.def}.

  

\begin{ex}[$\infty$-cosmos of quasi-categories]\label{ex:qcat-cosmos}
The prototypical example is the $\infty$-cosmos $\qCat$ of quasi-categories, with function complexes defined to be the exponential objects in  the  cartesian closed category of simplicial sets. An \emph{isofibration} is an inner fibration that has the right lifting property with respect to the inclusion $\catone\inc\iso$ of either endpoint of the (nerve of the) free-standing isomorphism. That is, a map $f \colon\qA \to \qB$ of quasi-categories is an \emph{equivalence} just when there exists a map $g \colon \qB \to \qA$ together with maps $\qA \to \qA^\iso$ and $\qB \to \qB^\iso$ that restrict along the vertices of $\iso$ to the maps $\id_{\qA}$, $gf$, $fg$, and $\id_{\qB}$ respectively: 
\[ f \colon \qA \we \qB\quad \mathrm{iff}\quad \exists g \colon \qB \we \qA\quad \mathrm{and} \quad \vcenter{\xymatrix{ & \qA \\ \qA \ar[r] \ar[ur]^{\id_{\qA}} \ar[dr]_{gf} & \qA^\iso \ar@{->>}[u]_{p_0} \ar@{->>}[d]^{p_1} \\ & \qA}} \quad\mathrm{and}\quad \vcenter{\xymatrix{ & \qB \\ \qB \ar[r]\ar[ur]^{fg} \ar[dr]_{\id_{\qB}} & \qB^{\iso} \ar@{->>}[u]_{p_0} \ar@{->>}[d]^{p_1} \\ & \qB}}\]
\end{ex}

\begin{rmk}\label{rmk:cosmoi-from-model-cats}
The $\infty$-cosmos of quasi-categories can also be described using the language of model categories. It is the full subcategory of fibrant objects, with the isofibrations and equivalences respectively the fibrations and weak equivalences between fibrant objects, in a model category that is enriched over the Joyal model structure on simplicial sets and in which all fibrant objects are cofibrant. It is easy to verify that any category of fibrant objects arising in this way defines an $\infty$-cosmos (see Lemma \refIV{lem:model-categories-cosmoi}). This is the source of  many of our examples, which are described in \S\refIV{sec:cosmoi}.
\end{rmk}

For any $\infty$-category $A$ in any $\infty$-cosmos $\eK$, the strict slice category $\eK_{/A}$ of isofibrations over $A$  is again an $\infty$-cosmos. It follows that all of our theorems in this axiomatic framework immediately have fibred analogues.

\begin{prop}[{sliced $\infty$-cosmoi, \refV{defn:sliced-cosmoi}}]\label{prop:sliced-cosmoi}
If $\eK$ is any $\infty$-cosmos and $A \in \eK$ is any object, then there is an $\infty$-cosmos $\eK_{/A}$, the \emph{sliced $\infty$-cosmos of $\eK$ over $A$}, whose:
\begin{itemize}
\item objects are isofibrations $p \colon E \tfib A$ with codomain $A$;
\item functor space from $p \colon E \tfib A$ to $q \colon F \tfib A$ is defined by  taking the pullback
\[
    \xymatrix@=1.5em{
      {\Fun_{\eK_{/A}}(p,q)}\pbexcursion\ar[r]\ar@{->>}[d] &
      {\Fun_{\eK}(E,F)}\ar@{->>}[d]^{\Fun_{\eK}(E,q)} \\
      {1}\ar[r]_-{p} & {\Fun_{\eK}(E,A)}
    }
\]
  in simplicial sets;
\item isofibrations, equivalences, and trivial fibrations are created by the forgetful functor $\eK_{/A} \to \eK$;
\end{itemize}
and in which the simplicial limits are defined in the usual way for sliced simplicial categories.
\end{prop}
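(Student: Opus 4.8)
The plan is to verify each clause of Definition~\ref{defn:cosmos} for $\eK_{/A}$ by reducing it, along the forgetful functor $\eK_{/A} \to \eK$, to the corresponding property of $\eK$; by construction this functor creates isofibrations, equivalences, and trivial fibrations. The simplicial enrichment is the standard one on any sliced simplicially enriched category, with hom-spaces given by the displayed pullback. To see these are quasi-categories, note that the last clause of \ref{defn:cosmos}\ref{defn:cosmos:b} makes $\Fun_{\eK}(E,q) \colon \Fun_{\eK}(E,F) \tfib \Fun_{\eK}(E,A)$ an isofibration of quasi-categories; its pullback $\Fun_{\eK_{/A}}(p,q)$ along the vertex $p \colon 1 \to \Fun_{\eK}(E,A)$ is then an inner fibration over the quasi-category $1$, hence itself a quasi-category.

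I would treat the completeness axiom \ref{defn:cosmos:a} next. The terminal object is $\id_A \colon A \tfib A$: it is an object because isomorphisms are isofibrations, and it is terminal because the pullback computing $\Fun_{\eK_{/A}}(p,\id_A)$ collapses to $1$. Products are wide pullbacks over $A$: for a small family $p_i \colon E_i \tfib A$ one forms $\prod_i E_i \times_{A^I} A$ along the diagonal $A \to A^I$, whose leg to $A$ is an isofibration since isofibrations are closed under small products and pullback. The forgetful functor creates connected limits, so pullbacks of isofibrations and countable inverse limits of towers of isofibrations in $\eK_{/A}$ are computed by the corresponding limits in $\eK$; these exist by \ref{defn:cosmos:a}, their legs are isofibrations by \ref{defn:cosmos:b}, and the composite structure map to $A$ remains an isofibration. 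The sole limit not created by the forgetful functor is the cotensor: the cotensor of $(E,e)$ by a small simplicial set $U$ is the projection $E^U \times_{A^U} A \to A$, namely the pullback of $e^U \colon E^U \tfib A^U$---itself an isofibration, being the Leibniz cotensor of $e$ with $\emptyset \inc U$---along the diagonal $A \to A^U$; its structure map to $A$ is thus an isofibration.

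For the isofibration axiom \ref{defn:cosmos:b}, closure under isomorphisms, stability under pullback, and closure under countable inverse limits are immediate, since the forgetful functor creates both these limits and the isofibrations, reducing each to the corresponding statement in $\eK$. The unique map from an object $(E,e)$ to the terminal object $\id_A$ is $e$ itself, an isofibration by hypothesis, so every map to the terminal object is an isofibration. The final clause holds because, for an object $X=(G,r)$ and an isofibration $p$ over $A$, the map $\Fun_{\eK_{/A}}(X,p)$ is the pullback along the vertex $r$ of the isofibration of quasi-categories supplied by \ref{defn:cosmos:b} in $\eK$, and such pullbacks are again isofibrations of quasi-categories.

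The one genuinely computational point---and the step I expect to be the main obstacle---is the Leibniz cotensor clause, precisely because cotensors in $\eK_{/A}$ are twisted by $A$ and so are not created by the forgetful functor. Let $g \colon (E,e) \tfib (F,f)$ be an isofibration of $\eK_{/A}$, so that $fg=e$ and $g\colon E \tfib F$ is an isofibration of $\eK$, and let $i \colon U \inc V$ be a monomorphism of simplicial sets. Unwinding the slice cotensor formula, I would compute the codomain of the slice Leibniz cotensor and use the relation $e=fg$ to eliminate the redundant $A^U$-constraint, thereby identifying the underlying $\eK$-map of $i\leib\pwr g$ with the base change, along the diagonal $A \to A^V$, of the ordinary $\eK$-Leibniz cotensor $i\leib\pwr g \colon E^V \tfib E^U \times_{F^U} F^V$. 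Concretely, both its source $E^V \times_{A^V} A$ and its target $(E^U \times_{F^U} F^V) \times_{A^V} A$ are pullbacks of the $\eK$-level source and target along this single diagonal, and the slice map is the corresponding base change. Since $i\leib\pwr g$ is an isofibration of $\eK$ by \ref{defn:cosmos:b} and isofibrations are pullback-stable, this base change is an isofibration, whence so is the slice Leibniz cotensor. This cancellation---that the $A$-corrections on source and target coincide, so that forming the Leibniz cotensor commutes with passage to the slice---is the crux; with it in hand the remaining bookkeeping, including the fibrant-objects structure governing equivalences and trivial fibrations, is routine.
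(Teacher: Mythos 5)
The paper itself offers no proof of this proposition---it is recalled from Part V of this series via the citation attached to the statement---so your argument can only be compared against the standard verification. Measured that way, your treatment of the enrichment and of axioms \ref{defn:cosmos}\ref{defn:cosmos:a} and \ref{defn:cosmos:b} is essentially correct: the hom-spaces are quasi-categories for the reason you give, the terminal object, sliced products, connected limits, and cotensors are as you describe, and you correctly isolated and executed the one genuinely computational step. Your identification of the sliced Leibniz cotensor as the base change along $A \to A^V$ of the $\eK$-level Leibniz cotensor $i\leib\pwr g \colon E^V \tfib E^U \times_{F^U} F^V$, using $e = fg$ to show the $A^U$-constraint in the codomain is redundant, is exactly the right computation, and pullback-stability of isofibrations then finishes that clause.

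The genuine gap is in your opening claim that ``by construction this functor creates isofibrations, equivalences, and trivial fibrations,'' and in the corresponding dismissal of equivalences at the end as routine bookkeeping. Only the isofibrations of $\eK_{/A}$ can be specified by fiat: in an $\infty$-cosmos the equivalences are not independent data but are \emph{defined representably} from the enrichment, so that $w$ is an equivalence in $\eK_{/A}$ if and only if $\Fun_{\eK_{/A}}((G,r),w)$ is an equivalence of quasi-categories for every object $(G,r)$ of the slice. Consequently the third bullet of the statement---that these representably-defined equivalences, and hence the trivial fibrations, coincide with the maps whose underlying map is an equivalence in $\eK$---is a theorem about the structure you have built, not part of its definition, and it is the one point of the proposition requiring real homotopical input beyond formal limit manipulations. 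For the direction ``underlying equivalence implies slice equivalence'' one can apply Ken Brown's lemma to the fiber functor given by pullback along the vertex $r$, using that $\Fun_{\eK}(G,w)$ is an equivalence between isofibrations over $\Fun_{\eK}(G,A)$; alternatively, factor $w$ through the slice cotensor of its codomain by $\iso$ and use that maps over $A$ whose underlying map is a trivial fibration are slice equivalences. For the converse one uses the $2$-categorical characterisation of equivalences (Proposition \refIV{prop:equiv.are.weak.equiv}, recalled in \S\ref{ssec:cosmoi-background}): a representable slice equivalence admits a fibered inverse and fibered invertibility homotopies, whose images under the forgetful functor exhibit the underlying map as an equivalence in $\ho_*\eK$. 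That some such argument is genuinely needed is illustrated in $\qCat$ by the inclusion $\boundary\Del^1 \inc \Del^1$ viewed as a map over $\Del^1$: it is a map between isofibrations over $\Del^1$ inducing equivalences on all fibers over vertices, yet it is neither an underlying equivalence nor a slice equivalence, so naive fiberwise reasoning about ``the fibrant-objects structure'' does not settle how the two notions of equivalence are related.
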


We may assemble the sliced $\infty$-cosmoi into a single $\infty$-cosmos $\eK^{\cattwo}$:

\begin{prop}[$\infty$-cosmoi of isofibrations]\label{prop:isofib-cosmoi} For any $\infty$-cosmos $\eK$, there is an $\infty$-cosmos $\eK$ which has:
  \begin{itemize}
  \item objects all isofibrations $p \colon E \tfib A$ in $\eK$;
  \item functor space from $p \colon E \tfib A$ to $q \colon F \tfib B$ defined by taking the pullback
    \[
      \xymatrix@R=1.5em@C=4em{
        {\Fun_{\eK^{\cattwo}}(p,q)}\pbexcursion\ar[r]\ar@{->>}[d] &
        {\Fun_{\eK}(E,F)}\ar@{->>}[d]^{\Fun_{\eK}(E,q)} \\
        {\Fun_{\eK}(A,B)}\ar[r]_-{\Fun_{\eK}(p,B)} & {\Fun_{\eK}(E,B)}
      }
    \]
    in simplicial sets so, in particular, the $0$-arrows from $p$ to $q$ are commutative squares
    \begin{equation}\label{eq:K-cattwo-squares}
      \xymatrix@=2em{
        {E}\ar@{->>}[d]_{p}\ar[r]^{g} & {F}\ar@{->>}[d]^{q} \\
        {A}\ar[r]_{f} & {B}
      }
    \end{equation}
    in $\eK$;
  \item equivalences those squares~\eqref{eq:K-cattwo-squares} whose components $f$ and $g$ are equivalences in $\eK$ and isofibrations (resp.\ trivial fibrations) those squares for which the map $f$ and the induced map $E\dashrightarrow A\times_B F$ (and thus also $g$) are isofibrations (resp.\ trivial fibrations) in $\eK$.
  \end{itemize}
  The simplicial limits of \ref{defn:cosmos}\ref{defn:cosmos:a} are defined object-wise in $\eK$, or in other words are jointly created by the domain and codomain projections $\dom,\cod\colon\eK^{\cattwo}\to\eK$.
\end{prop}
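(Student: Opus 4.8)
The plan is to verify the two $\infty$-cosmos axioms of Definition~\ref{defn:cosmos} directly for $\eK^{\cattwo}$, viewing it as the full simplicial subcategory of the category of arrows of $\eK$ spanned by the isofibrations; all of the genuine content is concentrated in the behaviour of the class of isofibrations of $\eK^{\cattwo}$ under the Leibniz cotensor of axiom~\ref{defn:cosmos:b}. To begin, the function complexes are quasi-categories: axiom~\ref{defn:cosmos:b} for $\eK$ supplies that the map $\Fun_{\eK}(E,q)$ in the defining pullback is an isofibration of quasi-categories, so its pullback $\Fun_{\eK^{\cattwo}}(p,q) \to \Fun_{\eK}(A,B)$ along $\Fun_{\eK}(p,B)$ is again such an isofibration, and in particular $\Fun_{\eK^{\cattwo}}(p,q)$ is a quasi-category since these are closed under pullback along isofibrations.

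For the completeness axiom~\ref{defn:cosmos:a} I would exploit that the cosmological limits are to be jointly created by the projections $\dom,\cod\colon\eK^{\cattwo}\to\eK$, i.e.\ formed object-wise. The terminal object is $\id_1\colon 1 \tfib 1$; small products, countable inverse limits of towers of isofibrations, cotensors $p^U$, and pullbacks are each computed by applying the corresponding $\eK$-limit to domains and to codomains. The only thing to check is that each such object-wise limit of isofibrations is again an isofibration of $\eK$ (so that it lands among the objects of $\eK^{\cattwo}$), which is immediate from the closure properties recorded in axiom~\ref{defn:cosmos:b}; for the cotensor one notes that $p^U$ is the Leibniz cotensor $(\emptyset \inc U)\leib\pwr p$ and so is an isofibration.

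The heart of the argument is axiom~\ref{defn:cosmos:b}. Here a square $\phi$ with codomain component $f\colon A \to B$ and domain component $g \colon E \to F$ is an isofibration of $\eK^{\cattwo}$ precisely when $f$ and the gap map $E \to A \pbtimes{B} F$ are isofibrations of $\eK$ --- the Reedy isofibration condition for the arrow shape. That this class contains the isomorphisms and the maps $!\colon p \tfib \id_1$ is direct, the gap map in the latter case being $p$ itself. I would then verify each remaining closure property by the same device: express the codomain component and the gap map of the constructed square in terms of the corresponding operation applied to the codomain components and gap maps of the input squares, and conclude via the matching closure property in $\eK$. For stability under pullback and countable inverse limits this is routine, since the gap map of a pullback (resp.\ inverse limit) of squares is the pullback (resp.\ inverse limit) of the gap maps. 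The delicate case is the Leibniz cotensor: for a monomorphism $i\colon U \inc V$, the codomain component of $i\leib\pwr\phi$ is $i\leib\pwr f$, while its gap map must be identified with the Leibniz cotensor of $i$ against the gap map of $\phi$; both are then isofibrations of $\eK$ by axiom~\ref{defn:cosmos:b}. The $\Fun$-condition of~\ref{defn:cosmos:b} is inherited from $\eK$ through the defining pullback exactly as in the first paragraph, the relevant gap-map manipulation now taking place internally to $\qCat$.

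The main obstacle I anticipate is precisely this last gap-map bookkeeping: one must establish the two-variable interchange identity expressing the gap map of a Leibniz cotensor as a Leibniz cotensor of the gap map, which rests on the associativity and cancellation calculus of Leibniz constructions. Granting these identities, the verification closes, and it remains only the cosmetic tasks of confirming that the representably-defined equivalences of $\eK^{\cattwo}$ coincide with the squares whose components $f,g$ are equivalences of $\eK$ --- which holds because $\Fun_{\eK^{\cattwo}}(X,-)$ is assembled from copies of $\Fun_{\eK}(X,-)$ by pullback, so equivalences are detected component-wise --- and that the trivial fibrations are then exactly the squares described in the statement.
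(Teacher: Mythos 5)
Your proposal is correct in substance, but it takes a genuinely different route from the paper's. You carry out the direct Reedy-theoretic verification: you treat the stated isofibrations of $\eK^{\cattwo}$ as the Reedy isofibrations for the inverse category $\cattwo$ and check each clause of \ref{defn:cosmos}\ref{defn:cosmos:b} by gap-map bookkeeping, with the two-variable Leibniz interchange identity (the gap map of a Leibniz cotensor is a Leibniz cotensor of the gap map) carrying the real weight. The paper invokes this Reedy picture only as motivation for experts and instead argues via fibred category theory: the codomain projection $\cod\colon\eK^{\cattwo}\to\eK$ is a Grothendieck fibration whose cartesian arrows are pullback squares, every square factors as a vertical arrow followed by a cartesian one, and the stated classes are characterised as ``codomain component of that kind in $\eK$, vertical factor of that kind in the sliced cosmos $\eK_{/A}$'' of Proposition \ref{prop:sliced-cosmoi}. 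Closure properties then follow because pullback functors between slices preserve isofibrations, and the limit verification is delegated to the abstract Lemma \ref{lem:fibration.limit.fact} on limits descending to fibres, executed in Proposition \ref{prop:K^2-cosmos-limits}. What each approach buys: yours is elementary and self-contained but pays in Leibniz calculus; the paper's avoids that bookkeeping by reusing the sliced cosmoi, and its Proposition \ref{prop:K^2-cosmos-limits} is then reused verbatim in Proposition \ref{prop:K^2_eq-cosmos} and in the proof of Proposition \ref{prop:flexible-weights-are-htpical}.

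Two points in your sketch need more care than the word ``immediate'' suggests, though in both cases the needed ingredients are already present elsewhere in your outline. First, the object-wise pullback of isofibrations is not an isofibration by any single closure property of \ref{defn:cosmos}\ref{defn:cosmos:b}: for a cospan $\phi\to\chi\fibt\psi$ in $\eK^{\cattwo}$, the underlying arrow of the limit object factors as the gap map of the projection (which is a pullback of the gap map of $\psi\tfib\chi$) followed by a pullback of the underlying isofibration of $\phi$, and one then uses that isofibrations form a subcategory, hence are closed under composition; your later gap-map paragraph supplies exactly these pieces, so the issue is organisational rather than mathematical. Second, identifying the representably-defined equivalences of $\eK^{\cattwo}$ with the componentwise ones requires both directions: your pullback-gluing argument shows componentwise equivalences are representable (this uses that one leg of each defining pullback is an isofibration, since equivalences are not stable under arbitrary pullback), while the converse follows, for instance, because equivalences in any $\infty$-cosmos admit the $\iso$-cotensor characterisation and cotensors in $\eK^{\cattwo}$ are computed componentwise, so that $\dom$ and $\cod$ preserve equivalences.
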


If the reader is comfortable thinking of $\infty$-cosmoi as categories of fibrant objects arising from model categories with the form described in Remark \ref{rmk:cosmoi-from-model-cats} and is sufficiently well-acquainted with the model category literature, then this result is more or less obvious. Note that the definitions of equivalences, isofibrations, and trivial fibrations in $\eK^\cattwo$ coincide with the Reedy weak equivalences, Reedy fibrations, and Reedy trivial fibrations when $\cattwo$ is considered as an inverse category. Such readers are encouraged to skip to Proposition \ref{prop:K^2_eq-cosmos}. For those for whom this sort of abstract homotopy theory is less familiar, we give the following justification.

\begin{proof}
As the proof will reveal, it is judicious to separate the simplicially enriched aspects of the axioms \ref{defn:cosmos}\ref{defn:cosmos:a}-\ref{defn:cosmos:b} from the unenriched aspects. To start, observe that the cotensor axioms are easy. If $p\colon E\tfib A$ is an object of $\eK^\cattwo$, that is to say an isofibration in $\eK$, then the map $p^X\colon E^X\tfib A^X$ induced between cotensors by a simplicial set $X$ in $\eK$ is an isofibration, by axiom~\ref{defn:cosmos}\ref{defn:cosmos:b}, and is easily seen to be the required cotensor in $\eK^\cattwo$.

The other limit types are a matter of unenriched category theory, so for the remainder of this proof we identify $\eK$ with its underlying category.  We shall temporarily adopt the notation $\bar\eK^{\cattwo}$ for the category whose objects are all arrows in $\eK$ and whose arrows are commutative squares. 

The pair of projections $\dom,\cod\colon\bar\eK^\cattwo\to \eK$ jointly create limits  simply because $\bar\eK^\cattwo$ is  a category of functors. In particular, if $F\colon \eD\to \eK^\cattwo$ is a diagram of one of the conical cosmological limit types discussed in \ref{defn:cosmos}\ref{defn:cosmos:a}---a pullback, product or countable tower in which certain maps are asked to be isofibrations in the sense described in the statement---then the projected diagrams $\dom F,\cod F\colon\eD\to\eK$ are also of that kind in the $\infty$-cosmos $\eK$, so they admit limits which extend in the usual way to give a limit of the diagram $F$ in the functor category $\bar\eK^\cattwo$.

  Our task now is twofold, first we must demonstrate that the limit we have constructed in $\bar\eK^\cattwo$ is an isofibration, which suffices to show that it gives a limit for the diagram $F$ in the full subcategory $\eK^\cattwo$. Then we must also show that the maps in its limit cone that are required to be isofibrations by \ref{defn:cosmos}\ref{defn:cosmos:b} are indeed isofibrations in $\eK^\cattwo$ of the kind described in the statement. We do this using a modicum of  fibred category theory.

  The codomain projection $\cod\colon \eK^{\cattwo}\to\eK$ is a cartesian fibration of categories; its cartesian lifts are constructed by taking pullbacks of isofibrations in $\eK$. Note, however, that the expanded codomain functor $\cod\colon\bar\eK^\cattwo \to\eK$ is not itself a cartesian fibration, because $\eK$ does not admit all pullbacks; but the fact that the cartesian maps of $\cod\colon\eK^\cattwo\to\eK$ are given as pullbacks in $\eK$ can equally well be stated as posting that the full inclusion $\eK^\cattwo\subset\bar\eK^\cattwo$ carries them to cartesian maps for $\cod\colon\bar\eK^\cattwo\to\eK$.

  We may now rephrase the definition of the equivalences, isofibrations, and trivial fibrations of $\eK^\cattwo$ in this language. Specifically we know that the total category of any cartesian fibration admits a factorisation of its arrows into a composite of an arrow in a fibre (sometimes called a \emph{vertical arrow}) and a cartesian arrow (sometimes called a \emph{horizontal arrow}) which is unique up to unique vertical isomorphism. In this case, given an arrow in the total space of $\cod\colon\eK^\cattwo\to\eK$ as depicted in~\eqref{eq:K-cattwo-squares} its horizontal factor is simply the universal square associated with the pullback $A\times_B F$ and its vertical factor is the induced comparison map $E\dashrightarrow A\times_BF$ in the slice $\eK_{/A}$, this being the fibre of $\cod\colon\eK^\cattwo\to\eK$ over $A$. It follows that such an arrow is an isofibration (resp.\ trivial fibration, equivalence) in $\eK^{\cattwo}$ iff its codomain component $\cod(g,f)=f$ is an isofibration (resp.\ trivial fibration, equivalence) in $\eK$ and its vertical factor is an isofibration (resp.\ trivial fibration, equivalence) in the sliced cosmos $\eK_{/A}$.

  It is clear from the $\infty$-cosmos axioms that for any arrow $f\colon A\to B$ in $\eK$ the pullback functor $f^*\colon\eK_{/B}\to\eK_{/A}$ preserves isofibrations and trivial fibrations. This result and some basic facts relating horizontal-vertical factorisations of maps to those of their composites suffices to prove that the class of isofibrations in $\eK^\cattwo$ is closed under composition and is stable under pullback along all arrows.
  
  It remains to verify that the appropriate components of the limits constructed above in $\bar\eK^\cattwo$ are isofibrations and define limits in $\eK^\cattwo$. We do this in Proposition \ref{prop:K^2-cosmos-limits} by applying an abstract result from fibred category theory recalled in Lemma \ref{lem:fibration.limit.fact}. This will complete the proof.
\end{proof}

\begin{lem}\label{lem:fibration.limit.fact}
  Suppose that $P\colon \eE\to\eB$ is a functor of categories and that $F\colon \eD\to\eE$ is a diagram which admits a limit $E\in\eE$ displayed by a limit cone $\pi\colon \Delta_E\To F$. Furthermore assume that this limit is preserved by the functor $P$ and that the limit cone $P\pi\colon \Delta_{PE}\To PF$ admits a $P$-cartesian lift $\chi\colon F'\To F$: a natural transformation with $P\chi = P\pi$ whose components are $P$-cartesian arrows. Then we may factor the cone $\pi$ through the $P$-cartesian lift $\chi$ to give a cone $\pi'\colon \Delta_{E}\To F'$ in the fibre $\eE_{PE}$ of $P$ over $PE$, which displays $E$ as a limit of the diagram $F'$ in the fibre $\eE_{PE}$.
\end{lem}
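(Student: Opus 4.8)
The plan is to construct $\pi'$ using the universal property of the $P$-cartesian arrows $\chi_d$, and then to check its universal property by playing it off against the limit cone $\pi$ together with its preserved image $P\pi$. First I would record the observation that makes everything typecheck: since $P\chi=P\pi$ is an equality of natural transformations into $PF$, their common domain must agree, so $PF'=\Delta_{PE}$. Hence $F'$ carries every object of $\eD$ into the fibre $\eE_{PE}$ and every morphism to a vertical arrow, so $F'$ is genuinely a diagram in $\eE_{PE}$, and $E$ (with $PE=PE$) is an object of that fibre. For each $d\in\eD$ the arrow $\pi_d\colon E\to Fd$ satisfies $P\pi_d=P\chi_d$, so $P$-cartesianness of $\chi_d$ yields a unique vertical arrow $\pi'_d\colon E\to F'd$ with $\chi_d\pi'_d=\pi_d$. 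I would then check that $\pi'$ is a cone in the fibre: for a morphism $\phi\colon d\to d'$, both $F'(\phi)\pi'_d$ and $\pi'_{d'}$ are vertical and compose with $\chi_{d'}$ to $\pi_{d'}$ (using naturality of $\chi$ and the cone condition on $\pi$), so the uniqueness clause of the cartesian property identifies them.

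Next I would establish the universal property against an arbitrary competitor. Given a cone $\lambda\colon\Delta_X\To F'$ in $\eE_{PE}$ — so $PX=PE$ and each $\lambda_d$ is vertical — postcomposing with $\chi$ produces a cone $\mu\colon\Delta_X\To F$ in $\eE$ with $\mu_d=\chi_d\lambda_d$, naturality following from that of $\chi$ and $\lambda$. Since $\pi$ is a limit cone in $\eE$, it supplies a unique $u\colon X\to E$ with $\pi_d u=\mu_d$ for every $d$.

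The hard part — and the only place the preservation hypothesis is really used — is showing that this $u$ is vertical. Applying $P$ to $\pi_d u=\mu_d$ and using $P\lambda_d=\id_{PE}$ gives $P\pi_d\circ Pu=P\pi_d$ for all $d$; since $P$ preserves the limit, $P\pi$ is a limit cone, so $\id_{PE}$ is the unique endomorphism $v$ of its apex $PE$ with $P\pi_d v=P\pi_d$, and as $Pu$ enjoys this property we conclude $Pu=\id_{PE}$. To finish, I would verify that $u$ factors $\lambda$ through $\pi'$: for each $d$ the arrows $\pi'_d u$ and $\lambda_d$ are both vertical and satisfy $\chi_d(\pi'_d u)=\pi_d u=\mu_d=\chi_d\lambda_d$, so cartesian uniqueness forces $\pi'_d u=\lambda_d$. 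Uniqueness is inherited from $\pi$: any vertical $u'$ with $\pi'_d u'=\lambda_d$ satisfies $\pi_d u'=\chi_d\lambda_d=\mu_d$, whence $u'=u$. This exhibits $\pi'$ as a limit cone for $F'$ in $\eE_{PE}$, as required.
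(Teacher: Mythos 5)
Your proof is correct and follows essentially the same route as the paper's: compose a competitor cone in the fibre with $\chi$, invoke the universal property of the limit cone $\pi$ in $\eE$, and then apply $P$ and use the uniqueness clause of the preserved limit $P\pi$ to conclude that the induced comparison map is vertical. You merely fill in details the paper leaves implicit, namely the explicit construction of $\pi'$ and the cone check via cartesian uniqueness, as well as the verification that the induced vertical map actually factors the competitor cone through $\pi'$.
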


\begin{proof}
  Given a cone $\alpha\colon \Delta_{E'}\To F'$ in $\eE_{PE}$ we have that the universal property of the limiting cone $\pi\colon\Delta_E\To F$ applied to the composite cone $\chi\cdot\alpha\colon\Delta_{E'} \To F$ induces a unique arrow $f\colon E'\to E$ for which $\chi\cdot\alpha = \pi\cdot\Delta_f $. Now, on applying $P$ to that defining equation for $f$, we obtain the equation $P\pi\cdot\Delta_{Pf} = P\chi\cdot\Delta_{Pf} = P(\chi\cdot\Delta_f) = P(\chi\cdot\alpha) = P\chi\cdot P\alpha = P\chi = P\pi$ in which the penultimate step follows because $\alpha$ is a cone in the fibre $\eE_{PE}$; so the uniqueness portion of the universal property of the limit cone $P\pi$ implies that $Pf = \id_{PE}$ and thus that $f$ is an arrow of the fibre $\eE_{PE}$. It is clear now that $f$ is the unique such arrow, thereby completing our verification that the cone $\pi'\colon \Delta_{E}\To F'$ has the required universal property in the fibre $\eE_{PE}$. 
\end{proof}

We'll apply this result to the codomain-projection functor $\cod \colon \eK^\cattwo \to \eK$. Since $\eK$ has pullbacks of isofibrations, this functor is a Grothendieck fibration, with $\cod$-cartesian arrows in $\eK^\cattwo$ given by pullback squares.

\begin{prop}\label{prop:K^2-cosmos-limits}
  Suppose that $F\colon\eD\to\eK^\cattwo$ is a diagram of one of the conical cosmological limit types described in \ref{defn:cosmos}\ref{defn:cosmos:a} with respect to the class of isofibrations specified in the statement of Proposition~\ref{prop:isofib-cosmoi}. Then $\eK^\cattwo$ is closed in the category of all arrows in $\eK$ under the limit for this diagram and the various components of the limiting cone that are required to be isofibrations under \ref{defn:cosmos}\ref{defn:cosmos:b} are members of the specified class of isofibrations in $\eK^\cattwo$.
\end{prop}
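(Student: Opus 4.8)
The plan is to feed the limit already constructed in $\bar\eK^\cattwo$ into Lemma \ref{lem:fibration.limit.fact}, applied to the codomain projection $\cod \colon \bar\eK^\cattwo \to \eK$. Recall from the discussion preceding the statement that $\cod \colon \eK^\cattwo \to \eK$ is a Grothendieck fibration whose cartesian arrows are the pullback squares, and that the inclusion $\eK^\cattwo \subset \bar\eK^\cattwo$ carries these to $\cod$-cartesian arrows in $\bar\eK^\cattwo$. Let $L \in \bar\eK^\cattwo$ denote the limit of $F$ constructed in the proof of Proposition \ref{prop:isofib-cosmoi}, with cone $\pi \colon \Delta_L \To F$; since $\cod$ preserves this limit, its image $B := \cod L$ is the limit of $\cod F$ in $\eK$, with legs $\lambda_d \colon B \to \cod F(d)$.

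First I would transport $F$ into the fibre of $\cod$ over $B$. Each isofibration $F(d)$ pulls back along $\lambda_d$ to an isofibration $F'(d) := \lambda_d^* F(d) \tfib B$, and these pullback squares---being $\cod$-cartesian---assemble into a $\cod$-cartesian lift $\chi \colon F' \To F$ of the cone $\cod\pi$, with $F'$ a diagram valued in the fibre $\eK_{/B}$. Lemma \ref{lem:fibration.limit.fact} then factors $\pi$ through $\chi$ and exhibits $L$ as the limit of $F'$ in that fibre. The key recognition step is that $F'$ is a diagram of the very same cosmological shape, now living in the sliced $\infty$-cosmos $\eK_{/B}$ of Proposition \ref{prop:sliced-cosmoi}: its designated maps are obtained by applying the pullback functors $\lambda_d^*$ to the vertical factors of the designated (isofibration) maps of $F$, and since these pullback functors preserve isofibrations---as observed in the proof of Proposition \ref{prop:isofib-cosmoi}---the images are again isofibrations in $\eK_{/B}$.

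Since $\eK_{/B}$ is an $\infty$-cosmos, it is complete for the cosmological limit types, so $F'$ admits a limit there whose cone legs enjoy the isofibration properties demanded by \ref{defn:cosmos}\ref{defn:cosmos:b}. As the simplicial limits of $\eK_{/B}$ are computed exactly as sliced simplicial limits, this limit coincides with $L$; in particular $L$ is an isofibration over $B$, proving that $\eK^\cattwo$ is closed under the limit. For the remaining claim I would invoke the characterisation of isofibrations in $\eK^\cattwo$ recorded before the statement: a map is an isofibration exactly when its $\cod$-component is an isofibration in $\eK$ and its vertical factor is an isofibration in the relevant slice. For a cone leg $\pi_d$ that is required to be an isofibration, its vertical factor is the corresponding leg of the limit cone in $\eK_{/B}$, hence an isofibration by the previous paragraph, while its $\cod$-component is $\lambda_d$, which is an isofibration in precisely the cases---$\cod F$ a tower of isofibrations, or a pullback of an isofibration---in which \ref{defn:cosmos}\ref{defn:cosmos:b} asks the $\eK^\cattwo$-leg to be one.

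I expect the principal obstacle to be the verification underlying the recognition step: confirming that the isofibration hypotheses on the designated maps of $F$ survive transport into the fibre over $B$, with their vertical factors correctly identified, so that $F'$ is genuinely a cosmological-shape diagram in $\eK_{/B}$. A secondary point needing care is the matching of the fibrewise limit produced by Lemma \ref{lem:fibration.limit.fact} inside the large fibre of $\cod \colon \bar\eK^\cattwo \to \eK$ with the cosmological limit computed inside the sub-$\infty$-cosmos $\eK_{/B}$.
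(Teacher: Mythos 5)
Your proposal is correct and follows essentially the same route as the paper's proof: form the limit in $\bar\eK^\cattwo$, take the $\cod$-cartesian lift $\chi\colon F'\To F$ of the projected cone, apply Lemma \ref{lem:fibration.limit.fact} to exhibit the limit in the fibre, recognise $F'$ as a cosmological-type diagram in the sliced $\infty$-cosmos $\eK_{/B}$ via the isofibration-preserving pullback functors, and conclude using closure of $\eK_{/B}$ in $\bar\eK_{/B}$ together with the codomain-component/vertical-factor characterisation of isofibrations in $\eK^\cattwo$. The two points you flag as needing care (that $F'$ really is a cosmological diagram in $\eK_{/B}$, and that the fibrewise limit agrees with the cosmological one) are exactly the verifications the paper carries out.
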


\begin{proof}
Recall that the limit of the given diagram is created in the category $\bar\eK^\cattwo$ of all arrows in $\eK$ by the pair of projections $\dom,\cod\colon\bar\eK^\cattwo\to \eK$. Suppose that this limit $\lim(F)$ is displayed by a limit cone $\pi\colon \Delta_{\lim(F)}\To F$. Let $B\defeq\cod(\lim(F))$ and consider the projected limit cone $\cod\pi\colon\Delta_B\To \cod F$ which admits a cartesian lift $\chi\colon F'\To F$ along the projection $\cod\colon\eK^\cattwo\to\eK$. Since $\cod$-cartesian arrows are pullback squares, the diagram $F'$ is formed by pulling back the arrows in the diagram $F$ along the codomain components of the limit cone. We may factor $\pi$ through $\chi$ to give a cone $\pi'\colon\Delta_{\lim(F)}\To F'$ in the fibre $\bar\eK_{/B}$, and on application of Lemma \ref{lem:fibration.limit.fact}, we see that this is a limit cone in that extended slice.

  Observe now that if $k \colon d' \to d$ is an arrow in $\eD$ then the arrow $F'k$ in the fibre $\eK_{/B}$ is otherwise obtained as a pullback of the vertical factor of $Fk$, which lies in the slice over $\cod(Fd)$, along the projection $\pi_d\colon B\to \cod(Fd)$. These pullback functors between slice $\infty$-cosmoi preserve isofibrations, so it follows that if $Fk$ is an isofibration of the form described in Proposition~\ref{prop:isofib-cosmoi} then its vertical factor is an isofibration in the slice $\eK_{/\cod(Fd)}$ as is its pullback $F'k$ in $\eK_{/B}$. Consequently $F'$ is a cosmological limit type diagram in the sliced $\infty$-cosmos $\eK_{/B}$ because $F$ is a diagram of that kind relative to the class of isofibrations given in the statement Proposition \ref{prop:isofib-cosmoi}. We know, however, that the slice $\eK_{/B}$ is closed in the extended slice $\bar\eK_{/B}$ under the cosmological limits, so it follows that $\lim(F)$ is an object of $\eK_{/B}$, and thus of $\eK^\cattwo$, and that those components of the limit cone $\pi'\colon\Delta_{\lim(F)}\To F'$ that are expected to be isofibrations by axiom \ref{defn:cosmos}\ref{defn:cosmos:b} are such in the slice $\eK_{/B}$.

  Consequently, the limit cone $\pi\colon \Delta_{\lim(F)}\To F$ restricts to the subcategory $\eK^\cattwo\subset \bar\eK^\cattwo$. Suppose further that $d$ is an object of $\eD$ for which axiom \ref{defn:cosmos}\ref{defn:cosmos:b} asks that the corresponding component $\pi_d$ of this limiting cone is an isofibration. That projection factors as a composite $\pi_d=\chi_d\pi'_d$ in which $\pi'_d$ is an isofibration in the slice $\eK_{/B}$, by the comment at then end of the last paragraph, and is thus an isofibration in $\eK^\cattwo$. Furthermore, by definition $\chi_d$ is a $\cod$-cartesian lift of the limit projection $\cod \pi_c\colon B\to \cod (Fd)$ which is an isofibration in the $\infty$-cosmos $\eK$; however, any such $\cod$-cartesian lift of an isofibration in $\eK$ is an isofibration in $\eK^\cattwo$. Consequently their composite $\pi_d=\chi_d\pi'_d$ is an isofibration in $\eK^\cattwo$ as required.
\end{proof}

\begin{prop}[the $\infty$-cosmos of trivial fibrations]\label{prop:K^2_eq-cosmos} Let $\eK$ be an $\infty$-cosmos.
\begin{enumerate}[label=(\roman*)]
\item For any $\infty$-category $A$ in $\eK$, the full simplicial subcategory $\eK^{\simeq}_{/A}\inc\eK_{/A}$ spanned by the trivial fibrations in $\eK$ is an $\infty$-cosmos, with limits, isofibrations, equivalences, and trivial fibrations  created by the inclusion, and with all functors in $\eK^{\simeq}_{/A}$ equivalences.
\item  The full simplicial subcategory  $\eK^\cattwo_\simeq \inc\eK^\cattwo$ spanned by the trivial fibrations defines an $\infty$-cosmos, with limits, isofibrations, equivalences, and trivial fibrations created by the inclusion.
\end{enumerate}
\end{prop}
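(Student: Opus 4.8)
The plan is to deduce both parts from a single package of closure properties in the ambient $\infty$-cosmos: the class of trivial fibrations of $\eK$ is closed under the cosmological limit constructions and under the isofibration-forming operations of Definition \ref{defn:cosmos}\ref{defn:cosmos:b} --- arbitrary products, pullbacks, countable inverse limits, composition, and Leibniz cotensors with monomorphisms. This is the standard package enjoyed by the acyclic fibrations in any category of fibrant objects; at the simplicial level it follows from the corresponding closure properties of trivial fibrations of quasi-categories together with the facts that $\Fun_{\eK}(X,-)$ preserves these limits and carries trivial fibrations to trivial fibrations. Because $\eK^{\simeq}_{/A}\inc\eK_{/A}$ and $\eK^\cattwo_\simeq\inc\eK^\cattwo$ are \emph{full} simplicial subcategories, their function complexes are quasi-categories and the purely enriched content of the axioms --- cotensors, the isofibration $\Fun(X,p)$, and the Leibniz cotensor condition --- is inherited from the ambient $\infty$-cosmos as soon as one checks that the relevant limit objects remain trivial fibrations. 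So in both cases the real work is to verify closure under the cosmological limits.

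For (i) I would first record the key structural observation: if $p\colon E\trvfib A$ and $q\colon F\trvfib A$ are trivial fibrations and $g\colon E\to F$ satisfies $qg=p$, then $g$ is an equivalence by the 2-of-3 property, so every functor of $\eK^{\simeq}_{/A}$ is an equivalence and, being an isofibration exactly when it is a trivial fibration of $\eK$, the isofibrations of $\eK^{\simeq}_{/A}$ coincide with the trivial fibrations created by the inclusion. It then remains to see that a cosmological limit in $\eK_{/A}$ of a diagram of trivial fibrations is again a trivial fibration over $A$. The terminal object $\id_A$ is an isomorphism; each remaining limit type, computed in the slice, presents the structure map to $A$ as a composite and/or pullback of products, cotensors $p^U$, and countable towers of the given trivial fibrations --- for instance the product is a pullback of $\prod_i p_i$ along the diagonal into $\prod_i A$, and the cotensor is a pullback of $p^U$ along the diagonal into $A^U$ --- and each of these is a trivial fibration by the closure properties above. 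Hence the inclusion creates cosmological limits and the isofibration axioms transfer directly from $\eK_{/A}$.

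For (ii) I would rerun the fibred-category argument already deployed for $\eK^\cattwo$ in Propositions \ref{prop:isofib-cosmoi} and \ref{prop:K^2-cosmos-limits}, now with trivial fibrations in place of isofibrations. The crucial point is that the codomain projection $\cod\colon\eK^\cattwo_\simeq\to\eK$ is again a Grothendieck fibration --- its cartesian lifts are pullback squares, and pullbacks of trivial fibrations are trivial fibrations --- whose fibre over an object $B$ is precisely the $\infty$-cosmos $\eK^{\simeq}_{/B}$ established in part (i). Given a diagram $F\colon\eD\to\eK^\cattwo_\simeq$ of one of the conical cosmological limit types, I would compute its limit in the category $\bar\eK^\cattwo$ of all arrows (created objectwise by $\dom,\cod$), factor the limit cone $\pi$ through a $\cod$-cartesian lift $\chi\colon F'\To F$ of the projected cone to obtain a cone $\pi'$ in the fibre, and invoke Lemma \ref{lem:fibration.limit.fact}. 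Since the legs of $\chi$ pull the trivial fibrations $Fd$ back along the codomain projections, $F'$ is a cosmological-limit-type diagram valued in $\eK^{\simeq}_{/B}$, which by part (i) is complete; thus $\lim F$ is a trivial fibration over $B$, hence an object of $\eK^\cattwo_\simeq$, and --- exactly as in Proposition \ref{prop:K^2-cosmos-limits}, via the factorisation $\pi_d=\chi_d\pi'_d$ --- those legs of the limit cone required to be isofibrations are so in $\eK^\cattwo_\simeq$. Cotensors are handled separately: the cotensor of $p\colon E\trvfib A$ by $U$ is $p^U\colon E^U\to A^U$, which is a trivial fibration because $p$ is.

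The arguments are essentially formal, and the one place demanding care is part (ii), where one must track the horizontal--vertical factorisation of the limit cone to certify simultaneously that the limit lands among the trivial fibrations and that the correct components are isofibrations of the specified kind. Since that bookkeeping is precisely what Lemma \ref{lem:fibration.limit.fact} discharges, the chief obstacle is really just the recognition that part (i) supplies the fibrewise $\infty$-cosmoi $\eK^{\simeq}_{/B}$ needed to reuse the earlier fibred argument verbatim.
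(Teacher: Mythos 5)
Your proposal is correct and takes essentially the same route as the paper: part (i) by observing that the full subcategory of trivial fibrations is closed in the slice $\eK_{/A}$ under the cosmological limit types (with all its functors equivalences by 2-of-3), and part (ii) by re-running the fibred-category argument of Proposition \ref{prop:K^2-cosmos-limits} --- cartesian lifts along $\cod$, Lemma \ref{lem:fibration.limit.fact}, and the factorisation $\pi_d=\chi_d\pi'_d$ --- with part (i) supplying the fibrewise closure. The paper's proof is simply a terser statement of this exact argument, leaving to the reader the closure checks that you spell out.
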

\begin{proof}
The full simplicial subcategory $\eK^{\simeq}_{/A}$ of the slice $\eK_{/A}$ is easily seen to be closed in $\eK_{/A}$ under the limit types named in the $\infty$-cosmos axioms. Consequently $\eK^{\simeq}_{/A}$ inherits an $\infty$-cosmos structure from $\eK_{/A}$, and in there all functors are equivalences by the 2-of-3 property in $\eK$. Furthermore this result allows us to apply the proof of Proposition~\ref{prop:K^2-cosmos-limits} to show that the full simplicial subcategory $\eK^\cattwo_\simeq$ of $\eK^\cattwo$ spanned by the trivial fibrations is also closed under the limits named in the $\infty$-cosmos axioms and thus inherits an $\infty$-cosmos structure from $\eK^\cattwo$. 
\end{proof}

\begin{defn}\label{defn:functor-inf-cosmoi} A \emph{cosmological functor} is a simplicial functor that preserves the classes of isofibrations and each of the limits specified in Definition~\ref{defn:cosmos}\ref{defn:cosmos:a}.
\end{defn}

\begin{ex}
Cosmological functors include:
\begin{enumerate}[label=(\roman*)]
\item the representable functor $\Fun_{\eK}(X,-) \colon\eK \to \qCat$ for any object $X \in \eK$;
\item as a special case, the \emph{underlying quasi-category functor} $\Fun_{\eK}(1,-) \colon \eK \to \qCat$; 
\item the simplicial cotensor $(-)^U \colon \eK \to \eK$ with any simplicial set $U$; 
\item the pullback functor $f^* \colon \eK_{/B} \to \eK_{/A}$ for any functor $f \colon A \to B \in \eK$; and
\item the domain and codomain projections $\dom,\cod\colon \eK^\cattwo\to\eK$ and $\dom,\cod\colon\eK^\cattwo_\simeq\to\eK$.
\end{enumerate}
among others.
\end{ex}

We refer to the $n$-simplices of the functor space $\Fun_{\eK}(A,B)$ of an $\infty$-cosmos as $n$-\emph{arrows}. Each $\infty$-cosmos has an underlying 1-category whose objects are the $\infty$-categories of that $\infty$-cosmos and whose morphisms, which we call $\infty$-\emph{functors} or more often simply \emph{functors}, are the 0-arrows (the vertices) of the functor spaces. Any $\infty$-cosmos has as quotient \emph{homotopy 2-category} that is built from this same underlying 1-category by adding 2-cells, which are represented by 1-arrows in the corresponding functor space up to a suitable equivalence relation.

\begin{defn}[the homotopy 2-category of an $\infty$-cosmos]\label{defn:hty-2-cat} The \emph{homotopy 2-category} of an $\infty$-cosmos $\eK$ is a strict 2-category $\ho_*\eK$ so that 
\begin{itemize}
\item the objects of $\ho_*\eK$ are the objects of $\eK$, i.e., the $\infty$-categories;
\item the 1-cells $f \colon A \to B$ of $\ho_*\eK$ are the vertices $f \in \Fun_{\eK}(A,B)$ in the functor spaces of $\eK$, i.e., the $\infty$-functors;
\item a 2-cell  $\xymatrix{ A \ar@/^2ex/[r]^f \ar@/_2ex/[r]_g \ar@{}[r]|{\Downarrow\alpha}& B}$ in $\ho_*\eK$ is represented by a 1-arrow $\alpha \colon f \to g \in \Fun_{\eK}(A,B)$, where a parallel pair of 1-arrows in $\Fun_{\eK}(A,B)$ represent the same 2-cell if and only if they bound a 2-arrow whose remaining outer face is degenerate.
\end{itemize}
Put concisely, the homotopy 2-category is the 2-category $\ho_*\eK$ defined by applying the homotopy category functor $\ho \colon \qCat \to \eop{Cat}$, the left adjoint to the nerve embedding $\eop{Cat}\inc \qCat$, to the functor spaces of the $\infty$-cosmos. We write $\hFun_{\eK}(A,B)$ for the hom-category of arrows from $A$ to $B$ in the 2-category $\ho_*\eK$.
\end{defn}

Proposition \refIV{prop:equiv.are.weak.equiv} proves that the equivalences between $\infty$-categories admit another important characterisation: they are precisely the equivalences in the homotopy 2-category of the $\infty$-cosmos. The upshot is that equivalence-invariant 2-categorical constructions are appropriately ``homotopical,'' characterising $\infty$-categories up to equivalence, and that we may use the term ``equivalence'' unambiguously in both the quasi-categorically enriched and 2-categorical contexts.

Similarly, the reason we have chosen the term ``isofibrations'' for the designated class of $\infty$-functors $A \tfib B$ is because these maps define isofibrations in the homotopy 2-category.  An \emph{isofibration} in a 2-category is a 1-cell that has a lifting property for isomorphisms with one chosen endpoint; see \refIV{rec:trivial-fibration} and \refIV{lem:isofib.are.representably.so}.

\begin{defn}[dual $\infty$-cosmoi]\label{defn:dual-cosmoi} For any $\infty$-cosmos $\eK$, write $\eK\co$ for
 the $\infty$-cosmos with the same objects but with the opposite functor spaces
\[ \Fun_{\eK\co}(A,B) \defeq\Fun_{\eK}(A,B)\op\] defined by applying the product preserving dual
  functor $(-)\op\colon \sSet\to \sSet$, which in turn is obtained by precomposing with the functor $(-)^\circ \colon \Del \to \Del$ that reverses the ordering of the elements in each ordinal $[n]$.  The homotopy 2-category of $\eK\co$ is the ``co'' dual of the homotopy 2-category of $\eK$, reversing the 2-cells but not the 1-cells. This explains our notation. 
 \end{defn}

  Any quasi-categorically enriched category $\eK$ also has an opposite category $\eK\op$ with the same objects but with
\[ \Fun_{\eK\op}(A,B) \defeq \Fun_{\eK}(B,A).\] However, if $\eK$ is an $\infty$-cosmos then $\eK\op$ need not be an $\infty$-cosmos; see Observation \refIV{obs:duals-of-cosmoi} however.

\subsection{Groupoidal objects and the \texorpdfstring{$(\infty,1)$}{(infty,1)}-core of an \texorpdfstring{$\infty$}{infty}-cosmos}\label{ssec:core}

An object $E$ in an $\infty$-cosmos $\eK$ is \emph{groupoidal\/} if the following equivalent conditions are satisfied:

\begin{lem}\label{lem:groupoidal-object}
For any object $E$ in an $\infty$-cosmos $\eK$ the following are equivalent:
\begin{enumerate}[label=(\roman*)]
\item\label{itm:groupoidal-object:i} $E$ is a groupoidal object in the homotopy 2-category $\ho_*\eK$, that is,  every 2-cell with codomain $E$ is invertible.
\item\label{itm:groupoidal-object:ii} For each $X \in \eK$, the hom-category $\hFun_{\eK}(X,E)$ is a groupoid. 
\item\label{itm:groupoidal-object:iii}  For each $X \in \eK$, the functor space $\Fun_{\eK}(X,E)$ is a Kan complex.
\item\label{itm:groupoidal-object:iv} The isofibration $E^\iso \tfib E^\cattwo$, induced by the inclusion of the walking arrow into the walking isomorphism $\cattwo\inc\iso$, is a trivial fibration.
\end{enumerate}
\end{lem}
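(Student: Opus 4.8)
The plan is to reduce each of the four conditions to a statement about the quasi-categories $\Fun_{\eK}(X,E)$, exploiting that both the homotopy $2$-category and the (trivial) fibrations of $\eK$ are detected representably, and to prove the three adjacent equivalences (i)$\Leftrightarrow$(ii), (ii)$\Leftrightarrow$(iii), and (iii)$\Leftrightarrow$(iv) in turn. The equivalence (i)$\Leftrightarrow$(ii) is pure unwinding of Definition \ref{defn:hty-2-cat}: a $2$-cell of $\ho_*\eK$ with codomain $E$ is exactly a morphism of some hom-category $\hFun_{\eK}(X,E)$, and it is invertible as a $2$-cell precisely when it is invertible there, so the demand that every $2$-cell with codomain $E$ be invertible says precisely that each $\hFun_{\eK}(X,E)$ is a groupoid.

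For (ii)$\Leftrightarrow$(iii) I would use that the homotopy $2$-category is obtained by applying $\ho\colon\qCat\to\eop{Cat}$ to the functor spaces, so that $\hFun_{\eK}(X,E)=\ho\Fun_{\eK}(X,E)$, together with the standard theorem of Joyal that a quasi-category $Q$ is a Kan complex if and only if its homotopy category $\ho Q$ is a groupoid, equivalently if and only if every $1$-simplex of $Q$ is an isomorphism. Since $\Fun_{\eK}(X,E)$ is a quasi-category by the $\infty$-cosmos axioms, applying this with $Q=\Fun_{\eK}(X,E)$ identifies (iii) with (ii) after quantifying over all $X$.

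The substantive step is (iii)$\Leftrightarrow$(iv). First observe that $E^\iso\to E^\cattwo$ is the Leibniz cotensor $(\cattwo\inc\iso)\leib\pwr(E\tfib 1)$, so it is already an isofibration by \ref{defn:cosmos}\ref{defn:cosmos:b}; hence it is a trivial fibration if and only if it is an equivalence, a condition that by definition may be tested representably. By the defining universal property of the simplicial cotensor, for each $X$ the functor $\Fun_{\eK}(X,-)$ carries $E^\iso\to E^\cattwo$ to the map $Q^\iso\to Q^\cattwo$ induced by $\cattwo\inc\iso$, where $Q\defeq\Fun_{\eK}(X,E)$. It therefore remains to show that, for a quasi-category $Q$, the map $Q^\iso\to Q^\cattwo$ is a trivial fibration exactly when $Q$ is a Kan complex. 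If $Q$ is a Kan complex, then since $\cattwo\inc\iso$ is a monomorphism between weakly contractible simplicial sets, hence a trivial cofibration in the Kan--Quillen model structure, and since the cotensor is a right Quillen bifunctor, the induced map $Q^\iso\to Q^\cattwo$ is a trivial fibration. Conversely, a trivial fibration is surjective on vertices, so every arrow of $Q$, i.e.\ every vertex of $Q^\cattwo$, is the restriction along $\cattwo\inc\iso$ of a map $\iso\to Q$ and is therefore an isomorphism; by Joyal's theorem this forces $Q$ to be a Kan complex.

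I expect the main obstacle to be exactly this last quasi-categorical fact: recognizing that the representable image of $E^\iso\to E^\cattwo$ is the canonical comparison $Q^\iso\to Q^\cattwo$, and that its being a trivial fibration is equivalent to the Kan condition on $Q$ via Joyal's characterization. Everything else is bookkeeping, provided one is careful to exploit that the map in question is known a priori to be an isofibration, so that ``trivial fibration'' and ``equivalence'' are interchangeable and the representable detection of equivalences may be applied directly.
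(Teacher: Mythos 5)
Your proposal is correct and follows essentially the same route as the paper's proof: (i)$\Leftrightarrow$(ii) by unpacking the homotopy 2-category, (ii)$\Leftrightarrow$(iii) by Joyal's theorem, reduction of (iv) to the representable statement that each $\Fun_{\eK}(X,E)^\iso\tfib\Fun_{\eK}(X,E)^\cattwo$ is a trivial fibration, with (iv)$\Rightarrow$(iii) via surjectivity on vertices and (iii)$\Rightarrow$(iv) via $\cattwo\inc\iso$ being a weak homotopy equivalence. Your write-up merely makes explicit some details the paper leaves implicit (the Leibniz cotensor/isofibration observation and the right Quillen bifunctor justification), but the argument is the same.
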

\begin{proof}
Here \ref{itm:groupoidal-object:ii} is an unpacking of \ref{itm:groupoidal-object:i}. The equivalence of \ref{itm:groupoidal-object:ii} and \ref{itm:groupoidal-object:iii} is a well-known result of Joyal \cite[1.4]{Joyal:2002:QuasiCategories}. Condition \ref{itm:groupoidal-object:iv} is equivalent to the assertion that $\Fun_{\eK}(X,E)^\iso \tfib \Fun_{\eK}(X,E)^\cattwo$ is a trivial fibration between quasi-categories for all $X$. If this is a trivial fibration, then surjectivity on vertices implies that every 1-simplex in $\Fun_{\eK}(X,E)$ is an isomorphism, proving \ref{itm:groupoidal-object:iii}. As $\cattwo \inc\iso$ is a weak homotopy equivalence, \ref{itm:groupoidal-object:iii} implies  \ref{itm:groupoidal-object:iv}.
\end{proof}

A common theme is that $\infty$-categorical definitions that admit ``internal'' characterisations in an $\infty$-cosmos are also preserved by cosmological functors. For instance:

\begin{cor}\label{cor:groupoidal-preservation} Cosmological functors preserve groupoidal objects.
\end{cor}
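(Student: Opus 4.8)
The plan is to verify the ``internal'' characterisation of groupoidal objects furnished by Lemma~\ref{lem:groupoidal-object}\ref{itm:groupoidal-object:iv}, since this is the only one of the four equivalent conditions expressed purely in terms of a cotensor and the class of trivial fibrations --- precisely the data a cosmological functor is built to respect. By contrast, the defining condition \ref{itm:groupoidal-object:i} is unsuitable here: although a simplicial functor descends to a 2-functor of homotopy 2-categories, a 2-functor need not preserve the \emph{property} that every incoming 2-cell is invertible, since fresh 2-cells into the image object may appear in the target. Recognising that \ref{itm:groupoidal-object:iv} is the correct criterion to transport is the main conceptual point; the rest is bookkeeping.

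Let $F\colon\eK\to\eL$ be a cosmological functor and suppose $E\in\eK$ is groupoidal. First I would record the one nontrivial input, namely that $F$ preserves trivial fibrations. A trivial fibration is by definition an isofibration that is an equivalence; $F$ preserves isofibrations by Definition~\ref{defn:functor-inf-cosmoi}, so it suffices to see that $F$ preserves equivalences. This follows because the simplicial functor $F$ descends to a 2-functor $\ho_*F\colon\ho_*\eK\to\ho_*\eL$ of homotopy 2-categories, obtained by applying the homotopy category functor $\ho\colon\qCat\to\eop{Cat}$ to the action of $F$ on functor spaces; any 2-functor preserves equivalences, carrying the inverse 1-cell and the witnessing invertible 2-cells of an equivalence to data of the same form, and by Proposition~\refIV{prop:equiv.are.weak.equiv} the equivalences of an $\infty$-cosmos are precisely the equivalences of its homotopy 2-category.

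With this in hand the corollary is immediate. Because cosmological functors preserve cotensors by Definition~\ref{defn:functor-inf-cosmoi}, there are natural identifications $F(E^\iso)\cong(FE)^\iso$ and $F(E^\cattwo)\cong(FE)^\cattwo$, and naturality of the cotensor in the simplicial-set variable identifies the image under $F$ of the map $E^\iso\tfib E^\cattwo$ induced by $\cattwo\inc\iso$ with the corresponding map $(FE)^\iso\to(FE)^\cattwo$ for $FE$. Since $E$ is groupoidal, $E^\iso\tfib E^\cattwo$ is a trivial fibration by Lemma~\ref{lem:groupoidal-object}\ref{itm:groupoidal-object:iv}; applying $F$ and using that it preserves trivial fibrations, $(FE)^\iso\tfib(FE)^\cattwo$ is again a trivial fibration, whence $FE$ is groupoidal by the same criterion.

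The only genuine obstacle is the verification that $F$ preserves trivial fibrations; everything else reduces to the compatibility of $F$ with cotensors. I expect the cleanest route to that fact to be the 2-categorical one sketched above, though one could alternatively cite directly any earlier statement to the effect that cosmological functors preserve equivalences and hence trivial fibrations.
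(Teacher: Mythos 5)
Your proposal is correct and takes essentially the same approach as the paper: both verify the internal characterisation of Lemma~\ref{lem:groupoidal-object}\ref{itm:groupoidal-object:iv}, using that a cosmological functor preserves simplicial cotensors and trivial fibrations. The only difference is that you additionally justify the preservation of trivial fibrations (via preservation of isofibrations plus the homotopy 2-category argument for equivalences), a step the paper's proof takes for granted; your justification is sound.
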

\begin{proof}
A cosmological functor preserves simplicial cotensors and trivial fibrations, so this follows directly from the characterisation of Lemma \ref{lem:groupoidal-object}\ref{itm:groupoidal-object:iv}.
\end{proof}

\begin{prop}[the $\infty$-cosmos of groupoidal objects]\label{prop:gpdal-infty-cosmos}
The full simplicial subcategory $\eK\gr\inc\eK$ of groupoidal objects in an $\infty$-cosmos $\eK$ defines an $\infty$-cosmos  with limits, isofibrations, equivalences, and trivial fibrations  created by the inclusion. Moreover, all of the functor spaces in $\eK\gr$ are Kan complexes.
\end{prop}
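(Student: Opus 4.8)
The plan is to exploit the representable characterisation of groupoidal objects from Lemma~\ref{lem:groupoidal-object}\ref{itm:groupoidal-object:iii}, which says that $E \in \eK$ is groupoidal precisely when $\Fun_{\eK}(X,E)$ is a Kan complex for every $X \in \eK$. Since every representable $\Fun_{\eK}(X,-)\colon \eK \to \qCat$ is a cosmological functor (Definition~\ref{defn:functor-inf-cosmoi}) and hence preserves each of the cosmological limit types, the whole proof reduces to a single assertion: \emph{that the cosmological limit types, computed in $\qCat$, carry Kan complexes to Kan complexes}. Granting this, if $F\colon \eD \to \eK\gr$ is a diagram of one of the shapes in \ref{defn:cosmos}\ref{defn:cosmos:a} whose limit $\lim F$ is formed in $\eK$, then $\Fun_{\eK}(X,\lim F) \cong \lim(\Fun_{\eK}(X,-)\circ F)$ is a Kan complex for every $X$, so $\lim F$ is again groupoidal; thus $\eK\gr$ is closed in $\eK$ under all cosmological limits, and these limits, together with the inherited classes of isofibrations, equivalences, and trivial fibrations, are created by the inclusion. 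The ``moreover'' clause is then immediate: for $X, E \in \eK\gr$ one has $\Fun_{\eK\gr}(X,E) = \Fun_{\eK}(X,E)$, which is a Kan complex by Lemma~\ref{lem:groupoidal-object}\ref{itm:groupoidal-object:iii}.

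For the easy limit types the closure of Kan complexes is standard and requires no hypothesis on isofibrations. The terminal simplicial set is a point, products of Kan complexes are Kan, and for a cotensor $A^U$ one uses $\Fun_{\eK}(X,A^U) \cong \Fun_{\eK}(X,A)^U$ together with the fact that the exponential $K^U$ of a Kan complex $K$ by an arbitrary simplicial set $U$ is again a Kan complex. In particular the terminal object $1$ is groupoidal, so $\eK\gr$ has the terminal object it needs.

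The substance of the argument lies in the two limit types whose defining diagrams involve isofibrations: pullbacks of an isofibration along an arbitrary functor, and inverse limits of countable towers of isofibrations. Here the key input is the classical fact that an isofibration of quasi-categories whose domain and codomain are both Kan complexes is necessarily a \emph{Kan} fibration. Applying $\Fun_{\eK}(X,-)$ to such a diagram of groupoidal objects sends each structural isofibration $p$ to the isofibration $\Fun_{\eK}(X,p)$ between the Kan complexes $\Fun_{\eK}(X,E)$ and $\Fun_{\eK}(X,B)$, which is therefore a Kan fibration. A pullback of a Kan fibration is a Kan fibration, and the inverse limit of a tower of Kan fibrations maps to its bottom stage by a Kan fibration; in either case the resulting object sits as the total space of a Kan fibration over a Kan complex and is thus itself a Kan complex. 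This is exactly the assertion we needed, so pullbacks and countable towers of groupoidal objects are groupoidal. I expect this coincidence of isofibrations and Kan fibrations between Kan complexes to be the one genuinely non-formal step: the naive ``component-wise'' argument that the pullback is groupoidal fails because inverses of edges need only agree up to homotopy in the base, and the Kan fibration property is precisely what fills that gap.

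Finally, with closure under all cosmological limits in hand, the remaining $\infty$-cosmos axioms for $\eK\gr$ follow formally. Axiom~\ref{defn:cosmos}\ref{defn:cosmos:a} is the closure just established. For axiom~\ref{defn:cosmos}\ref{defn:cosmos:b}, the isofibrations, being created by the inclusion, automatically contain the isomorphisms and the maps $A \tfib 1$, and are stable under pullback and countable inverse limit within $\eK\gr$ since those limits are computed in $\eK$ and land back in $\eK\gr$; for the Leibniz cotensor $i\leib\pwr p\colon E^V\tfib E^U\times_{B^U} B^V$ of an isofibration $p$ between groupoidal objects, every object appearing (the cotensors and their pullback) is groupoidal by the closure properties above, while the map is an isofibration in $\eK$ by \ref{defn:cosmos}\ref{defn:cosmos:b} for $\eK$, hence an isofibration in $\eK\gr$. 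The final representability clause of \ref{defn:cosmos}\ref{defn:cosmos:b} transfers verbatim since the functor spaces agree. This completes the verification.
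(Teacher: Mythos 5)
Your proof is correct, but it takes a genuinely different route from the paper's. The paper uses the \emph{internal} characterisation of Lemma~\ref{lem:groupoidal-object}\ref{itm:groupoidal-object:iv} ($E$ is groupoidal iff $E^{\iso}\tfib E^{\cattwo}$ is a trivial fibration) to exhibit $\eK\gr$ as the pullback of the inclusion $\eK^{\cattwo}_{\simeq}\inc\eK^{\cattwo}$ along the cosmological functor $A\mapsto(A^{\iso}\tfib A^{\cattwo})$; closure under the cosmological limits then follows at once from Proposition~\ref{prop:K^2_eq-cosmos}, which says that $\eK^{\cattwo}_{\simeq}\inc\eK^{\cattwo}$ creates them. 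That argument stays entirely synthetic and reuses the machinery of the $\infty$-cosmoi of isofibrations and trivial fibrations built earlier in the section. You instead use the \emph{representable} characterisation \ref{itm:groupoidal-object:iii} to reduce everything to a statement about $\qCat$ --- that the cosmological limit types preserve Kan complexes --- and you correctly identify the crux there: Joyal's theorem that an isofibration between Kan complexes is a Kan fibration, after which stability of Kan fibrations under pullback and countable inverse limits of towers finishes the job. This is a valid and more self-contained argument (it needs neither Proposition~\ref{prop:isofib-cosmoi} nor Proposition~\ref{prop:K^2_eq-cosmos}), and your remark about why the naive component-wise argument fails is exactly right; the trade-off is that you import an analytic fact about quasi-categories that the paper never states (though it is classical and in the same spirit as the citation of Joyal in Lemma~\ref{lem:groupoidal-object}), whereas the paper's proof is model-independent in flavour and is essentially a two-line corollary of results it has already paid for. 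Your verification of the remaining axioms \ref{defn:cosmos}\ref{defn:cosmos:b} and of the ``moreover'' clause, which the paper leaves implicit, is also fine.
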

\begin{proof}
To prove the claimed result, we need only demonstrate that the full simplicial subcategory $\eK\gr\inc\eK$ is closed under the cosmological limit types. For this observe from Lemma \ref{lem:groupoidal-object}\ref{itm:groupoidal-object:iv} that the category $\eK\gr$ is defined by the pullback
\[
\xymatrix{
\eK\gr \ar@{^(->}[d] \ar[r] \pbexcursion & \eK^\cattwo_\simeq \ar@{^(->}[d] \\ \eK \ar[r]_E & \eK^\cattwo
}\]
where $E \colon \eK \to \eK^\cattwo$ is the simplicial functor defined on objects by $A \mapsto (A^{\iso} \tfib A^\cattwo)$. The functor $E \colon \eK \to \eK^\cattwo$ is cosmological, preserving the cosmological limits of \ref{defn:cosmos}\ref{defn:cosmos:a}, and by Proposition \ref{prop:K^2_eq-cosmos}, the inclusion $\eK^\cattwo_\simeq\inc\eK^\cattwo$ creates the cosmological limits. Both vertical inclusions are \emph{replete}---meaning that any object in $\eK$ that is equivalent to a groupoidal object is itself groupoidal---and fully faithful, so it follows from the fact that $\eK^\cattwo_\simeq\inc\eK^\cattwo$ creates the cosmological limits and $E$ preserves them that $\eK\gr\inc\eK$ also creates the cosmological limits. Thus $\eK\gr$ inherits an $\infty$-cosmos structure from $\eK$ as claimed.
\end{proof}

\begin{ex}[the $\infty$-cosmos of Kan complexes]\label{ex:Kan-infty-cosmos}
  A quasi-category is a groupoidal object in the $\infty$-cosmos $\qCat$ if and only if it is a Kan complex. This follows from the fact that a natural transformation of functors of quasi-categories is an isomorphism iff its components are isomorphisms and that a quasi-category is a Kan complex iff all of its edges are invertible. So the subcategory $\qCat\gr$ spanned by the groupoidal objects in the $\infty$-cosmos of quasi-categories is simply the enriched category $\Kan$ of Kan complexes, which we see by Proposition \ref{prop:gpdal-infty-cosmos} defines an $\infty$-cosmos.
\end{ex}

Example \ref{ex:Kan-infty-cosmos} encourages us to think of the groupoidal objects in an $\infty$-cosmos $\eK$ as being the \emph{spaces\/} in that universe. 

An $\infty$-cosmos is a type of $(\infty,2)$-category since it is a category enriched over a model of $(\infty,1)$-categories. We now introduce the \emph{$(\infty,1)$-categorical core} of an $\infty$-cosmos:

\begin{defn}[groupoid cores of a quasi-category]\label{defn:gpd-cores}
  We use the notation $g\qA$ to denote the \emph{groupoid core\/} of a quasi-category $\qA$, this being the maximal Kan complex it contains. More explicitly, $g\qA$ is the largest simplicial subset of $\qA$ spanning its invertible edges.
  \end{defn}
  
   Functors of quasi-categories preserve isomorphisms, so a functor $f\colon\qA\to \qB$ restricts to a functor $f\colon g\qA\to g\qB$; in this way the groupoidal core construction acts functorially on the underlying category of $\qCat$ and it is indeed right adjoint $g \colon \qCat \to \Kan$ to the inclusion $\Kan\inc\qCat$ in the unenriched sense. In particular it preserves finite products, so we may apply it to the functor spaces of a quasi-categorically enriched category $\eK$ to construct a Kan-complex-enriched subcategory that we now introduce:
   
   \begin{defn}[$(\infty,1)$-core of an $\infty$-cosmos]\label{defn:infinity,1-core}
   For any $\infty$-cosmos $\eK$, write   $g_*\eK \subseteq \eK$ for the subcategory with the same objects and with homs defined to be the groupoid cores of the functor spaces of $\eK$. We refer to $g_*\eK$ as the \emph{$(\infty,1)$-core} of $\eK$ and think of it as being the core $(\infty,1)$-category inside this $(\infty,2)$-category. 
\end{defn}

The construction of Definition \ref{defn:infinity,1-core} applies equally to construct the $(\infty,1)$-core of any quasi-categorically enriched category, whether or not it defines an $\infty$-cosmos.

\begin{rmk}\label{rmk:core-not-enriched}
  We  note that the groupoid core functor does not admit a simplicial enrichment with respect to the usual enrichment of $\qCat$ over itself. To understand this failure, observe that a natural transformation between functors of quasi-categories will only restrict to groupoid cores if each of its components is invertible. 
  
  This does, however, lead us to the conclusion that, so long as we are willing to consider only invertible natural transformations, the groupoid core functor does admit a canonical enrichment to a simplicial functor $g\colon g_*\qCat\to\Kan$ and this is right adjoint to the inclusion $\Kan\inc g_*\qCat$ in the simplicially enriched sense:
  \[ \adjdisplay {}-| g : g_*\qCat -> \Kan.\]
\end{rmk}



\section{Limits and colimits in an \texorpdfstring{$\infty$}{infinity}-category}\label{sec:limits}

In this section, we review the relevant aspects of the \emph{synthetic} theory of limits and colimits of diagrams valued in an $\infty$-category, and connect them back to the more familiar ``analytic'' definitions of Joyal in the $\infty$-cosmos of quasi-categories. The presentation we give here is much sparser than the full story presented in \S\refI{sec:limits}, as we save ourselves time by presenting only the definition that we will require in \S\ref{sec:complete}, rather than its many equivalent forms. Limits and colimits are defined in \S\ref{ssec:limits}  using \emph{arrow} and \emph{comma} $\infty$-categories, which we first review in  \S\ref{ssec:commas}.

\subsection{Arrow and comma constructions}\label{ssec:commas}

The axioms of an $\infty$-cosmos permit us to construct \emph{arrow} and \emph{comma} $\infty$-categories as particular simplicially enriched limits. In this section, we briefly review these constructions and their associated homotopical properties.

\begin{defn}[arrow $\infty$-categories] For any $\infty$-category $A$, the simplicial cotensor 
\[ \xymatrix@C=30pt{ A^\cattwo \defeq A^{\Del^1} \ar@{->>}[r]^-{(p_1,p_0)} & {A^{\boundary\Delta^1}} \cong A \times A}\] defines the \emph{arrow $\infty$-category} $A^\cattwo$, equipped with an isofibration $(p_1,p_0)\colon A^\cattwo \tfib A \times A$, where $p_1 \colon A^\cattwo \tfib A$ denotes the codomain projection and $p_0 \colon A^\cattwo \tfib A$ denotes the domain projection.
\end{defn}

Using the simplicially enriched pullbacks of isofibrations that exist by virtue of axiom \ref{defn:cosmos}\ref{defn:cosmos:a}, arrow $\infty$-categories can be used to define a general \emph{comma $\infty$-category} associated to a cospan of functors.

\begin{defn}[comma $\infty$-categories]\label{defn:comma} Any pair of functors  $f\colon B\to A$ and $g\colon C\to A$ in an $\infty$-cosmos $\eK$ has an associated \emph{comma $\infty$-category}, constructed by the following pullback in $\eK$:
\[
    \xymatrix@=2.5em{
      {f\comma g}\pbexcursion \ar[r]\ar@{->>}[d]_{(p_1,p_0)} &
      {A^\cattwo} \ar@{->>}[d]^{(p_1,p_0)} \\
      {C\times B} \ar[r]_-{g\times f} & {A\times A}
    }
\]
 Note that, by construction, the map $(p_1,p_0) \colon f \comma g \tfib C \times B$ is an isofibration.
\end{defn}

\begin{ex}[comma and slice quasi-categories]\label{ex:comma-vs-slice} An \emph{object} (or perhaps better \emph{element}) of an $\infty$-category can be represented by a functor $a \colon 1 \to A$. When paired with the identity, the comma construction gives rise to a pair of isofibrations
\[ p_1 \colon a \comma A \tfib A \qquad \mathrm{and} \qquad p_0 \colon A \comma a \tfib A\]
which should be thought of as the codomain and domain projections from the sub $\infty$-categories of the arrow $\infty$-category consisting of those arrows whose source or target, respectively, is $a$.

In the $\infty$-cosmos of quasi-categories, these comma quasi-categories are equivalent over $\qA$ to the \emph{slice quasi-categories} introduced by Joyal
\[ a \comma \qA \simeq \slicel{a}{\qA} \qquad \mathrm{and} \qquad \qA \comma a \simeq \slicer{\qA}{a}.\] See Lemma \refI{lem:slice-equiv-comma} for a proof.
\end{ex}

\begin{prop}[{maps between commas, \refI{lem:comma-obj-maps}}]\label{prop:trans-comma}
A natural transformation of co-spans on the left of the following display gives rise to the diagram of pullbacks on the right
\begin{equation*}
  \vcenter{
    \xymatrix@R=1.5em@C=2.5em{
      {C}\ar[r]^{g}\ar[d]_{c} & {A}\ar[d]_{a} & {B}\ar[l]_{f}\ar[d]^{b} \\
      {C'}\ar[r]_{g'} & {A'} & {B'}\ar[l]^{f'}
    }}\mkern30mu\rightsquigarrow\mkern30mu
  \vcenter{
    \xymatrix@=1em{
      {f\comma g}\pbexcursion\ar[rr]\ar@{->>}[dd] &&
      {A^{\cattwo}}\ar@{->>}[dd]|!{[rd];[ld]}\hole & \\
      & {f'\comma g'}\pbexcursion\ar[rr]\ar@{->>}[dd] &&
      {(A')^{\cattwo}}\ar@{->>}[dd]\\
      {C\times B}\ar[rr]|!{[ru];[rd]}\hole_(0.65){g\times f} && {A\times A} & \\
      & {C'\times B'}\ar[rr]_{g'\times f'} &&
      {A'\times A'}
      \ar@{-->} "1,1";"2,2"
      \ar "3,1";"4,2"_-{c\times b}
      \ar "1,3";"2,4" ^{a^{\cattwo}}
      \ar "3,3";"4,4" ^{a\times a}
    }}
\end{equation*}
in which the uniquely induced dashed map completing the commutative cube is denoted \[\comma(b,a,c) \colon f \comma g \to f' \comma g'.\] Moreover,  $\comma(b,a,c)$ is an isofibration (resp.~trivial fibration, equivalence) whenever the components $a$, $b$ and $c$ are all maps of that kind.
\end{prop}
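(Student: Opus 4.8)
The plan is to construct $\comma(b,a,c)$ from the universal property of the pullback and then to analyse its homotopical type by factoring it through an intermediate pullback into two maps, each of which is visibly a pullback of a map whose behaviour we already control. Existence is immediate: the left-hand square of the display exhibits a natural transformation of cospans, which is to say $g'c = ag$ and $f'b = af$, so the composites $f\comma g \to A^\cattwo \xrightarrow{a^\cattwo} (A')^\cattwo$ and $f\comma g \to C\times B \xrightarrow{c\times b} C'\times B'$ assemble into a cone over the cospan $(A')^\cattwo \xrightarrow{(p_1,p_0)} A'\times A' \xleftarrow{g'\times f'} C'\times B'$ defining $f'\comma g'$; the resulting induced map is $\comma(b,a,c)$.

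For the homotopical statements, set $P \defeq (A')^\cattwo \times_{A'\times A'}(C\times B)$, formed along the common composite $(a\times a)(g\times f) = (g'\times f')(c\times b)$, and factor $\comma(b,a,c)$ as $f\comma g \to P \to f'\comma g'$. The second map is the pullback of $c\times b$ along the isofibration $(p_1,p_0)\colon f'\comma g' \tfib C'\times B'$. For the first map, recall that $A^\cattwo = A^{\Del^1}$ with $(p_1,p_0)$ the cotensor of the inclusion $\boundary\Del^1\inc\Del^1$; then, pasting pullback squares, $f\comma g \to P$ is exhibited as the pullback along $g\times f\colon C\times B \to A\times A$ of the Leibniz cotensor $(\boundary\Del^1\inc\Del^1)\leib\pwr a \colon A^\cattwo \to (A\times A)\times_{A'\times A'}(A')^\cattwo$.

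Now the three cases. If $a$ is an isofibration, axiom \ref{defn:cosmos}\ref{defn:cosmos:b} makes this Leibniz cotensor an isofibration, hence so is its pullback $f\comma g\to P$; the product $c\times b$ of isofibrations is an isofibration and hence so is its pullback $P\to f'\comma g'$, and the composite $\comma(b,a,c)$ is an isofibration. If instead $a$, $b$, $c$ are equivalences, we invoke the gluing lemma in the category of fibrant objects underlying $\eK$: the cube is a map of cospans whose distinguished legs are the isofibrations $(p_1,p_0)$ and whose three remaining comparison maps $a^\cattwo$, $a\times a$, and $c\times b$ are equivalences---cotensoring with $\Del^1$ and taking products both preserve equivalences---so the induced map on pullbacks $\comma(b,a,c)$ is an equivalence. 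Finally, since a trivial fibration is precisely an isofibration that is also an equivalence, the trivial-fibration case follows by combining the previous two.

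The one step demanding genuine care is the identification in the second paragraph: recognising the first factor $f\comma g\to P$ as a pullback of the Leibniz cotensor of $a$ against $\boundary\Del^1\inc\Del^1$. This requires pasting the defining pullback squares correctly and matching up the domain and codomain projections $(p_1,p_0)$ on both sides; once it is in place, every remaining assertion is a routine instance of a closure property of isofibrations or of the gluing lemma.
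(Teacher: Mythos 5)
Your proof is correct, but it takes a genuinely different route from the paper's. The paper disposes of this proposition by citation: it observes that $f\comma g$ is the flexible weighted limit of the cospan diagram for the weight $\Del^0 \inc \Del^1 \hookleftarrow \Del^0$, and then invokes Proposition~\ref{prop:flexible-weights-are-htpical}\ref{itm:flexible-htpical} (alternatively, it remarks that the quasi-categorical proof from the earlier paper in the series goes through verbatim in any $\infty$-cosmos). The content there is proven once for \emph{all} flexible weights, via the $\infty$-cosmoi of isofibrations and of trivial fibrations (Propositions~\ref{prop:isofib-cosmoi} and~\ref{prop:K^2_eq-cosmos}) together with a Ken Brown's lemma argument for the equivalence case. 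Your argument instead specialises to this one weight and is self-contained: the factorisation of $\comma(b,a,c)$ through $P \defeq (A')^\cattwo \times_{A'\times A'}(C\times B)$ is correct, the identification of the first factor as a base change of the Leibniz cotensor $(\boundary\Del^1\inc\Del^1)\leib\pwr a$ and of the second as a base change of $c\times b$ both check out by pasting of pullbacks, and the isofibration case then follows directly from Definition~\ref{defn:cosmos}\ref{defn:cosmos:b} (plus the standard fact that binary products of isofibrations are isofibrations, which the paper also uses). What your route buys is independence from the machinery of \S\ref{ssec:flexible}---note the paper's own proof forward-references Definitions~\ref{defn:simp-weight}, \ref{defn:flexible-weight} and Proposition~\ref{prop:flexible-weights-are-htpical}---at the cost of importing one external ingredient: the gluing lemma for pullbacks along fibrations in a category of fibrant objects. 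That lemma is classical (Brown, Baues) and the paper does assert that the underlying $1$-category of an $\infty$-cosmos is a category of fibrant objects, so the appeal is legitimate, but it is the one step whose proof lives entirely outside the paper; if you wanted to avoid it, you could instead deduce the equivalence case from the trivial fibration case via Ken Brown's lemma, which is closer in spirit to how Proposition~\ref{prop:flexible-weights-are-htpical} itself is proven.
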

\begin{proof}
The result of Lemma \refI{lem:comma-obj-maps} is stated only for quasi-categories but its proof applies in any $\infty$-cosmos. Alternatively, the comma construction is a particular kind of \emph{flexible weighted limit}, using a notion to be introduced in Definitions \ref{defn:simp-weight} and \ref{defn:flexible-weight}, where the weight is given by the cospan of simplicial sets
\[ \xymatrix@R=0em@C=3em{ \Del^0 \ar@{^(->}[r]^{\fbv{1}} & \Del^1 & \Del^0 \ar@{_(->}[l]_{\fbv{0}}}\]
so we may apply Proposition~\ref{prop:flexible-weights-are-htpical}\ref{itm:flexible-htpical}, to show that $\comma(b,a,c)$ is an isofibration (resp. trivial fibration, equivalence) whenever the components $a$, $b$ and $c$ are all maps of that kind.
\end{proof}

\subsection{Limits and colimits in an \texorpdfstring{$\infty$}{infinity}-category}\label{ssec:limits}

\begin{defn}[{terminal elements, \refI{prop:terminalconverse}}]\label{defn:terminal} An element $t \colon 1 \to A$ defines a \emph{terminal element} of the $\infty$-category $A$ if and only if  the domain projection $p_0 \colon A \comma t \tfib A$ is a trivial fibration. Dually, $i \colon 1 \to A$ defines an \emph{initial element} of the $\infty$-category $A$ if and only if the codomain projection $p_1 \colon i \comma A \tfib A$ is a trivial fibration.
\end{defn}

Equivalently, terminal and initial elements, respectively, define right and left adjoints to the unique functor $!\colon A \to 1$; see \S\refI{subsec:terminal}.

\begin{ex}[terminal objects in a quasi-category] If $\qA$ is a quasi-category, the equivalence $\qA \comma t \simeq \slicer{\qA}{t}$ of Example \ref{ex:comma-vs-slice} provides another characterisation of terminal objects, the original one due to Joyal. The defining lifting property for trivial fibrations of quasi-categories
\[
\xymatrix{ \partial\Del^n\ar[r] \ar@{^(->}[d] & \slicer{\qA}{t} \ar@{->>}[d]^{p_0} \\ \Del^n \ar[r] \ar@{-->}[ur] & \qA}
\] for $n \geq 0$ transposes to
\[ \xymatrix{ \Del^{\fbv{n+1}} \ar[r] \ar@/^2ex/[rr]^t& \partial\Del^{n+1}\ar@{^(->}[d]  \ar[r] & \qA \\ & \Del^{n+1} \ar@{-->}[ur]}\]
i.e., a vertex $t \in \qA$ is terminal if and only if any sphere in $\qA$ whose final vertex is $t$ has a filler.
\end{ex}

Via the nerve embedding, diagrams indexed by small categories are among the diagrams indexed by small simplicial sets. The simplicial cotensors of axiom \ref{defn:cosmos}\ref{defn:cosmos:a} are used to define $\infty$-categories of diagrams.

\begin{defn}[diagram $\infty$-categories]\label{defn:diagram-cats} If $J$ is a (small) simplicial set and $A$ is an $\infty$-category, then the $\infty$-category $A^J$ is naturally thought of as being the \emph{$\infty$-category of $J$-indexed diagrams in $A$}.
\end{defn}

For any $\infty$-category $A$ and simplicial set $J$, the \emph{constant diagram functor} $\Delta \colon A \to A^J$ is constructed by applying the contravariant functor $A^{(-)}$ to the unique simplicial map $J\to \Del^0$.

\begin{defn}[$\infty$-categories of cones]\label{defn:cones} For any diagram $d \colon 1 \to A^J$ of shape $J$ in an $\infty$-category $A$, the \emph{$\infty$-category of cones over $d$} is the comma $\infty$-category $p_0 \colon \Delta \comma d \tfib A$ formed by the pullback 
\[
    \xymatrix@=2.5em{
      {\Delta \comma d}\pbexcursion \ar[r]\ar@{->>}[d]_{(p_1,p_0)} &
      {A^{J \times \cattwo}} \ar@{->>}[d]^{(p_1,p_0)} \\
      {1 \times A} \ar[r]_-{d\times \Delta} & {A^J\times A^J}
    }
\]
Dually, the \emph{$\infty$-category of cones under $d$} is the comma $\infty$-category $p_1 \colon d \comma \Delta \tfib A$.
\end{defn}

\begin{defn}[{limits and colimits in an $\infty$-category, \refI{prop:limits.as.terminal.objects}}]\label{defn:general-limits}
For a diagram $d \colon 1 \to A^J$ of shape $J$ in an $\infty$-category $A$, the following are equivalent and define what it means for $d$ to \emph{have a limit in $A$}.
\begin{enumerate}[label=(\roman*)]
\item The $\infty$-category of cones $\Delta \comma d$ has a terminal element $\lambda \colon 1 \to \Delta \comma d$. The element $\ell\defeq p_0 \lambda \colon 1 \to A$ is then the \emph{limit object}, while $\lambda$ is the \emph{limit cone}.
\item There exists an element $\ell \colon 1 \to A$ and an equivalence $A \comma \ell \simeq \Delta \comma d$ over $A$.
\end{enumerate}
Dually, $d \colon 1 \to A^J$ \emph{has a colimit} if and only if the $\infty$-category of cones $d \comma \Delta$ has an initial element, or equivalently, if there exists an element $\ell \colon 1 \to A$ and an equivalence $\ell \comma A \simeq d \comma \Delta$ over $A$.
\end{defn}

\begin{ex}[limits in quasi-categories]\label{ex:limits-in-quasi-categories} In the $\infty$-cosmos of quasi-categories, the quasi-category of cones $\Delta \comma d$ over a diagram $d \colon 1 \to \qA^J$ is equivalent over $\qA$
 to the Joyal slice $\slicer{\qA}{d}$ of $\qA$ over the transposed map $d \colon J \to \qA$; see Lemma \refI{lem:cone-equiv-fatcone}. For any $\ell \colon 1 \to \qA$, it is easy to see that $\qA \comma \ell$ admits a terminal object, induced by the identity 2-cell at $\ell$; see Example \refI{ex:slice-terminal}. The equivalence $\qA\comma \ell \simeq \Delta\comma d$ of Definition \ref{defn:general-limits} then implies that the limit cones defines a terminal object in $\Delta \comma d \simeq \slicer{\qA}{d}$. Proposition \refI{prop:limits.are.limits} proves the converse, demonstrating that the general notion of limits introduced here specialises to recapture precisely the notion of limits and colimits for quasi-categories introduced by Joyal in  \cite[4.5]{Joyal:2002:QuasiCategories}.
\end{ex}



\section{Flexible homotopy limits in an \texorpdfstring{$\infty$}{infinity}-cosmos and its \texorpdfstring{$(\infty,1)$}{(infinity,1)}-categorical core}\label{sec:flexible}

In this section, we  demonstrate that an $\infty$-cosmos permits a large variety of limit constructions, namely those with \emph{flexible weights}. These flexible weighted limits are introduced in \S\ref{ssec:flexible}. The weighted limit for a fixed flexible weight is defined up to isomorphism but the construction is appropriately homotopical, as we prove in Proposition \ref{prop:flexible-weights-are-htpical}, our main result. This leads us to consider a weaker notion of \emph{homotopy} weighted limit in an $\infty$-cosmos, whose defining universal property expresses an equivalence, rather than an isomorphism of quasi-categories. The reason for our interest in this weaker notion is revealed in \S\ref{ssec:htpy-limit}, where we prove that flexibly weighted homotopy limits in the groupoidal core $g_*\eK$ of an $\infty$-cosmos are computed as limits in $\eK$ weighted by the pointwise Kan complex replacement of the original flexible weight.

\subsection{Flexible homotopy limits}\label{ssec:flexible}

The basic simplicially-enriched limit notions enumerated in axiom \ref{defn:cosmos}\ref{defn:cosmos:a} imply that an $\infty$-cosmos $\eK$ possesses a much larger class of weighted limits, which we now describe. A more comprehensive introduction to the theory of weighted limits in  simplicially enriched categories  can be found in Chapter~3 of Kelly~\cite{kelly:ect} or Chapter~7 of Riehl~\cite{Riehl:2014kx}.  We'll apply these notions in an arbitrary simplicially enriched category $\eC$ with hom-spaces denoted by $\Map_{\eC}(X,Y)$.
 
\begin{defn}\label{defn:simp-weight}
A \emph{weight} for a diagram indexed by a small simplicial category $\eA$ is a simplicial functor $W \colon \eA \to \SSet$. A $W$-\emph{cone} over a diagram $F \colon \eA \to \eC$ in a simplicial category $\eC$ is comprised of an object $L \in \eC$ together with a simplicial natural transformation $\Lambda \colon W \to \Map_{\eC}(L,F-)$. Such a cone displays $L$ as a $W$-\emph{weighted limit of} $F$ if and only if for all $X \in \eC$ the simplicial map
  \begin{equation}\label{eq:weighted-UP}
    \xymatrix@R=0em@C=5em{
      \Map_{\eC}(X,L) \ar[r]^-{\cong} & \Map_{\SSet^{\eA}}(W,\Map_{\eC}(X,F-))
    }
  \end{equation}
  given by post-composition with $\Lambda$ is an isomorphism, in which case the limit object $L$ is typically denoted by $\wlim{W}{F}_{\eA}$ or simply $\wlim{W}{F}$. In this notation, the universal property \eqref{eq:weighted-UP} of the weighted limit asserts an isomorphism
  \[ 
      \xymatrix@R=0em@C=5em{
      \Map_{\eC}(X,\wlim{W}{F}_{\eA}) \ar[r]^-{\cong} & \{W,\Map_{\eC}(X,F-)\}_{\eA}.
    }
    \]
\end{defn}

The dual notion of \emph{weighted colimit} is definable for any weight $W \colon \eA \to \SSet$ and diagram $F \colon \eA\op \to \eC$; note in the colimit case that the weight and the diagram have contrasting variance. A $W$-\emph{cone} under a diagram $F$ is comprised of an object $C \in \eC$ together with a simplicial natural transformation $\Lambda \colon W \to \Map_{\eC}(F-,C)$ and displays $C$ as a $W$-\emph{weighted colimit of} $F$ if and only if for all $X \in \eC$ the simplicial map
  \[ 
      \xymatrix@R=0em@C=5em{
      \Map_{\eC}(C,X) \ar[r]^-{\cong} & \{W,\Map_{\eC}(F-,X)\}_{\eA}.
    }
    \]
    is an isomorphism. In this case, the colimit object $C$ is typically denoted by $W\wcolim_{\eA}F$ of $W \wcolim F$.

\begin{obs}\label{obs:lim-cont-in-weight}
  Suppose that $\eA$ and $\eB$ are small simplicial categories, that $W\colon\eB\op\to\SSet^{\eA}$ is a diagram of weights, and that $U\colon\eB \to \SSet$ is another weight. Then the weighted colimit $U\wcolim_{\eB} W$ exists in $\SSet^{\eA}$ and is computed level-wise in $\SSet$ by the coend formula:
  \begin{equation*}
    U\wcolim_{\eB} W \cong \int^{B\in \eB} UB\times W(B,-)
  \end{equation*}
  Now suppose that $F\colon\eA\to \eC$ is a diagram and that for each object $B\in\eB$ the weighted limit $\wlim{W(B,-)}{F}_{\eA}$ exists in $\eC$. Then these limits assemble into a diagram $\wlim{W}{F}_{\eA}\colon \eB\to \eC$ and there exists an isomorphism
  \begin{equation*}
    \wlim{U\wcolim_{\eB} W}{F}_{\eA} \cong \wlim{U}{\wlim{W}{F}_{\eA}}_{\eB}
  \end{equation*}
  when these limits exist. Indeed, the limit on one side of this isomorphism exists if and only if the limit on the other side exists. This result expresses the sense in which the weighted limit construction is cocontinuous in its weight.
\end{obs}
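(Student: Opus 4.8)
The plan is to deduce both assertions formally from the representability characterisation of weighted limits in Definition~\ref{defn:simp-weight}, together with the coend calculus for simplicially enriched functor categories (Kelly~\cite{kelly:ect}). For the coend formula I would first observe that colimits in $\SSet^{\eA}$ are computed pointwise and that $\SSet^{\eA}$ is tensored over $\SSet$ with tensors formed level-wise; the general coend formula for a $\SSet$-weighted colimit then specialises to say that $U\wcolim_{\eB}W$ exists and is computed at each $a\in\eA$ as $\int^{B\in\eB}UB\times W(B,a)$, which is exactly the stated formula read level-wise.

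The heart of the matter is the Fubini isomorphism, which I would prove by showing that both sides represent the same $\SSet$-valued functor of $X\in\eC$. Fix $X$ and abbreviate $G\defeq\Map_{\eC}(X,F-)\colon\eA\to\SSet$. On the left, the defining universal property gives $\Map_{\eC}(X,\wlim{U\wcolim_{\eB}W}{F}_{\eA})\cong\wlim{U\wcolim_{\eB}W}{G}_{\eA}$, and since a $\SSet$-valued weighted limit is the hom-object of the relevant functor category, this equals $\Map_{\SSet^{\eA}}(U\wcolim_{\eB}W,G)$. The contravariant hom-functor $\Map_{\SSet^{\eA}}(-,G)$ carries the defining coend colimit to an end, and, since $\SSet^{\eA}$ is tensored and cotensored over $\SSet$, the tensor--hom adjunction yields
\begin{align*}
\Map_{\SSet^{\eA}}\bigl(U\wcolim_{\eB}W,\,G\bigr)
&\cong\int_{B}\Map_{\SSet^{\eA}}\bigl(UB\times W(B,-),\,G\bigr)\\
&\cong\int_{B}\Map_{\SSet}\bigl(UB,\,\wlim{W(B,-)}{G}_{\eA}\bigr)
=\wlim{U}{\wlim{W}{G}_{\eA}}_{\eB}.
\end{align*}
On the right, the defining property of the $U$-weighted limit over $\eB$ gives $\Map_{\eC}(X,\wlim{U}{\wlim{W}{F}_{\eA}}_{\eB})\cong\wlim{U}{\Map_{\eC}(X,\wlim{W}{F}_{\eA})}_{\eB}$, and the per-object universal properties $\Map_{\eC}(X,\wlim{W(B,-)}{F}_{\eA})\cong\wlim{W(B,-)}{G}_{\eA}$ identify this with $\wlim{U}{\wlim{W}{G}_{\eA}}_{\eB}$ as well. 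Thus both weighted limits represent the functor $X\mapsto\wlim{U}{\wlim{W}{G}_{\eA}}_{\eB}$, and the Yoneda lemma supplies the claimed isomorphism.

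Finally, for the ``exists on one side iff exists on the other'' clause I would note that the displayed identifications are all natural in $X$ and take place entirely in $\SSet$, so they hold whether or not the $\eC$-valued weighted limits exist; consequently the existence of either side is equivalent to representability of the single common functor $X\mapsto\wlim{U}{\wlim{W}{G}_{\eA}}_{\eB}$, and when it is representable both limits exist and are canonically isomorphic. I expect the only real obstacle to be the enriched coend bookkeeping: keeping the variances straight (with $U$ covariant and $W$ contravariant in $\eB$), confirming that the tensor and cotensor of $\SSet^{\eA}$ over $\SSet$ are formed level-wise, and justifying the interchange of the end over $\eB$ with the end defining $\wlim{-}{-}_{\eA}$ by Fubini. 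None of these steps is deep, but they must be carried out with care.
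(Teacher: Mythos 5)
Your proposal is correct. The paper offers no proof of this Observation at all---it is stated as a standard fact of enriched category theory (with Kelly's book cited earlier in the section for background)---and your representability argument, identifying $\Map_{\eC}(X,-)$ of both sides with the common functor $X\mapsto\wlim{U}{\wlim{W}{\Map_{\eC}(X,F-)}_{\eA}}_{\eB}$ via the hom-coend-to-end and tensor-hom isomorphisms and then invoking Yoneda, is exactly the standard argument the paper is implicitly relying on, including the existence clause via representability of that common functor.
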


\begin{defn}[flexible weights and projective cell complexes]\label{defn:flexible-weight}
  For a small simplicial category $\eA$ and pair of objects $[n] \in \Del$ and
  $A \in \eA$, the \emph{projective $n$-cell} associated with $A$ is the
  simplicial natural transformation:
  \[
    \boundary\Del^n\times \Map_{\eA}(A,-)\inc
    \Del^n\times \Map_{\eA}(A,-).
  \]
  We say that a monomorphism $V\inc W$ in $\SSet^{\eA}$ is a \emph{projective cell complex\/} if it may be expressed as a countable composite of pushouts of coproducts of projective cells. A weight $W\colon\eA \to \SSet$ is said to be a \emph{flexible weight\/} if the inclusion $\emptyset\inc W$ is a projective cell complex. 
\end{defn}

We shall let $\Flex^{\eA}$ denote the full simplicial subcategory of $\SSet^{\eA}$ spanned by the flexible weights. Adapting the arguments of Propositions \refII{prop:projwlims2} and \refII{prop:proj-wlim-homotopical} to the general $\infty$-cosmos context, it follows that flexible weighted limits exist in any $\infty$-cosmos admitting the limits axiomatised in \ref{defn:cosmos}\ref{defn:cosmos:a} and moreover such limits are appropriately ``homotopical'': 

\begin{prop}\label{prop:flexible-weights-are-htpical} Let $\eK$ be an $\infty$-cosmos and let $\eA$ be a small simplicial category.
  \begin{enumerate}[label=(\roman*)]
  \item\label{itm:flexible-exist} For any diagram $F \colon \eA\to\eK$ and flexible weight $W \colon \eA \to \SSet$, the weighted limit $\wlim{W}{F}$ exists in $\eK$.
  \item\label{itm:flexible-relative} For any diagram $F \colon \eA\to\eK$, flexible weights $W,W'\colon \eA\to\SSet$, and projective cell complex $i\colon W\inc W'$, the induced map $\wlim{i}{F}\colon\wlim{W'}{F}\to\wlim{W}{F}$ is an isofibration. It follows that we may extend the construction of these flexible limits to a simplicial bifunctor
  \begin{equation*}
    \xymatrix@R=0em@C=8em{
      {(\Flex^{\eA})\op\times\eK^{\eA}}\ar[r]^-{\wlim{-}{-}_{\eA}} & {\eK}
    }
  \end{equation*}
  which carries projective cell complexes to isofibrations.
  \item\label{itm:flexible-htpical} If $\kappa \colon F \To G$ is a simplicial natural transformation between two such diagrams whose components are equivalences, isofibrations, or trivial fibrations in $\eK$ and $W$ is a flexible weight, then the induced map
    \begin{equation*}
      \xymatrix@R=0em@C=6em{
        {\wlim{W}{F}}\ar[r]^-{\wlim{W}{\kappa}} & {\wlim{W}{G}}
      }
    \end{equation*}
    is an equivalence, isofibration, or trivial fibration (respectively) in $\eK$.
  \end{enumerate}
\end{prop}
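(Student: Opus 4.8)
The plan is to run a single induction along the cellular presentation of a flexible weight, exploiting the fact (Observation~\ref{obs:lim-cont-in-weight}) that the weighted limit $\wlim{-}{F}$ is \emph{cocontinuous in its weight}: it converts coproducts, pushouts, and countable composites of weights into products, pullbacks, and countable inverse limits in $\eK$. The base cells are handled by the enriched Yoneda lemma, which identifies $\wlim{\Del^n\times\Map_{\eA}(A,-)}{F}\cong (FA)^{\Del^n}$ and $\wlim{\boundary\Del^n\times\Map_{\eA}(A,-)}{F}\cong (FA)^{\boundary\Del^n}$, with the map induced by a projective $n$-cell being the cotensor of $FA$ with $\boundary\Del^n\inc\Del^n$. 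This cotensor is exactly the Leibniz cotensor $(\boundary\Del^n\inc\Del^n)\leib\pwr(FA\tfib 1)$, which is an isofibration by axiom~\ref{defn:cosmos}\ref{defn:cosmos:b}. Thus each projective cell contributes a cotensor and each cell-attachment a basic isofibration, while everything else is assembled by the cosmological limits of axiom~\ref{defn:cosmos}\ref{defn:cosmos:a}.

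For \ref{itm:flexible-exist} and \ref{itm:flexible-relative} together, I would present $\emptyset\inc W$ (or, for the relative statement, $V\inc W$) as a countable tower $V=W_0\inc W_1\inc\cdots$ whose colimit is $W$ and each stage of which attaches a coproduct of projective cells along a pushout. Applying $\wlim{-}{F}$ and invoking Observation~\ref{obs:lim-cont-in-weight} turns this into a tower $\cdots\tfib\wlim{W_1}{F}\tfib\wlim{W_0}{F}$ in which each map is a pullback of a product of the basic cotensor isofibrations above. Since isofibrations are stable under product, pullback, and countable inverse limit by axiom~\ref{defn:cosmos}\ref{defn:cosmos:b}, and these limits exist by axiom~\ref{defn:cosmos}\ref{defn:cosmos:a}, the limit $\wlim{W}{F}=\lim_n\wlim{W_n}{F}$ exists and the projection $\wlim{W}{F}\to\wlim{V}{F}$ onto the foot of the tower is an isofibration. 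Functoriality in both variables is inherited from the universal property of weighted limits, giving the bifunctor of \ref{itm:flexible-relative} that carries projective cell complexes to isofibrations.

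For the isofibration (and hence trivial fibration) clause of \ref{itm:flexible-htpical}, a naive induction fails because a map between pullbacks induced by cornerwise isofibrations need not itself be an isofibration. I would therefore strengthen to the two-variable \emph{Leibniz} statement: for any projective cell complex $i\colon V\inc W$ and any $\kappa\colon F\To G$ with isofibration components, the gap map $\wlim{W}{F}\to\wlim{W}{G}\times_{\wlim{V}{G}}\wlim{V}{F}$ is an isofibration. The base case, where $W$ is obtained from $V$ by attaching a single cell, reduces this gap map to the Leibniz cotensor $(\boundary\Del^n\inc\Del^n)\leib\pwr(\kappa_A)$, an isofibration by axiom~\ref{defn:cosmos}\ref{defn:cosmos:b}; the general case follows by checking that the Leibniz construction composes and is stable under the product, pullback, and countable-inverse-limit assembly of the preceding paragraph. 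Setting $V=\emptyset$ collapses the target to $\wlim{W}{G}$ and yields that $\wlim{W}{\kappa}$ is an isofibration.

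The equivalence clause is the crux and the expected main obstacle. Here I would induct directly along the cells of $W$ for $F$ and $G$ simultaneously, comparing the two towers built in the first two paragraphs. The base contributions are handled because $(-)^{\Del^n}$ is a cosmological functor (the Example following Definition~\ref{defn:functor-inf-cosmoi}) and so carries the equivalences $\kappa_A$ to equivalences $(\kappa_A)^{\Del^n}$, and products of equivalences are equivalences. At a cell-attachment stage one has a map of pullback squares each of whose pulled-back legs is an isofibration from \ref{itm:flexible-relative}, with the three comparison maps equivalences by induction; the gluing lemma for homotopy pullbacks in a category of fibrant objects then makes the induced map of pullbacks an equivalence. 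Passing to the countable inverse limit, a levelwise equivalence between towers of isofibrations induces an equivalence on limits. The delicate point throughout --- and the reason \ref{itm:flexible-exist}--\ref{itm:flexible-relative} must be established first --- is that these homotopical gluing lemmas apply only because every comparison map in sight is an isofibration, so the requisite pullbacks and inverse limits are genuinely homotopy limits. Finally, the trivial fibration case follows by combining the isofibration and equivalence cases, since a trivial fibration is precisely an isofibration that is an equivalence.
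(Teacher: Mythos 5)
Your treatment of parts \ref{itm:flexible-exist} and \ref{itm:flexible-relative} is essentially the paper's own proof: the same cellular induction, the same Yoneda identification of the projective cells with the cotensor isofibrations $FA^{\Del^n}\tfib FA^{\boundary\Del^n}$, and the same assembly via the stability of isofibrations under products, pullbacks, and countable inverse limits from Definition~\ref{defn:cosmos}\ref{defn:cosmos:b}.

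For part \ref{itm:flexible-htpical} you take a genuinely different route. The paper runs no further induction: it regards a componentwise isofibration $\kappa\colon F\To G$ as a single diagram $K\colon\eA\to\eK^{\cattwo}$ valued in the $\infty$-cosmos of isofibrations of Proposition~\ref{prop:isofib-cosmoi}, applies part \ref{itm:flexible-exist} there, and uses the fact (Proposition~\ref{prop:K^2-cosmos-limits}) that $\dom,\cod\colon\eK^{\cattwo}\to\eK$ jointly create the cosmological limits to identify the object $\wlim{W}{K}$ of $\eK^{\cattwo}$ with the map $\wlim{W}{\kappa}$, which is therefore an isofibration; trivial fibrations are handled by the same argument in $\eK^{\cattwo}_{\simeq}$ (Proposition~\ref{prop:K^2_eq-cosmos}); and equivalences then follow from trivial fibrations by Ken Brown's lemma, using the cylinder formed by levelwise cotensoring with $\iso$. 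You instead argue by hand: a two-variable Leibniz induction for isofibrations (correct---at a single cell attachment the gap map is a pullback of a product of Leibniz cotensors $(\boundary\Del^n\inc\Del^n)\leib\pwr\,\kappa_A$, and the Leibniz calculus handles composites and towers), a gluing-lemma induction for equivalences, and trivial fibrations as the intersection of the two---note this reverses the paper's order of deduction. Your route is more self-contained, needing none of the arrow-cosmos machinery of \S\ref{ssec:cosmoi-background}, and it makes explicit the stronger two-variable gap-map statement, which the paper only obtains implicitly as part \ref{itm:flexible-relative} interpreted inside $\eK^{\cattwo}$; the paper's route buys brevity and confines all homotopical input to constructions already verified, so that no gluing arguments are needed at all.

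That last point is where your proposal has a genuine, though fillable, gap. The pullback gluing lemma you invoke is indeed standard for categories of fibrant objects, but the tower statement---that a levelwise equivalence between countable towers of isofibrations induces an equivalence on inverse limits---is not: Brown's axiomatics involve only finite limits, so this is not a citation-ready ``familiar homotopical property'' and must be proved (the same issue touches your unremarked claim that products of equivalences are equivalences). Both facts do hold in an $\infty$-cosmos, and the cleanest repair is representability: since equivalences are defined representably and the simplicial representables $\Fun_{\eK}(X,-)$ preserve the cosmological limits, both claims reduce to the corresponding statements for towers and products of isofibrations of quasi-categories, where they follow from the Joyal model structure (a tower of isofibrations between quasi-categories is Reedy fibrant, so its inverse limit is a homotopy limit). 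With that lemma supplied, your argument goes through.
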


\begin{proof}
We prove \ref{itm:flexible-exist} and \ref{itm:flexible-relative} simultaneously with the former serving as the base case for the latter. Note in the latter case, the hypothesis that $\wlim{W}{F}$ exists, and $W \inc W'$ is a projective cell complex will imply that $\wlim{W'}{F}$ exists, if we did not know this already.

We start by observing that $i\colon W\inc W'$ may be built up as a countable composite of pushouts of coproducts of projective cells and, by observation~\ref{obs:lim-cont-in-weight}, these are carried to the corresponding limit notions by $\{-,F\}$ whenever those limits exist in $\eK$. Note, by the Yoneda lemma, that for a simplicial set $U$ and object $A \in \eA$ the weighted limit $\{ U \times \Map_{\eA}(A,-), F\}$ exists and is isomorphic to the $U$-cotensor of the object $FA \in \eK$; consequently, the map $\wlim{\Del^n\times \Map_{\eA}(A,-)}{F}\to \wlim{\boundary\Del^n\times\Map_{\eA}(A,-)}{F}$ induced by the projective $n$-cell associated with $A$ is simply the isofibration $FA^{\Del^n} \tfib FA^{\boundary\Del^n}$. It follows that $\wlim{i}{F}\colon\wlim{W}{F}\to\wlim{W'}{F}$ exists and may be expressed as a countable inverse limit of pullbacks of products of isofibrations $FA^{\Del^n} \tfib FA^{\boundary\Del^n}$ so long as each one of those particular limits exist in $\eK$. This fact is, however, assured by Definition~\ref{defn:cosmos}\ref{defn:cosmos:b}, which implies that products of isofibrations exist and are again isofibrations, that in turn pullbacks of those exist and are again isofibrations, and that the sequence of those admits a countable inverse limit which is again an isofibration as posited. This completes our proof of~\ref{itm:flexible-exist} and \ref{itm:flexible-relative} as soon as we observe that the limit $\wlim{\emptyset}{F}$ weighted by the empty weight exists and is isomorphic to the terminal object of $\eK$.

For \ref{itm:flexible-htpical}, suppose that the natural transformation $\kappa\colon F\To G$ has the property that each of its components is an isofibration in $\eK$. Then it may equally well be regarded as being a single diagram $K\colon\eA\to\eK^{\cattwo}$ in the $\infty$-cosmos of isofibrations introduced in Proposition \ref{prop:isofib-cosmoi}. So we may apply the result of part~\ref{itm:flexible-exist} to show that the flexible weighted limit $\wlim{W}{K}$ exists in $\eK^\cattwo$. Furthermore we know, from Proposition~\ref{prop:K^2-cosmos-limits}, that the products, pullbacks and limits of towers used in that construction are jointly created by the projections $\dom,\cod\colon\eK^\cattwo\to\eK$ and so the same holds true for the flexible limit $\wlim{W}{K}$. In other words, the object $\wlim{W}{K}$ in $\eK^\cattwo$ is simply the induced map $\wlim{W}{\kappa}\colon\wlim{W}{F}\tfib\wlim{W}{G}$ in $\eK$ which is thus an isofibration as postulated. The corresponding result for trivial fibrations follows by applying the same argument in the $\infty$-cosmos of trivial fibrations $\eK^\cattwo_\simeq$ of Proposition~\ref{prop:K^2_eq-cosmos}. Now we may apply the argument of Ken Brown's lemma~\cite{Brown:1973zl}, which tells us that in the presence of a suitable functorial factorization, constructed in this case using the cylinder object in $\eK^{\eA}$ formed by the level-wise cotensor with the free-living isomorphism $\iso$, then the desired result for component-wise equivalences follows from the result for trivial fibrations.
\end{proof}

\begin{rmk}\label{rmk:simp-limit-construction}
The limit types named in the $\infty$-cosmos axioms are all constructed in $\qCat$ by taking the corresponding limits at the level of simplicial sets. It follows, from the construction of Proposition~\ref{prop:flexible-weights-are-htpical}, that the flexible weighted limits of $\qCat$ are computed as in $\SSet$ and thus that these may be given by the familiar end formula \begin{equation}\label{eq:simp-limit-construction}
\wlim{W}{F}_{\eA}\cong\int_{a\in\eA} Fa^{Wa} \cong \mathrm{eq}\left( \xymatrix{\prod\limits_{a \in \eA} Fa^{Wa} \ar@<.75ex>[r] \ar@<-.75ex>[r] & \prod\limits_{a,a' \in \eA} Fa'^{\Map_{\eA}(a,a') \times Wa}} \right)\end{equation} in there, which in turn may be expressed as an equaliser of products of functor spaces  in the underlying category of simplicial sets, and thus in its full subcategory of quasi-categories. 
\end{rmk}


When working in a quasi-category enriched category $\eK$ it is often the case that we are only interested in weighted (co)limits that are defined up to \emph{equivalence\/} rather than \emph{isomorphism}. To that end we have the following definition:

\begin{defn}[flexible weighted homotopy limits]\label{defn:flexible-hty-limit}
  Suppose that $W\colon\eA\to\SSet$ is a flexible weight and that $F\colon\eA\to\eK$ is a diagram in a quasi-category enriched category $\eK$. We say that a $W$-cone $\Lambda\colon W\to \Fun_{\eK}(L,F-)$ displays an object $L\in \eK$ as a {\em flexible weighted homotopy limit\/} of $F$ weighted by $W$ if for all objects $X\in\eK$ the map
\begin{equation}\label{eq:flex-hty-lim-comp}
  \xymatrix@R=0em@C=5em{
    \Fun_{\eK}(X,L) \ar[r] &
    \{W,\Fun_{\eK}(X,F-)\}_{\eA}.
  }
\end{equation}
induced by post-composition with $\Lambda$ is an equivalence of quasi-categories,\footnote{Here we show that the codomain of the comparison map in~\eqref{eq:flex-hty-lim-comp} is a quasi-category by applying Proposition~\ref{prop:flexible-weights-are-htpical} in the $\infty$-cosmos of quasi-categories.} in which case we denote the limit object by $\wlim{W}{F}^\simeq_{\eA}$.
\end{defn}

Proposition \ref{prop:flexible-weights-are-htpical}\ref{itm:flexible-htpical} implies that flexible weighted homotopy limits in an $\infty$-cosmos are appropriately homotopical: a natural equivalence $\kappa \colon F \To G$ of diagrams induces an equivalence
\[ \wlim{W}{F}^\simeq \we \wlim{W}{G}^\simeq.\]

\begin{defn}\label{defn:functorial-flexible}
We say that $\eK$ admits a \emph{functorial choice of flexible weighted homotopy limits\/} if there exists a simplicial bifunctor
\begin{equation*}
  \xymatrix@R=0em@C=8em{
    {(\Flex^{\eA})\op\times\eK^{\eA}}\ar[r]^-{\wlim{-}{-}^{\simeq}_{\eA}} &
    {\eK}
  }
\end{equation*}
and a family of weighted cones $\Lambda_{W,F}\colon W\to\Fun_{\eK}(\wlim{W}{F}^{\simeq}_{\eA},F-)$ which is simplicially natural in $F$ and $W$ and such that each $\Lambda_{W,F}$ displays $\wlim{W}{F}^{\simeq}_{\eA}$ as a $W$-weighted homotopy limit of $F$.
\end{defn}

\subsection{Homotopy limits in the \texorpdfstring{$(\infty,1)$}{(infty,1)}-core of an \texorpdfstring{$\infty$}{infty}-cosmos}\label{ssec:htpy-limit}

As an example of our use of flexible weighted homotopy limits we offer the following study of their construction in the $(\infty,1)$-core of an $\infty$-cosmos introduced in Definition \ref{defn:infinity,1-core}. Our main result, Proposition \ref{prop:infty-one-core-flex}, will give an explicit construction of the flexible weighted homotopy limit of any diagram valued in the $(\infty,1)$-core of an $\infty$-cosmos, which demonstrates in particular that such limits exist for any flexible weight.



We start with the following observation. Despite the fact observed in Remark \ref{rmk:core-not-enriched} that the groupoid core functor $g \colon \qCat \to \Kan$ is not simplicially enriched, it still preserves certain flexible weighted limits:

\begin{lem}\label{lem:infty-one-core-flex-1}
Consider a flexible weight $W\colon \eA\to\SSet$, taking values in the subcategory $\Kan\subset\SSet$ of Kan copmlexes,  and a diagram $F\colon\eA\to\qCat$, which factors through $g_*\qCat\subset\qCat$. On taking flexible limits in $\qCat$ we obtain a canonical comparison $g\wlim{W}{F}\to\wlim{W}{gF}$ of Kan complexes, and this is an isomorphism.
\end{lem}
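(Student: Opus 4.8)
The plan is to realise both $g\wlim{W}{F}$ and $\wlim{W}{gF}$ as simplicial subsets of the quasi-category $L := \wlim{W}{F}$ and show that they coincide. Since $F$ factors through $g_*\qCat$ and $g\colon g_*\qCat\to\Kan$ is a simplicial functor (Remark~\ref{rmk:core-not-enriched}), the composite $gF\colon\eA\to\Kan$ is a genuine simplicial diagram, so by Proposition~\ref{prop:flexible-weights-are-htpical} applied in the $\infty$-cosmos $\Kan$ of Example~\ref{ex:Kan-infty-cosmos} the weighted limit $L':=\wlim{W}{gF}$ exists and is a Kan complex. Both $L$ and $L'$ are computed by the end formula of Remark~\ref{rmk:simp-limit-construction}, presented as the equaliser of a parallel pair of maps between products of cotensors $Fa^{Wa}$ (resp.\ $(gFa)^{Wa}$). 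The componentwise inclusions $gFa\inc Fa$ are the legs of a simplicial natural transformation $gF\To F$, which induces a map $\iota\colon L'\to L$ that is a monomorphism, being assembled from equalisers and products of the cotensored monomorphisms $(gFa)^{Wa}\inc Fa^{Wa}$.

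The crux is the following point-set identity: for any quasi-category $Q$ and any Kan complex $U$,
\[ g(Q^U) = (gQ)^U \qquad\text{as simplicial subsets of } Q^U. \]
One inclusion is formal. For the other, an $n$-simplex of $Q^U$ is a map $\phi\colon U\times\Del^n\to Q$; it lies in $g(Q^U)$ exactly when each of its edges is an invertible $1$-arrow of $Q^U$, which by the componentwise criterion for invertibility of natural transformations (Example~\ref{ex:Kan-infty-cosmos}) means that $\phi$ carries each edge $\{u\}\times e$ (for $u$ a vertex of $U$ and $e$ an edge of $\Del^n$) to an invertible edge of $Q$; it lies in $(gQ)^U$ exactly when $\phi$ carries \emph{every} edge of $U\times\Del^n$ to an invertible edge. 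To close the gap I would take an arbitrary edge $(f,e)$ of $U\times\Del^n$ and factor it, using one of the two $2$-simplices triangulating $f\times e\colon\Del^1\times\Del^1\to U\times\Del^n$, as a composite of an edge of the form $\{u\}\times e$ with an edge of the form $f\times\{j\}$ (where $j$ is an endpoint of $e$). The first is invertible by the criterion just described; the second is the image under the functor $\phi|_{U\times\{j\}}\colon U\to Q$ of the edge $f$ of the Kan complex $U$, hence invertible since $f$ is invertible in $U$ and functors of quasi-categories preserve isomorphisms. A composite of invertible $1$-arrows is invertible, so $\phi(f,e)$ is invertible and $\phi$ lands in $(gQ)^U$.

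With this in hand I would assemble the two inclusions. Since $L'$ is a Kan complex contained in $L$, it is contained in the maximal Kan complex, giving $L'\subseteq gL$. Conversely, given a simplex $\sigma$ of $gL$, each of its edges is invertible, and since the limit projection $\pi_a\colon L\to Fa^{Wa}$ is a functor of quasi-categories it carries these to invertible edges; thus $\pi_a(\sigma)\in g(Fa^{Wa})=(gFa)^{Wa}$ by the identity above, with $U=Wa$ a Kan complex by the hypothesis on $W$. As the relevant codomain inclusions are monic, the equaliser condition satisfied by $(\pi_a\sigma)_a$ in $L$ is inherited, so this family defines a simplex of $L'$; hence $gL\subseteq L'$. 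Therefore $gL=L'$ as subobjects of $L$, and the canonical comparison $g\wlim{W}{F}\to\wlim{W}{gF}$—the map induced on $\wlim{W}{gF}$ by the universal property from the restricted projections $\pi_a|_{gL}$, which we have just seen land in $(gFa)^{Wa}$—is precisely this identification, an isomorphism.

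The main obstacle is the point-set identity $g(Q^U)=(gQ)^U$ for $U$ a Kan complex, and specifically the nontrivial inclusion $g(Q^U)\subseteq(gQ)^U$: this is exactly the step where both hypotheses are consumed, since the invertibility of the ``$U$-direction'' edges $f\times\{j\}$ fails without the Kan condition on $U$ (equivalently, on the values of $W$), while the factoring through $g_*\qCat$ is what makes $gF$ a simplicial diagram in the first place. Everything else—the existence of the two limits, the end/equaliser presentation, and the monomorphism bookkeeping—is routine once this identity is established.
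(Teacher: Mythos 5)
Your proof is correct, and although it turns on the same key point-set identity as the paper's proof---namely $g(\qA^{\qX})\cong(g\qA)^{\qX}$ for $\qX$ a Kan complex, which you prove in more detail than the paper does, by triangulating $\Del^1\times\Del^1$ and invoking Joyal's pointwise criterion for invertibility of natural transformations---the surrounding argument is genuinely different. The paper's proof is a chain of isomorphisms: it uses the fact that $g$, being a right adjoint on underlying categories, preserves the products and equaliser of the end formula, and it must then repair the resulting equaliser, since its codomain involves the cotensors $g(Fb^{\Map_{\eA}(a,b)\times Wa})\cong(g(Fb^{\Map_{\eA}(a,b)}))^{Wa}$ whose inner exponents $\Map_{\eA}(a,b)$ need not be Kan; the repair consists of showing that both parallel maps factor through the subspace $(gFb)^{\Map_{\eA}(a,b)\times Wa}$, which is where the hypothesis that $F$ lands in $g_*\qCat$ is spent. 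Your proof sidesteps both of these points: you never use that $g$ preserves limits, and the problematic cotensors are never hit with $g$ at all, because you instead realise both sides as subcomplexes of $L=\wlim{W}{F}$ and prove they coincide by double inclusion---$\wlim{W}{gF}\subseteq gL$ because a Kan subcomplex of a quasi-category lies in its core (the Kan-ness of $\wlim{W}{gF}$ coming from Proposition~\ref{prop:flexible-weights-are-htpical} applied in the $\infty$-cosmos $\Kan$, so this is where flexibility of $W$ enters), and $gL\subseteq\wlim{W}{gF}$ because the limit projections, being functors, preserve invertible edges, with the $gF$-equaliser condition then inherited from the $F$-equaliser condition by monicity. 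In your argument the hypothesis that $F$ factors through $g_*\qCat$ is consumed exactly once, to make $gF$ a simplicial functor via Remark~\ref{rmk:core-not-enriched} and hence to make the inclusion $gF\To F$ simplicially natural, which is arguably cleaner than the paper's factoring step. What the paper's route buys is an explicit construction of the comparison map as a composite of isomorphisms, in the functorial spirit of ``right adjoints preserve limits''; what yours buys is a more elementary subobject argument that avoids the delicate equaliser repair for non-Kan hom-spaces entirely.
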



\begin{proof}
From  \eqref{eq:simp-limit-construction} we know that the flexible weighted limit $\wlim{W}{F}$ (respectively $\wlim{W}{gF}$) may be constructed as an un-enriched limit of functor spaces in the underlying category of quasi-categories. The groupoid core construction is right adjoint as a functor on underlying categories, and so it preserves those un-enriched limits. Furthermore, if $\qA$ is a quasi-category and $\qX$ is a Kan-complex then $g(\qA^{\qX})\cong (g\qA)^{\qX}$, a fact which follows from the observations that any functor $f\colon\qX\to\qA$ lands in the groupoid core $g\qA$ and that a natural transformation is invertible in $\qA^{\qX}$ iff its components all lie in $g\qA$; see Remark \ref{rmk:core-not-enriched}. From these facts, it follows that we have the following sequence of natural isomorphisms
\begin{align*}
g\wlim{W}{F} &\cong g\left(\int_{a\in\eA} Fa^{Wa}\right) \\ &\cong \mathrm{eq}\left( \xymatrix{\prod\limits_{a \in \eA} (gFa)^{Wa} \ar@<.75ex>[r] \ar@<-.75ex>[r] & \prod\limits_{a,b \in \eA} (g(Fb^{\Map_{\eA}(a,b)}))^{Wa}} \right) \\ \intertext{Now if $\Map_{\eA}(a,b)$ is not a Kan complex, then $(gFb)^{\Map_{\eA}(a,b)}$ may be a proper subspace of $g(Fb^{\Map_{\eA}(a,b)})$. However, since the diagram $F$ takes values in $g_*\qCat$ the images of both of the maps
\[
 \xymatrix{\prod\limits_{a \in \eA} (gFa)^{Wa} \ar@<.75ex>[r] \ar@<-.75ex>[r] & \prod\limits_{a,b \in \eA} ((gFb)^{\Map_{\eA}(a,b)})^{Wa} \subset  \prod\limits_{a,b \in \eA}(g(Fb^{\Map_{\eA}(a,b)}))^{Wa}}\]
 factor through this subspace. Hence, we have further isomorphisms}
  &\cong  \mathrm{eq}\left( \xymatrix{\prod\limits_{a \in \eA} (gFa)^{Wa} \ar@<.75ex>[r] \ar@<-.75ex>[r] & \prod\limits_{a,b \in \eA} (gFb)^{\Map_{\eA}(a,b) \times Wa}} \right) \\ &\cong \wlim{W}{gF}
\end{align*}
whose composite is the comparison map of the statement. 
\end{proof}

\begin{obs}
  When $W\colon\eA\to\SSet$ is a weight and $F\colon\eA\to g_*\eK$ is a diagram in the $(\infty,1)$-core of a quasi-category enriched category $\eK$ then cones $\mu\colon W\to\Fun_{g_*\eK}(X,F-)$ in $g_*\eK$ correspond to cones in $\eK$ whose components $\mu_A\colon WA\to\Fun_{\eK}(X,FA)$ send every edge in $WA$ to an invertible edge. This latter condition is, however, vacuous whenever all of the edges in $WA$ are themselves invertible, since functors preserve isomorphisms. So in the case where $W \colon \eA \to \SSet$ is valued in Kan complexes,  it follows that the $W$-weighted limit of $F$ in $\eK$ provides the corresponding $W$-weighted limit in $g_*\eK$. This and the simplicial functor $g \colon g_*\qCat \to \Kan$ of Remark \ref{rmk:core-not-enriched} give the fundamental justification for the result of Lemma \ref{lem:infty-one-core-flex-1}.
\end{obs}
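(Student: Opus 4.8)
The plan is to dispatch the three assertions of the Observation in order --- the correspondence of cones, its vacuity for Kan-complex-valued weights, and the consequent transfer of weighted limits --- and to read the closing sentence as a soft consequence. The only structural input I would use is that $g_*\eK\inc\eK$ is the identity-on-objects simplicial subcategory whose hom-quasi-categories are the groupoid cores $\Fun_{g_*\eK}(X,Y)=g\Fun_{\eK}(X,Y)$, together with the defining property of the core from Definition \ref{defn:gpd-cores}: a simplicial map $S\to\qA$ factors, necessarily uniquely, through $g\qA\inc\qA$ exactly when it carries every edge of $S$ to an invertible edge of $\qA$ (since an $n$-simplex lies in $g\qA$ iff all of its $1$-faces are invertible).

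First I would establish the cone correspondence for an arbitrary weight $W\colon\eA\to\SSet$. A $W$-cone $\mu\colon W\To\Fun_{g_*\eK}(X,F-)$ in $g_*\eK$ has components $\mu_A\colon WA\to g\Fun_{\eK}(X,FA)$, and post-composing with the inclusions $g\Fun_{\eK}(X,FA)\inc\Fun_{\eK}(X,FA)$ yields a simplicial natural transformation $\bar\mu\colon W\To\Fun_{\eK}(X,F-)$; naturality is preserved because $\Fun_{g_*\eK}(X,F-)\inc\Fun_{\eK}(X,F-)$ is a level-wise monomorphism of diagrams in $\SSet^{\eA}$. By the universal property of the core recalled above, each $\bar\mu_A$ sends every edge of $WA$ to an invertible edge; conversely any $W$-cone in $\eK$ whose components enjoy this property factors uniquely through the cores, and the factorisation is again natural since a monomorphism reflects the naturality equations. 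This is precisely the asserted bijection.

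Next I would check that the edge-invertibility condition is vacuous when $W$ is Kan-complex-valued. If $WA$ is a Kan complex then $\ho(WA)$ is a groupoid, so every edge of $WA$ is invertible; as $WA$ and each $\Fun_{\eK}(X,FA)$ are quasi-categories and functors of quasi-categories preserve isomorphisms (noted after Definition \ref{defn:gpd-cores}), the component $\mu_A$ automatically sends every edge to an invertible one. Hence for such $W$ the bijection of the previous paragraph degenerates to an identification of all $W$-cones in $\eK$ with all $W$-cones in $g_*\eK$, natural in the apex. Granting that $L\defeq\wlim{W}{F}$ exists in $\eK$, its universal cone is therefore the image of a unique cone $\Lambda'$ in $g_*\eK$, and --- since $g_*\eK$ and $\eK$ share objects and share the vertices of their hom-spaces --- the cone bijection upgrades the $\eK$-universal property of the limit to the cone-level universal property of $\Lambda'$ in $g_*\eK$. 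To reach the full enriched universal property I would apply $g$ to the $\eK$-isomorphism $\Fun_{\eK}(X,L)\cong\wlim{W}{\Fun_{\eK}(X,F-)}$ and commute $g$ past the limit, using that $\Fun_{\eK}(X,F-)$ factors through $g_*\qCat$ (the simplicial functor $\Fun_{\eK}(X,-)$ carries the invertible edges witnessing $F\colon\eA\to g_*\eK$ to invertible edges); this commutation $g\wlim{W}{\Fun_{\eK}(X,F-)}\cong\wlim{W}{g\Fun_{\eK}(X,F-)}$ is exactly the content of Lemma \ref{lem:infty-one-core-flex-1}. For the final sentence I would then recall from Remark \ref{rmk:core-not-enriched} that $g\colon g_*\qCat\to\Kan$ is a simplicially enriched right adjoint, hence preserves weighted limits, so that applying it to $L$ regarded as the $g_*\qCat$-limit reproduces $g\wlim{W}{F}\cong\wlim{W}{gF}$, recovering Lemma \ref{lem:infty-one-core-flex-1}.

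The main obstacle is precisely this last upgrade, from the vertex-level bijection of cones to the full enriched universal property: an $n$-simplex of $\wlim{W}{\Fun_{g_*\eK}(X,F-)}$ is a cone out of $W\times\Del^n$, and the edges in the $\Del^n$-direction are not invertible, so the edge-invertibility criterion no longer holds automatically and the naive argument breaks. What rescues it is that invertibility of an edge of a weighted limit of quasi-categories is detected by evaluating at the vertices of the weights --- the pointwise criterion that a natural transformation is invertible exactly when its components are, as used in Example \ref{ex:Kan-infty-cosmos} --- so that the core of the limit coincides with the limit of the cores. This is the very mechanism made explicit by the end-formula computation in the proof of Lemma \ref{lem:infty-one-core-flex-1}, and the Observation is best understood as the conceptual repackaging of that computation through the adjunction $\Kan\inc g_*\qCat$.
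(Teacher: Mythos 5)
Your argument is correct and takes essentially the same route as the paper's own inline reasoning: the factorisation criterion for the groupoid core gives the cone correspondence, Kan-valuedness of $W$ renders the edge-invertibility condition vacuous, and the transfer of the limit combined with the enriched adjunction between $\Kan$ and $g_*\qCat$ recovers Lemma \ref{lem:infty-one-core-flex-1} exactly as the Observation's final sentence intends. Your additional care in flagging that the vertex-level cone bijection does not by itself yield the full enriched universal property --- the weights $W\times\Del^n$ appearing at simplex level are no longer Kan-valued --- and in resolving this via the pointwise detection of invertibility made explicit in the end-formula computation proving Lemma \ref{lem:infty-one-core-flex-1}, fills in a step the paper elides, and involves no circularity since that Lemma is established independently beforehand.
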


  So our approach to building $W$-weighted  limits in $g_*\eK$ for a general flexible weight $W$ will be to first complete the weight to some weight $W'$ whose values are Kan complexes, and whose edges are thus all invertible, and show that a $W'$-weighted limit of $F$ in $\eK$ provides us with a $W$-weighted \emph{homotopy\/} limit in $g_*\eK$ in the sense of Definition \ref{defn:flexible-hty-limit}. Under this construction it is only reasonable to hope for a homotopy limit, rather than a strict enriched limit, because the replacement $W\inc W'$ is given by the following homotopical construction:

\begin{lem}\label{lem:levelwise-Kan-replacement}$\quad$
\begin{enumerate}[label=(\roman*)]
\item Any  weight $W \in \SSet^{\eA}$ admits a level-wise Kan complex replacement $W'$, related via a projective anodyne extension $W \inc W'$.
\item If the weight $W$ is flexible then so is the associated weight $W'$.
\item\label{itm:levelwise-kan-iii} This construction can be given by a simplicial functor $(-)'\colon\Flex^{\eA}\to\Flex^{\eA}$ and a simplicially natural family of projective anodyne extensions $u_W\colon W\inc W'$.
\end{enumerate}
\end{lem}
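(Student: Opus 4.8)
The plan is to construct $W'$ by applying the small object argument to the set of \emph{projective horn cells}
\[ j^n_{k,A}\colon \Horn^n_k\times\Map_{\eA}(A,-)\inc \Del^n\times\Map_{\eA}(A,-), \qquad A\in\eA,\ 0\le k\le n,\ n\ge 1, \]
the horn-analogues of the projective cells of Definition~\ref{defn:flexible-weight}, whose cellular closure is the class of projective anodyne extensions. The computation that powers everything is the enriched Yoneda isomorphism $\Map_{\SSet^{\eA}}(U\times\Map_{\eA}(A,-),V)\cong\Map_{\sSet}(U,VA)$ (the same isomorphism exploited in the proof of Proposition~\ref{prop:flexible-weights-are-htpical}), under which a lifting problem of the unique map $V\to\ast$ to the terminal weight against $j^n_{k,A}$ transposes into the problem of filling the horn $\Horn^n_k\to VA$. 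Hence $V\to\ast$ has the right lifting property against every projective horn cell precisely when each $VA$ is a Kan complex. First I would run the small object argument to factor $W\to\ast$ as $W\inc W'\to\ast$, with $W\inc W'$ a composite of pushouts of coproducts of projective horn cells and $W'\to\ast$ having that right lifting property, i.e.\ $W'$ levelwise Kan. That a composite of length $\omega$ suffices---so that $W\inc W'$ is genuinely a \emph{countable} composite, matching Definition~\ref{defn:flexible-weight}, and is therefore a projective anodyne extension---follows because $\Horn^n_k$ and $\Del^n$ are finite: by the Yoneda isomorphism any horn in $W'A=\colim_m W^{(m)}A$ factors through a finite stage and is filled at the next. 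This proves (i).

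For (ii) the plan is to show that every projective anodyne extension is already a projective cell complex, so that flexibility propagates along the composite $\emptyset\inc W\inc W'$. The key observation is that the functor $-\times\Map_{\eA}(A,-)\colon\sSet\to\SSet^{\eA}$ is cocontinuous---its colimits are computed pointwise and $\sSet$ is cartesian closed---and carries the boundary inclusion $\boundary\Del^n\inc\Del^n$ to the projective $n$-cell associated with $A$. A horn inclusion $\Horn^n_k\inc\Del^n$ is a monomorphism of simplicial sets with finite codomain, hence a two-cell relative complex built from boundary inclusions (attach the missing codimension-one face, then the top simplex); applying the cocontinuous functor $-\times\Map_{\eA}(A,-)$ exhibits each projective horn cell $j^n_{k,A}$ as a finite projective cell complex. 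Combining this with the routine closure of the class of projective cell complexes under coproducts, pushout, and countable composition (interleaving the $\omega$-indexed stages) shows that any projective anodyne extension is a projective cell complex. Composing the projective cell complexes $\emptyset\inc W$ and $W\inc W'$ then exhibits $W'$ as flexible.

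For the functoriality of (iii) the naive ``attach one cell per lifting square'' form of the small object argument will not do, since its indexing set consists only of the $0$-simplices of the relevant mapping spaces and so yields merely a $1$-functor. Instead I would run the \emph{simplicially enriched} small object argument, forming $W^{(\beta+1)}$ as the pushout of $W^{(\beta)}$ along the coproduct over all $(n,k,A)$ of the tensored maps $\Map_{\SSet^{\eA}}(\Horn^n_k\times\Map_{\eA}(A,-),W^{(\beta)})\cdot j^n_{k,A}$, where $\cdot$ denotes the tensor of $\SSet^{\eA}$ over $\sSet$; this is manifestly a simplicial endofunctor and produces a simplicially natural inclusion $u_W\colon W\inc W'$. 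Each tensored generator is $(-\times\Map_{\eA}(A,-))$ applied to an anodyne monomorphism $U\times\Horn^n_k\inc U\times\Del^n$, which lies in the cellular closure of the horn inclusions, so applying the cocontinuous functor places every stage, and hence $u_W$, among the projective anodyne extensions, while finiteness again yields levelwise Kan-ness at stage $\omega$. I expect the main obstacle to lie exactly here: reconciling the \emph{simplicial} functoriality of the enriched construction with the requirement that $u_W$ remain a (countable) projective anodyne extension---so that $(-)'$ lands in $\Flex^{\eA}$ by (ii), rather than merely in the cofibrant weights---which amounts to controlling the length of the cellular presentations of the tensored generators. Everything else---the transposition of lifting problems, the $\omega$-convergence from finiteness, and the cellular closure bookkeeping---is routine.
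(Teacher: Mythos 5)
Your proposal is correct and follows essentially the same route as the paper's proof: apply the small object argument to the set of projective horns $\Horn^{n,k}\times\Map_{\eA}(A,-)\inc\Del^n\times\Map_{\eA}(A,-)$ for (i), observe that horn inclusions are cell complexes of boundary inclusions so projective anodyne extensions are projective cell complexes for (ii), and invoke the simplicially enriched small object argument for (iii) (the paper simply cites Chapter 13 of \cite{Riehl:2014kx} for the enriched construction you write out explicitly). Your added care about $\omega$-convergence via finiteness of horns, and about the tensored generators remaining projective anodyne, fills in details the paper leaves implicit but does not change the argument.
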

\begin{proof}
  We may apply Quillen's small object argument in the category of weights $\SSet^{\eA}$ to the following set of \emph{projective horns}
  \begin{equation}\label{eq:kan-proj-cells}
    \left\{ \left. \Horn^{n,k}\times\Map_{\eA}(A,-)\inc \Del^n\times\Map_{\eA}(A,-) \right|\, n\geq 0 \text{ and } 0\leq k\leq n\right\}
  \end{equation}
  built from \emph{all\/} horns. In that way we obtain a weak factorisation system which factorises each simplicial natural transformation $w\colon U\to V$ of weights into a composite $w = v\circ u$ where:
  \begin{itemize}
  \item the map $u$ is a countable composite of pushouts of coproducts of projective cells in the set in display~\eqref{eq:kan-proj-cells}, or in other words a \emph{projective anodyne extension}, and
  \item the map $v$ is \emph{level-wise Kan fibration}.
  \end{itemize}
  It follows that any weight $W\colon\eA\to\SSet$ has a replacement $W'\colon\eA\to\SSet$ which is a level-wise Kan complex equipped with a projective anodyne extension $u_W\colon W\inc W'$. Since horn inclusions are build as composites of pushouts of sphere boundary inclusions, any projective anodyne extension is, in particular, a relative cell complex, so it follows that if $W$ is a flexible weight then so is its replacement $W'$.

  Indeed, by applying the simplicially enriched variant of the small object argument discussed in Chapter 13 of~\cite{Riehl:2014kx}, we may construct a factorisation system which is functorial in the simplicially enriched sense. Consequently, this replacement construction may be assumed to deliver a simplicial functor $(-)'\colon\Flex^{\eA}\to\Flex^{\eA}$ and a simplicially natural family of projective anodyne extensions $u_W\colon W\inc W'$.
\end{proof}

\begin{lem}\label{lem:infty-one-core-flex-2}
  Suppose that $u\colon U\inc W$ is a projective anodyne extension of weights in $\SSet^{\eA}$ and that $F\colon \eA\to \Kan$ is a diagram valued in Kan complexes. Then the induced simplicial map $\wlim{u}{F}\colon\wlim{W}{F}\to\wlim{U}{F}$ is a trivial fibration.
\end{lem}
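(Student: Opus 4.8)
The plan is to run the cell-by-cell induction from the proof of Proposition~\ref{prop:flexible-weights-are-htpical}\ref{itm:flexible-relative} essentially verbatim, with the projective cells replaced by the \emph{projective horn cells} $\Horn^{n,k}\times\Map_{\eA}(A,-)\inc\Del^n\times\Map_{\eA}(A,-)$ out of which the projective anodyne extension $u\colon U\inc W$ is assembled. Since $u$ is a countable composite of pushouts of coproducts of such cells, and since $\wlim{-}{F}$ is cocontinuous in its weight variable (Observation~\ref{obs:lim-cont-in-weight}), these colimit presentations are carried to the dual limit presentations: the map $\wlim{u}{F}\colon\wlim{W}{F}\to\wlim{U}{F}$ is exhibited as a countable inverse limit of pullbacks of products of the maps on weighted limits induced by the individual projective horn cells. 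As in Proposition~\ref{prop:flexible-weights-are-htpical}, existence of $\wlim{U}{F}$ --- guaranteed when $U$ is flexible, as it is in our applications --- forces existence of $\wlim{W}{F}$ and of each intermediate limit built along the way.

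The key new input is the computation of a single generating map. By the Yoneda-lemma identification $\wlim{V\times\Map_{\eA}(A,-)}{F}\cong FA^V$ used in the proof of Proposition~\ref{prop:flexible-weights-are-htpical}\ref{itm:flexible-exist}, the projective horn cell associated with $[n]\in\Del$, $0\le k\le n$, and $A\in\eA$ induces the cotensor $FA^{\Del^n}\to FA^{\Horn^{n,k}}$, that is, the Leibniz cotensor $(\Horn^{n,k}\inc\Del^n)\leib\pwr(FA\tfib 1)$. Because $F$ takes values in $\Kan$, the object $FA$ is a Kan complex and $FA\tfib 1$ is a Kan fibration; since the horn inclusion is anodyne, this Leibniz cotensor is a \emph{trivial} fibration of Kan complexes. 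Thus, in contrast to Proposition~\ref{prop:flexible-weights-are-htpical}\ref{itm:flexible-relative}, where the generators were merely isofibrations, here every generating map is a trivial fibration.

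It then remains to observe that the trivial fibrations of the $\infty$-cosmos $\Kan$ of Example~\ref{ex:Kan-infty-cosmos} are closed under the three operations appearing in the inverse-limit presentation: arbitrary products, pullbacks along arbitrary functors, and countable inverse limits of towers. For the underlying isofibrations this is immediate from Definition~\ref{defn:cosmos}\ref{defn:cosmos:b}; indeed, since every projective anodyne extension is in particular a projective cell complex, Proposition~\ref{prop:flexible-weights-are-htpical}\ref{itm:flexible-relative} already shows $\wlim{u}{F}$ to be an isofibration. The real work therefore lies in the \emph{equivalence} half of each closure statement, and this is where I expect the main obstacle. Products and pullbacks of equivalences cause no trouble --- equivalences are representably defined, and trivial fibrations are stable under pullback in the category of fibrant objects underlying $\Kan$ --- so the delicate case is the countable tower, where one must verify that the limit projection of a tower of trivial fibrations is again a trivial fibration. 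This is a standard property of the category-of-fibrant-objects structure on $\Kan$ noted after Definition~\ref{defn:cosmos}, and with it in hand the composite presentation exhibits $\wlim{u}{F}$ as a trivial fibration, completing the proof.
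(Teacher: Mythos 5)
Your proposal is correct and is essentially the paper's own argument: the paper likewise reruns the induction of Proposition~\ref{prop:flexible-weights-are-htpical} with projective horns in place of projective cells, the key point being that each generating map is the trivial fibration $FA^{\Del^n}\tfib FA^{\Horn^{n,k}}$ since $FA$ is a Kan complex. The closure properties of trivial fibrations under products, pullbacks, and countable towers that you treat as the remaining ``delicate'' step are immediate from their right-lifting-property characterisation against boundary inclusions (or, in cosmological language, from Proposition~\ref{prop:K^2_eq-cosmos}), so no genuinely new work is needed there.
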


\begin{proof}
  Our argument follows that of the proof of Proposition~\ref{prop:flexible-weights-are-htpical}, but this time we start from the observation that the the map $\wlim{\Del^n\times \Map_{\eA}(A,-)}{F}\to \wlim{\Horn^{n,k}\times\Map_{\eA}(A,-)}{F}$ induced by the projective horn $\Horn^{n,k}\times\Map_{\eA}(A,-) \inc \Del^n\times\Map_{\eA}(A,-)$ is simply the trivial fibration $FA^{\Del^n} \tfib FA^{\Horn^{n,k}}$.
\end{proof}

Combining the results of this section, we now compute the flexible weighted homotopy limit of any diagram $F \colon \eA \to g_*\eK$ valued in the $(\infty,1)$-core of a quasi-categorically enriched category $\eK$ that admits flexible weighted limits. The following result reveals that for any flexible weight $W \colon \eA \to \SSet$ the homotopy limit object may be computed as the homotopy weighted limit of $F \colon \eA \to \eK$ weighted by the level-wise Kan complex replacement $W'$ of $W$, and the $W$-weighted homotopy limit cone may similarly be extracted from the $W'$-weighted homotopy limit cone.

\begin{prop}\label{prop:infty-one-core-flex}
  Suppose that $W\colon \eA\to\SSet$ is a flexible weight and that $u_W\colon W\inc W'$ is its replacement by a level-wise Kan complex. Consider a diagram $F\colon\eA\to g_*\eK$  in the $(\infty,1)$-core of a quasi-category enriched category $\eK$.
  Any cone $\Lambda'\colon W'\to \Fun_{\eK}(L,F-)$ which displays $L$ as a $W'$-weighted homotopy limit in $\eK$ factors through the inclusion $\Fun_{g_*\eK}(L,F-)\subseteq\Fun_{\eK}(L,F-)$, and the composite cone
  \begin{equation*}
    \Lambda = \mkern20mu
    \xymatrix@R=0em@C=4em{
      {W}\ar@{^(->}[r]^-{u_W} & {W'}\ar[r]^-{\Lambda'} &
      {\Fun_{g_*\eK}(L,F-)}
    }
  \end{equation*}
  displays $L$ as a $W$-weighted homotopy limit of $F$ in the $(\infty,1)$-core $g_*\eK$.
\end{prop}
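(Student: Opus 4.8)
The plan is to split the statement into its two assertions: first, that $\Lambda'$ factors through the groupoidal subcategory, and second, that the restricted cone $\Lambda$ exhibits the claimed homotopy limit. For the factoring, I would look at each component $\Lambda'_A\colon W'A\to\Fun_{\eK}(L,FA)$, a map of simplicial sets out of the Kan complex $W'A$. Every edge of a Kan complex is invertible, and a functor of quasi-categories preserves invertible edges, so $\Lambda'_A$ carries all of $W'A$ into the groupoid core $g\Fun_{\eK}(L,FA)=\Fun_{g_*\eK}(L,FA)$, exactly as in the Observation preceding the statement. These factorisations are natural in $A$, so they assemble into a cone $\Lambda'\colon W'\to\Fun_{g_*\eK}(L,F-)$, and $\Lambda$ is by definition its restriction along $u_W$.

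For the universal property I would fix $X\in g_*\eK$ and abbreviate $G\defeq\Fun_{\eK}(X,F-)\colon\eA\to\qCat$, so that $\Fun_{g_*\eK}(X,F-)=gG$ is valued in Kan complexes. I would first record two preliminary facts. (1) The diagram $G$ factors through $g_*\qCat$: since $F$ lands in $g_*\eK$, its action sends every edge of $\Map_{\eA}(A,B)$ to an invertible edge of $\Fun_{\eK}(FA,FB)$, and post-composing with the representable simplicial functor $\Fun_{\eK}(X,-)$, which preserves invertible edges, shows $G$ sends every such edge to an invertible edge of $\Fun_{\qCat}(GA,GB)$. (2) The groupoid core functor $g\colon\qCat\to\Kan$ preserves equivalences of quasi-categories, since a homotopy inverse together with the witnessing invertible $2$-cells all restrict to the groupoid cores.

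With these in hand the argument is an assembly of the section's lemmas. Because $L$ is a $W'$-weighted homotopy limit of $F$ in $\eK$, the $\Lambda'$-comparison $\Fun_{\eK}(X,L)\to\wlim{W'}{G}_{\eA}$ is an equivalence of quasi-categories; applying $g$ and invoking fact (2) yields an equivalence $\Fun_{g_*\eK}(X,L)\to g\wlim{W'}{G}_{\eA}$. Since $W'$ is Kan-complex valued and $G$ factors through $g_*\qCat$ by fact (1), Lemma \ref{lem:infty-one-core-flex-1} gives an isomorphism $g\wlim{W'}{G}_{\eA}\cong\wlim{W'}{gG}_{\eA}$, under which this composite is identified with the $\Lambda'$-comparison $\Fun_{g_*\eK}(X,L)\to\wlim{W'}{\Fun_{g_*\eK}(X,F-)}_{\eA}$ computed internally to $g_*\eK$; the latter is therefore an equivalence. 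Finally, since $u_W\colon W\inc W'$ is a projective anodyne extension and $gG$ is valued in $\Kan$, Lemma \ref{lem:infty-one-core-flex-2} shows the restriction map $\wlim{W'}{gG}_{\eA}\to\wlim{W}{gG}_{\eA}$ is a trivial fibration, hence an equivalence. As $\Lambda=\Lambda'\circ u_W$, the $\Lambda$-comparison for $g_*\eK$ is precisely the composite of these two maps, and so is an equivalence of quasi-categories, which is what we must show.

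Every step is supplied by an earlier result, so the genuine work is bookkeeping rather than new ideas. The point demanding the most care, which I expect to be the main obstacle, is verifying that applying $g$ to the $W'$-comparison in $\eK$ and transporting along the isomorphism of Lemma \ref{lem:infty-one-core-flex-1} really reproduces the $W'$-comparison computed inside $g_*\eK$, rather than merely some equivalence between the same objects. This is a naturality check on the cone $\Lambda'$ against the level-wise end formula \eqref{eq:simp-limit-construction} underlying Lemma \ref{lem:infty-one-core-flex-1}, and it is exactly what licenses reading the final universal property off the explicit cone $\Lambda$ rather than off an abstract zig-zag of equivalences.
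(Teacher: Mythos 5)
Your proposal is correct and follows essentially the same route as the paper's proof: the factorisation via Kan-complex-valued components landing in groupoid cores, followed by the composite of the $g$-restricted $\Lambda'$-comparison equivalence, the isomorphism of Lemma \ref{lem:infty-one-core-flex-1}, and the trivial fibration of Lemma \ref{lem:infty-one-core-flex-2}, identified at the end with the $\Lambda$-comparison in $g_*\eK$. Your explicit verification that $\Fun_{\eK}(X,F-)$ factors through $g_*\qCat$ (a hypothesis of Lemma \ref{lem:infty-one-core-flex-1} the paper leaves implicit) and your flagging of the final identification as a naturality check are welcome points of care, but they do not change the argument.
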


\begin{proof}
  The cone $\Lambda'$ factors as stated because each $W'(A)$ is a Kan complex, so its component $\Lambda'_A\colon W'(A)\to\Fun_{\eK}(L,FA)$ factors through the groupoid core $g\Fun_{\eK}(L,FA)=\Fun_{g_*\eK}(L,FA)$. For the remainder consider the following composite:
  \begin{equation*}
    \xymatrix@C=1.3em{
      {g\Fun_{\eK}(X,L)}\ar[r]^-{\simeq} &
      {g\wlim{W'}{\Fun_{\eK}(X,F-)}}\ar[r]^-{\cong} &
      {\wlim{W'}{g\Fun_{\eK}(X,F-)}}\ar@{->>}[r]^-{\simeq} &
      {\wlim{W}{g\Fun_{\eK}(X,F-)}}
    }
  \end{equation*}
  Here the left-hand map is constructed by taking the equivalence induced by post-comp\-osi\-tion with homotopy limiting cone $\Lambda'$ in $\eK$ and restricting it to groupoid cores. We infer that it is an equivalence by appealing to the fact that equivalences of quasi-categories restrict to equivalences of their groupoid cores. The middle isomorphism arises by application of Lemma~\ref{lem:infty-one-core-flex-1} and the right-hand trivial fibration by application of Lemma~\ref{lem:infty-one-core-flex-2} to the replacement $u_W\colon W\inc W'$. Now observe that this composite equivalence is simply the map obtained by post-composing with the composite cone $\Lambda$ in $g_*\eK$ and the stated result follows.
\end{proof}

Putting together the results of this section, we conclude that for any flexible weight $W$, the $(\infty,1)$-core of an $\infty$-cosmos admits a simplicially functorial choice of flexible weighted homotopy limits, formed by taking a simplicially functor levelwise Kan replacement $W'$ of the flexible weight as in Lemma \ref{lem:levelwise-Kan-replacement}, forming the strict $W'$-weighted limit as permitted by Lemma \ref{lem:infty-one-core-flex-1}, and applying Proposition \ref{prop:infty-one-core-flex}.

\begin{cor}\label{cor:infty-one-core-flex}
If $\eK$ is a quasi-categorically enriched category which admits a simplicially functorial choice of flexible homotopy limits, then so does its $(\infty,1)$-core $g_*\eK$. In particular, the $(\infty,1)$-core of an $\infty$-cosmos admits a simplicially functorial choice of flexible homotopy limits.
\end{cor}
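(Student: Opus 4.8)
The plan is to establish the conditional statement first and then read off the ``in particular'' clause. So assume $\eK$ is quasi-categorically enriched and equipped with a simplicial bifunctor $\wlim{-}{-}^\simeq_{\eA}\colon(\Flex^{\eA})\op\times\eK^{\eA}\to\eK$ together with a simplicially natural family of cones $\Lambda'$ exhibiting flexible weighted homotopy limits in the sense of Definition \ref{defn:functorial-flexible}. Recalling that $g_*\eK$ is enriched in Kan complexes and that Lemma \ref{lem:levelwise-Kan-replacement}\ref{itm:levelwise-kan-iii} provides a simplicial endofunctor $(-)'\colon\Flex^{\eA}\to\Flex^{\eA}$ valued in level-wise Kan weights together with a simplicially natural family of projective anodyne extensions $u_W\colon W\inc W'$, I would define the structure on $g_*\eK$ as the composite
\[
(\Flex^{\eA})\op\times(g_*\eK)^{\eA}\longrightarrow(\Flex^{\eA})\op\times\eK^{\eA}\xrightarrow{\wlim{-}{-}^\simeq_{\eA}}\eK,
\]
whose first leg applies $(-)'$ in the weight variable and post-composes with the inclusion $g_*\eK\inc\eK$ in the diagram variable, so that $(W,F)\mapsto\wlim{W'}{F}^\simeq_{\eA}$. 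The accompanying cone is $\Lambda_{W,F}\defeq\Lambda'_{W',F}\circ u_W$; by Proposition \ref{prop:infty-one-core-flex} this factors through $\Fun_{g_*\eK}(\wlim{W'}{F}^\simeq,F-)$ and displays $\wlim{W'}{F}^\simeq$ as a $W$-weighted homotopy limit of $F$ in $g_*\eK$, and it is simplicially natural in $W$ and $F$ because $\Lambda'$, $u$, and $(-)'$ all are. Granting that the composite lands in $g_*\eK$, this is exactly the data required by Definition \ref{defn:functorial-flexible}.

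The step I expect to be the main obstacle is precisely verifying that this composite factors through the inclusion $g_*\eK\inc\eK$, i.e.\ that on hom-spaces its image lies in the groupoid cores. My plan is to reduce this to the slogan that a simplicial functor, restricting on hom-spaces to a map of quasi-categories, carries invertible edges to invertible edges; it then suffices to see that after the replacement $(-)'$ the source hom-spaces contain only invertible edges. For the diagram variable this is clear: a morphism of diagrams valued in $g_*\eK$ is pointwise invertible, its components landing in the groupoid cores $\Fun_{g_*\eK}(F_1 A,F_2 A)$, and a pointwise-invertible transformation is invertible in the hom-space of $(g_*\eK)^{\eA}$. For the weight variable the key computation is that $\Map_{\SSet^{\eA}}(W_2',W_1')$ is itself a Kan complex: running the tower-of-isofibrations analysis from the proof of Proposition \ref{prop:flexible-weights-are-htpical}, the projective cell structure of $W_2'$ presents this mapping space as a countable inverse limit of pullbacks of products of the maps $(W_1'(A))^{\Del^n}\tfib(W_1'(A))^{\boundary\Del^n}$, each of which is a Kan fibration because $W_1'(A)$ is a Kan complex, so the limit is a Kan complex and all of its edges are invertible. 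Consequently every edge of every source hom-space is sent to an invertible edge of $\Fun_{\eK}$, i.e.\ into its groupoid core, which is the desired factorisation.

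Finally, for the ``in particular'' clause I would note that any $\infty$-cosmos already carries a simplicially functorial choice of \emph{strict} flexible weighted limits: Proposition \ref{prop:flexible-weights-are-htpical}\ref{itm:flexible-relative} supplies the bifunctor $\wlim{-}{-}_{\eA}\colon(\Flex^{\eA})\op\times\eK^{\eA}\to\eK$ together with its universal cones, and a strict weighted limit is a fortiori a flexible weighted homotopy limit since the defining isomorphism of \eqref{eq:weighted-UP} is in particular the equivalence demanded in Definition \ref{defn:flexible-hty-limit}. Thus an $\infty$-cosmos satisfies the hypothesis of the conditional statement just proved, and its $(\infty,1)$-core therefore inherits a simplicially functorial choice of flexible weighted homotopy limits.
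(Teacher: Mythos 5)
Your construction is precisely the paper's: its one\nobreakdash-sentence proof cites Lemma \ref{lem:infty-one-core-flex-1}, Propositions \ref{prop:flexible-weights-are-htpical} and \ref{prop:infty-one-core-flex}, and Lemma \ref{lem:levelwise-Kan-replacement}\ref{itm:levelwise-kan-iii}, assembled exactly as you assemble them, and your reduction of the ``in particular'' clause to Proposition \ref{prop:flexible-weights-are-htpical}\ref{itm:flexible-relative} is also the intended one. You moreover correctly isolate the point that the paper's citation leaves implicit: the composite bifunctor must \emph{corestrict} to $g_*\eK$, i.e.\ its action on hom-spaces must land in groupoid cores. Your treatment of the weight variable is sound: $\Map_{\SSet^{\eA}}(W_2',W_1')$ is indeed a Kan complex (projectively cofibrant source, level-wise Kan target), so every edge of it extends along $\Del^1\inc\iso$ and is therefore carried to an invertible edge by any simplicial map into a quasi-category.

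The diagram-variable step, however, contains a genuine gap. The hom-space $\Map_{(g_*\eK)^{\eA}}(F_1,F_2)$ is the end $\int_{A}\Fun_{g_*\eK}(F_1A,F_2A)$, an equalizer of products and cotensors of Kan complexes; such a limit is in general neither a Kan complex nor a quasi-category, so the only notion of ``invertible edge'' there that is preserved by an arbitrary simplicial map into a quasi-category is ``extends along $\Del^1\inc\iso$''. Your claim that pointwise invertibility gives invertibility in this hom-space therefore amounts to asserting that the componentwise extensions $\Psi_A\colon\iso\to\Fun_{g_*\eK}(F_1A,F_2A)$ of the edges $\psi_A$ can be chosen \emph{strictly} naturally in $A$. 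That strictification fails in general: unlike the $\Cat$-enriched setting, where inverses are unique and hence automatically natural, an $\iso$-extension in a quasi-category involves non-unique higher coherence data, and forming the strict end does not preserve the relevant level-wise trivial fibrations, precisely because the weight computing a strict end (the hom bifunctor) is not projectively cofibrant. Tellingly, your argument for this step never invokes flexibility of the weight, yet flexibility is exactly what is needed. The repair is to test invertibility of the \emph{image} edge $h\in\Fun_{\eK}(L_1,L_2)$ against the universal property rather than in the source: writing $\lambda$ for the limiting cone over $F_1$, simplicial naturality of the cones identifies the image of $h$ under the equivalence (isomorphism, in the strict cosmos case) $\Fun_{\eK}(L_1,L_2)\to\wlim{W'}{\Fun_{\eK}(L_1,F_2-)}$ given by post-composition with the cone over $F_2$ with the edge whose evaluation at each vertex $w\in W'a$ is the whiskered edge $\psi_a\circ\lambda_a(w)$, and these are invertible. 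Then one needs the lemma that an edge of a flexibly weighted limit of quasi-categories all of whose vertex evaluations are invertible is itself invertible; this holds because the flexible weight $W'$ has the left lifting property against the level-wise trivial fibration from $\Fun_{\eK}(L_1,F_2-)^{\iso}$ to the simplicial subfunctor of $\Fun_{\eK}(L_1,F_2-)^{\Del^1}$ spanned by the simplices with invertible vertices (a standard consequence of Joyal's pointwise criterion quoted in Example \ref{ex:Kan-infty-cosmos}). Since equivalences of quasi-categories reflect invertibility, $h$ is invertible, and with this lemma in place your construction does go through.
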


\begin{proof}
The result follows directly from Lemma \ref{lem:infty-one-core-flex-1},  Propositions~\ref{prop:flexible-weights-are-htpical} and~\ref{prop:infty-one-core-flex}  and Lemma~\ref{lem:levelwise-Kan-replacement}\ref{itm:levelwise-kan-iii}.\end{proof}



\section{Homotopy coherent realisation, simplicial computads, and collages}\label{sec:computads}

Many interesting examples of large quasi-cat\-e\-gories arise as homotopy coherent nerves of Kan complex enriched categories. In this section, we develop tools that will be used   in \S\ref{sec:lims-in-nerves} to prove that appropriately-defined homotopy limits in a Kan complex enriched category descend to limits in the quasi-category defined as its homotopy coherent nerve.

We can probe the homotopy coherent nerve of a Kan complex enriched category by making use of the \emph{homotopy coherent realisation functor}
\[ \adjdisplay \gC -| \nrvhc : \sSet\text{-}\Cat -> \sSet.\] which is  left adjoint  to the homotopy coherent nerve functor of Cordier \cite{Cordier:1982:HtyCoh}. This left adjoint takes values in the subcategory of ``freely generated'' simplicial categories, which go by the name of \emph{simplicial computads}.\footnote{The simplicial computads are the cofibrant objects \cite[\S 16.2]{Riehl:2014kx} in  the model structure on simplicial categories due to  Bergner \cite{Bergner:2007fk}.} By adjointness, an $X$-shaped diagram in a homotopy coherent nerve transposes to define a simplicial functor whose domain is the simplicial computad $\gC{X}$, and the ``freeness'' implies that such diagrams are specified by relatively little data, enumerated in Proposition \ref{prop:gothic-C}. This analysis gives us a more explicit presentation of the generating data of a so-called \emph{homotopy coherent diagram} $\gC{X} \to \eC$ than is widely known.

In this section, we briefly review the notions needed here. A more leisurely account with considerably more details appears as \S\refVI{sec:coherent-nerve}. Simplicial computads and the homotopy coherent realisation functor are defined in \S\ref{ssec:coherent}. These techniques are applied in \S\ref{ssec:collage} to construct a particular weight $W_X \colon \gC{X} \to \SSet$ associated to homotopy coherent diagrams of shape $X$ via a simplicial computad that defines the \emph{collage} of $W_X$ and dictates the shape of $W_X$-cones over a diagram. Finally, in \S\ref{ssec:joins}, we compute the homotopy coherent realisation $\gC[X\join Y]$ of the join of two simplicial sets, a result which will be needed in \S\ref{sec:complete}.

\subsection{Simplicial computads and homotopy coherent realisation}\label{ssec:coherent}
 
 Let $\Graph$ denote the  presheaf
  category of reflexive graphs and graph morphisms. We identify the category of graphs with the essential image of the free category functor $\free \colon \Graph \inc \Cat$. The objects in the essential image are \emph{computads}, those categories in which every arrow admits a unique factorisation into \emph{atomic}\footnote{For bookkeeping reasons it is convenient to adopt the convention that atomic arrows are not identities, though in a simplicial computad the identities will also admit no non-trivial factorisations. With this convention, an identity arrow factors uniquely as an empty composite of atomic arrows.} arrows that admit no non-trivial factorisations, and the morphisms in the essential image are \emph{computad functors}, that preserve atomic arrows (or send them to identities).
  
  Recall that a simplicial category $\eA$ may be regarded as a simplicial object $[n] \mapsto \eA_n$ in the category of categories with a common set of objects $\mathrm{ob}\eA$ and identity-on-objects functors.  The arrows of $\eA_n$ are referred to as $n$-arrows of $\eA$.

  \begin{defn}[{the category of simplicial computads \refVI{defn:simplicial-computad}, \refVI{lem:computad-colimits}}]\label{defn:simplicial-computad}
  The category of simplicial computads $\sCptd$ is defined by the intersection of the 
sub-categories
  $\sCat$ and $\Cptd^{\Del\ep\op}$ of $\Cat^{\Del\ep\op}$:
  \[
\xymatrix{ \sCptd \ar@{^(->}[r] \ar[d] \pbexcursion & \sCat \ar[d] \\ \Graph^{\Del\ep\op} \ar@{^(->}[r]_{\free} & \Cat^{\Del\ep\op}}\]
which is to say that a simplicial category $\eA$ is a \emph{simplicial computad} if and only if each category $\eA_n$ of $n$-arrows is freely generated by the reflexive directed graph of atomic $n$-arrows and degenerate images of atomic arrows are atomic. A simplicial functor is a \emph{simplicial computad morphism} if it preserves atomic arrows (or send them to identities).
\end{defn}

\begin{lem}[{\refVI{lem:computad-colimits}}]\label{lem:computad-colimits} $\sCptd$ is closed
  under  colimits in $\sCat$.
  \end{lem}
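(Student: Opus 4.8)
The plan is to prove this by reducing to the corresponding unenriched fact about $\Cptd\subseteq\Cat$ and then promoting it one simplicial degree at a time, exploiting the pullback presentation of $\sCptd$ from Definition~\ref{defn:simplicial-computad}; this recovers \refVI{lem:computad-colimits}. Concretely, I would show that the colimit in $\sCat$ of any diagram $D\colon\mathcal{J}\to\sCptd$ is computed \emph{levelwise}, in the sense that in each simplicial degree $n$ the category of $n$-arrows of $\colim D$ is the colimit in $\Cat$ of the categories $(D_j)_n$; equivalently, that the right-hand vertical functor $\sCat\to\Cat^{\Del\ep\op}$ of the defining pullback preserves this particular colimit. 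Granting this, the pullback square finishes the argument at once: the bottom functor $\free\colon\Graph^{\Del\ep\op}\to\Cat^{\Del\ep\op}$ is applied pointwise and hence preserves colimits, so once the right-hand leg preserves the colimit of $D$ the $\sCat$-colimit lifts canonically through the pullback to an object of $\sCptd$, which is then the colimit in $\sCptd$.

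The unenriched input is clean. The functor $\free\colon\Graph\to\Cat$ is left adjoint to the underlying reflexive graph functor, and it corestricts to an equivalence $\Graph\simeq\Cptd$: a computad functor $\free G\to\free H$ is exactly a morphism of reflexive graphs $G\to H$, since it is determined by its effect on atomic arrows, which it must send to atomic arrows or to identities. Because $\free$ preserves colimits and $\Graph$ is cocomplete, the colimit in $\Cat$ of a diagram of computads and computad functors is again a computad --- namely $\free$ of the colimit of the associated diagram of atomic-arrow graphs --- and it serves as the colimit in $\Cptd$. Writing $\bar D_{j,n}$ for the graph of atomic $n$-arrows of $D_j$, so that $(D_j)_n=\free(\bar D_{j,n})$, this identifies the candidate levelwise colimit $C$ with $C_n:=\free\bigl(\colim_j \bar D_{j,n}\bigr)$ as a genuine simplicial computad: each $C_n$ is free, and the degeneracy operators, being $\free$ of the induced maps of atomic graphs, carry atomic arrows to atomic arrows.

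The main obstacle is the levelwise claim itself, since colimits in $\sCat$ are \emph{not} computed degreewise for general diagrams (identifying objects or arrows typically forces new formal composites). The point is that for a diagram valued in $\sCptd$ this pathology does not occur, and I would establish it directly by verifying that $C$ carries the universal property of $\colim D$ in $\sCat$: a cocone from $D$ to a simplicial category $E$ is, in each degree, a cocone of categories, inducing a unique functor $C_n\to E_n$ by the degreewise colimit property above, and these assemble into a simplicial functor because compatibility with the face and degeneracy operators may be checked after precomposition with the jointly universal colimit legs, where it follows from naturality. The one subtlety to flag is that the face operators need \emph{not} preserve atomic arrows --- this is precisely why $\sCptd$ is cut out as a pullback over $\Del\ep\op$ rather than over all of $\Del\op$ --- but this causes no trouble, since the faces of $C$ are produced by its universal property rather than assumed to be graph morphisms, and they commute with the legs by naturality exactly as the degeneracies do.
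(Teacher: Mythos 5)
Your argument is correct, and once unwound it is the same argument as the paper's: the paper disposes of the lemma in one line by observing that both $\sCat$ and $\Cptd^{\Del\ep\op}$ are closed under colimits in $\Cat^{\Del\ep\op}$ (with $\Cptd^{\Del\ep\op}$ replete there), and your two main steps --- the unenriched closure of $\Cptd\simeq\Graph$ inside $\Cat$ via the left adjoint $\free$, and the levelwise computation of $\colim D$ --- are exactly proofs of those two closure statements. The one thing to correct is your framing of the ``main obstacle'': colimits in $\sCat$ \emph{are} computed degreewise for \emph{all} diagrams, not merely those valued in $\sCptd$. Indeed $\sCat$ is a \emph{full} subcategory of $\Cat^{\Del\op}$ (a natural transformation between simplicial objects whose structure maps are identity-on-objects automatically has a single object map in every degree, hence is precisely a simplicial functor), and it is closed under pointwise colimits there: since $\mathrm{ob}\colon\Cat\to\Set$ is a left adjoint, the pointwise colimit again has constant object sets and identity-on-objects faces and degeneracies. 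New formal composites do get created, but they are created by the colimit in $\Cat$ in each simplicial degree, so this is not a failure of degreewise computation. Your direct verification of the universal property --- which, note, never actually uses that the $D_j$ are simplicial computads --- is in effect a proof of this general fact; so your proof is sound, but it presents as a special ``non-pathology'' of computad-valued diagrams something that holds across all of $\sCat$, and whose availability is precisely the reason the paper's own proof can be a single sentence.
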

  \begin{proof}
  Both of these
  subcategories   $\sCat$ and $\Cptd^{\Del\ep\op}$ are closed under  colimits in $\Cat^{\Del\ep\op}$ and
  $\Cptd^{\Del\ep\op}$ is replete in there, so it follows that $\sCptd$ is closed
  under  colimits in $\sCat$ and in $\Cptd^{\Del\ep\op}$.
  \end{proof}
  
  \begin{ex}\label{ex:directed-suspension}
  For any simplicial set $X$, we write $\cattwo[X]$ for the simplicial computad with two objects, denoted $-$ and $+$, and a single non-trivial hom-space $\Map_{\cattwo{X}}(-,+)\defeq X$.
\end{ex}

\begin{defn}[{relative simplicial computads \refVI{defn:cptd-morphism}}]\label{defn:subcomputad}\hfil
\begin{enumerate}[label=(\roman*)]
\item For simplicial computads $\eA$ and $\eB$, a simplicial computad morphism $F\colon\eA\to\eB$  is a \emph{simplicial subcomputad inclusion} if and only if its image under the functor $\sCptd \to \Graph^{\Del\ep\op}$ is a monomorphism, i.e., if and only if it  is injective on objects and faithful.
  \item For simplicial categories $\eA$ and $\eB$, a simplicial functor $F \colon \eA \to \eB$ is a \emph{relative simplicial computad} if it is expressible as a sequential composition of pushouts of coproducts of the maps
  \[ \emptyset\inc\catone \qquad \text{and} \qquad \cattwo[\boundary\Del^n]\inc\cattwo[\Del^n]\qquad \text{for}\ n \geq 0.\]
  \end{enumerate}
\end{defn}

We leave it to the curious reader to verify that if $\eA$ is a simplicial computad, then $F\colon \eA \to \eB$ is a simplicial subcomputad inclusion if and only if it is a relative simplicial computad. 

\begin{lem}[{\refVI{lem:sub-computad-stability}}]\label{lem:sub-computad-stability} Simplicial subcomputads are
  stable under pushout, coproduct, and colimit of countable sequences in
  $\sCptd$.
\end{lem}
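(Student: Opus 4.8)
The plan is to reduce everything to the behaviour of monomorphisms in a presheaf category. Write $U\colon \sCptd \to \Graph^{\Del\ep\op}$ for the functor sending a simplicial computad to its levelwise reflexive graph of atomic arrows, i.e.\ the left-hand vertical of the defining pullback of Definition~\ref{defn:simplicial-computad}. By Definition~\ref{defn:subcomputad}, a simplicial computad morphism $F$ is a subcomputad inclusion exactly when it is injective on objects and faithful; since $F$ carries atomic arrows to atomic arrows or identities and every arrow factors uniquely into atomics, this is the same as asking that $U(F)$ be a monomorphism of $\Del\ep\op$-diagrams of graphs, i.e.\ a monomorphism in the presheaf category $\Graph^{\Del\ep\op}$. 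Thus the whole lemma splits into two facts: first, that $U$ sends the colimits in question to the corresponding colimits of atomic graphs; second, that monomorphisms in $\Graph^{\Del\ep\op}$ are stable under pushout, coproduct, and colimit of countable sequences.

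For the first fact I would appeal directly to Lemma~\ref{lem:computad-colimits} and its proof. There it is shown that $\sCptd$ is closed under colimits not only in $\sCat$ but also in $\Cptd^{\Del\ep\op}$, and the graph-of-atomics equivalence identifies $\Cptd^{\Del\ep\op}$ with $\Graph^{\Del\ep\op}$. This says precisely that $U$ preserves the colimits that exist in $\sCptd$ --- in particular pushouts, small coproducts, and colimits of countable sequences --- so that the atomic-arrow graphs of such a colimit of simplicial computads are computed as the colimit in $\Graph^{\Del\ep\op}$ of the atomic-arrow graphs of its constituents.

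For the second fact, recall that $\Graph^{\Del\ep\op}$ is a presheaf category, hence a Grothendieck topos, in which monomorphisms are detected pointwise (on vertices and edges, levelwise over $\Del\ep\op$) and in which pushouts, coproducts, and sequential colimits are also computed pointwise. The required stability statements therefore reduce to the elementary facts in $\Set$ that the pushout of an injection along an arbitrary map is an injection, that a coproduct of injections is an injection, and that the canonical maps out of the terms of a sequence of injections into its colimit are injections.

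Assembling these, given a subcomputad inclusion $i\colon \eA\inc\eB$ and a map $\eA\to\eC$ in $\sCptd$, the pushout $\eC\to\eB\cup_{\eA}\eC$ exists and is computed in $\sCptd$ by Lemma~\ref{lem:computad-colimits}, and applying $U$ exhibits its image as the pushout of the monomorphism $U(i)$ in $\Graph^{\Del\ep\op}$, which is again a monomorphism; hence $\eC\to\eB\cup_{\eA}\eC$ is a subcomputad inclusion. The arguments for a coproduct $\coprod_{\alpha} i_{\alpha}$ of subcomputad inclusions and for the colimit of a countable sequence of subcomputad inclusions are word-for-word the same, with ``pushout'' replaced by ``coproduct'' and ``sequential colimit'' respectively. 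The only genuine obstacle is the colimit-preservation of $U$; once that is quoted from Lemma~\ref{lem:computad-colimits}, the remainder is just the pointwise stability of monomorphisms in a presheaf topos.
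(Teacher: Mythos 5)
Your proof is correct and follows essentially the same route as the paper's: the paper's (one-sentence) argument likewise reduces the claim to the fact that colimits and monomorphisms in the presheaf category $\Graph^{\Del\ep\op}$ are computed pointwise in $\Set$, where the stability statements are elementary, with the identification of subcomputad inclusions with monomorphisms and the colimit-preservation of $\sCptd\to\Graph^{\Del\ep\op}$ (via Lemma~\ref{lem:computad-colimits}) left implicit. Your write-up simply makes those implicit steps explicit.
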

\begin{proof}
The category $\Graph^{\Del\op\ep}$ is a presheaf category, in which colimits and monomorphisms are
  determined pointwise in $\Set$, so the desired result follows from the fact
  that it clearly holds for monomorphisms in $\Set$.
\end{proof}

\begin{ntn}[cubes, boundaries, and cubical horns]\label{ntn:cubes}$\quad$
\begin{itemize}
\item  We shall adopt the notation $\Cube^k$ for the simplicial cube
  $(\Del^1)^{\times k}$ for each $k \geq 0$.
  \item We write $\boundary\Cube^k\subset\Cube^k$ for the boundary of the cube, formed as the domain of the iterated Leibniz product
  $(\boundary\Del^1\subset\Del^1)^{\leib\times k}$.\footnote{For more details about the Leibniz or ``pushout-product'' construction see \cite[\S 4]{RiehlVerity:2013kx}.} That is, an $r$-simplex of $\Cube^k$, represented as a $k$-tuple of maps   $(\rho_1,...,\rho_k)$ with each $\rho_i \colon [r] \to [1]$, is a member of $\boundary\Cube^k$ if and
  only if there is some $i$ for which $\rho_i\colon[r]\to[1]$ is constant at either $0$ or $1$ (in which case $\rho_i$ defines an $r$-simplex in $\boundary\Del^1\subset\Del^1$).

\item  We also define the \emph{cubical horn} $\CHorn^{k,j}_{e}\subset\Cube^k$,
  for $1 \leq j \leq k$ and $e \in \{0,1\}$, to be the domain of the following Leibniz product:
  \begin{equation*}
    (\boundary\Del^1\subset\Del^1)^{\leib\times(j-1)}\leib\times
    (\Del^{\fbv{e}}\subset\Del^1)\leib\times
    (\boundary\Del^1\subset\Del^1)^{\leib\times(k-j)}
  \end{equation*}
  So an $r$-simplex $(\rho_1,...,\rho_k)$ of $\Cube^k$ is in $\CHorn^{k,j}_e$ if
  and only if $\rho_i$ is a constant operator for some $i\neq j$
  or $\rho_j$ is the constant operator which maps everything to $e$.
  \end{itemize}
\end{ntn}

  \begin{ex}[{homotopy coherent simplices as simplicial computads; \S\refVI{sec:htpy-coh-simplex}}]\label{ex:homotopy-coherent-simplex} Recall the simplicial category $\mathfrak{C}\Del^n$ whose objects are integers $0,1,\ldots, n$ and whose mapping spaces are the cubes \[ \Map_{\mathfrak{C}\Del^n}(i,j) = \begin{cases} \Cube^{j-i-1} & i < j \\ \Cube^0 & i = j \\ \emptyset & i > j \end{cases}\] 
For $i < j$, the vertices of $\Map_{\mathfrak{C}\Del^n}(i,j)$ are naturally identified with subsets of the closed interval $[i,j] = \{ i \leq t \leq j\}$ containing both endpoints, a set whose cardinality is $j-i-1$; more precisely, $\Map_{\mathfrak{C}\Del^n}(i,j)$ is the nerve of the poset with these elements, ordered by inclusion. Under this isomorphism, the composition operation  corresponds to the simplicial map 
  \[
  \xymatrix@R=1em{ \Map_{\gC\Del^n}(i,j)\times\Map_{\gC\Del^n}(j,k)\ar[r]^-\circ \ar@{}[d]|{\rotatebox{90}{$\cong$}}& \Map_{\gC\Del^n}(i,j) \ar@{}[d]|{\rotatebox{90}{$\cong$}} \\
\Cube^{\times
    (j-i-1)}\times\Cube^{\times (k-j-1)}\ar[r] & \Cube^{\times (k-i-1)}}\]
     which
  maps the pair of vertices $T \subset [i,j]$ and $S \subset [j,k]$ to $T \cup S \subset [i,k]$.
  
  Again for $i < j$, an $r$-arrow $T^\bullet$ in $\Map_{\gC\Del^n}(i,j)$ corresponds to a sequence
  \[ T^0 \subset T^1 \subset \cdots \subset T^r\] of subsets of $[i,j] = \{ i \leq t \leq j\}$ and is non-degenerate if and only if each of these inclusions are proper. The composite of a pair of $r$-arrows $T^\bullet \colon i \to j$ and $S^\bullet \colon j \to k$ is the level-wise union $T^\bullet \cup S^\bullet \colon i \to k$ of these sequences.
  
 From this description, it is easy to see that the simplicial category $\gC\Del^n$  is a simplicial computad (Lemma \refVI{lem:simplex-computad}), in which an $r$-arrow $T^\bullet$ from $i$ to $j$ is atomic if and only if the set $T^0 = \{i,j\}$; the only atomic $r$-arrows from $j$ to $j$ are identities. Geometrically,   the atomic arrows in each functor space $\Map_{\gC\Del^n}(i,j) \cong \Cube^{j-i-1}$ are precisely those simplices that contain the initial vertex in the poset whose nerve defines the simplicial cube.
\end{ex}

The homotopy coherent simplices define a cosimplicial object in $\sCat$ from which one defines the homotopy coherent realisation and homotopy coherent nerve functors first studied by Cordier \cite{Cordier:1982:HtyCoh}.

\begin{defn}[homotopy coherent realization and the homotopy coherent nerve] Applying Kan's construction \cite[1.5.1]{Riehl:2014kx} to the functor $    \gC{\Del^\bullet}\colon \Del \longrightarrow \sCat$ yields an adjunction
  \begin{equation*}
    \adjdisplay \gC -| \hN : \sCat -> \sSet .
  \end{equation*}
the right adjoint of which is called the
  \textit{homotopy coherent nerve} and the left adjoint of which, defined by pointwise
  left Kan extension along the Yoneda embedding:
\[
    \xymatrix@=1.5em{
      {\Del}\ar[rr]^{\Del^{\bullet}}\ar[dr]_{\gC{\Del^\bullet}} &
      \ar@{}[d]|(0.4){\cong} & {\sSet}\ar@{-->}[dl]^{\gC} \\
      & {\sCat} &
    }
  \]
 we refer to as \emph{homotopy coherent realisation}. An $n$-simplex of a
  homotopy coherent nerve $\hN\eC$ is simply a simplicial functor
  $c\colon\gC{\Del^{n}}\to \eC$ and the action of an operator
  $\beta\colon[m]\to[n]$ on that simplex is given by precomposition with 
$\gC{\Del^{\beta}}\colon\gC{\Del^{m}}\to\gC{\Del^{n}}$.
\end{defn}

Our aim is now to concretely describe the simplicial categories $\gC{X}$ arising as homotopy coherent realisations.  The objects of $\gC{X}$ are the vertices of $X$; a complete description of these simplicial computads is given in Proposition \ref{prop:gothic-C}. To build up to this result, we start by considering the case where $X$ is a simplicial subset of $\Del^n$. It follows from Lemma \ref{lem:sub-computad-stability} that:

\begin{lem}[{\refVI{lem:subcomputad-inclusion}}]\label{lem:subcomputad-inclusion} For any inclusion $i\colon X \inc Y$ of simplicial sets, the induced simplicial functor $\gC{i}\colon\gC{X}\inc\gC{Y}$ is a simplicial subcomputad.
\end{lem}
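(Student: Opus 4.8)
The plan is to use that homotopy coherent realisation is cocontinuous to reduce the general statement to the single family of generating boundary inclusions, and then to read off that generating case from the explicit combinatorics of Example~\ref{ex:homotopy-coherent-simplex}.

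First I would record two facts: $\gC$ is a left adjoint, hence preserves all colimits, and $\gC{Z}$ is a simplicial computad for every simplicial set $Z$. I would then present $i\colon X\inc Y$ by its relative skeletal filtration: writing $Y^{(n)}\defeq X\cup\sk_n Y$, we have $Y^{(-1)}=X$, $Y=\colim_n Y^{(n)}$, and each stage fits into a pushout
\[
  \xymatrix@=2.5em{
    {\coprod\boundary\Del^n}\ar[r]\ar@{^(->}[d] & {Y^{(n-1)}}\ar@{^(->}[d] \\
    {\coprod\Del^n}\ar[r] & {Y^{(n)}}
  }
\]
in $\sSet$, with the coproducts ranging over the nondegenerate $n$-simplices of $Y$ that do not lie in $X$. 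Applying $\gC$ and using cocontinuity exhibits $\gC{i}\colon\gC{X}\to\gC{Y}$ as a countable sequential composite of pushouts of coproducts of the maps $\gC{\boundary\Del^n}\to\gC{\Del^n}$. Since every object appearing in this diagram is $\gC$ of a simplicial set and so a simplicial computad, Lemma~\ref{lem:computad-colimits} ensures that these colimits are computed in $\sCptd$, not merely in $\sCat$.

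Granting this, Lemma~\ref{lem:sub-computad-stability} applies directly: simplicial subcomputad inclusions are stable under coproduct, pushout, and countable sequential colimit, so it suffices to check that each generating map $\gC{\boundary\Del^n}\inc\gC{\Del^n}$ is itself a simplicial subcomputad inclusion, i.e.\ injective on objects and faithful. Injectivity on objects is immediate, as the objects of $\gC{Z}$ are the vertices of $Z$ and $\boundary\Del^n$, $\Del^n$ have the same vertices for $n\geq1$ (the case $n=0$ being $\emptyset\inc\catone$). For faithfulness I would invoke Example~\ref{ex:homotopy-coherent-simplex}, where $\Map_{\gC\Del^n}(i,j)=\Cube^{j-i-1}$ and an $r$-arrow $T^\bullet$ is atomic exactly when $T^0=\{i,j\}$. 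Computing the colimit that defines $\gC{\boundary\Del^n}$ over the proper faces $\Del^S\subsetneq\Del^n$, one identifies it with the subcomputad of $\gC{\Del^n}$ whose inclusion is the identity on every hom-space $\Map(i,j)$ with $(i,j)\neq(0,n)$ and is the boundary inclusion $\boundary\Cube^{n-1}\inc\Cube^{n-1}$ on $\Map(0,n)$. As a monomorphism of simplicial sets in each hom-space it is in particular injective on atomic arrows, so the map is faithful and the generating case follows.

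The main obstacle is precisely this last identification. The delicacy is that $\Map_{\gC\Del^n}(0,n)$ is generated under the free simplicial-category structure not only by cells supported on a single face but also by formal composites $f_{n-1,n}\circ\cdots\circ f_{01}$ of atomic arrows drawn from several faces; consequently many cells of $\Cube^{n-1}$ that sit in no individual $\gC{\Del^S}$ nonetheless appear in $\gC{\boundary\Del^n}$. One must therefore verify, using that composition of homotopy coherent simplices is the level-wise union $T^\bullet\cup S^\bullet$ of vertex-subset sequences, that the colimit over proper faces recovers exactly the boundary $\boundary\Cube^{n-1}$ on $\Map(0,n)$ — every cell except those in the interior of the cube — while adding no new atomic arrows and making no identifications elsewhere; equivalently, that $\gC{\boundary\Del^n}$ is the subcomputad generated by those atomic arrows of $\gC{\Del^n}$ that factor through a proper face. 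Once this bookkeeping is settled, the cocontinuity reduction above delivers the lemma (cf.\ \refVI{lem:subcomputad-inclusion}).
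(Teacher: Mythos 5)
Your strategy is precisely the one the paper intends: in this paper the lemma is justified by exactly your reduction, presenting $i$ as a countable composite of pushouts of coproducts of boundary inclusions, applying cocontinuity of $\gC$ together with the fact that these colimits are created in $\sCptd$ (Lemma \ref{lem:computad-colimits}), and then invoking stability of subcomputad inclusions under coproducts, pushouts and countable composites (Lemma \ref{lem:sub-computad-stability}). You were also right not to quote Example \ref{ex:subcomputad-of-simplex} for the generating case, since the paper derives that example \emph{from} this lemma. But your proposal has a genuine gap, and it sits at the only point where the lemma has non-formal content: you assert, and do not prove, that $\gC\boundary\Del^n\inc\gC\Del^n$ is a subcomputad inclusion, i.e.\ that the colimit of the $\gC\Del^S$ over the proper faces maps faithfully onto the subcomputad of $\gC\Del^n$ you describe. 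You even diagnose correctly why this is delicate (cells of $\Cube^{n-1}$ arising as composites of atomic arrows drawn from several faces lie in no single $\gC\Del^S$), but diagnosing the difficulty and then writing ``once this bookkeeping is settled'' is not a proof: everything before that sentence is formal, so the unperformed bookkeeping \emph{is} the proof. A second, smaller omission: for your colimit diagrams to live in $\sCptd$ at all, you need $\gC$ to send \emph{arbitrary} simplicial maps --- in particular the attaching maps $\boundary\Del^n\to Y^{(n-1)}$, which are not monomorphisms --- to computad morphisms; this is Lemma \refVI{lem:realization-in-computads}, and computad-ness of the objects alone does not suffice for Lemmas \ref{lem:computad-colimits} and \ref{lem:sub-computad-stability} to apply.

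The main gap can be closed with tools already in the paper. Since computads are free on their graphs of atomic arrows, colimits in $\sCptd$ are computed on those underlying graphs, pointwise in $\Set$ (this is the mechanism behind Lemmas \ref{lem:computad-colimits} and \ref{lem:sub-computad-stability}), and by Definition \ref{defn:subcomputad}(i) a morphism of simplicial computads is a subcomputad inclusion exactly when this graph map is monic. Writing $\boundary\Del^n = \colim_S \Del^S$ over the poset of proper nonempty subsets $S\subsetneq[0,n]$, the atomic $r$-arrows of $\gC\boundary\Del^n$ from $i$ to $j$ are therefore $\colim_S\{\, T^\bullet : T^0=\{i,j\},\ T^r\subseteq S \,\}$. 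The key observation is that an atomic arrow $T^\bullet$ of $\gC\Del^n$ lying in some proper face lies in a \emph{minimal} one, namely $\Del^{T^r}$; hence the subposet of proper faces containing a given $T^\bullet$ has a least element, so it is connected, and the colimit of these subsets of a fixed set is just their union: the atomic arrows with $T^r\neq[0,n]$, each appearing once. Thus the atomic-arrow graph of $\gC\boundary\Del^n$ injects into that of $\gC\Del^n$, which is the desired subcomputad inclusion; your hom-space description then drops out, since for $(i,j)\neq(0,n)$ every atomic arrow lies in the proper face $\Del^{[i,j]}$, while from $0$ to $n$ the image consists of all composites of length at least two (whose factors automatically lie in proper faces) together with the atomics having $T^r\neq[0,n]$, which is exactly $\boundary\Cube^{n-1}\subset\Cube^{n-1}$.
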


Lemma \ref{lem:subcomputad-inclusion} tells us that if $X \subset \Del^n$ then 
 its homotopy coherent realisation
  $\gC{X}$ is a simplicial subcomputad of $\gC\Del^n$ containing only those atomic arrows of $\gC\Del^n$ that are in the image of the simplicial functor $\gC{\Del^{\alpha}}\colon \gC{\Del^{m}}\inc\gC{\Del^{n}}$
  for some non-degenerate face $\alpha\colon[m]\inc [n]$ in $X$. It follows, in particular, that the homotopy coherent realisations of boundaries and horns have a simple description. 

\begin{ex}[homotopy coherent nerves of sub-simplices; \refVI{ex:subcomputad-of-simplex}]\label{ex:subcomputad-of-simplex}
Consider the following simplicial subsets of $\Del^n$. 
  \begin{enumerate}[label=(\roman*)]
  \item\label{itm:boundary} \textbf{boundaries:} The inclusion $\gC\boundary\Del^n\inc\gC\Del^n$ is the identity on objects. The only non-degenerate simplex in $\Del^n$ that is not present in its boundary $\partial\Del^n$ is the top dimensional $n$-simplex. Thus, an atomic $r$-arrow $T^\bullet$ of $\gC\Del^n$
    is not in $\gC\boundary\Del^n$ if and only  if it has $T^r = [0,n]$; in particular each of these missing atomic arrows
    \[ \{0,n\} = T^0 \subset T^1 \subset \cdots \subset T^n = [0,n]\] 
     lies in the functor space
    $\Map_{\gC\Del^n}(0,n)$ and $\Map_{\gC\boundary\Del^n}(i,j)=\Map_{\gC\Del^n}(i,j)$ for all but that particular functor space. 
The inclusion
         \[
  \xymatrix@R=1em{ \Map_{\gC\boundary\Del^n}(0,n)\ar@{^(->}[r] \ar@{}[d]|{\rotatebox{90}{$\cong$}}& \Map_{\gC\Del^n}(0,n)\ar@{}[d]|{\rotatebox{90}{$\cong$}} \\
\boundary\Cube^{n-1}\ar@{^(->}[r] & \Cube^{n-1}}\]
    is isomorphic to the cubical boundary inclusion.
    
  \item\label{itm:outer-horn}\textbf{outer horns:} The inclusion $\gC\Horn^{n,n}\inc\gC\Del^n$ is also the identity on objects. The only non-degenerate simplices in $\Del^n$ that are not present in the horn $\Horn^{n,n}$ are the top dimensional $n$-simplex and its $n\th$ face. In the former case, the missing atomic $r$-arrows are elements of $\Map_{\gC\Del^n}(0,n)$, and in the latter case they are elements of  $\Map_{\gC\Del^n}(0,n-1)$;
    so $\Map_{{\gC\Horn^{n,n}}}(i,j)= \Map_{\gC\Del^n}(i,j)$ for all but those two
    functor spaces. Under the isomorphism of Example \ref{ex:homotopy-coherent-simplex}
    the inclusions 
         \[
           \xymatrix@R=1em{\Map_{\gC\Horn^{n,n}}(0,n-1)\ar@{^(->}[r] \ar@{}[d]|{\rotatebox{90}{$\cong$}}&\Map_{\gC\Del^n}(0,n-1)\ar@{}[d]|{\rotatebox{90}{$\cong$}} \\
\boundary\Cube^{n-2}\ar@{^(->}[r] &\Cube^{n-2}}
\qquad
  \xymatrix@R=1em{ \Map_{{\gC\Horn^{n,n}}}(0,n)\ar@{^(->}[r] \ar@{}[d]|{\rotatebox{90}{$\cong$}}& \Map_{\gC\Del^n}(0,n)\ar@{}[d]|{\rotatebox{90}{$\cong$}} \\
\CHorn^{n-1,n-1}_0\ar@{^(->}[r] & \Cube^{n-1}}\] are isomorphic to the cubical boundary and cubical horn inclusions. 
  \end{enumerate}
\end{ex}

\begin{defn}[bead shapes]
  We shall call those atomic arrows $T^\bullet\colon 0 \to n$ of $\gC{\Del^{n}}$
  which are not members of $\gC\boundary\Del^n$ \emph{bead shapes}. By Examples \ref{ex:homotopy-coherent-simplex} and \ref{ex:subcomputad-of-simplex}, an $r$-dimensional bead shape $T^\bullet\colon
  0\to n$ is given by a sequence of subsets
  \[ \{0,n\} = T^0 \subset T^1 \subset \cdots \subset T^r = [0,n]\]
  with $T^0 = \{0,n\}$  and $T^r$ equal to the full interval $[0,n] = \{ 0 \leq t \leq n\}$. 
\end{defn}

\begin{prop}[{$\gC X$ as a simplicial computad; \refVI{prop:gothic-C}}]\label{prop:gothic-C}
The homotopy coherent realisation $\gC{X}$ of a simplicial set $X$ is a simplicial computad with
  \begin{itemize}
  \item   objects the vertices of $X$ and 
  \item non-degenerate atomic $r$-arrows given by pairs $(x,T^\bullet)$, wherein
  $x$ is a non-degenerate $n$-simplex of $X$ for some $n > r $ and
  $T^\bullet\colon 0\to n$ is an $r$-dimensional bead shape. 
  \end{itemize}
  The domain of $(x,T^\bullet)$ is the initial vertex $x_0$ of $x$ and its codomain is the terminal vertex $x_n$.\qed
\end{prop}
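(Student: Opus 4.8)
The plan is to build $\gC X$ out of the computads $\gC\Del^n$ by exploiting the fact that homotopy coherent realisation, being a left adjoint (a pointwise left Kan extension along the Yoneda embedding), preserves all colimits, and combining this with the skeletal presentation of $X$. First I would recall that every simplicial set is the sequential colimit $X = \colim_n \sk_n X$ of its skeleta, in which each $\sk_n X$ is obtained from $\sk_{n-1} X$ by a pushout of coproducts of the boundary inclusions $\boundary\Del^n \inc \Del^n$, one for each non-degenerate $n$-simplex $x$ of $X$. Applying the cocontinuous functor $\gC$ transports this to a presentation of $\gC X$ as the sequential colimit of the computads $\gC(\sk_n X)$, where each stage is a pushout of coproducts of the subcomputad inclusions $\gC\boundary\Del^n \inc \gC\Del^n$ supplied by Lemma~\ref{lem:subcomputad-inclusion}.

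Next I would invoke the closure results established earlier to control this colimit. By Lemma~\ref{lem:computad-colimits}, colimits of simplicial computads computed in $\sCat$ are again simplicial computads, so $\gC X$ is a simplicial computad; its objects are visibly the vertices of $X$, since the objects of each $\gC\Del^n$ are the vertices of $\Del^n$, glued along the objects of the attaching maps. By Lemma~\ref{lem:sub-computad-stability}, subcomputad inclusions are stable under the coproducts, pushouts, and countable colimits appearing in this presentation, so each $\gC(\sk_{n-1}X) \inc \gC(\sk_n X)$ is a subcomputad inclusion. Consequently the atomic arrows of $\gC X$ are obtained by freely adjoining, at stage $n$, exactly those atomic arrows of $\gC\Del^n$ that are not already present in $\gC\boundary\Del^n$, with one fresh copy per non-degenerate $n$-simplex $x$ of $X$.

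The identification of these new generators is then read off directly from Example~\ref{ex:subcomputad-of-simplex}\ref{itm:boundary}: the atomic $r$-arrows of $\gC\Del^n$ missing from $\gC\boundary\Del^n$ are precisely the bead shapes $T^\bullet \colon 0 \to n$, namely the non-degenerate $r$-simplices $\{0,n\} = T^0 \subset \cdots \subset T^r = [0,n]$ of $\Map_{\gC\Del^n}(0,n) \cong \Cube^{n-1}$. Since a chain of proper inclusions running from a two-element set up to the full $(n+1)$-element interval $[0,n]$ has at most $n-1$ steps, the constraint $r < n$, i.e.\ $n > r$, is automatic. Thus the non-degenerate atomic $r$-arrows of $\gC X$ are indexed by pairs $(x, T^\bullet)$ with $x$ a non-degenerate $n$-simplex of $X$ and $T^\bullet$ an $r$-dimensional bead shape; transporting the domain $0$ and codomain $n$ of $T^\bullet$ along the attaching map $\gC\Del^n \to \gC X$ classified by $x$ shows that $(x,T^\bullet)$ runs from the initial vertex $x_0$ to the terminal vertex $x_n$, as claimed.

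The step I expect to be the main obstacle is the verification that these generators are adjoined \emph{freely} and \emph{without collision}: that distinct pairs $(x,T^\bullet)$ determine distinct atomic arrows of $\gC X$, and that each bead shape remains atomic after the pushout rather than being factored or identified by the gluing along boundaries. This is exactly the bookkeeping governed by Lemma~\ref{lem:sub-computad-stability}. Because the relevant maps are subcomputad inclusions, and such inclusions are detected as monomorphisms computed pointwise in the presheaf category $\Graph^{\Del\ep\op}$ (as in the proof of that lemma, where colimits and monomorphisms are determined in $\Set$), the pushout at each stage adjoins the missing atomic arrows freely. Hence no two fresh generators coincide, and none coincides with a previously present atomic arrow; assembling these observations across the entire skeletal filtration yields the stated description of $\gC X$.
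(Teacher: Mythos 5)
Your proposal is correct and follows essentially the intended route: the paper itself gives no argument for Proposition \ref{prop:gothic-C} (the statement is quoted from \refVI{prop:gothic-C} with a \qedsymbol), but the scaffolding assembled immediately before it --- Lemmas \ref{lem:computad-colimits}, \ref{lem:sub-computad-stability}, \ref{lem:subcomputad-inclusion} and Example \ref{ex:subcomputad-of-simplex}\ref{itm:boundary} --- is exactly what your skeletal induction puts together, including the correct reading of the $n>r$ clause as encoding non-degeneracy of the chain $T^\bullet$. The one step to make explicit is that $\sCptd$ is a \emph{non-full} subcategory of $\sCat$, so applying Lemma \ref{lem:computad-colimits} to your pushouts and sequential colimits requires knowing that the attaching maps $\coprod_x \gC[\boundary\Del^n]\to\gC[\sk_{n-1}X]$ are morphisms of simplicial computads and not merely simplicial functors; this is supplied by the externally cited Lemma \refVI{lem:realization-in-computads} (invoked for precisely this purpose in the proof of Lemma \ref{lem:join-gC-bifunctor}), or can be folded into your induction using the inductively known computad structure of $\gC[\sk_{n-1}X]$.
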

 
The pairs $(x,T^\bullet)$ appearing in Proposition \ref{prop:gothic-C} called \emph{beads in $X$}. As a consequence of this result we find that $r$-simplices of $\gC{X}$ correspond to sequences of abutting beads, structures which are called \emph{necklaces} in the work of Dugger and Spivak~\cite{DuggerSpivak:2011ms} and Riehl~\cite{Riehl:2011ot}. In this terminology, $\gC X$ is a simplicial computad in which the atomic arrows are those necklaces that consist of a single bead with non-degenerate image.

\subsection{Collages of weights}\label{ssec:collage}

In this section we explore an alternate presentation of a simplicial weight $W \colon \eA\to\SSet$ as a simplicial category we refer to as the \emph{collage} of the weight. The idea is that $\coll(W)$ describes the shape of $W$-cones over a diagram. The main result is that a weight $W$  is flexible if and only if $\catone + \eA\inc\coll(W)$ is a relative simplicial computad. The main example will be the collage that defines the shape of pseudo limit cones over a homotopy coherent diagram, a weight that will feature prominently in the the main theorem of \S\ref{sec:complete}.

A simplicial functor $W\colon\eA\to\SSet$ may otherwise be described as
comprising a family of simplicial sets $\{Wa\}_{a\in\eA}$ along with left
actions
\begin{equation}\label{eq:action-of-weight}
  \xymatrix@R=0em@C=6em{
    Wa\times\Map_{\eA}(a,a')\ar[r]^-{*} & Wa'
  }
\end{equation}
of the hom-spaces of $\eA$ that must collectively satisfy the customary
axioms with respect to the identities and composition of $\eA$. This
description leads us to define a simplicially enriched category $\coll(W)$,
called the \emph{collage} of $W$.

\begin{defn}[collages]\label{defn:collage}
  For any weight $W \colon \eA\to\SSet$, the \emph{collage} of $W$ is a
  simplicial category $\coll(W)$ that contains $\eA$ as a full simplicial
  subcategory along with precisely one extra object $\bot$ whose endomorphism
  space is the point. The simplicial sets $\Map_{\coll(W)}(a,\bot)$ are all taken
  to be empty and we define:
  \begin{equation*}
    \Map_{\coll(W)}(\bot,a) \defeq Wa\mkern40mu \text{for objects $a\in\eA$.}
  \end{equation*}
  The composition operations between hom-spaces with domain $\bot$ and hom-spaces in  $\eA$ 
  are given by the actions depicted in~\eqref{eq:action-of-weight}.
\end{defn}

\begin{prop}[the collage adjunction]\label{prop:collage-adjunction}$\quad$
\begin{enumerate}[label=(\roman*)]
\item\label{itm:collage-fun} The collage construction defines a fully faithful functor 
  \begin{equation*}
    \xymatrix@R=0em@C=6em{
      \sSet^{\eA}\ar[r]^-{\coll} & \prescript{\catone+\eA/}{}{\sCat}
    }
  \end{equation*}
  from the category of $\eA$-indexed weights to the category of simplicial categories under $\catone+\eA$ whose essential image is comprised of those simplicial functors $F\colon \catone+\eA\to\eC$ that are bijective on objects, fully faithful when restricted to $\eA$ and $\catone$, and have the property that any arrow with codomain $F\bot$ is the identity.
\item\label{itm:collage-adj} The collage functor admits a right adjoint
  \begin{equation*}
    \adjdisplay \coll -| \wgt :
    \prescript{\catone+\eA/}{}{\sCat} -> \sSet^{\eA}.
  \end{equation*}
  which carries a simplicial functor $F\colon\catone+\eA\to\eC$ to the following weight:
  \begin{equation*}
    \xymatrix@R=0em@C=8em{
      {\eA}\ar[r]^{\Map_{\eC}(F\bot,F-)} & {\SSet}
    }
  \end{equation*}
\end{enumerate}
\end{prop}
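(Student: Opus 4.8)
The plan is to prove the adjunction $\coll\dashv\wgt$ directly and then read off both the fully-faithfulness of $\coll$ and the description of its essential image from the unit and counit of that adjunction.

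First I would promote $\coll$ and $\wgt$ to functors. On a map of weights $\phi\colon W\To W'$ --- that is, a family $\phi_a\colon Wa\to W'a$ compatible with the actions \eqref{eq:action-of-weight} --- I define $\coll(\phi)\colon\coll(W)\to\coll(W')$ to be the identity on objects and on every hom-space lying in $\eA$ or at $\bot$, and to be $\phi_a$ on $\Map_{\coll(W)}(\bot,a)=Wa$; compatibility of $\phi$ with the actions is exactly what is required for $\coll(\phi)$ to respect composition, and it visibly commutes with the structure maps out of $\catone+\eA$. Dually, a map $G$ under $\catone+\eA$ from $F\colon\catone+\eA\to\eC$ to $F'\colon\catone+\eA\to\eD$ sends $F\bot$ to $F'\bot$ and $Fa$ to $F'a$, so post-composition with $G$ carries $\Map_{\eC}(F\bot,Fa)$ to $\Map_{\eD}(F'\bot,F'a)$ and defines a natural transformation $\wgt(F)\To\wgt(F')$; this makes $\wgt$ a functor.

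The heart of the argument is the natural hom-set bijection
\[ \prescript{\catone+\eA/}{}{\sCat}(\coll W,F)\;\cong\;\sSet^{\eA}(W,\wgt F).\]
A simplicial functor $G\colon\coll W\to\eC$ under $\catone+\eA$ is forced to agree with $F$ on objects and on the full subcategories $\eA$ and $\catone$, since these make up the image of the structure map $\catone+\eA\to\coll W$; and because $\Map_{\coll W}(a,\bot)=\emptyset$ its only remaining data are the maps $G_a\colon Wa=\Map_{\coll W}(\bot,a)\to\Map_{\eC}(F\bot,Fa)=\wgt(F)(a)$. The single functoriality constraint these must satisfy --- compatibility with the action $Wa\times\Map_{\eA}(a,a')\to Wa'$ on one side and the composite $\Map_{\eC}(F\bot,Fa)\times\Map_{\eC}(Fa,Fa')\to\Map_{\eC}(F\bot,Fa')$ on the other --- is precisely the assertion that $\{G_a\}$ is a natural transformation $W\To\wgt(F)$. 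Unwinding shows this correspondence is a bijection natural in $W$ and $F$, giving the adjunction of part \ref{itm:collage-adj}.

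Finally I would extract part \ref{itm:collage-fun} from the unit and counit. The unit $W\To\wgt(\coll W)$ has component at $a$ the identity on $Wa=\Map_{\coll W}(\bot,a)$, hence is an isomorphism, so the left adjoint $\coll$ is fully faithful. An object $F$ then lies in the essential image exactly when the counit $\coll(\wgt F)\to F$ is an isomorphism in the slice, and here one checks that invertibility of the counit is equivalent to the three listed conditions: bijectivity on objects identifies the objects of $\eC$ with those of $\eA$ together with $F\bot$; full faithfulness on $\eA$ identifies the $\eA$-homs; the condition that every arrow with codomain $F\bot$ be an identity forces $\Map_{\eC}(Fa,F\bot)=\emptyset$ and $\Map_{\eC}(F\bot,F\bot)=\ast$ (the latter being full faithfulness on $\catone$), reproducing the defining hom-spaces of a collage; and the remaining homs $\Map_{\eC}(F\bot,Fa)$ are by construction $\wgt(F)(a)$. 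The whole argument is essentially bookkeeping with the definition of $\coll$; the one step needing genuine care is this essential-image clause, where one must verify that the informal requirement ``every arrow into $F\bot$ is an identity'' is the exact algebraic counterpart of the collage's stipulations $\Map(\bot,\bot)=\ast$ and $\Map(a,\bot)=\emptyset$, keeping in mind that these are conditions on entire hom-\emph{spaces} rather than merely on $0$-arrows.
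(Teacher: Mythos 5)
Your proof is correct, and its computational heart --- the bijection between simplicial functors $\coll(W)\to\eC$ under $\catone+\eA$ and maps of weights $W\To\Map_{\eC}(F\bot,F-)$ --- is exactly the bijection the paper constructs. Where you differ is in the logical organization: the paper first verifies part (i) directly from the construction of $\coll$ (full faithfulness by inspection of how $\coll$ acts on simplicial natural transformations, with the essential image declared ``clear'' from that vantage point), and only then establishes the adjunction; you instead prove the adjunction first and deduce part (i) formally, obtaining full faithfulness from the fact that the unit $W\To\wgt(\coll W)$ is an isomorphism (indeed the identity, since $\Map_{\coll W}(\bot,a)=Wa$), and characterizing the essential image as those objects at which the counit $\coll(\wgt F)\to F$ is invertible. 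Your route buys a little more rigour precisely where the paper is terse: the essential-image clause becomes an explicit hom-space-by-hom-space check that invertibility of the counit is equivalent to the three stated conditions, and your closing caveat --- that ``every arrow with codomain $F\bot$ is an identity'' must be read as a condition on $n$-arrows for all $n$, so that together with bijectivity on objects it forces $\Map_{\eC}(Fa,F\bot)=\emptyset$ and $\Map_{\eC}(F\bot,F\bot)=\ast$ --- is exactly the point the paper leaves implicit. The only cost is reliance on the standard facts that a left adjoint is fully faithful if and only if its unit is invertible, and that the essential image of a fully faithful left adjoint consists of precisely those objects where the counit is invertible; both are unobjectionable, so the two arguments are equally valid and differ mainly in bookkeeping.
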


Here $\sSet^{\eA}$ denotes the underlying category of the simplicially enriched category $\SSet^{\eA}$.
  
\begin{proof}
  The subcategory inclusion of $\eA$ into $\coll(W)$ together with the
  functor $\catone\to\coll(W)$ mapping the unique object of $\catone$ to
  $\bot$ define a functor $\catone+\eA\inc \coll(W)$. Extending this
  construction to simplicial natural transformations in the obvious way we
  obtain a fully faithful functor
  \begin{equation*}
    \xymatrix@R=0em@C=6em{
      \sSet^{\eA}\ar[r]^-{\coll} & \prescript{\catone+\eA/}{}{\sCat}
    }
  \end{equation*}
 From this vantage point, the characterisation of the essential image is clear.

 Given a map of weights $\Lambda\colon W\to \Map_{\eC}(F\bot, F-)$, we may construct a simplicially enriched functor $F_\Lambda\colon\coll(W)\to\eC$ which carries $\bot$ to $F\bot$, acts as $F$ on the subcategory $\eA$, and which acts on the hom-space $\Map_{\coll(W)}(\bot,a)=Wa$ in the way given by the component $\Lambda_a\colon Wa\to \Map_{\eC}(F\bot, Fa)$. The simplicial functoriality of $F_\Lambda$ is an immediate consequence of the simplicial naturality of $\Lambda$. Conversely given a simplicial functor
  \begin{equation*}
    \xymatrix@=1.5em{
      & {\catone+\eA}\ar@{^(->}[dl]!R!U(0.5)\ar[dr]^{F} & \\
      {\coll(W)}\ar[rr]_-{G} && {\eC}
    }
  \end{equation*}
  in $\prescript{\catone+\eA/}{}{\sCat}$ we may construct a map of weights $\Lambda_G\colon W\to\Map_{\eC}(F\bot,F-)$ whose component at an object $a\in\eA$ is simply the action of $G$ on the hom-space $\Map_{\coll(W)}(F\bot,Fa)=Wa$. These operations are clearly mutually inverse, thereby demonstrating the postulated right adjointness property of the weight $\Map_{\eC}(F\bot,F-)$ with respect to the collage construction; see also
  Observation \refII{obs:coll-right-adj}.
  \end{proof}

  This adjunction has a useful and important interpretation, when read in light of Definition \ref{defn:simp-weight}:
  
  \begin{cor}\label{cor:collage-bijection} The collage
    $\coll(W)$ realises the shape of $W$-cones, in the sense that simplicial functors $G \colon \coll(W) \longrightarrow \eC$ with domain $\coll(W)$ stand in bijection to $W$-cones with apex $G(\bot)$ over the restricted diagram $G\colon\eA\to\eC$.\qed
\end{cor}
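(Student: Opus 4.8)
The plan is to read the bijection directly off the mutually inverse constructions already established in the proof of Proposition~\ref{prop:collage-adjunction}, observing only that we now allow the functor $F\colon\catone+\eA\to\eC$ on the subcategory $\catone+\eA\inc\coll(W)$ to vary rather than holding it fixed. In effect, the corollary totals up the fibrewise correspondence of Proposition~\ref{prop:collage-adjunction}\ref{itm:collage-adj} over all choices of base $F$.

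First, given a simplicial functor $G\colon\coll(W)\to\eC$, I would restrict it along the canonical inclusion $\catone+\eA\inc\coll(W)$ to extract both the object $G(\bot)\in\eC$, the image of the $\catone$ summand, and the diagram $G|_{\eA}\colon\eA\to\eC$, the restriction to the full simplicial subcategory $\eA$. Since $\eA$ sits inside $\coll(W)$ as a full simplicial subcategory and $\Map_{\coll(W)}(\bot,a)=Wa$ by Definition~\ref{defn:collage}, the action of $G$ on the hom-spaces with domain $\bot$ furnishes, for each $a\in\eA$, a simplicial map $Wa\to\Map_{\eC}(G\bot,G|_{\eA}a)$; the simplicial functoriality of $G$ with respect to the composites of~\eqref{eq:action-of-weight} is precisely the simplicial naturality of this family. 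By Definition~\ref{defn:simp-weight}, this family is exactly a $W$-cone $\Lambda_G\colon W\to\Map_{\eC}(G\bot,G|_{\eA}-)$ with apex $G(\bot)$ over the restricted diagram $G|_{\eA}$.

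Conversely, a $W$-cone with apex $L$ over a diagram $D\colon\eA\to\eC$ is, by Definition~\ref{defn:simp-weight}, a simplicial natural transformation $\Lambda\colon W\to\Map_{\eC}(L,D-)$. Taking $F\colon\catone+\eA\to\eC$ to be the functor sending $\bot\mapsto L$ and restricting to $D$ on $\eA$, the construction $F_\Lambda\colon\coll(W)\to\eC$ from the proof of Proposition~\ref{prop:collage-adjunction}\ref{itm:collage-adj} extends $F$ across the inclusion $\catone+\eA\inc\coll(W)$, acting as the component $\Lambda_a$ on each hom-space $\Map_{\coll(W)}(\bot,a)=Wa$. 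The proof of Proposition~\ref{prop:collage-adjunction} already verifies that the passages $G\mapsto\Lambda_G$ and $\Lambda\mapsto F_\Lambda$ are mutually inverse, so these assignments establish the claimed bijection. There is no genuine obstacle here: the only point worth noting is that every simplicial functor $G$ out of $\coll(W)$ automatically factors through its own restriction $G|_{\catone+\eA}$, so that the fixed-base correspondence of Proposition~\ref{prop:collage-adjunction}\ref{itm:collage-adj} indeed assembles into a single bijection ranging over all apexes and all restricted diagrams.
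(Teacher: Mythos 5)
Your proposal is correct and matches the paper's intended argument: the paper states this corollary with no separate proof (marked \qed) precisely because it is, as you say, the hom-set bijection of the adjunction in Proposition \ref{prop:collage-adjunction}\ref{itm:collage-adj} read fibrewise over all choices of base functor $F\colon\catone+\eA\to\eC$, interpreted via Definition \ref{defn:simp-weight}. Your unpacking of the two mutually inverse assignments $G\mapsto\Lambda_G$ and $\Lambda\mapsto F_\Lambda$ is exactly the content of that proof, so nothing further is needed.
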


\begin{ntn}
From hereon we shall tend to blur the distinction between $W$-cones in $\eC$ and the simplicial functors $\coll(W)\to\eC$ to which they correspond. Consequently we will generally use the term ``weighted cone'' to mean either structure interchangeably.
\end{ntn}

We have the following recognition principle for flexible weights on simplicial
computads,  a mild variant of Proposition~\refII{prop:projcofchar}.

\begin{thm}[flexible weights and collages]\label{thm:flexible-collage} A natural transformation $\alpha \colon V \inc W$ between weights in $\sSet^{\eA}$ is a projective cell complex if and only if $\coll{(\alpha)} \colon \coll{V} \inc \coll{W}$ is a relative simplicial computad. In particular, $W$ is a flexible weight if and only if $\catone+\eA\inc\coll{W}$ is a relative simplicial computad.
\end{thm}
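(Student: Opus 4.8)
The plan is to reduce the entire biconditional to a single computation---the image under $\coll$ of a projective cell---and then to exploit the formal properties of the collage functor recorded in Proposition~\ref{prop:collage-adjunction}. Two facts do all the work: first, $\coll$ is a left adjoint (Proposition~\ref{prop:collage-adjunction}\ref{itm:collage-adj}), hence preserves all colimits; second, it is fully faithful with essential image the functors $F\colon\catone+\eA\to\eC$ that are bijective on objects, fully faithful on $\eA$ and on $\catone$, and admit no non-identity arrow into the cone point $\bot$ (Proposition~\ref{prop:collage-adjunction}\ref{itm:collage-fun}); call such functors \emph{collage-like}. The ``in particular'' clause is then the special case $V=\emptyset$, since $\coll(\emptyset)=\catone+\eA$ and $W$ is flexible exactly when $\emptyset\inc W$ is a projective cell complex (Definition~\ref{defn:flexible-weight}); so it suffices to prove the general statement.

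First I would establish the key computation: for any simplicial set $U$ and object $A\in\eA$ there is a pushout in $\sCat$
\[
  \xymatrix@R=1.5em@C=3.5em{
    {\catone+\catone}\ar[r]\ar[d]_-{(\bot,A)} & {\cattwo[U]}\ar[d] \\
    {\catone+\eA}\ar[r] & {\coll(U\times\Map_{\eA}(A,-))}
  }
\]
whose top map includes the two objects of $\cattwo[U]$ (see Example~\ref{ex:directed-suspension}) and whose left map selects $\bot$ and $A$. Indeed, the pushout freely adjoins a hom-space $U$ from $\bot$ to $A$ and then freely composes it with the $\eA$-arrows out of $A$; as $\bot$ receives no arrows and $\eA$ sits fully inside, the new hom-spaces out of $\bot$ are $U\times\Map_{\eA}(A,-)$ with the action of Definition~\ref{defn:collage}, which is exactly the collage of $U\times\Map_{\eA}(A,-)$. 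Taking $U=\boundary\Del^n$ and $U=\Del^n$ and pasting the two squares along $\cattwo[\boundary\Del^n]\inc\cattwo[\Del^n]$, the pushout-pasting law identifies $\coll$ of the projective $n$-cell at $A$ with a pushout of the generating relative-computad cell $\cattwo[\boundary\Del^n]\inc\cattwo[\Del^n]$, attached along $-\mapsto\bot$, $+\mapsto A$.

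For the forward implication, suppose $\alpha\colon V\inc W$ is a projective cell complex, written as a countable composite of pushouts of coproducts of projective cells. Since $\coll$ is cocontinuous, it carries this presentation to an analogous one: the pushouts and the sequential composite are connected colimits and so are created in $\sCat$, while a coproduct of weights is sent to the wide pushout of their collages under $\catone+\eA$, again a connected colimit in $\sCat$. Because colimits commute, this wide pushout of the maps produced in the previous paragraph is a single pushout, in $\sCat$, of the corresponding coproduct of generating cells $\cattwo[\boundary\Del^n]\inc\cattwo[\Del^n]$. Hence $\coll(\alpha)$ is a countable composite of pushouts of coproducts of generating cells, i.e.\ a relative simplicial computad; no $\emptyset\inc\catone$ cells are needed, since $\coll(V)$ and $\coll(W)$ share the object set $\mathrm{ob}\,\eA\sqcup\{\bot\}$.

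For the converse I would run a relative-computad presentation of $\coll(\alpha)$ backwards, inducting over its stages $\coll(V)=X_0\to X_1\to\cdots$ with $\colim_i X_i=\coll(W)$. No object is created or destroyed, so each $X_i$ has objects $\mathrm{ob}\,\eA\sqcup\{\bot\}$ and no $\emptyset\inc\catone$ cells appear. Assuming inductively that $X_i=\coll(W_i)$ is collage-like, any cell $\cattwo[\boundary\Del^n]\inc\cattwo[\Del^n]$ attached next adjoins non-degenerate arrows between two of these objects; were both endpoints in $\eA$, or the codomain $\bot$, those arrows would survive into $\coll(W)$ and contradict fullness of $\eA$, emptiness of $\Map_{\eA}$-less homs $\Map_{\coll(W)}(a,\bot)$, or triviality of the endomorphisms of $\bot$. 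Thus every cell is of the ``good'' type $-\mapsto\bot$, $+\mapsto a\in\eA$; by the computation above---and since such an attaching map $\cattwo[\boundary\Del^n]\to\coll(W_i)$ is, by adjunction, the same datum as a map $\boundary\Del^n\times\Map_{\eA}(a,-)\to W_i$---each pushout of good cells is $\coll$ of a pushout of the matching coproduct of projective cells, so $X_{i+1}$ is again collage-like. Full faithfulness of $\coll$ then lifts the whole presentation to a projective cell complex $V=W_0\inc W_1\inc\cdots$ with $\colim_i W_i=W$. The one genuinely delicate point---and the main obstacle---is this forcing step: that \emph{every} relative-computad presentation of $\coll(\alpha)$ uses only good cells, equivalently that each intermediate $X_i$ remains a collage. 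This rests on the explicit description of the hom-spaces of a collage (arrows out of $\bot$ are freely composed beads, and nothing maps into $\bot$) together with the stability of simplicial subcomputad inclusions under pushout, coproduct, and countable colimit (Lemma~\ref{lem:sub-computad-stability}), which ensures the attached cells stay non-degenerate and their shape is detected in the colimit. This is exactly the bookkeeping of the cited Proposition~\refII{prop:projcofchar}, of which the present result is a mild variant.
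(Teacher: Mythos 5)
Your proposal is correct and follows essentially the same route as the paper's own proof: both hinge on identifying $\coll$ applied to a projective cell with a pushout of the generating inclusion $\cattwo[\boundary\Del^n]\inc\cattwo[\Del^n]$ (your $\catone+\catone$ pushout plus the pasting law is a re-packaging of the paper's pushout square, which is verified by the same collage-adjunction/Yoneda transposition you invoke), then exploit cocontinuity and connectedness of the relevant colimits for the forward direction, and for the converse identify each intermediate stage as a collage via the essential-image characterization, forcing all attaching cells to be of the ``good'' type, before concluding by full faithfulness and reflection of colimits. The inductive ``collage-like'' bookkeeping you flag as the delicate point is exactly the step the paper handles by the same argument, so there is no gap.
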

\begin{proof}
If  $\alpha \colon V \inc W$ is a projective cell complex, then it can be presented as a countable composite of pushouts of coproducts of projective cells of varying dimensions indexed by the objects $a \in \eA$. Since the collage construction is a left adjoint, it preserves these colimits, and hence the map $\coll{(\alpha)} \colon \coll{V} \inc \coll{W}$ as a sequential composite of pushouts of coproducts of simplicial functors $\coll{(\partial\Delta[n]\times\eA(a,-))} \inc \coll{(\Delta[n]\times\eA(a,-))}$ in the category ${}^{\catone+\eA/}\sCat$. This composite colimit diagram is connected --- note $\coll{\emptyset}=\catone+\eA$ --- so this cell complex presentation is also preserved by the forgetful functor ${}^{\catone+\eA/}\sCat \to \sCat$ and the simplicial functor $\coll{(\alpha)} \colon \coll{V} \inc \coll{W}$ can be understood as a countable composite of pushouts of coproducts of  $\coll{(\partial\Delta[n]\times\eA(a,-))} \inc \coll{(\Delta[n]\times\eA(a,-))}$ in $\sCat$.

This is advantageous because there is a pushout square in $\sCat$ displayed below left corresponding to the maps of simplicial sets displayed below right
\begin{equation}\label{eq:collage-computad-pushout}
\begin{tikzcd}
\cattwo[\partial\Delta[n]] \arrow[d, hook] \arrow[r] \arrow[dr, phantom, "\ulcorner" very near end] & \coll{(\partial\Delta[n]\times\eA(a,-))} \arrow[d, hook] & \arrow[d, phantom, "\leftrightsquigarrow"] & \partial\Delta[n] \arrow[r, hook, "\id_a"] \arrow[d, hook] & \partial\Delta[n] \times \eA(a,a) \arrow[d, hook] \\ \cattwo[\Delta[n]] \arrow[r] & \coll{(\Delta[n]\times\eA(a,-))}  & ~ & \Delta[n] \arrow[r, hook, "\id_a"] & \Delta[n] \times \eA(a,a)
\end{tikzcd}
\end{equation}
whose horizontals sends the two objects $-$ and $+$ of the simplicial computads defined in Example \ref{ex:directed-suspension} to $\top$ and $a$ and act on the non-trivial hom-spaces via the inclusions whose component in $\eA(a,a)$ is constant at the identity element at $a$. To verify that $\coll{(\partial\Delta[n]\times\eA(a,-))} \inc \coll{(\Delta[n]\times\eA(a,-))}$ is a pushout of $\cattwo[\partial\Delta[n]]\inc\cattwo[\Delta[n]]$, observe from the adjunction of Proposition \ref{prop:collage-adjunction} and the Yoneda lemma  that a commutative triangle of simplicial functors as below left-transposes to a commutative triangle of simplicial maps as below-right
\[
\begin{tikzcd}
 \coll{(\partial\Delta[n]\times\eA(a,-))} \arrow[d, hook]  \arrow[dr]  & & \arrow[d, phantom, "\leftrightsquigarrow"] & \partial\Delta[n] \arrow[d, hook] \arrow[dr]  \\  \coll{(\Delta[n]\times\eA(a,-))} \arrow[r] & \eC  & ~ & \Delta[n] \arrow[r] & \Map_{\eC}(F\bot,Fa)
\end{tikzcd}
\]
where $F \colon \catone+\eA \inc \coll{(\Delta[n]\times\eA(a,-))} \to \eC$ is the composite functor. Thus, we see that to extend a simplicial functor $ \coll{(\partial\Delta[n]\times\eA(a,-))} \to \eC$ along the inclusion $\coll{(\partial\Delta[n]\times\eA(a,-))} \inc \coll{(\Delta[n]\times\eA(a,-))}$  is to attach an $n$-simplex to an $n$-simplex boundary in a particular hom-space in $\eC$, i.e., to extend a functor $\cattwo[\partial\Delta[n]] \to \eC$ along $\cattwo[\partial\Delta[n]]\inc\cattwo[\Delta[n]]$. From this we see that $\coll{(\alpha)} \colon \coll{V} \inc \coll{W}$ is a transfinite composite of pushouts of coproducts of simplicial functors $\cattwo[\partial\Delta[n]]\inc\cattwo[\Delta[n]]$, which proves that this map is a relative simplicial computad.

Conversely, if $\coll{(\alpha)} \colon \coll{V} \inc \coll{W}$ is a relative simplicial computad, then it can be presented as a countable composite of pushouts of coproducts of simplicial functors $\cattwo[\partial\Delta[n]]\inc\cattwo[\Delta[n]]$; since this inclusion is bijective on objects the inclusion $\emptyset\inc\catone$ is not needed. Since the only arrows of $\coll{W}$ that are not present in $\coll{V}$ have domain $\top$ and codomain $a \in \eA$, the characterisation of the essential image of the collage functor of Proposition \ref{prop:collage-adjunction}\ref{itm:collage-fun} allows us to identify each stage of the countable composite
\[
\begin{tikzcd}[column sep=small]
\coll{V} \arrow[r, hook] & \coll{(W^1)}  \arrow[r, hook] & \cdots \arrow[r, hook] &\coll{(W^i)} \arrow[r, hook] & \coll{(W^{i+1})} \arrow[r, hook] & \cdots \arrow[r, hook] & \coll{W}
\end{tikzcd}
\]
as the collage of some weight $W^i \colon \eA \to\SSet$. Each attaching map $\cattwo[\partial\Delta[n]] \to \coll{W^i}$ in the cell complex presentation acts on objects by mapping $-$ and $+$ to $\top$ and $a$ for some $a \in \eA$, and hence factors through the top horizontal of the pushout square \eqref{eq:collage-computad-pushout}. Hence, the inclusion $\coll{(W^i)}\inc\coll{(W^{i+1})}$ is a pushout of a coproduct of the maps 
$\coll{(\partial\Delta[n]\times\eA(a,-))} \inc \coll{(\Delta[n]\times\eA(a,-))}$, one for each cell $\cattwo[\partial\Delta[n]]\inc\cattwo[\Delta[n]]$ whose attaching map sends $+$ to $a \in \coll{(W^i)}$. As the collage functor is fully faithful, we have now expressed $\coll{(\alpha)} \colon \coll{V} \inc \coll{W}$ as a countable composite of pushouts of coproducts of simplicial functors $\coll{(\partial\Delta[n]\times\eA(a,-))} \inc \coll{(\Delta[n]\times\eA(a,-))}$. A fully faithful functor that preserves colimits also reflects them, so in this way we see that $\alpha \colon V \inc W$ is a countable composite of pushouts of coproducts of projective cells, proving that it is a projective cell complex as claimed.
\end{proof}

We now apply Proposition \ref{prop:collage-adjunction} and Theorem \ref{thm:flexible-collage} to identify a flexible weight for homotopy coherent diagrams of shape $X$ by means of its collage.

\begin{defn}[weights for pseudo limits]\label{defn:weight-for-pseudo-limits}
For any simplicial set $X$, the coherent realisation of the canonical inclusion $\Del^0+X \inc \Del^0\join X$ defines a collage $\catone+\gC{X}\inc\gC{[\Del^0\join X]}$. Hence, via the counit isomorphism of the collage adjunction Proposition \ref{prop:collage-adjunction}\ref{itm:collage-fun}, this simplicial category is isomorphic to the collage of its corresponding weight which we denote by $W_X \colon \gC{X} \to \SSet$ and call the \emph{weight for the pseudo limit of a homotopy coherent diagram of shape $X$}. The $W_X$-weighted limit of a homotopy coherent diagram of shape $X$ is then referred to as the \emph{pseudo limit} of that diagram.

Since the left adjoint of the collage adjunction is fully faithful, its unit is an isomorphism, and this permits us to define the weight $W_X$ explicitly: for a vertex $x \in X$,
\[ W_X(x) \defeq \gC{[\Del^0\join X]}(\bot,x).\]
So the functor
  \begin{equation*}
    \xymatrix@R=0em@C=5em{
      \gC{X}\ar[r]^{W_X} & {\SSet}
    }\qquad\text{is~given~by}\qquad W_X(x) \defeq {\gC[\Del^0\join X]}(\bot,x). 
  \end{equation*}
  \end{defn}

\begin{lem}\label{lem:pseudo-is-flexible}
  For all simplicial sets $X$ the weight $W_X\colon\gC{X}\to\SSet$ for homotopy limits of diagrams of shape $\gC{X}$ is a flexible weight.
\end{lem}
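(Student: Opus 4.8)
The plan is to reduce the statement immediately to the recognition principle of Theorem~\ref{thm:flexible-collage}, which asserts that a weight is flexible exactly when the inclusion of $\catone+\eA$ into its collage is a relative simplicial computad. By Definition~\ref{defn:weight-for-pseudo-limits}, the weight $W_X$ was constructed precisely so that its collage is identified, under $\catone+\gC{X}$ and via the counit isomorphism of the collage adjunction, with the homotopy coherent realisation $\gC{[\Del^0\join X]}$; moreover the canonical inclusion $\catone+\gC{X}\inc\coll{W_X}$ is by construction the image under $\gC$ of the monomorphism $\Del^0+X\inc\Del^0\join X$. So it suffices to show that $\gC{[\Del^0+X]}\inc\gC{[\Del^0\join X]}$ is a relative simplicial computad.

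First I would record that $\Del^0+X\inc\Del^0\join X$ is a monomorphism of simplicial sets: an $n$-simplex of the join is a decomposition of $[n]$ into an initial segment carrying a simplex of $\Del^0$ and a final segment carrying a simplex of $X$, and the coproduct $\Del^0+X$ is included as exactly those simplices for which one of the two segments is empty, which is injective in each simplicial degree. Since homotopy coherent realisation is a left adjoint it preserves coproducts, so $\gC{[\Del^0+X]}\cong\gC{\Del^0}+\gC{X}=\catone+\gC{X}$, matching the domain appearing in Definition~\ref{defn:weight-for-pseudo-limits}. Next I would apply Lemma~\ref{lem:subcomputad-inclusion}, which tells us that $\gC$ carries this monomorphism to a simplicial subcomputad inclusion $\catone+\gC{X}\inc\gC{[\Del^0\join X]}$. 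The domain $\catone+\gC{X}$ is itself a simplicial computad, being a coproduct of the simplicial computads $\catone$ and $\gC{X}$ (the latter by Proposition~\ref{prop:gothic-C}), and $\sCptd$ is closed under such coproducts by Lemma~\ref{lem:computad-colimits}. Invoking the equivalence recorded after Definition~\ref{defn:subcomputad}---that a simplicial subcomputad inclusion whose domain is a simplicial computad is the same thing as a relative simplicial computad---I conclude that $\catone+\gC{X}\inc\gC{[\Del^0\join X]}=\coll{W_X}$ is a relative simplicial computad, whence $W_X$ is flexible by Theorem~\ref{thm:flexible-collage}.

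There is essentially no genuine obstacle here once these pieces are assembled: the entire content is carried by Theorem~\ref{thm:flexible-collage} together with the fact that $\gC$ sends monomorphisms of simplicial sets to simplicial subcomputad inclusions. The only point that warrants a moment's care is the bookkeeping at the start---confirming that the inclusion $\catone+\gC{X}\inc\coll{W_X}$ of Definition~\ref{defn:weight-for-pseudo-limits} really is the realisation of $\Del^0+X\inc\Del^0\join X$, rather than merely being abstractly isomorphic to some collage inclusion---but since $W_X$ is defined through this very identification, no additional argument is required.
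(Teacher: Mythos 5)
Your proposal is correct and follows essentially the same route as the paper's proof: identify $\coll{W_X}$ with $\gC[\Del^0\join X]$ via Definition~\ref{defn:weight-for-pseudo-limits}, apply Lemma~\ref{lem:subcomputad-inclusion} to see that $\catone+\gC{X}\cong\gC[\Del^0+X]\inc\gC[\Del^0\join X]$ is a simplicial subcomputad inclusion, and conclude by Theorem~\ref{thm:flexible-collage}. The only difference is that you spell out details the paper compresses or leaves to the reader, namely that $\Del^0+X\inc\Del^0\join X$ is a monomorphism and that a subcomputad inclusion with computad domain is the same as a relative simplicial computad.
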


\begin{proof}
  The collage of $W_X$ is $\catone + \gC{X} \cong \gC[\Del^0+X] \inc \gC[X\join\Del^0]$, which is a simplicial computad inclusion by Lemma \ref{lem:subcomputad-inclusion}, so this result follows from Theorem \ref{thm:flexible-collage}.
\end{proof}

  By Corollary \ref{cor:collage-bijection}, a simplicial functor $F\colon \gC[\Del^0\join X] \to \eC$ defines a $W_X$-shaped cone over the restricted diagram $F\colon\gC[X]\to\eC$. In the case where $F$ encodes a weighted limit cone, we refer to the simplicial functor as the \emph{pseudo limit cone over $F$}. We call $W_X$ the weight for a ``pseudo'' limit because we anticipate considering homotopy coherent diagrams valued in Kan-complex enriched categories, $\eC$ in which all arrows in positive dimension are automatically invertible. If we were to consider diagrams valued in quasi-categorically enriched categories,  it would be more appropriate to refer to $W_X$ as the weight for \emph{lax} limits, since in that context the 1-arrows of $\gC[\Del^0\join X]$ will likely map to non-invertible morphisms.

\subsection{Homotopy coherent realisation of joins}\label{ssec:joins}

Our aim in this section is to establish a characterisation of the homotopy coherent realisation $\gC[X\join Y]$ of the join of two simplicial sets.   By Proposition \ref{prop:gothic-C}, the objects in $\gC[X\join Y]$ are vertices in either $X$ or $Y$, and  there are no $n$-arrows in $\gC[X \join Y]$ from a vertex in $Y$ to a vertex in $X$. In this way, we have a canonical functor $\gC[X\join Y] \to \cattwo$ so that $\gC[X]$ is the fibre over $0$ and $\gC[Y]$ is the fibre over 1. It remains, then, to identify those $n$-arrows whose source is a vertex in $X$ and whose target is a vertex in $Y$. In Theorem \ref{thm:Phi-iso}, which we state more precisely at the end of this section, we will establish a natural isomorphism of simplicial computads
\[ \gC[X \join Y] \cong \gC[X\join\Del^0] \times_\cattwo \gC[\Del^0\join Y].\]

We start by formalising our analysis of the construction just given.

\begin{lem}\label{lem:join-gC-bifunctor} There is a canonical bifunctor
  \begin{equation}\label{eq:join-gC-bifunctor-cptd}
    \xymatrix@R=0ex@C=8em{
      {\sSet\times\sSet}\ar[r]^{\gC[{-}\join{-}]} & {\sCptd_{/\cattwo}}}
  \end{equation}
  which preserves  connected colimits in each variable independently.
  \end{lem}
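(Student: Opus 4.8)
The plan is to build the functor \eqref{eq:join-gC-bifunctor-cptd} by composing three pieces and then to reduce the colimit statement to the corresponding property of the join bifunctor on simplicial sets.

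First I would construct the bifunctor. The join is a bifunctor $\join\colon\sSet\times\sSet\to\sSet$, and by Proposition \ref{prop:gothic-C} the homotopy coherent realisation $\gC$ factors through the inclusion $\sCptd\inc\sCat$; composing gives $\gC[{-}\join{-}]\colon\sSet\times\sSet\to\sCptd$. To obtain the lift to the slice, note that the unique maps $X\to\Del^0$ and $Y\to\Del^0$ induce, by bifunctoriality of the join, a natural map $X\join Y\to\Del^0\join\Del^0=\Del^1$; applying $\gC$ and the isomorphism $\gC\Del^1\cong\cattwo$ of Example \ref{ex:homotopy-coherent-simplex} yields a natural transformation from $\gC[{-}\join{-}]$ to the constant functor at $\cattwo$. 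Since the map $X\join Y\to\Del^1$ is plainly natural in each variable, this assembles into the desired bifunctor valued in $\sCptd_{/\cattwo}$, recovering the projection described before the statement whose fibres over $0$ and $1$ are $\gC X$ and $\gC Y$.

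Then I would dispatch the two ``cheap'' layers. The forgetful functor $\sCptd_{/\cattwo}\to\sCptd$ from a slice category creates all colimits, so $\gC[{-}\join Y]\colon\sSet\to\sCptd_{/\cattwo}$ preserves connected colimits as soon as the underlying functor $\sSet\to\sCptd$ does. Moreover $\gC$ is a left adjoint, and by Lemma \ref{lem:computad-colimits} the subcategory $\sCptd$ is closed under colimits in $\sCat$, so $\gC\colon\sSet\to\sCptd$ preserves all colimits. Thus the whole question collapses to showing that the join functors $-\join Y$ and $X\join{-}$ each preserve connected colimits in $\sSet$.

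The main work, then, is this last point about the join. I would argue that for fixed $Y$ the functor $X\mapsto X\join Y$, regarded as valued in the coslice category $Y/\sSet$ via the canonical inclusion $Y\inc X\join Y$, is a left adjoint (this is Joyal's join--slice adjunction); it therefore preserves all colimits formed in $Y/\sSet$, and since the forgetful functor $Y/\sSet\to\sSet$ preserves connected colimits, $-\join Y$ preserves connected colimits in $\sSet$. The functor $X\join{-}$ is handled symmetrically through the coslice under $X$. Equivalently, and avoiding the adjunction, one can read this off the simplex-level formula $(X\join Y)_n=\coprod_{i+j=n-1,\,i,j\geq-1}X_i\times Y_j$ (with $X_{-1}=Y_{-1}=\Del^0$): every summand with $i\geq 0$ is a product $X_i\times(\text{const})$ and so preserves all colimits in $X$, while the single summand $X_{-1}\times Y_n\cong Y_n$ is constant in $X$, and a constant functor preserves exactly the connected colimits. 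The hard part is precisely locating this failure of cocontinuity: the join does not preserve the empty colimit or coproducts---$\emptyset\join Y=Y$---so connectedness of the indexing diagram is exactly what is needed to absorb the constant summand $\colim Y_n\cong Y_n$, which is why the statement is restricted to connected colimits.
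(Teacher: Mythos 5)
Your proof is correct and follows essentially the same route as the paper's: both reduce the statement to the facts that the join preserves connected colimits in each variable, that $\gC$ is a colimit-preserving left adjoint landing in $\sCptd$ (which is closed under colimits in $\sCat$ by Lemma \ref{lem:computad-colimits}), and that colimits in the slice over $\cattwo$ are computed in the underlying category. The two points where you deviate are minor but worth recording. First, the paper constructs the projection $P_{X,Y}\colon\gC[X\join Y]\to\cattwo$ by hand, stipulating its action on objects (and noting that any simplicial functor from a computad to $\cattwo$ is automatically a computad morphism) and then checking naturality directly, whereas you obtain it by applying $\gC$ to the canonical map $X\join Y\to\Del^0\join\Del^0\cong\Del^1$ and using $\gC\Del^1\cong\cattwo$; this is slicker, since naturality in both variables is then automatic from functoriality. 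Second, where the paper merely cites Joyal for the connected-colimit preservation of the join, you supply proofs: either via the join--slice adjunction valued in the coslice $Y/\sSet$ together with the fact that coslice forgetful functors preserve connected colimits, or via the levelwise coproduct formula, correctly isolating the constant summand (equivalently, the failure $\emptyset\join Y=Y$) as the precise reason full cocontinuity fails and connectedness suffices. One small citation correction: Proposition \ref{prop:gothic-C} gives only the object-level fact that each $\gC{X}$ is a simplicial computad; that $\gC$ factors through $\sCptd$ \emph{as a functor} (i.e., that each $\gC{f}$ is a computad morphism) is Lemma \refVI{lem:realization-in-computads}, which is what the paper's proof invokes at this step.
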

  \begin{proof}
  Given a pair of simplicial sets $X$ and $Y$, we promote the arrow category
  $\cattwo = \{ 0 \to 1 \} $ to a trivially enriched simplicial category and
  define a canonical simplicial functor $P_{X,Y}\colon \gC[X\join Y]\to\cattwo$
  uniquely determined by the stipulation that it maps objects arising from
  vertices of $X$ to the object $0$ and those arising from vertices of $Y$ to
  the object $1$. Note that this is a functor of
  simplicial computads, since any simplicial functor from a computad into $\cattwo$ is
  necessarily a functor of simplicial computads.

  Corresponding observations reveal that if $f\colon X\to X'$ and $g\colon Y\to
  Y'$ are simplicial maps, then the corresponding functor $\gC[f\join g]\colon
  \gC[X\join Y]\to \gC[X'\join Y']$ makes the following triangle commute:
  \begin{equation*}
    \xymatrix@=1em{
      {\gC[X\join Y]}\ar[rr]^-{\gC[f\join g]}\ar[dr]_{P_{X,Y}} &&
      {\gC[X'\join Y']}\ar[dl]^{P_{X',Y'}} \\
      & {\cattwo} & }
  \end{equation*}
  In this way we have shown that the construction $(X,Y)\mapsto \gC[X\join Y]$
  extends naturally to give a bifunctor:
\begin{equation}\label{eq:join-gC-bifunctor}
    \xymatrix@R=0ex@C=8em{
      {\sSet\times\sSet}\ar[r]^{\gC[{-}\join{-}]} & {\sCat_{/\cattwo}}}
  \end{equation}
  
  The join of simplicial sets preserves  connected colimits in each
  variable independently \cite[3.2]{Joyal:2002:QuasiCategories} and the homotopy coherent realisation functor is a left
  adjoint so it preserves all  colimits. Furthermore colimits in the
  slice $\sCat_{/\cattwo}$ are constructed as in $\sCat$, so it follows that this bifunctor also preserves  connected
  colimits in each variable independently. Since the homotopy coherent
  realisation functor $\gC$ factors through the subcategory $\sCptd\inc\sCat$,
  by Lemma~\refVI{lem:realization-in-computads}, and that inclusion creates
   colimits, by Lemma~\ref{lem:computad-colimits}. It follows
  that we can restrict the codomain of~\eqref{eq:join-gC-bifunctor} to define the bifunctor of the statement  which preserves  connected colimits in each variable independently.
\end{proof}

\begin{rmk}\label{rmk:join-gC-bifunctor-var}
  By specialising the bifunctor \eqref{eq:join-gC-bifunctor-cptd} we obtain a pair of functors
  \begin{equation}\label{eq:join-gC-bifunctor-var-1}
    \xymatrix@R=0ex@C=5em{{\sSet}\ar[r]^-{\gC[{-}\join \Del^0]} &
      {\sCat_{/\cattwo}}}\mkern40mu
    \xymatrix@R=0ex@C=5em{{\sSet}\ar[r]^-{\gC[\Del^0\join{-}]} &
      {\sCat_{/\cattwo}}}
  \end{equation}
  and these give rise to a bifunctor
  \begin{equation}\label{eq:join-gC-bifunctor-var-2}
    \xymatrix@R=0ex@C=5em{
      {\sSet\times\sSet}\ar[rr]^-{\gC[{-}\join \Del^0]\times
        \gC[\Del^0\join{-}]} &&
      {\sCat_{/\cattwo}\times\sCat_{/\cattwo}} \ar[r]^-{{-}\times_{\cattwo}{-}} &
      {\sCat_{/\cattwo}}
    }
  \end{equation}
  where the functor labelled ${-}\times_{\cattwo}{-}$ is the binary product
  (pullback over $\cattwo$) of the slice category $\sCat_{/\cattwo}$.

  On applying the bifunctor $\gC[{-}\join{-}]$ to the unique map $!\colon Y\to
  \Del^0$ (respectively $!\colon X\to \Del^0$) we obtain a family of simplicial
  projection functors $\gC[X\join !]\colon \gC[X\join Y]\to\gC[X\join\Del^0]$
  natural in $X\in \sSet$ (respectively $\gC[!\join Y]\colon \gC[X\join Y]\to
  \gC[\Del^0\join Y]$ natural in $Y\in\sSet$) in the slice $\sCat_{/\cattwo}$.
  As a consequence of the uniqueness of maps into the terminal simplicial set
  $\Del^0$ these families of functors are also natural in their second variable,
  in the sense that the following triangles commute:
  \begin{equation*}
    \xymatrix@=1em{
      {\gC[X\join Y]} \ar[rr]^-{\gC[X\join g]}\ar[dr]_-{\gC[X\join !]} &&
      {\gC[X\join Y']} \ar[dl]^-{\gC[X\join !]} \\
      & {\gC[X\join\Del^0]} & }
    \mkern60mu
    \xymatrix@=1em{
      {\gC[X\join Y]} \ar[rr]^-{\gC[f\join Y]}\ar[dr]_-{\gC[!\join Y]} &&
      {\gC[X'\join Y]} \ar[dl]^-{\gC[!\join Y]} \\
      & {\gC[\Del^0\join Y]} & }
  \end{equation*}
  These projection maps induce a comparison map
  \begin{equation*}
    \Phi_{X,Y}\defeq (\gC[X\join !],\gC[!\join Y]) \colon
    \gC[X\join Y]\longrightarrow \gC[X\join\Del^0] \times_{\cattwo}
    \gC[\Del^0\join Y]
  \end{equation*}
  and the naturality properties of the projections discussed above imply that
  these are natural in $X$ and $Y$. It follows that these comparisons assemble
  into a natural transformation
  \begin{equation}\label{eq:join-gC-bifunctor-var-3}
    \xymatrix@R=0ex@C=9em{
      {\sSet\times\sSet}
      \ar@/^1.5ex/[]!R!UR(0.5);[r]!L!UL(0.5)^-{\gC[{-}\join{-}]}
      \ar@/_1.5ex/[]!R!DR(0.5);[r]!L!DL(0.5)_-{\gC[{-}\join\Del^0]\times_{\cattwo}
        \gC[\Del^0\join{-}]}\ar@{}[r]|{\Downarrow\Phi} &
      {\sCat_{/\cattwo}}
    }
  \end{equation}
  between the bifunctors introduced here and in
Lemma~\ref{lem:join-gC-bifunctor}.
\end{rmk}

While we made a point of observing that the bifunctor introduced in
Lemma~\ref{lem:join-gC-bifunctor} landed in the subcategory of computads in
$\sCat_{/\cattwo}$ we have, as yet, made no such claim for the corresponding
bifunctor introduced in Remark~\ref{rmk:join-gC-bifunctor-var}. This result will be proven as Corollary \ref{cor:phi-lands-in-computad}, but it requires a little justification, as provided in the following
sequence of lemmas:

 Given an object $\eA\to\cattwo$ of $\Cat_{/\cattwo}$ or $\sCat_{/\cattwo}$ we
  shall use the notation $\eA^i$ to denote its fibre over the object
  $i\in\cattwo$, the subscript already being taken.

\begin{obs}\label{obs:fibre-over-2}
  Suppose that $\eA\to\cattwo$ is an object of $\Cat_{/\cattwo}$ (respectively
  $\sCat_{/\cattwo}$). Then any arrow that is not contained in one of its fibres
  must have domain in $\eA^0$ and codomain in $\eA^1$. So an arrow in one of
  those fibres cannot be factored through an object in the other fibre. It
  follows that an arrow in a fibre $\eA^i$ is atomic (respectively has a unique
  decomposition as a composite of atomic arrows) in there if and only if it is
  atomic (respectively has a unique atomic decomposition) in $\eA$ itself. 
\end{obs}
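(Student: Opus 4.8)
The plan is to reduce everything to the elementary structure of the category $\cattwo = \{0 \to 1\}$, exploiting that it has no arrow $1 \to 0$ and no non-identity endomorphisms. Write $P \colon \eA \to \cattwo$ for the structure functor (a simplicial functor into the trivially enriched $\cattwo$ in the $\sCat_{/\cattwo}$ case). First I would prove the claim about domains and codomains: for any arrow $f \colon a \to b$ of $\eA$, the image $P(f)$ is one of the three arrows $\id_0$, $\id_1$, or the unique non-identity arrow $0 \to 1$. If $P(f)$ is an identity then $f$ lies in a single fibre $\eA^i$; otherwise $P(f)$ is the arrow $0 \to 1$, forcing $P(a) = 0$ and $P(b) = 1$, i.e. $a \in \eA^0$ and $b \in \eA^1$. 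This is the entire content of the first assertion.

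Next I would establish that a fibre-arrow cannot factor through the opposite fibre. Suppose $f$ lies in $\eA^i$ and $f = g \circ h$ factors through an object $c$. Applying $P$ gives $\id_i = P(g) \circ P(h)$ in $\cattwo$. The only factorizations of an identity in $\cattwo$ route through the same object: factoring $\id_0$ through $1$ would require the arrow $P(g) \colon 1 \to 0$, and factoring $\id_1$ through $0$ would require the arrow $P(h) \colon 1 \to 0$, neither of which exists. Hence $P(c) = i$, so $c \in \eA^i$, and in fact both $g$ and $h$ lie in $\eA^i$. This proves the second assertion and, more usefully, shows that every factorization in $\eA$ of an arrow of $\eA^i$ takes place entirely inside $\eA^i$.

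Finally, I would deduce the atomicity statement. Because $\eA^i \subseteq \eA$ is a (simplicial) subcategory, any factorization in $\eA^i$ is a fortiori a factorization in $\eA$; and by the previous paragraph the converse holds for arrows of $\eA^i$. Thus the set of factorizations of a fibre-arrow $f$ is the same whether computed in $\eA^i$ or in $\eA$. Since atomicity is precisely the absence of a non-trivial factorization (recalling the convention that atomic arrows are non-identities), $f$ is atomic in $\eA^i$ if and only if it is atomic in $\eA$; equivalently, the atomic arrows of $\eA^i$ are exactly the atomic arrows of $\eA$ that happen to lie in $\eA^i$. For the simplicial computad case one runs this argument level-wise: at each level $n$ the functor $P_n \colon \eA_n \to \cattwo$ obeys the same analysis, so the unique atomic decomposition of an $n$-arrow $f \in \eA^i$ supplied by the computad structure of $\eA_n$ consists of atomic arrows all lying in $(\eA^i)_n$, while conversely any atomic decomposition in $(\eA^i)_n$ is already one in $\eA_n$; the two decompositions therefore coincide and uniqueness transfers verbatim.

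I do not expect a genuine obstacle here, since the argument is a direct unwinding of the shape of $\cattwo$. The only points requiring a little care are bookkeeping ones: handling the simplicial enrichment by arguing separately at each simplicial level (using that $\cattwo$ is trivially enriched, so that $P$ is determined by its levelwise components $P_n$), and respecting the convention that identities are not counted as atomic when comparing the factorizations computed in $\eA^i$ with those computed in $\eA$.
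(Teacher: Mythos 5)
Your proposal is correct and follows essentially the same route as the paper, which states this as a self-contained Observation whose embedded reasoning is exactly your chain: the only non-identity arrow of $\cattwo$ is $0\to 1$, so arrows outside the fibres go from $\eA^0$ to $\eA^1$, hence factorizations of fibre arrows stay inside the fibre, and atomicity and unique atomic decompositions therefore agree in $\eA^i$ and $\eA$. Your added bookkeeping (the functor $P$, the absence of non-identity endomorphisms in $\cattwo$ forcing both factors into the fibre, and the levelwise treatment of the simplicial case) just makes explicit what the paper leaves implicit.
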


Recall we identify $\Graph$ with the essential image of the free category functor $\Graph\inc\Cat$ and refer to free categories as \emph{computads}.

\begin{lem}\label{lem:fibre-prod-over-2}
  Suppose that $\eA\to\cattwo$ and $\eB\to\cattwo$ are objects of $\Graph_{/\cattwo}$ with $\eA^1\cong\catone$ and $\eB^0 \cong \catone$. Then the fibred product $\eA \times_{\cattwo} \eB$ in $\Cat$ is contained in the subcategory $\Graph$ and defines the fibred product in that subcategory.
\end{lem}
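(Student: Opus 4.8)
The plan is to exploit the degeneracy forced by the hypotheses $\eA^1\cong\catone$ and $\eB^0\cong\catone$: in each fibre of the pullback only one of the two coordinates can vary, and the passage from the $0$-fibre to the $1$-fibre must happen in a single synchronised step. First I would use Observation~\ref{obs:fibre-over-2} to read off the shape of the pullback $\eA\times_{\cattwo}\eB$ formed in $\Cat$. Its objects over $0\in\cattwo$ are the pairs $(a,b_0)$ with $a$ an object of $\eA^0$ and $b_0$ the unique object of $\eB^0\cong\catone$, and dually its objects over $1$ are the pairs $(a_1,b)$; its arrows over $\id_0$ are exactly the arrows of $\eA^0$ (paired with $\id_{b_0}$), those over $\id_1$ are the arrows of $\eB^1$, and those over the non-identity edge $0\to 1$ are the pairs $(f,g)$ with $f\colon a\to a_1$ in $\eA$ and $g\colon b_0\to b$ in $\eB$. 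The same observation tells me that the fibres $\eA^0$ and $\eB^1$ are themselves computads.

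The key step is a unique-factorisation lemma for the crossing arrows. Since $\eA^1\cong\catone$ contains no non-identity arrow and, by Observation~\ref{obs:fibre-over-2}, no arrow runs from $\eA^1$ back to $\eA^0$, the unique atomic decomposition in $\eA$ of any crossing arrow $f\colon a\to a_1$ consists of a (possibly empty) string of atomic arrows of $\eA^0$ followed by exactly one atomic crossing arrow $c$, with nothing after it; that is, $f=c\circ f_0$ with $f_0$ in $\eA^0$ and $c$ atomic, uniquely. Dually every crossing $g\colon b_0\to b$ in $\eB$ factors uniquely as $g=g_1\circ d$ with $d$ atomic and $g_1$ in $\eB^1$. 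It then follows that each crossing arrow $(f,g)$ of the pullback factors uniquely as $(\id_{a_1},g_1)\circ(c,d)\circ(f_0,\id_{b_0})$, the single crossing step $(c,d)$ being forced. Combining this with the computad structure of $\eA^0$ and $\eB^1$, I would conclude that $\eA\times_{\cattwo}\eB$ is freely generated — hence an object of $\Graph$ — its reflexive graph of atomic arrows being precisely the pointwise pullback $G_\eA\times_{\cattwo}G_\eB$ of the generating graphs of $\eA$ and $\eB$, which is the fibred product computed in the presheaf category $\Graph$.

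Finally I would confirm that this object is the fibred product in $\Graph$ and not merely in $\Cat$. The projections $\eA\times_{\cattwo}\eB\to\eA$ and $\to\eB$ send fibre-$0$, fibre-$1$, and crossing atomic arrows to atomic arrows or to identities, so they are computad functors. For the universal property, given computad functors $P\colon\eX\to\eA$ and $Q\colon\eX\to\eB$ agreeing over $\cattwo$, the unique functor $\eX\to\eA\times_{\cattwo}\eB$ supplied by the $\Cat$-pullback sends an atomic arrow $\xi$ of $\eX$ to $(P\xi,Q\xi)$; a short case analysis on the image of $\xi$ in $\cattwo$ shows this pair is again atomic or an identity. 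The crucial case is when $\xi$ lies over $0\to 1$: then neither $P\xi$ nor $Q\xi$ can be an identity (identities lie in a fibre), so both are atomic crossing arrows and $(P\xi,Q\xi)$ is a crossing atomic. Hence the comparison is a computad functor, verifying the universal property in $\Graph$.

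The main obstacle is the unique-factorisation step. A product of free categories is generally not free — for instance $\mathbb{N}\times\mathbb{N}$ is the free commutative monoid on two generators, not a free category — and it is exactly the hypotheses $\eA^1\cong\catone$ and $\eB^0\cong\catone$ that rule out the offending ``staggered'' factorisations, by ensuring that within each fibre only one coordinate moves and that the crossing atomic step is unique. Keeping careful track of why no alternative interleavings arise is where the real content lies; the verification of the universal property is then routine.
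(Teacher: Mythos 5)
Your proof is correct and takes essentially the same approach as the paper's: both use Observation~\ref{obs:fibre-over-2} to handle the fibres, identify the crossing atomic arrows as pairs of crossing atomics of $\eA$ and $\eB$, establish the unique interleaved atomic decomposition of a general crossing arrow $(f,g)$ (fibre-$0$ atomics, then a single forced crossing pair, then fibre-$1$ atomics), and conclude that the atomic-arrow graph of $\eA\times_{\cattwo}\eB$ is the fibred product of the atomic-arrow graphs. Your explicit case-by-case check of the universal property via computad functors is slightly more detailed than the paper's, which simply notes that the identification of generating graphs already yields the universal property in $\Graph$.
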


When $\eA^1\cong\catone$ and $\eB^0\cong\catone$, we use the notation $\top$ (respectively $\bot$) to denote the unique object of $\eA^1$ (respectively $\eB^0$).

\begin{proof}
  We have that $(\eA\times_{\cattwo}\eB)^0=\eA^0\times\eB^0\cong\eA^0$ and
  $(\eA\times_{\cattwo}\eB)^1=\eA^1\times\eB^1\cong\eB^1$, but
  Observation~\ref{obs:fibre-over-2} tells us that the fibres $\eA^0$ and
  $\eB^1$ are computads and that the fibre inclusions
  $(\eA\times_{\cattwo}\eB)^i \inc \eA\times_{\cattwo}\eB$ preserve atomic
  arrows. So an arrow $(f,\id_\bot)$ (respectively $(\id_\top,g)$) is atomic in
  $\eA\times_{\cattwo}\eB$ if and only if $f$ is atomic in $\eA$ (respectively
  $g$ is atomic in $\eB$). The unique atomic decomposition of a general such
  arrow $(f,\id_\bot)$ (respectively $(\id_\top,g)$) is obtained by taking the
  atomic decomposition of $f$ in $\eA$ (respectively $g$ in $\eB$). This
  establishes the required atomic decomposition result for arrows in the fibres
  of $\eA\times_{\cattwo}\eB$.

  Observe now that the arrows of $\eA\times_{\cattwo}\eB$ which are not in one
  of its fibres are of the form $(f,g)\colon (A,\bot)\to(\top,B)$ with
  $A\in\eA^0$ and $B\in\eB^1$. Clearly, such an arrow factors through some
  object $(A',\bot)$ in $\eA\times_{\cattwo}\eB$ if and only if $f$ factors
  through $A'$ as a composite $f_2\circ f_1$ in $\eA$, and that decomposition
  uniquely determines a corresponding decomposition
  $(f,g)=(f_2,g)\circ(f_1,\id_\bot)$. Dual comments apply to factorisations of
  $(f,g)$ through some object $(\top,B')$; it follows easily that the map
  $(f,g)$ is atomic in $\eA\times_{\cattwo}\eB$ if and only if $f$ is atomic in
  $\eA$ and $g$ is atomic in $\eB$. To decompose a general such map $(f,g)$ into
  atomic arrows we form the atomic decompositions $f=f_n\circ\cdots\circ f_1$ in
  $\eA$ and $g=g_m\circ\cdots\circ g_1$ in $\eB$ and write down the manifest
  atomic decomposition $(f,g)=(\id_\top,g_m)\circ\cdots\circ(f_n,g_1)\circ\cdots
  \circ(f_1,\id_\bot) $ which is easily seen to be the unique such.

  Having shown that $\eA\times_{\cattwo}\eB$ is a computad, it remains only to
  check that it possesses the universal property of the fibred product in
  $\Graph$. However, our analysis of $\eA\times_{\cattwo}\eB$ actually
  showed that its graph of atomic arrows is the fibred product of the graphs of
  the atomic arrows of $\eA$ and $\eB$.
\end{proof}

\begin{lem}\label{lem:fibre-prod-over-2-again}
  Suppose that $\eA\to\cattwo$ and $\eB\to\cattwo$ are objects of $\sCptd_{/\cattwo}$
  with $\eA^1\cong\catone$ and $\eB^0\cong\catone$. Then the subcategory $\sCptd$ is
  closed in $\sCat$ under the fibred product $\eA \times_{\cattwo} \eB$.
\end{lem}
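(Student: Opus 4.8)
The plan is to reduce the enriched statement to the $1$-categorical statement of Lemma~\ref{lem:fibre-prod-over-2}, applied levelwise, and then to supply the one additional condition that distinguishes a simplicial computad from a category that is merely a computad in each simplicial degree, namely that degeneracy operators carry atomic arrows to atomic arrows.

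First I would note that the fibred product over the trivially enriched simplicial category $\cattwo$ is formed levelwise. Since $\cattwo_n = \cattwo$ for all $n$, an $n$-arrow of $\eA\times_\cattwo\eB$ is precisely a pair $(f,g)$ consisting of an $n$-arrow $f$ of $\eA$ and an $n$-arrow $g$ of $\eB$ with a common image in $\cattwo$, so that $(\eA\times_\cattwo\eB)_n \cong \eA_n\times_\cattwo\eB_n$. The hypotheses $\eA^1\cong\catone$ and $\eB^0\cong\catone$ pass to each level, giving $(\eA_n)^1\cong\catone$ and $(\eB_n)^0\cong\catone$, so Lemma~\ref{lem:fibre-prod-over-2} applies in every degree $n$ to show that $\eA_n\times_\cattwo\eB_n$ is a computad whose graph of atomic arrows is the fibred product of the graphs of atomic arrows of $\eA_n$ and $\eB_n$.

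It then remains to verify the degeneracy condition of Definition~\ref{defn:simplicial-computad}. For this I would recall the explicit componentwise description of the atomic arrows supplied by Lemma~\ref{lem:fibre-prod-over-2}: every atomic arrow of $\eA_n\times_\cattwo\eB_n$ has the form $(f,\id_\bot)$ with $f$ atomic in $\eA_n$, or $(\id_\top,g)$ with $g$ atomic in $\eB_n$, or a cross-fibre arrow $(f,g)$ with both $f$ and $g$ atomic. A degeneracy acts as $(f,g)\mapsto(\degen f,\degen g)$ and preserves identities; since $\eA$ and $\eB$ are themselves simplicial computads, $\degen f$ and $\degen g$ are again atomic in $\eA$ and $\eB$ respectively, so the same characterisation shows that the degenerate image of an atomic arrow of $\eA\times_\cattwo\eB$ is atomic. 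This establishes that $\eA\times_\cattwo\eB$ is a simplicial computad, and as $\sCptd\subseteq\sCat$ this is exactly the asserted closure.

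I expect the only point requiring genuine care to be this last verification of the degeneracy condition, as it is the sole piece of data beyond the levelwise computad structure; but it is immediate once the componentwise description of atomic arrows from Lemma~\ref{lem:fibre-prod-over-2} is in hand and combined with the hypothesis that $\eA$ and $\eB$ are simplicial computads.
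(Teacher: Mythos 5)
Your reduction is essentially the paper's: both arguments pass to the categories of $n$-arrows, where the fibred product is computed levelwise, invoke Lemma~\ref{lem:fibre-prod-over-2} in each degree, and then deal with the simplicial structure. The difference is in how that last step is handled. The paper argues abstractly, using $\sCptd=\sCat\cap\Graph^{\Del\ep\op}$ inside $\Cat^{\Del\ep\op}$ and the fact that levelwise closure of $\Graph$ in $\Cat$ promotes to closure of $\Graph^{\Del\ep\op}$ in $\Cat^{\Del\ep\op}$; you instead verify the degeneracy condition by hand from the explicit three-case description of atomic arrows in the proof of Lemma~\ref{lem:fibre-prod-over-2}. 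Your concrete check is correct (degeneracies of identities are identities, so each case $(f,\id_\bot)$, $(\id_\top,g)$, $(f,g)$ is preserved) and arguably more transparent than the functor-category packaging.

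There is, however, a genuine under-statement in your final sentence: ``as $\sCptd\subseteq\sCat$ this is exactly the asserted closure.'' It is not, because $\sCptd$ is \emph{not a full} subcategory of $\sCat$ --- a simplicial functor between simplicial computads need not preserve atomic arrows. So closure under the fibred product requires more than membership of the object: you must also check that the two projections are simplicial computad morphisms, and that the cone has the universal property of the fibred product \emph{in} $\sCptd$, i.e.\ that the map induced in $\sCat$ by any cone of computad morphisms is itself a computad morphism. Both points follow from the levelwise setup you already have --- Lemma~\ref{lem:fibre-prod-over-2} asserts that each $\eA_n\times_{\cattwo}\eB_n$ with its projections \emph{is} the fibred product in $\Graph$, so the projections preserve atomics levelwise, and an induced map in $\Cat$ agrees, by uniqueness, with the one induced by the universal property in $\Graph$, hence lies in $\Graph$ --- but they need to be said. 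This is not pedantry: the universal property in $\sCptd$ is exactly what is used downstream (Corollary~\ref{cor:phi-lands-in-computad}) to conclude that the comparison map $\Phi_{X,Y}$, induced by a pair of computad morphisms, is itself a simplicial computad morphism.
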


\begin{proof}
By Definition \ref{defn:simplicial-computad}, $\sCptd=\sCat\cap\Graph^{\Del\ep\op}$, with the intersection formed in $\Cat^{\Del\ep\op}$. The subcategory $\sCat$ is closed in $\Cat^{\Del\ep\op}$ under all limits, in particular it is closed under the fibred product $\eA\times_{\cattwo}\eB$. Furthermore, we have that $\eA$ and $\eB$ satisfy the condition given in the statement if and only if for all $n\in\mathbb{N}$ their categories $\eA_n$ and $\eB_n$ of $n$-arrows satisfy the conditions of Lemma~\ref{lem:fibre-prod-over-2}. It follows that $\Graph$ is closed in $\Cat$ under the fibred product $\eA_n\times_{\cattwo}\eB_n$ for each $n\in\mathbb{N}$ and thus that $\Graph^{\Del\ep\op}$ is also closed in $\Cat^{\Del\ep\op}$ under the fibred product $\eA\times_{\cattwo}\eB$. In this way we have shown that $\eA\times_{\cattwo}\eB$ and its projections are in $\sCptd=\sCat\cap\Graph^{\Del\ep\op}$; a routine argument demonstrates that it also has the desired universal property in there, using the fact that it does so in each of the subcategories $\sCat$ and $\Graph^{\Del\ep\op}$.
\end{proof}



\begin{defn}\label{defn:bot-top-over-2}
  Let $\sCat_{/\cattwo}^{\bot}$ (respectively $\sCat_{/\cattwo}^{\top}$) denote
  the full subcategory of $\sCat_{/\cattwo}$ spanning those $\eA\to\cattwo$
  whose fibre $\eA^0$ (respectively $\eA^1$) is isomorphic to $\catone$. Also
  let $\sCptd_{/\cattwo}^\bot\defeq \sCptd_{/\cattwo}\cap
  \sCat_{/\cattwo}^{\bot}$ and $\sCptd_{/\cattwo}^\top\defeq \sCptd_{/\cattwo}
  \cap \sCat_{/\cattwo}^{\top}$.
\end{defn}

\begin{lem}\label{lem:closed-conn-colim}
  The subcategories $\sCat_{/\cattwo}^{\bot}$ and $\sCat_{/\cattwo}^\top$ are
  closed in $\sCat_{/\cattwo}$ under connected colimits, and similarly the subcategories
  $\sCptd_{/\cattwo}^{\bot}$ and $\sCptd_{/\cattwo}^\top$ are closed in
  $\sCptd_{/\cattwo}$ under connected colimits.
\end{lem}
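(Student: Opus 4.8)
The plan is to reduce everything to an analysis of the fibre over $0\in\cattwo$, exploiting the fact that $\cattwo=\{0\to1\}$ has no arrow $1\to 0$. The $\top$ statements are entirely analogous, interchanging the two fibres of $\cattwo$, so I would treat only the subcategories decorated with $\bot$. So let $D\colon J\to\sCat_{/\cattwo}^{\bot}$ be a diagram indexed by a \emph{connected} category $J$ and form its colimit $\eA$ in $\sCat_{/\cattwo}$. Colimits in a slice are created by the forgetful functor to $\sCat$, and, as recorded in the proof of Lemma~\ref{lem:computad-colimits}, $\sCat$ is closed under colimits in $\Cat^{\Del\ep\op}$; hence $\eA$ is computed levelwise, with $\eA_n\cong\colim_J (D_j)_n$ in $\Cat$ for each $n$, equipped with the induced functor $P\colon\eA\to\cattwo$. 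The task is then to show that the fibre $\eA^{0}=P^{-1}(0)$ is isomorphic to $\catone$.

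First I would pin down the objects of $\eA^{0}$. The object-set functor $\mathrm{ob}\colon\sCat\to\Set$ has a right adjoint (sending a set to the chaotic simplicial category on it) and so preserves colimits, and passage to the fibre over $0$ is the first projection under the equivalence $\Set_{/\{0,1\}}\simeq\Set\times\Set$, which likewise preserves colimits. Since each $D_j^{0}\cong\catone$ contributes a single object, we obtain $\mathrm{ob}(\eA^{0})\cong\colim_J\{*\}\cong\pi_0(J)$; connectedness of $J$ forces this to be a one-point set, so $\eA^{0}$ has a unique object, which I will call $\bot$.

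The heart of the argument --- and the step I expect to be the main obstacle, since morphisms in a colimit of (simplicial) categories are in general hard to control --- is to show that the only endomorphism of $\bot$ is the identity. I would argue levelwise. Fix $n$; an $n$-arrow $\bot\to\bot$ in $\eA_n\cong\colim_J (D_j)_n$ is represented by a composable string of arrows drawn from the categories $(D_j)_n$, and applying $P$ sends this to a composable string in $\cattwo$ beginning and ending at $0$. Because $\cattwo$ has no arrow $1\to 0$, any such string is constant at $0$ (cf.\ Observation~\ref{obs:fibre-over-2}); hence each constituent arrow lies in some fibre $(D_j)_n^{0}\cong\catone$ and is therefore an identity, so their composite is the identity. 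Thus $\Map_{\eA^{0}}(\bot,\bot)_n=\{\mathrm{id}\}$ for every $n$, i.e.\ $\Map_{\eA^{0}}(\bot,\bot)\cong\Del^0$, and together with the previous paragraph this gives $\eA^{0}\cong\catone$. This establishes closure of $\sCat_{/\cattwo}^{\bot}$ under connected colimits.

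Finally, for the computad versions I would note that $\sCptd_{/\cattwo}^{\bot}=\sCptd_{/\cattwo}\cap\sCat_{/\cattwo}^{\bot}$. Given a connected diagram valued in this intersection, its colimit formed in $\sCat_{/\cattwo}$ lands in $\sCptd_{/\cattwo}$ by Lemma~\ref{lem:computad-colimits} (as $\sCptd$ is closed under colimits in $\sCat$ and slice colimits are underlying colimits) and in $\sCat_{/\cattwo}^{\bot}$ by the argument above, hence in the intersection; the $\top$ cases follow dually. This completes the proof.
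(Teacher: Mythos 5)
Your proof is correct, but it takes a genuinely different route from the paper's. The paper's proof is a one-line adjoint-functor argument: the fibre functor $(-)^0\colon\sCat_{/\cattwo}\to\sCat$ has a right adjoint (carrying $\eB$ to $\eB^\top\to\cattwo$, where $\eB^\top$ adjoins a terminal object sent to $1\in\cattwo$), hence preserves \emph{all} colimits; applied to a connected diagram in $\sCat^\bot_{/\cattwo}$ it yields the constant connected diagram at $\catone$, whose colimit is $\catone$, so the colimit's fibre is $\catone$. You instead compute the colimit by hand: objects via the colimit-preserving functors $\mathrm{ob}$ and $\Set_{/\{0,1\}}\simeq\Set\times\Set$, giving $\pi_0(J)\cong\{\ast\}$, and arrows via the representation of every arrow in a colimit of categories as a composite of images of arrows from the diagram, combined with the absence of arrows $1\to 0$ in $\cattwo$. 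Both are sound, and your version has the merit of making visible exactly where connectedness enters (only in the object count) and where the order structure of $\cattwo$ enters (only in the endomorphism analysis), whereas the paper's argument localises connectedness to the triviality that a connected colimit of a constant diagram at a terminal object is that object. The trade-off is that your "main obstacle" step leans on the fact that every arrow of $\colim_J (D_j)_n$ is a finite composite of images of arrows from the $(D_j)_n$; this is true --- the images of the cocone legs generate a subcategory through which the cocone factors, and the induced retraction forces that subcategory to be everything --- but it is precisely the kind of concrete control over colimits in $\Cat$ that the paper's right-adjoint argument is designed to avoid, so if you keep your route you should record that generation argument explicitly. Your handling of the computad clause (intersection with $\sCptd_{/\cattwo}$, closed under all colimits by Lemma \ref{lem:computad-colimits}) coincides with the paper's.
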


\begin{proof}
  Note that the fibre functor $(-)^0\colon\sCat_{/\cattwo}\to\sCat$, which
  carries an object $\eA\to\cattwo$ to its fibre over $0\in\cattwo$, has a right
  adjoint and so preserves all colimits. Specifically, that right adjoint
  carries $\eB$ to the object $\eB^\top\to\cattwo$ where $\eB^\top$ is the
  simplicial category obtained by adjoining an terminal object $\top$ to $\eB$
  and its projection to $\cattwo$ is determined by the object mappings $B\mapsto
  0$ for objects $B\in\eB\subset\eB^\top$ and $\top\mapsto 1$. Now observe that
  $(-)^0$ maps the colimit $\eC$ of a connected diagram in
  $\sCat^{\bot}_{/\cattwo}$ to the colimit of a constant connected diagram at
  the terminal simplicial category $\catone$, but that colimit is isomorphic to
  $\catone$ so we have that $\eC^0\cong\catone$ and thus that $\eC$ is in the
  full subcategory $\sCat^{\bot}_{/\cattwo}$. This establishes the stated result
  for the full subcategory $\sCat_{/\cattwo}^{\bot}$ and the manifest dual
  argument also establishes it for $\sCat_{/\cattwo}^{\top}$. The second clause follows because $\sCptd_{/\cattwo}$ is closed in $\sCat_{/\cattwo}$ under all colimits.
\end{proof}

\begin{prop}\label{prop:fib-prod-restr}
  The fibred product functor on $\sCat_{/\cattwo}$ restricts to give a bifunctor
  \begin{equation}\label{eq:fib-prod-restr}
    \xymatrix@R=0em@C=5em{
      {\sCptd^\top_{/\cattwo}\times\sCptd^\bot_{/\cattwo}}
      \ar[r]^-{{-}\times_{\cattwo}{-}} &
      {\sCptd_{/\cattwo}}
    }
  \end{equation}
  which preserves connected colimits in each variable independently.
\end{prop}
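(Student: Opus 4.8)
The plan is to treat the two assertions in turn, reducing each to the ambient category $\sCat_{/\cattwo}$.

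First I would verify that the fibred product restricts as claimed. For $\eA\in\sCptd^\top_{/\cattwo}$ and $\eB\in\sCptd^\bot_{/\cattwo}$ we have $\eA^1\cong\catone$ and $\eB^0\cong\catone$, so Lemma~\ref{lem:fibre-prod-over-2-again} applies verbatim and shows that the fibred product $\eA\times_\cattwo\eB$, formed in $\sCat_{/\cattwo}$, already lies in $\sCptd_{/\cattwo}$; functoriality is then inherited from the ambient fibred product. By the left--right symmetry that interchanges the roles of $\top$ and $\bot$, it suffices for the colimit statement to fix $\eB\in\sCptd^\bot_{/\cattwo}$ and show that $-\times_\cattwo\eB$ preserves connected colimits. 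Here Lemma~\ref{lem:closed-conn-colim} tells us that connected colimits in $\sCptd^\top_{/\cattwo}$ and in $\sCptd_{/\cattwo}$ are created by the inclusions into $\sCat_{/\cattwo}$ --- using Lemma~\ref{lem:computad-colimits} and the fact that colimits in a slice are computed in $\sCat$ --- so it is enough to prove that $-\times_\cattwo\eB\colon\sCat^\top_{/\cattwo}\to\sCat_{/\cattwo}$ preserves connected colimits, all colimits now being computed in $\sCat_{/\cattwo}$.

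The heart of the matter is the cograph (collage) description of objects over $\cattwo$, cf.\ Proposition~\ref{prop:collage-adjunction}: such an object $\eD\to\cattwo$ is equivalent data to its fibres $\eD^0,\eD^1$ together with the simplicial profunctor $\Map_{\eD}(-,-)\colon(\eD^0)\op\times\eD^1\to\SSet$ recording its crossing hom-spaces. Under this reformulation $\sCat^\top_{/\cattwo}$ consists of pairs $(\eA^0,M)$ with $M=\Map_\eA(-,\top)$ a presheaf on $\eA^0$, while $\eB$ contributes the fixed fibre $\eB^1$ and copresheaf $N=\Map_\eB(\bot,-)$; a direct computation of the pullback over $\cattwo$ (cf.\ Lemma~\ref{lem:fibre-prod-over-2}) identifies $\eA\times_\cattwo\eB$ with the external product
\[ (\eA^0,M)\times_\cattwo(\eB^1,N)=\bigl(\eA^0,\,\eB^1,\,(a,b)\mapsto M(a)\times N(b)\bigr). \]
Given a connected diagram $\{(\eA_i^0,M_i)\}$ with colimit $(\eC^0,M_\infty)$, both $\colim_i(\eA_i\times_\cattwo\eB)$ and $(\colim_i\eA_i)\times_\cattwo\eB$ have fibre $\eC^0=\colim_i\eA_i^0$ over $0$ and fibre $\eB^1$ over $1$ --- the latter because a connected colimit of the constant diagram at $\eB^1$ returns $\eB^1$ --- so only the crossing profunctors need be compared. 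Since $\SSet$ is cartesian closed, $-\times N(b)\colon\SSet\to\SSet$ is a left adjoint and hence preserves all colimits; therefore the external product $-\boxtimes N$ commutes with the pointwise colimits that compute profunctor colimits after pushing the modules $M_i$ forward along the base maps $\eA_i^0\to\eC^0$, yielding $\colim_i(M_i\boxtimes N)\cong(\colim_i M_i)\boxtimes N=M_\infty\boxtimes N$. As the cograph construction is an equivalence, and so detects isomorphisms, the canonical comparison $\colim_i(\eA_i\times_\cattwo\eB)\to(\colim_i\eA_i)\times_\cattwo\eB$ is an isomorphism.

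I expect the main obstacle to be conceptual rather than computational: a fibred product is a limit, and limits do not commute with colimits in general, so the statement can hold only because the hypotheses $\eA^1\cong\catone$ and $\eB^0\cong\catone$ force the crossing hom-spaces to factor as the external product $M(a)\times N(b)$, converting an intractable pullback into a product with a fixed simplicial set. The \emph{connectedness} hypothesis plays the complementary role of ensuring that the fixed data supplied by $\eB$ --- the fibre $\eB^1$ and the copresheaf $N$ --- is copied exactly once in the colimit rather than duplicated, as a coproduct would do. The remaining subtlety is that colimits of simplicial categories create new composite arrows, so crossing hom-spaces cannot be computed naively as colimits of hom-spaces; the cograph reformulation circumvents this by confining the genuine ``colimit of categories'' behaviour to the fibre $\eC^0$, which $\times_\cattwo\eB$ leaves untouched, and isolating the external-product computation to the crossing profunctor, where cartesian closedness does the work.
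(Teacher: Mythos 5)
Your proof is correct, but its second half takes a genuinely different route from the paper's. The restriction statement is handled identically (via Lemma~\ref{lem:fibre-prod-over-2-again}), and your reduction of the colimit statement to $\sCat_{/\cattwo}$ via Lemmas~\ref{lem:computad-colimits} and~\ref{lem:closed-conn-colim} is sound. From there, however, the paper never argues in $\sCat_{/\cattwo}$ at all: it observes that under the fibre hypotheses the fibred product of computads is computed as in the presheaf category $\Graph^{\Del\ep\op}$, and that colimits of simplicial computads are likewise computed there, so both sides of the comparison live in the slice $\Graph^{\Del\ep\op}_{/\cattwo}$ of a presheaf category, which is cartesian closed; its binary product therefore preserves all colimits in each variable, and the proposition follows. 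You instead prove the stronger statement that ${-}\times_{\cattwo}\eB\colon\sCat^{\top}_{/\cattwo}\to\sCat_{/\cattwo}$ preserves connected colimits for arbitrary simplicial categories, using the cograph equivalence between $\sCat_{/\cattwo}$ and triples (fibre, fibre, crossing bimodule), the identification of the fibred product with the external product $M\boxtimes N$ (cf.\ Remark~\ref{rmk:fib-prod-simpl}), and cocontinuity of ${-}\times N(b)$ in $\SSet$. What your route buys is generality and transparency: the result is seen to hold at the level of $\sCat$, not just $\sCptd$, and the precise roles of the fibre hypotheses and of connectedness are exposed. What the paper's route buys is that it never has to describe how colimits in $\sCat_{/\cattwo}$ interact with the cograph decomposition. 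Your phrase ``the pointwise colimits that compute profunctor colimits after pushing the modules $M_i$ forward along the base maps'' is precisely the assertion that the collage category is a bifibration over $\sCat\times\sCat$, with pushforwards given by enriched left Kan extension, and that colimits in its total category are computed by pushing into the fibre over the colimit base and then taking pointwise colimits there. That assertion is true---and it is exactly what accounts for the new composite crossing arrows created in a colimit of categories, which your final paragraph rightly worries about---but it is the technical heart of your argument and would need proof in a complete write-up; the paper's detour through $\Graph^{\Del\ep\op}$ is designed precisely to avoid it, since colimits of computads are computed on the generating graphs of atomic arrows, where no new composites arise.
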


\begin{proof}
Lemma~\ref{lem:fibre-prod-over-2-again} shows that the fibred product functor on $\sCat$ restricts to a bifunctor as displayed in~\eqref{eq:fib-prod-restr}. Note, however, that $\sCat$ is closed in $\Graph^{\Del\ep\op}$ under pullbacks and that $\sCptd$ is closed in $\Graph^{\Del\ep\op}$ under all colimits. Consequently the fibred product bi-functor~\eqref{eq:fib-prod-restr} is computed as in $\Graph^{\Del\ep\op}_{/\cattwo}$, as are the connected colimits of $\sCptd^\top_{/\cattwo}$, $\sCptd^\bot_{/\cattwo}$ and $\sCptd_{/\cattwo}$. Now $\Graph^{\Del\ep\op}$ is  a presheaf category, so it follows that the slice $\Graph^{\Del\ep\op}_{/\cattwo}$ is cartesian closed and thus that its fibred product bifunctor preserves colimits in each variable independently. So the desired result follows since the bifunctor and colimits discussed in the statement are computed as in $\Graph^{\Del\ep\op}$.
\end{proof}

\begin{cor}\label{cor:phi-lands-in-computad}
The bifunctor introduced in~\eqref{eq:join-gC-bifunctor-var-2} takes values in simplicial computads and  the component $\Phi_{X,Y}$ of the natural transformation displayed
  in~\eqref{eq:join-gC-bifunctor-var-3} is a simplicial computad morphism.
  In other words, the natural transformation
  in~\eqref{eq:join-gC-bifunctor-var-3} restricts along
  $\sCptd_{/\cattwo}\inc\sCat_{/\cattwo}$ to give a natural transformation:
  \begin{equation}\label{eq:Phi-rest}
    \xymatrix@R=0ex@C=9em{
      {\sSet\times\sSet}
      \ar@/^1.5ex/[]!R!UR(0.5);[r]!L!UL(0.5)^-{\gC[{-}\join{-}]}
      \ar@/_1.5ex/[]!R!DR(0.5);[r]!L!DL(0.5)_-{\gC[{-}\join\Del^0]\times_{\cattwo}
        \gC[\Del^0\join{-}]}\ar@{}[r]|{\Downarrow\Phi} &
      {\sCptd_{/\cattwo}}
    }
  \end{equation}
\end{cor}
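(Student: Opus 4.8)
The plan is to treat the two assertions separately, deriving the first from Proposition~\ref{prop:fib-prod-restr} and the second from the explicit description of atomic arrows in a fibred product over $\cattwo$ recorded in Lemma~\ref{lem:fibre-prod-over-2}. For the claim that the bifunctor \eqref{eq:join-gC-bifunctor-var-2} lands in $\sCptd_{/\cattwo}$, I would first identify the fibres of its two constituent functors. Recall from the discussion opening \S\ref{ssec:joins} that the canonical projection $\gC[X\join Y]\to\cattwo$ has $\gC[X]$ as its fibre over $0$ and $\gC[Y]$ as its fibre over $1$. Specialising, the fibre of $\gC[X\join\Del^0]$ over $1$ is $\gC[\Del^0]\cong\catone$, so this object lies in $\sCptd^\top_{/\cattwo}$; dually the fibre of $\gC[\Del^0\join Y]$ over $0$ is $\gC[\Del^0]\cong\catone$, placing it in $\sCptd^\bot_{/\cattwo}$. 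Both are simplicial computads by Lemma~\ref{lem:join-gC-bifunctor}. Proposition~\ref{prop:fib-prod-restr} then applies directly to show that their fibred product $\gC[X\join\Del^0]\times_{\cattwo}\gC[\Del^0\join Y]$ is again a simplicial computad, and the resulting bifunctor preserves connected colimits in each variable, being the composite of $\gC[{-}\join\Del^0]\times\gC[\Del^0\join{-}]$ with the restricted fibred product bifunctor of \eqref{eq:fib-prod-restr}, each factor having this property.

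For the second claim I would argue directly on atomic arrows, rather than reducing to representables. The domain $\gC[X\join Y]$ and codomain $\gC[X\join\Del^0]\times_{\cattwo}\gC[\Del^0\join Y]$ of $\Phi_{X,Y}$ are now both known to be simplicial computads, and its two legs $\gC[X\join !]$ and $\gC[!\join Y]$ are computad morphisms, being images of the simplicial maps $X\join !$ and $!\join Y$ under the bifunctor $\gC[{-}\join{-}]$, which takes values in $\sCptd_{/\cattwo}$ by Lemma~\ref{lem:join-gC-bifunctor}. The crucial input is the observation recorded in the proof of Lemma~\ref{lem:fibre-prod-over-2}, and promoted levelwise by Lemma~\ref{lem:fibre-prod-over-2-again}, that the graph of atomic arrows of $\gC[X\join\Del^0]\times_{\cattwo}\gC[\Del^0\join Y]$ is precisely the fibred product over $\cattwo$ of the graphs of atomic arrows of the two factors. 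Given an atomic $n$-arrow $\alpha$ of $\gC[X\join Y]$, the pair $\Phi_{X,Y}(\alpha)=(\gC[X\join !](\alpha),\gC[!\join Y](\alpha))$ respects the fibre structure since $\Phi_{X,Y}$ commutes with the projections to $\cattwo$, and each component is atomic or an identity because each leg is a computad morphism. A short case analysis according to whether $\alpha$ lies in the fibre over $0$, the fibre over $1$, or crosses from $0$ to $1$ — using that a crossing arrow can never be an identity, so that both components are then genuinely atomic — shows in each case that $\Phi_{X,Y}(\alpha)$ is an edge of the fibred product of atomic graphs, hence atomic or an identity in the fibred product. Thus $\Phi_{X,Y}$ preserves atomic arrows. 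Since every map in sight is then a computad morphism, the natural transformation $\Phi$ restricts along $\sCptd_{/\cattwo}\inc\sCat_{/\cattwo}$ to give \eqref{eq:Phi-rest}.

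The main obstacle is the second assertion: one must check that the map induced into a fibred product of computad morphisms is itself a computad morphism, and this is not formal. It depends genuinely on the fact that over $\cattwo$, with the relevant fibres trivial, the atomic arrows of the pullback are exactly the compatible pairs of atomic arrows — precisely the content of Lemmas~\ref{lem:fibre-prod-over-2} and~\ref{lem:fibre-prod-over-2-again}. Everything else is bookkeeping with the fibre decomposition over $\cattwo$, and I would keep the case analysis brief, noting only the three fibre positions of a generic atomic arrow of $\gC[X\join Y]$.
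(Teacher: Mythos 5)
Your proof is correct, and its first half --- placing $\gC[{-}\join\Del^0]$ and $\gC[\Del^0\join{-}]$ in $\sCptd^\top_{/\cattwo}$ and $\sCptd^\bot_{/\cattwo}$ respectively, then invoking Proposition~\ref{prop:fib-prod-restr} --- is exactly the paper's argument. Where you diverge is in showing that $\Phi_{X,Y}$ is a computad morphism. The paper disposes of this in one line: $\Phi_{X,Y}$ is induced, via the universal property of the fibred product in $\sCat_{/\cattwo}$, by the pair of computad morphisms $\gC[X\join !]$ and $\gC[!\join Y]$, and Lemma~\ref{lem:fibre-prod-over-2-again} asserts that $\sCptd_{/\cattwo}$ is \emph{closed} in $\sCat_{/\cattwo}$ under this fibred product, i.e., the limit cone lies in $\sCptd_{/\cattwo}$ and is limiting there; the induced map then automatically lies in $\sCptd_{/\cattwo}$, since the map induced by the universal property in the subcategory and the map induced in the ambient category solve the same problem and hence coincide. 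So, contrary to your closing remark, this step \emph{is} formal once the closure lemma is in hand; what is genuinely non-formal is the closure lemma itself, and that is exactly where the identification of atomic arrows lives. Your case analysis (fibre over $0$, fibre over $1$, crossing) is a correct concrete re-derivation of the same fact: it uses the observation from the proof of Lemma~\ref{lem:fibre-prod-over-2} that the atomic graph of $\eA\times_{\cattwo}\eB$ is the fibred product of the atomic graphs, together with the facts that computad morphisms send atomics to atomics or identities and that crossing arrows are never identities. The trade-off is the usual one: your route is self-contained at the level of arrows and makes visible exactly what $\Phi_{X,Y}$ does to each atomic generator, while the paper's route is shorter because it reuses the universal property already packaged and paid for in Lemma~\ref{lem:fibre-prod-over-2-again}.
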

\begin{proof}
  We may now return to Remark~\ref{rmk:join-gC-bifunctor-var} and observe that
  the functors in~\eqref{eq:join-gC-bifunctor-var-1} restrict to give functors:
  \begin{equation*}
    \xymatrix@R=0ex@C=5em{{\sSet}\ar[r]^-{\gC[{-}\join \Del^0]} &
      {\sCptd_{/\cattwo}^\top}}\mkern40mu
    \xymatrix@R=0ex@C=5em{{\sSet}\ar[r]^-{\gC[\Del^0\join{-}]} &
      {\sCptd_{/\cattwo}^\bot}}
  \end{equation*}
landing in the categories defined in Definition \ref{defn:bot-top-over-2}. 
  We know that the original functors in~\eqref{eq:join-gC-bifunctor-var-1}
  preserved connected colimits and Lemma~\ref{lem:closed-conn-colim} reveals
  that the subcategories $\sCptd_{/\cattwo}^\top$ and $\sCptd_{/\cattwo}^\bot$
  are closed under connected colimits in $\sCptd_{/\cattwo}$. It follows,
  therefore, that the functors in the last display preserve connected colimits.
  
  As a consequence we may apply Proposition~\ref{prop:fib-prod-restr}, to show
  that the bifunctor introduced in~\eqref{eq:join-gC-bifunctor-var-2} restricts
  to a bifunctor:
  \begin{equation*}
    \xymatrix@R=0ex@C=4em{
      {\sSet\times\sSet}\ar[rr]^-{\gC[{-}\join \Del^0]\times
        \gC[\Del^0\join{-}]} &&
      {\sCptd_{/\cattwo}^\top\times\sCptd_{/\cattwo}^\bot} \ar[r]^-{{-}\times_{\cattwo}{-}} &
      {\sCptd_{/\cattwo}}
    }
  \end{equation*}
  What is more the preservation results for connected colimits established in
  the last paragraph and in Proposition~\ref{prop:fib-prod-restr} together imply
  that this restricted bifunctor preserves connected colimits independently in
  each variable.

  Finally, the component $\Phi_{X,Y}$ of the natural transformation displayed
  in~\eqref{eq:join-gC-bifunctor-var-3} is induced by a pair of simplicial
  computad morphisms $\gC[X\join !]$ and $\gC[!\join Y]$ under the universal
  property of the fibred product $\gC[X\join\Del^0]\times_{\cattwo}
  \gC[\Del^0\join Y]$ in $\sCat_{/\cattwo}$.
  Lemma \ref{lem:fibre-prod-over-2-again} tells us, however, that the subcategory
  $\sCptd_{/\cattwo}$ is closed in $\sCat_{/\cattwo}$ under that fibred product,
  so it follows that the induced map $\Phi_{X,Y}$ is actually a simplicial
  computad morphism as claimed.
\end{proof}

  We shall extend our notation for standard simplices slightly by letting
  $\Del^{-1}$ denote the empty simplicial set.

\begin{prop}\label{prop:Phi-iso}
  For all $n,m\geq -1$ the simplicial computad morphism
  \begin{equation*}
    \Phi_{\Del^n,\Del^m}\colon\gC[\Del^n\join\Del^m]\to
    \gC[\Del^n\join\Del^0]\times_{\cattwo}\gC[\Del^0\join\Del^m]
  \end{equation*}
  is an isomorphism.
\end{prop}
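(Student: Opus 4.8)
The plan is to use the isomorphism $\Del^n\join\Del^m\cong\Del^{n+m+1}$, which identifies the source $\gC[\Del^n\join\Del^m]$ with the explicit simplicial computad $\gC[\Del^{n+m+1}]$ of Example \ref{ex:homotopy-coherent-simplex}, whose hom-spaces are nerves of posets of subsets of intervals $[i,j]$ containing their endpoints. Since $\Phi_{\Del^n,\Del^m}$ is a simplicial computad morphism by Corollary \ref{cor:phi-lands-in-computad}, and a simplicial computad is freely generated in each degree by its reflexive graph of atomic arrows, it suffices to prove that $\Phi_{\Del^n,\Del^m}$ is a bijection on objects and a bijection on atomic $r$-arrows for every $r\geq 0$; any such simplicial computad morphism then admits an inverse, which is automatically simplicial because $\Phi_{\Del^n,\Del^m}$ is. The atomic $r$-arrows of the target are supplied by Lemma \ref{lem:fibre-prod-over-2}, which identifies the graph of atomic arrows of a fibred product $\eA\times_{\cattwo}\eB$ (with $\eA^1\cong\catone$ and $\eB^0\cong\catone$) with the fibred product of the graphs of atomic arrows of $\eA$ and $\eB$; here $\eA=\gC[\Del^n\join\Del^0]\cong\gC[\Del^{n+1}]$ and $\eB=\gC[\Del^0\join\Del^m]\cong\gC[\Del^{m+1}]$.

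The bijection on objects is immediate. The vertices of $\gC[\Del^{n+m+1}]$ split as $\{0,\dots,n\}$ (the $\Del^n$-part, lying over $0\in\cattwo$) and $\{n+1,\dots,n+m+1\}$ (the $\Del^m$-part, over $1$), matching the two fibres of the fibred product, whose fibre over $0$ is $\gC[\Del^n]\times\catone$ and whose fibre over $1$ is $\catone\times\gC[\Del^m]$; on objects $\Phi$ sends $i\mapsto(i,\bot)$ for $i\leq n$ and $n+1+k\mapsto(\top,1+k)$. When $n=-1$ or $m=-1$ the relevant fibre is empty, and the statement degenerates to the assertion that $\Phi$ is the identity on $\gC[\Del^m]$, on $\gC[\Del^n]$, or on the empty computad, so we may henceforth assume $n,m\geq 0$.

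For the atomic arrows, I will use that the two legs of $\Phi$ are $\gC[\Del^\phi]$ and $\gC[\Del^\psi]$, where $\phi\colon[n+m+1]\to[n+1]$ is the collapse $\phi(t)=\min(t,n+1)$ and $\psi\colon[n+m+1]\to[m+1]$ is $\psi(t)=\max(0,t-n)$, and that $\gC[\Del^\alpha]$ carries an $r$-arrow $T^\bullet$ to the flag obtained by applying $\alpha$ (as a monotone map) to each subset. An atomic $r$-arrow of $\gC[\Del^{n+m+1}]$ is a flag $\{i,j\}=T^0\subseteq\cdots\subseteq T^r\subseteq[i,j]$ with all terms containing the endpoints. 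The within-fibre arrows are handled at once: since $\phi$ and $\psi$ restrict to the identity on the respective fibres, $\Phi$ sends a fibre-$0$ atomic arrow $T^\bullet$ to $(T^\bullet,\id_\bot)$ and a fibre-$1$ atomic arrow to $(\id_\top,T^\bullet)$, matching the within-fibre atomic arrows of Lemma \ref{lem:fibre-prod-over-2}. The only case requiring real work is a crossing arrow, with $i\leq n$ and $j=n+1+k\geq n+1$. Using the disjoint decomposition $[i,n+1+k]=[i,n]\sqcup[n+1,n+1+k]$, on which $\phi$ is the identity on the lower interval and collapses the upper to $\{n+1\}$ (and dually for $\psi$), I will verify that
\[ T^\bullet\;\longmapsto\;\bigl(\phi(T^\bullet),\psi(T^\bullet)\bigr) \]
is inverted by the assignment sending a pair of atomic $r$-flags $(F^\bullet,G^\bullet)$ to $T^\bullet$ with $T^l\defeq(F^l\cap[i,n])\cup\bigl((G^l\cap[1,1+k])+n\bigr)$, where $+n$ denotes the shift. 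The endpoint conditions guarantee that $T^0=\{i,n+1+k\}$, that the flag is well-formed, and that $\phi(T^\bullet)=F^\bullet$ and $\psi(T^\bullet)=G^\bullet$.

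The main obstacle is purely bookkeeping: checking that the shift-and-intersect formula genuinely inverts $\Phi$ on crossing arrows, and in particular that the implicit ``matching length'' constraint in the fibred product corresponds exactly to the fact that a single flag $T^\bullet$ of length $r$ produces flags $\phi(T^\bullet)$ and $\psi(T^\bullet)$ of the same length. No genuinely new idea beyond the explicit cube description of Example \ref{ex:homotopy-coherent-simplex} and the graph-level fibred product of Lemma \ref{lem:fibre-prod-over-2} is needed; indeed the dimension count $\Cube^{n-i}\times\Cube^{k}\cong\Cube^{n+k-i}$ of the relevant hom-spaces already signals that $\Phi$ ought to be an isomorphism, and the combinatorial bijection above makes this precise.
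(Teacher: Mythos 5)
Your proposal is correct and follows essentially the same route as the paper: both rest on the explicit description of $\gC\Del^{n+m+1}$ from Example \ref{ex:homotopy-coherent-simplex}, the identification of the two legs of $\Phi_{\Del^n,\Del^m}$ with the operators collapsing $[n+m+1]$ onto $[n+1]$ and $[m+1]$, and the very same inverse formula --- your shift-and-intersect assignment $T^l=(F^l\cap[i,n])\cup\bigl((G^l\cap[1,1+k])+n\bigr)$ is precisely the paper's $S\oplus T=(S\setminus\{n+1\})\cup(T\setminus\{n\})$ after renumbering. The only (harmless) organizational difference is in how the pointwise bijection is globalized: the paper proves bijectivity on \emph{all} $0$-arrows and then deduces it for $r$-arrows because these are inclusion-ordered flags of $0$-arrows and $\alpha$, $\beta$, $\oplus$ preserve inclusions, whereas you check bijectivity only on \emph{atomic} $r$-arrows degreewise and conclude by free generation of simplicial computads (via Lemma \ref{lem:fibre-prod-over-2} and Corollary \ref{cor:phi-lands-in-computad}) --- both reductions are valid.
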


\begin{proof}
  The morphism $\Phi_{n,m}\defeq\Phi_{\Del^n,\Del^m}$ has domain and codomain
  isomorphic to $\gC\Del^{n+m+1}$ and $\gC\Del^{n+1}\times_{\cattwo}
  \gC\Del^{m+1}$ respectively. By definition $\gC\Del^{n+1}$ is the full
  simplicial subcategory of $\gC\Del^{n+m+1}$ spanning its objects $0,\ldots,n+1$
  and we shall simplify our notation in the following argument by identifying
  $\gC\Del^{m+1}$ with the full simplicial subcategory of $\gC\Del^{n+m+1}$
  spanning its objects $n,\ldots,n+m+1$. We might note here that under these
  identifications the homotopy coherent simplex $\gC\Del^{n+m+1}$ is fibred over
  $\cattwo$ by the unique functor which acts on objects to map $0,\ldots,n\mapsto
  0$ and $n+1,\ldots,n+m+1\mapsto 1$, and that the full subcategories
  $\gC\Del^{n+1}$ and $\gC\Del^{m+1}$ are fibred by restricting that functor.

  To describe the component $\Phi_{n,m}$ with respect to these presentations, we
  introduce a pair of simplicial operators $\alpha,\beta\colon[n+m+1]\to[n+m+1]$
  defined by:
  \begin{align*}
    \alpha(i) &=
      \begin{cases}
        i & \text{if $i\leq n$, and} \\
        n+1 & \text{if $i\geq n+1$}
      \end{cases} &
    \beta(i) &= 
      \begin{cases}
        n & \text{if $i\leq n$, and} \\
        i & \text{if $i\geq n+1$.}
      \end{cases}
  \end{align*}
  Observe that the derived simplicial computad morphisms
  $\gC[\alpha],\gC[\beta]\colon \gC\Del^{n+m+1}\to\gC\Del^{n+m+1}$ map all
  objects into the respective full subcategories $\gC\Del^{n+1}$ and
  $\gC\Del^{m+1}$ under the specified renumbering of the vertices of the latter.
  Consequently the induced morphism
  \begin{equation*}
    \xymatrix@R=0em@C=6em{
      {\gC\Del^{n+m+1}}\ar[r]^-{(\gC[\alpha],\gC[\beta])} &
      {\gC\Del^{n+m+1}\times_{\cattwo}\gC\Del^{n+m+1}}
    }
  \end{equation*}
  factors through the subcategory $\gC\Del^{n+1}\times_{\cattwo}\gC\Del^{m+1}$
  and the resulting morphism is $\Phi_{n,m}$. We can make the action of
  $\Phi_{n,m}$ more explicit by recalling that if $k$ and $l$ are
  objects of $\gC\Del^{n+m+1}$ with $k\leq l$ then $0$-arrows $U\colon k\to l$
  correspond to subsets $U\subseteq [k,l] = \{i\in\mathbb{N}\mid k\leq i\leq
  l\}$ with $\{k,l\}\subseteq U$, and that $r$-arrows $U^\bullet\colon i\to j$
  correspond to ordered sequences $U^0\subseteq U^1\subseteq \cdots\subseteq
  U^r$ of $0$-arrows. The functors $\gC[\alpha]$ and $\gC[\beta]$ act on the
  $0$-arrow $U\colon k\to l$ by mapping it to $\alpha(U)=\{\alpha(i)\mid i\in
  U\}$ and $\beta(U)=\{\beta(i)\mid i\in U\}$ respectively and it is easily
  checked, from the definitions of the operators $\alpha$ and $\beta$, that we
  have:
  \begin{equation}\label{eq:gC-ab-formulae}
    \begin{aligned}
      \alpha(U) &=
      \begin{cases}
        U & \text{if $l\leq n$, and} \\
        (U\cap[0,n])\cup\{n+1\} & \text{if $l\geq n+1$,}
      \end{cases}\\
      \beta(U) &=
      \begin{cases}
        U & \text{if $k\geq n+1$, and} \\
        \{n\}\cup(U\cap[n+1,n+m+1]) & \text{if $k\leq n$.}
      \end{cases}
    \end{aligned}
  \end{equation}
  The objects of $\gC\Del^{n+1}\times_{\cattwo}\gC\Del^{m+1}$ are pairs of
  integers of the form $(i,n)$ for $i=0,\ldots,n$ or $(n+1,j)$ for $j=n+1,\ldots, n+m+1$ and its $0$-arrows
  are pairs $(S,T)$ of $0$-arrows of $\gC\Del^{n+m+1}$ of one of the following
  three forms:
  \begin{itemize}
  \item $(S,T)\colon(i,n)\to(i',n)$, in which case $S\colon i\to i'$ is a
    $0$-arrow with $0\leq i\leq i'\leq n$ and $T=\{n\}\colon n\to n$ (the
    identity $0$-arrow on $n$),
  \item $(S,T)\colon(i,n)\to(n+1,j)$, in which case $S\colon i\to n+1$ is a
    $0$-arrow with $0\leq i\leq n$ and $T\colon n\to j$ is a $0$-arrow with
    $n+1\leq j\leq n+m+1$, or
  \item $(S,T)\colon(n+1,j)\to(n+1,j')$, in which case $T\colon j\to j'$ is a
    $0$-arrow with $n+1\leq j\leq j'\leq n+m+1$ and $S=\{n+1\}\colon n+1\to n+1$
    (the identity $0$-arrow on $n+1$).
  \end{itemize}
  Given such a $0$-arrow $(S,T)$ in $\gC\Del^{n+1}\times_{\cattwo}\gC\Del^{m+1}$
  define a set $S\oplus T\defeq (S\setminus\{n+1\})\cup(T\setminus\{n\})$ and
  then for each of the cases adumbrated above it is easily checked that $S\oplus
  T$ is a $0$-arrow of $\gC\Del^{n+m+1}$. Arguing case-by-case, it is also
  routine to apply the formulae in~\eqref{eq:gC-ab-formulae} to establish the
 equalities $\alpha(S\oplus T)=S$ and $\beta(S\oplus T)=T$ and to show that for
  all $0$-arrows $U$ of $\gC\Del^{n+m+1}$ we have $U=\alpha(U)\oplus\beta(U)$.
  In this way we have demonstrated that $S\oplus T$ is the \emph{unique}
  $0$-arrow of $\gC\Del^{n+m+1}$ which is mapped by $\Phi_{n,m}$ to the
  $0$-arrow $(S,T)$ of $\gC\Del^{n+1}\times_{\cattwo}\gC\Del^{m+1}$. Finally
  observe that the $r$-arrows of $\gC\Del^{n+m+1}$ and
  $\gC\Del^{n+1}\times_{\cattwo}\gC\Del^{m+1}$ are uniquely determined by the
  ordered sequences of $0$-arrows that comprise their vertices and that
  $\alpha$, $\beta$ and $\oplus$ all preserve the subset inclusion ordering
  between $0$-arrows. So we may infer from the fact that $\Phi_{n,m}$ acts
  bijectively on $0$-arrows that it also acts bijectively on $r$-arrows, and
  thus that it is an isomorphism of simplicial computads as required.
\end{proof}

We now extend the  result of Proposition \ref{prop:Phi-iso} to show that $\Phi_{X,Y}$ is an isomorphism
  for all pairs of simplicial sets $X,Y$. As one might expect, we do this by
  expressing $X$ and $Y$ as colimits of standard simplices $\Del^n$. We will,
  however, need to be careful to use representing colimits that are
  \emph{connected\/} since we will need them to be preserved by the bifunctors
  in~\eqref{eq:Phi-rest}.

\begin{thm}\label{thm:Phi-iso}
  For all simplicial sets $X,Y\in\sSet$ the simplicial computad morphism
  \begin{equation*}
    \Phi_{X,Y}\colon\gC[X\join Y]\to
    \gC[X\join\Del^0]\times_{\cattwo}\gC[\Del^0\join Y]
  \end{equation*} is an isomorphism.
\end{thm}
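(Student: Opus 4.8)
The plan is to bootstrap from the case of standard simplices, already settled in Proposition~\ref{prop:Phi-iso}, to arbitrary simplicial sets by exploiting that both bifunctors between which $\Phi$ transforms preserve connected colimits in each variable separately. The domain $\gC[{-}\join{-}]$ does so by Lemma~\ref{lem:join-gC-bifunctor} and Corollary~\ref{cor:phi-lands-in-computad}. The codomain $\gC[{-}\join\Del^0]\times_{\cattwo}\gC[\Del^0\join{-}]$ does so as well: the single-variable functors $\gC[{-}\join\Del^0]$ and $\gC[\Del^0\join{-}]$ preserve connected colimits by Corollary~\ref{cor:phi-lands-in-computad}, and the fibred product $\sCptd^\top_{/\cattwo}\times\sCptd^\bot_{/\cattwo}\to\sCptd_{/\cattwo}$ preserves connected colimits in each variable independently by Proposition~\ref{prop:fib-prod-restr}. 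Since $\Phi$ is natural and a connected colimit of isomorphisms is an isomorphism, it suffices to present every simplicial set as a connected colimit of standard simplices.

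The crux of the argument --- and the reason Proposition~\ref{prop:Phi-iso} was stated for all $n,m\geq-1$ rather than merely $n,m\geq 0$ --- is the following observation, which I would record as a preliminary lemma: every simplicial set $X$ is a connected colimit of standard simplices, provided one admits the empty simplicial set $\Del^{-1}=\emptyset$ among them. The canonical density presentation $X\cong\colim_{(\Del^n\to X)\in\Del\slice X}\Del^n$ over the category of simplices of $X$ is disconnected precisely when $X$ is, but this is repaired by augmenting the indexing category with an initial object, sent by the diagram to $\Del^{-1}=\emptyset$ and mapping into every other simplex by the unique arrow out of the initial simplicial set. Adjoining an initial object renders the indexing category connected without altering the colimit, since $\emptyset$ is initial in $\sSet$ and the compatibility conditions it imposes on a cocone are vacuous.

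With this in hand I would run the bootstrap in two stages. First fix $Y=\Del^m$ and let $\mathcal{S}$ denote the class of simplicial sets $X$ for which $\Phi_{X,\Del^m}$ is an isomorphism for every $m\geq-1$. By Proposition~\ref{prop:Phi-iso} every standard simplex $\Del^n$, $n\geq-1$, lies in $\mathcal{S}$; and because $\Phi_{X,\Del^m}=\colim_d\Phi_{X_d,\Del^m}$ whenever $X=\colim_d X_d$ is a connected colimit, the connected-colimit preservation recorded above shows $\mathcal{S}$ is closed under connected colimits. The preliminary lemma then forces $\mathcal{S}=\sSet$, so $\Phi_{X,\Del^m}$ is an isomorphism for all $X$ and all $m\geq-1$. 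Second, fix an arbitrary $X$ and repeat the identical argument in the second variable, using that $\gC[X\join{-}]$ and $\gC[X\join\Del^0]\times_{\cattwo}\gC[\Del^0\join{-}]$ preserve connected colimits in $Y$: the class of $Y$ with $\Phi_{X,Y}$ invertible contains every $\Del^m$ by the first stage and is closed under connected colimits, hence is all of $\sSet$.

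The main obstacle is located entirely in the preliminary lemma: the naive density colimit of representables is disconnected for disconnected $X$, so connected-colimit preservation cannot be applied to it directly. The augmentation-by-$\emptyset$ device circumvents this, and it is exactly this device that the base case $n,m=-1$ of Proposition~\ref{prop:Phi-iso} was engineered to accommodate. Everything else is the formal principle that a natural transformation between connected-colimit-preserving functors which is invertible on a connected-colimit-dense generating family is invertible throughout.
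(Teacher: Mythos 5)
Your proposal is correct and is essentially the paper's own argument: the paper likewise presents $X$ and $Y$ as connected colimits of standard simplices by adjoining an initial object to the category of elements sent to $\Del^{-1}=\emptyset$, and then invokes Proposition~\ref{prop:Phi-iso} together with the connected-colimit preservation of both bifunctors (Lemma~\ref{lem:join-gC-bifunctor}, Proposition~\ref{prop:fib-prod-restr}, Corollary~\ref{cor:phi-lands-in-computad}) and naturality of $\Phi$. The only cosmetic difference is that you run the bootstrap one variable at a time, whereas the paper takes the colimit over the product category $\el(X)_\bot\times\el(Y)_\bot$ in a single step, which amounts to the same iterated-colimit computation.
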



\begin{proof}
  Given a simplicial set $Z$, let $\el(Z)$ denote its category of elements and
  recall that the Yoneda lemma implies that we may canonically present $Z$ as a
  colimit of the diagram $d^Z\colon\el(Z)\stackrel{\text{proj}}\longrightarrow
  \Del \stackrel{\Del^\bullet} \longrightarrow\sSet$ of standard simplices.
  Indeed we may express $Z$ as a \emph{connected\/} colimit of standard
  simplices. To do so, form a connected category $\el(Z)_\bot$ by adjoining an
  initial object $\bot$ to $\el(Z)$ and extend the diagram $d^Z$ to a diagram on
  $\el(Z)_\bot$ by mapping $\bot$ to the initial object $\Del^{-1}=\emptyset$ in $\sSet$.
  Now observe that, since $\Del^{-1}$ is initial, any cocone under the original
  diagram extends uniquely to a cocone under the extended diagram, consequently
  the colimit of the original and extended diagrams coincide; they are both
  isomorphic to $Z$.

In this way, the simplicial sets $X$ and $Y$ may be
  expressed as connected colimits of diagrams $d_X\colon \el(X)_\bot\to\sSet$
  and $d_Y\colon\el(Y)_\bot\to\sSet$ of standard simplices. Composing these with
  the natural transformation in~\eqref{eq:Phi-rest} we obtain a natural
  transformation
  \begin{equation}\label{eq:Phi-iso-diags}
    \xymatrix@R=0ex@C=12em{
      {\el(X)_\bot\times\el(Y)_\bot}
      \ar@/^1.5ex/[]!R!UR(0.5);[r]!L!UL(0.5)^-{\gC[d_X(-)\join d_Y(-)]}
      \ar@/_1.5ex/[]!R!DR(0.5);[r]!L!DL(0.5)_-{\gC[d_X(-)\join\Del^0]\times_{\cattwo}
        \gC[\Del^0\join d_Y(-)]}\ar@{}[r]|(0.6){\Downarrow\Phi_{d_X(-),d_Y(-)}} &
      {\sCptd_{/\cattwo}}
    }
  \end{equation}
  whose components are instances of the morphisms analysed in
  Proposition~\ref{prop:Phi-iso} and are thus all isomorphisms.

  Now the bifunctors in~\eqref{eq:Phi-rest} preserve connected colimits in each
  variable independently and colimits commute with colimits. So it
  follows that in~\eqref{eq:Phi-iso-diags} the colimit of the upper functor is
  isomorphic to $\gC[X\join Y]$ and the colimit of the lower functor is
  $\gC[X\join\Del^0]\times_{\cattwo}\gC[\Del^0\join Y]$. What is more, it is a
  consequence of the naturality of $\Phi$ that the morphism induced between
  those colimits by the isomorphic transformation in~\eqref{eq:Phi-iso-diags} is
  is the component $\Phi_{X,Y}$, which is thus an isomorphism as postulated.
\end{proof}

The form of the simplicial category $\gC[X\join\Del^0]\times_{\cattwo}\gC[\Del^0\join Y]$ appearing on the right-hand side of the isomorphism of Theorem \ref{thm:Phi-iso} admits a simplified concrete description that we shall exploit in the next section:

\begin{rmk}\label{rmk:fib-prod-simpl}
For any $\eA$ in $\sCat^\top_{/\cattwo}$ and $\eB$ in
  $\sCat^\bot_{/\cattwo}$, we have that $(\eA\times_{\cattwo}\eB)^0\cong
  \eA^0$ and $(\eA^1\times_{\cattwo}\eB^1)\cong\eB^1$, and it follows that we
  can give the following simplified concrete description of
  $\eA\times_{\cattwo}\eB$, it has:
  \begin{itemize}
  \item Objects the disjoint union of the sets of objects of $\eA^0$ and
    of $\eB^1$,
  \item Hom spaces
    \begin{gather*}
      \Map_{\eA\times_{\cattwo}\eB}(a',a) \defeq \Map_{\eA}(a',a)\mkern60mu
      \Map_{\eA\times_{\cattwo}\eB}(b,b') \defeq \Map_{\eB}(b,b')\\
      \Map_{\eA\times_{\cattwo}\eB}(a,b) \defeq \Map_{\eA}(a,\top)\times
      \Map_{\eB}(\bot,b)
    \end{gather*}
    with all others empty, and
  \item Composites computed in the manifest way in $\eA$ or $\eB$.
  \end{itemize}
\end{rmk}



\section{Complete and cocomplete quasi-categories}\label{sec:complete}

Any quasi-category is equivalent to the homotopy coherent nerve of a Kan-complex-enriched category, so when discussing limits or colimits in a quasi-category $\qC$, it suffices to consider the case where $\qC=\hN\eC$ for a Kan-complex-enriched category $\eC$. In \S\ref{sec:lims-in-nerves}, we give a condition for a diagram $d \colon Y \to \qC$ to admit a limit (or dually a colimit): namely, $\eC$ needs to admit a \emph{pseudo homotopy limit}, a homotopy weighted limit in the sense of Definition \ref{defn:flexible-hty-limit} for the weight $W_Y$ of Definition \ref{defn:weight-for-pseudo-limits}, of the corresponding diagram $D \colon \gC{Y} \to \eC$. This is Theorem \ref{thm:nerve-completeness}.

In particular, if $\qC$ is the homotopy coherent nerve of a Kan-complex-enriched category $\eC$ that admits all flexible weighted homotopy limits then this condition holds for any diagram, and we conclude that the quasi-category $\qC$ is complete. Several families of examples of this form are established in \S\ref{ssec:complete-qcats}. 

One example we consider is the projectively fibrant and cofibrant simplicial presheaves indexed by a small Kan-complex-enriched category $\eC$, which gives rise to a complete and cocomplete quasi-category $\hat\qC$. By analysing the simplicially enriched Yoneda embedding $\yoneda\colon\eC\to\hat\eC$, we see that the sufficient condition for a diagram valued in $\qC$ to admit a limit or colimit is also necessary, as we show in Theorem  \ref{thm:nerve-completeness-converse}. Together Theorems \ref{thm:nerve-completeness} and \ref{thm:nerve-completeness-converse} recover Lurie's \cite[4.2.4.1]{Lurie:2009fk}, but our proof is somewhat more explicit. In the proof below, we employ a very particular model for the point-set level homotopy limit cone in a Kan-complex-enriched category, the data of which precisely translates to a limit cone in the quasi-category defined as the homotopy coherent nerve.

\subsection{Computing (co)limits in homotopy coherent nerves}\label{sec:lims-in-nerves}

In this subsection let $\eC$ denote a (locally small) \emph{Kan-complex-enriched
  category}, that is to say a simplicially enriched category that is ``locally
Kan'' in the sense that each of its mapping spaces $\Map_{\eC}(A,B)$ is a Kan complex.
Let $\qC$ denote the quasi-category obtained as the homotopy
coherent nerve of $\eC$. For any small simplicial set $Y$ and a diagram $d\colon
Y\to \qC$ our aim will be to develop a condition on the corresponding homotopy coherent diagram $d \colon \gC{Y} \to \eC$ which ensures that $\qC$ admits a limit of the given diagram.

  Example~\ref{ex:limits-in-quasi-categories} tells us that the diagram $d\colon Y\to \qC$ admits a limit in $\qC$ if and only if there exists some cone $\lambda$ over $d$ as depicted in diagram below-left, which enjoys the lifting property below-right:
  \begin{equation}\label{eq:limit-lift-prob'}
    \xymatrix@R=2em@C=3em{ Y \ar[r]^d \ar@{^(->}[d]  & \qC & & 
      {\Del^0\join Y}\ar@/^2ex/[]!R!UR(0.5);[rr]!L!UL(0.5)^-{\lambda}
      \ar[r]_-{\fbv{n}\join Y} &
      {\boundary\Del^n\join Y}\ar[r]_-{f}\ar@{^(->}[d] & {\qC} \\  \Del^0 \join Y \ar@{-->}[ur]_\lambda & & & 
      {} & {\Del^n\join Y}\ar@{-->}[ur] & {}
    }
  \end{equation}
  for all $n\geq 1$ and all $f\colon \boundary\Del^n\join Y\to \qC$ with
  $f\vert_{\fbv{n}\join Y} = \lambda$. Taking transposes under the homotopy
  coherent nerve $\dashv$ realisation adjunction, we obtain a cone $\Lambda\colon
  \gC[\Del^0\join Y]\to \eC$ and an equivalent lifting property:
  \begin{equation}\label{eq:limit-lift-prob}
    \xymatrix@R=2em@C=3.5em{ \gC[Y] \ar[r]^D \ar@{^(->}[d] & \eC & 
      {\gC[\Del^0\join Y]}\ar@/^2ex/[]!R!UR(0.5);[rr]!L!UL(0.5)^-{\Lambda}
      \ar[r]_-{\gC[\fbv{n}\join Y]} &
      {\gC[\boundary\Del^n\join Y]}\ar[r]_(0.55){F}\ar@{^(->}[d] &
      {\eC} \\ \gC[\Del^0\join Y] \ar@{-->}[ur]_{\Lambda} & &
      {} & {\gC[\Del^n\join Y]}\ar@{-->}[ur] & {}
    }
  \end{equation}
  
Recall from Corollary \ref{cor:collage-bijection} that every simplicial functor $\Lambda\colon\gC[\Del^0\join Y]\to \eC$ gives rise to a $W_Y$-weighted cone 
  \begin{equation}\label{eq:pseudo-cone-as-nat}
    \mu_\Lambda \defeq
    \xymatrix@R=0em@C=4em{
      {W_Y = {\gC[\Del^0\join Y]}(\bot,-)}\ar[r]^-{\Lambda_{\bot,-}} &
      {\Map_{\eC}(\Lambda\bot,\Lambda-)}
    }
  \end{equation}
  with apex $\Lambda\bot$ over the restricted diagram $\gC[Y]\subset\gC[\Del^0\join Y] \stackrel{\Lambda}{\longrightarrow} \eC$, where $W_Y$ is the weight whose collage is $\gC[\Del^0\join Y]$ introduced in Definition \ref{defn:weight-for-pseudo-limits}. This construction provides a bijection between simplicial functors $\Lambda\colon\gC[\Del^0\join Y]\to \eC$ and pairs $(\Lambda_Y,\mu_\Lambda)$ comprising a diagram $\Lambda_Y\colon\gC[Y]\to\eC$ and a $W_Y$-weighted cone $\mu_\Lambda$ over that diagram. Henceforth, we refer to the simplicial natural transformation \eqref{eq:pseudo-cone-as-nat} and the associated simplicial functor $\Lambda \colon \gC[\Del^0\join Y] \to \eC$ equally as \emph{pseudo weighted cones}.
  
We are now prepared to state our main theorem:

\begin{thm}\label{thm:nerve-completeness}
For any Kan-complex-enriched category $\eC$, simplicial set $Y$, and homotopy coherent diagram $D \colon \gC[Y] \to \eC$, if $D$ admits a pseudo homotopy limit in $\eC$ then the $W_Y$-weighted limit cone $\Lambda \colon \gC[\Del^0\join Y] \to \eC$ transposes to define a limit cone over the transposed diagram $d \colon Y \to \qC \defeq \hN\eC$ in the homotopy coherent nerve of $\eC$. Consequently if $\eC$ admits pseudo homotopy limits for all simplicial sets $Y$ then the quasi-category $\qC$ admits all limits.
\end{thm}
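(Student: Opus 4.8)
The plan is to verify that the transposed cone $\lambda\colon\Del^0\join Y\to\qC$ is a limit cone by checking the terminality lifting property recorded in Example~\ref{ex:limits-in-quasi-categories}. By that characterisation $d$ has a limit as soon as $\lambda$ enjoys the right lifting property displayed on the right of~\eqref{eq:limit-lift-prob'}, so, transposing across the $\gC\dashv\hN$ adjunction, it suffices to solve for each $n\geq 1$ the extension problem on the right of~\eqref{eq:limit-lift-prob}: given $F\colon\gC[\boundary\Del^n\join Y]\to\eC$ agreeing with $\Lambda$ along $\gC[\fbv{n}\join Y]$, produce a filler $\gC[\Del^n\join Y]\to\eC$. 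Here $\Lambda$ is the pseudo homotopy limit cone supplied by hypothesis, which by Corollary~\ref{cor:collage-bijection} records the $W_Y$-weighted cone $\mu_\Lambda$ of~\eqref{eq:pseudo-cone-as-nat} with apex $L=\Lambda\bot$, and Definition~\ref{defn:flexible-hty-limit} asserts that $\mu_\Lambda$ displays $L$ as a $W_Y$-weighted homotopy limit of $D$. The final ``consequently'' is then immediate, since the stronger hypothesis provides such a $\Lambda$ for every $Y$.

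First I would rewrite both sides of the extension problem using Theorem~\ref{thm:Phi-iso}, giving $\gC[\Del^n\join Y]\cong\gC[\Del^n\join\Del^0]\times_{\cattwo}\gC[\Del^0\join Y]$ and, since the boundary join $\boundary\Del^n\join\Del^0$ is the outer horn $\Horn^{n+1,n+1}$, also $\gC[\boundary\Del^n\join Y]\cong\gC[\Horn^{n+1,n+1}]\times_{\cattwo}\gC[\Del^0\join Y]$, with the second factor the collage of $W_Y$ from Definition~\ref{defn:weight-for-pseudo-limits}. Feeding these through the concrete description of the fibred product in Remark~\ref{rmk:fib-prod-simpl}, the cubical analysis of $\gC[\Horn^{n+1,n+1}]\inc\gC[\Del^{n+1}]$ in Example~\ref{ex:subcomputad-of-simplex}, and the bead count of Proposition~\ref{prop:gothic-C}, one sees that every atomic arrow present in $\gC[\Del^n\join Y]$ but absent from $\gC[\boundary\Del^n\join Y]$ has domain the initial vertex $0$. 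These missing cells fall into two families: those landing in $\Map_{\eC}(F0,Fn)=\Map_{\eC}(F0,L)$, which assemble into the inclusion $\boundary\Cube^{n-1}\inc\Cube^{n-1}$ on the hom-space $\Map(0,n)$; and the mixed cells landing in $\Map_{\eC}(F0,Dy)$, which assemble into $\CHorn^{n,n}_0\times W_Y(y)\inc\Cube^n\times W_Y(y)$ on the hom-spaces $\Map(0,y)$. Crucially, vertex $n$ carries exactly the cone $\mu_\Lambda$, since $\Map_{\gC[\Del^n\join\Del^0]}(n,\top)\cong\Del^0$, so $Fn=L$ and the $W_Y$-cone restricted there is $\mu_\Lambda$.

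The crux is to recognise this joint extension problem as a single lifting problem against the comparison map
\begin{equation*}
\Map_{\eC}(F0,L)\longrightarrow \wlim{W_Y}{\Map_{\eC}(F0,D-)}_{\gC[Y]}
\end{equation*}
induced by $\mu_\Lambda$, evaluated at the apex $X=F0$. Choosing a filler $\sigma\colon\Cube^{n-1}\to\Map_{\eC}(F0,L)$ for the first family extends its prescribed boundary, while composition with $\mu_\Lambda$ pins the ``far'' mixed cells to $\sigma^\ast\mu_\Lambda$; the remaining freedom in the mixed cells is precisely a compatible homotopy in $\wlim{W_Y}{\Map_{\eC}(F0,D-)}_{\gC[Y]}$ connecting $\sigma^\ast\mu_\Lambda$ to the cone recorded at vertex $0$. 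Since the displayed comparison map is an equivalence of Kan complexes by Definition~\ref{defn:flexible-hty-limit}, the required cubes and connecting homotopies exist, and the genuinely anodyne cubical-horn portions $\CHorn^{n,n}_0\inc\Cube^n$ of the mixed cells are filled freely using that each $\Map_{\eC}(F0,Dy)$ is a Kan complex.

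The main obstacle I anticipate is not the existence of individual fillers but their coherent assembly into an honest simplicial functor $\gC[\Del^n\join Y]\to\eC$, compounded by the fact that the comparison map is hypothesised to be only an \emph{equivalence} rather than a trivial fibration. The point that makes the argument ``follow one's nose'' once the decomposition is in hand is that the mixed cells are not fixed boundary data but are themselves constructed alongside $\sigma$: one is therefore solving a lifting problem in which both the source cube and its image cone are to be built compatibly, and for such ``fill source and target together'' problems a weak equivalence of Kan complexes supplies exactly the fillers and coherence homotopies required. Fixing once and for all a very particular point-set model of the pseudo homotopy limit cone $\Lambda$ is what lets these compatible choices be organised simultaneously across all $n$ and all beads, completing the verification of the lifting property and hence of the theorem.
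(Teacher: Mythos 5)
You follow the paper's own route through every reduction: the transposition to the extension problem \eqref{eq:limit-lift-prob}, the decomposition via Theorem~\ref{thm:Phi-iso} and Remark~\ref{rmk:fib-prod-simpl}, and the identification of the missing data as a cube $\Cube^{n-1}$ in $\Map_{\eC}(F0,L)$ together with a cube $\Cube^{n}$ in $\wlim{W_Y}{\Map_{\eC}(F0,F-)}$ with prescribed horn --- precisely the content of Propositions~\ref{prop:simp-func-from-join} and~\ref{prop:lift-analysis}. The gap is at the decisive step, where your argument replaces proof by assertion, and two of your intermediate claims are false as stated. First, ``choosing a filler $\sigma\colon\Cube^{n-1}\to\Map_{\eC}(F0,L)$'' is not possible in general: $\boundary\Cube^{n-1}\inc\Cube^{n-1}$ is not anodyne, so the prescribed sphere need not bound in a Kan complex; $\sigma$ can only be produced \emph{jointly} with the homotopy filling the mixed cells, using the equivalence. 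Second, the horn portions cannot then be ``filled freely'': once the far face $\Cube^{n-1}\times\fbv{1}$ is pinned to $\sigma^{\ast}\mu_\Lambda$, the residual extension problem in the weighted limit is along $\boundary\Cube^{n}\inc\Cube^{n}$ (since $\CHorn^{n,n}_0\cup(\Cube^{n-1}\times\fbv{1})=\boundary\Cube^{n}$), which again is not anodyne. Consequently the whole proof rests on your closing claim that, for problems where ``both the source cube and its image cone are to be built compatibly,'' a mere equivalence of Kan complexes supplies the required fillers --- and that claim is exactly the statement needing proof; it is not a citable standard fact, and, as just noted, it fails for each of the two sub-problems taken separately.

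The paper closes this gap with a device absent from your proposal: a comma, or mapping path space, reformulation. Lemma~\ref{lem:coherent-horn-lifting} observes that the left-hand vertical of the coupled problem is the Leibniz product $(\boundary\Cube^{n-1}\inc\Cube^{n-1})\leib\times(\Del^{\fbv{0}}\inc\Del^1)$, and hence, by the Leibniz product/exponential adjunction and a pullback, that the coupled pair of lifts corresponds bijectively to a \emph{single} lift of $\boundary\Cube^{n-1}\inc\Cube^{n-1}$ against the map $\comma(B,q,p)\colon B\comma h\to q\comma k$; in the absolute case $\eD=\catone$ relevant here, this is the projection from the mapping path space of $h=\Lambda_F\circ{-}$. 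Lemma~\ref{lem:coherent-horn-lifting-suff} then shows that this comma map is a trivial fibration exactly when the ambient square is a homotopy pullback, which for $\eD=\catone$ says precisely that $h$ is an equivalence, i.e., that $\Lambda$ is a pseudo homotopy limit cone. This is how an equivalence gets rigorously converted into a trivial fibration, so that every boundary-prescribed instance of the coupled problem is solvable at once. Supplying these two lemmas, or an equivalent argument, is what your proposal needs in order to become a proof; the reductions preceding that point are sound and match the paper's.
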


To prove Theorem \ref{thm:nerve-completeness}, we show that the extension \eqref{eq:limit-lift-prob} can be constructed whenever the cone $\Lambda \colon \gC[\Del^0\join Y] \to \eC$ is a pseudo homotopy limit cone, satisfying the condition of Definition \ref{defn:flexible-hty-limit} for the weight $W_Y$ for pseudo limits. The next result gives a first step towards analysing such situations:

\begin{prop}\label{prop:simp-func-from-join} If $\eC$ admits the $W_Y$-weighted limit of a diagram $F_Y\colon\gC Y\to\eC$, then simplicial functors $F\colon\gC[X\join Y]\to\eC$ that extend $F_Y\colon\gC[Y]\to\eC$ stand in bijective correspondence to simplicial functors $F_X\colon\gC[X\join\Del^0]\to \eC$ that map $\top$ to the weighted limit $\wlim{W_Y}{F_Y}$.
\end{prop}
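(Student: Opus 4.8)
The plan is to transport the problem across the isomorphism of Theorem~\ref{thm:Phi-iso} and then to apply the universal property of the weighted limit $L\defeq\wlim{W_Y}{F_Y}$ one hom-space at a time. Since $\gC[X\join\Del^0]$ lies in $\sCat^\top_{/\cattwo}$ and $\gC[\Del^0\join Y]$ in $\sCat^\bot_{/\cattwo}$, Theorem~\ref{thm:Phi-iso} together with the concrete description of Remark~\ref{rmk:fib-prod-simpl} identifies $\gC[X\join Y]$ with a simplicial category whose objects are the vertices of $X$ and of $Y$, whose hom-spaces internal to $\gC[X]$ and to $\gC[Y]$ are computed there, and whose only cross hom-spaces are
\[
  \Map_{\gC[X\join Y]}(a,b)\cong\Map_{\gC[X\join\Del^0]}(a,\top)\times\Map_{\gC[\Del^0\join Y]}(\bot,b)=\Map_{\gC[X\join\Del^0]}(a,\top)\times W_Y(b)
\]
for $a$ a vertex of $X$ and $b$ a vertex of $Y$, the final equality being Definition~\ref{defn:weight-for-pseudo-limits}. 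Under this identification the fibre inclusion $\gC[Y]\inc\gC[X\join Y]$ is the fibre over $1$, so ``$F$ extends $F_Y$'' means that the restriction of $F$ to this fibre is $F_Y$.

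First I would unpack a simplicial functor $F\colon\gC[X\join Y]\to\eC$ extending $F_Y$ against this presentation. Such an $F$ amounts to (i) a functor $F^0_X\defeq F|_{\gC[X]}\colon\gC[X]\to\eC$ on the fibre over $0$; (ii) the prescribed functor $F_Y$ on the fibre over $1$; and (iii) for each vertex $a$ of $X$ a simplicial map on the cross hom-spaces, natural in $b$. By cartesian closure of $\SSet$ the maps in (iii), taken compatibly with the right action of the hom-spaces of $\gC[Y]$ --- which is one of the associativity constraints on $F$ --- are exactly the data of a single simplicial map
\[
  \Map_{\gC[X\join\Del^0]}(a,\top)\longrightarrow\wlim{W_Y}{\Map_{\eC}(F^0_X a,F_Y-)}_{\gC[Y]}
\]
into the simplicial set of $W_Y$-cones over $F_Y$ with apex $F^0_X a$, in the sense of Definition~\ref{defn:simp-weight}; here the factor $\Map_{\gC[X\join\Del^0]}(a,\top)$ of the cross hom-space plays the role of a simplicial parameter while the factor $W_Y(b)$ is acted upon by $F$ precisely as a cone leg.

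The crux is then to invoke the defining universal property~\eqref{eq:weighted-UP} of $L=\wlim{W_Y}{F_Y}$, namely the isomorphism $\Map_{\eC}(A,L)\cong\wlim{W_Y}{\Map_{\eC}(A,F_Y-)}_{\gC[Y]}$ natural in $A\in\eC$. Taking $A=F^0_X a$ converts the map of the previous paragraph into a simplicial map $\Map_{\gC[X\join\Del^0]}(a,\top)\to\Map_{\eC}(F^0_X a,L)$. Together with the functor $F^0_X$ these maps constitute precisely a simplicial functor $F_X\colon\gC[X\join\Del^0]\to\eC$ sending $\top\mapsto L$: the category $\gC[X\join\Del^0]$ has objects the vertices of $X$ and one further object $\top$ with $\Map(\top,\top)$ a point and $\Map(\top,a)$ empty, so a functor out of it sending $\top\mapsto L$ is exactly a functor on $\gC[X]$ equipped with such a compatible family of maps into $\Map_{\eC}(-,L)$. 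This assignment is manifestly invertible.

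I expect the only real work to be the verification that the associativity and unit constraints on the two sides correspond. The compatibility of (iii) with the left action of the hom-spaces of $\gC[X]$ must be shown to match the functoriality of $F_X$ along $\gC[X]$-composition into $\top$; this is where the naturality of~\eqref{eq:weighted-UP} in the variable $A$ is used, since it is this naturality that intertwines precomposition by a $\gC[X]$-arrow with the induced map on $\Map_{\eC}(-,L)$. The action of $\Map(\top,\top)$, being a point, imposes no further condition, and the unit constraints are routine. Everything else is a formal consequence of Theorem~\ref{thm:Phi-iso}, Remark~\ref{rmk:fib-prod-simpl}, and the universal property of the weighted limit.
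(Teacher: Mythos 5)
Your proposal is correct and follows essentially the same route as the paper's proof: transport across the isomorphism $\Phi_{X,Y}$ of Theorem~\ref{thm:Phi-iso} using Remark~\ref{rmk:fib-prod-simpl}, transpose the cross hom-space data and take the end over $\gC[Y]$ to land in $\wlim{W_Y}{\Map_{\eC}(F_Xx,F_Y-)}$, then apply the defining universal property of $\wlim{W_Y}{F_Y}$ to reassemble the data, naturally in $x$, into a simplicial functor $\gC[X\join\Del^0]\to\eC$ sending $\top$ to the limit. The paper packages your final ``associativity/unit'' verification into the phrase ``simplicially natural in $x$,'' but the content is identical.
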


\begin{proof}  Under the isomorphism $\Phi_{X,Y}\colon\gC[X\join Y]\cong \gC[X\join\Del^0]\times_{\cattwo}\gC[\Del^0\join Y]$ of Theorem \ref{thm:Phi-iso} and the description of $\gC[X\join\Del^0]\times_{\cattwo}\gC[\Del^0\join Y]$ given in Remark~\ref{rmk:fib-prod-simpl}, we find that a simplicial functor $F\colon\gC[X\join Y]\to\eC$ is uniquely determined by giving the following data:
  \begin{enumerate}[label=(\roman*)]
  \item a pair of simplicial functors $F_X\colon\gC[X]\to\eC$ and $F_Y\colon\gC[Y]\to\eC$, and
  \item a family of simplicial maps
    \begin{equation}\label{eq:simp-func-from-join}
      \xymatrix@R=0ex@C=4em{
        {{\gC[X\join\Del^0]}(x,\top)\times\Map_{\gC[\Del^0\join Y]}(\bot,y)}
        \ar[r]^-{F_{x,y}} & {\Map_{\eC}(F_Xx, F_Yy)}
      }
    \end{equation}
    that is simplicially natural in $x\in\gC X$ and $y\in\gC Y$.
  \end{enumerate}
Transposing the family of maps in~\eqref{eq:simp-func-from-join} and taking the manifest end with respect to $y\in\gC Y$ reduces it to a family of simplicial maps
  \begin{equation}\label{eq:simp-func-from-join'}
    \xymatrix@R=0em@C=2em{
      {{\gC[X\join\Del^0]}(x,\top)}\ar[r]^-{F_{x,*}} & \int_{y \in \gC{Y}} \Map_{\eC}(F_Xx,F_Yy)^{{\gC[\Del^0\join Y]}(\bot, y)} \eqdef \wlim{W_Y}{ \Map_{\eC}(F_Xx,F_Y-)}   }
  \end{equation}
  which is simplicially natural in $x\in\gC X$. In the particular case where $\eC$ admits a $W_Y$-weighted limit of the diagram $F_Y\colon\gC Y\to\eC$, the codomain of the family in~\eqref{eq:simp-func-from-join'} is, by definition, isomorphic to $\Map_{\eC}(F_Xx,\wlim{W_Y}{F_Y})$. So then simplicial functors $F\colon\gC[X\join Y]\to\eC$ that extend $F_Y\colon\gC[Y]\to\eC$ stand in bijective correspondence to simplicial functors $F_X\colon\gC[X\join\Del^0]\to \eC$ that map $\top$ to the limit $\wlim{W_Y}{F_Y}$.
\end{proof}

With this result in hand, we may return to analysing the lifting problem \eqref{eq:limit-lift-prob}. The next result characterises solutions to lifting problems generalising those of the form \eqref{eq:limit-lift-prob}.

\begin{prop}\label{prop:lift-analysis}
Solutions to lifting problems of the form
  \begin{equation}\label{eq:cone-lift}
    \xymatrix@R=2em@C=6em{
      {\gC[\boundary\Del^n\join Y]} \ar@{^(->}[d]\ar[r]^-{F} &
      {\eC}\ar[d]^{P} \\
      {\gC[\Del^n\join Y]}\ar[r]_-{G}\ar@{-->}[ur]_{L} & {\eD}
    }
  \end{equation}
  correspond bijectively to a pair of lifts making the following diagram commute:
  \begin{equation}\label{eq:unpacked-cone-lift} \xymatrix@C=7pt@R=5pt{ {\gC[\Horn^{n+1,n+1}]}(0,n)  \ar[dr] \ar@{^(->}[dd] \ar[rr] & & \Map_{\eC}(F0,Fn) \ar'[d][dd] \ar[dr] \\ &{\gC[\Horn^{n+1,n+1}]}(0,n+1)  \ar[rr]\ar@{^(->}[dd] & & \wlim{W_Y}{\Map_{\eC}(F0,F-)} \ar[dd] \\ {\gC[\Del^{n+1}]}(0,n) \ar[dr] \ar@/_3ex/@{-->}[]!R!U(0.4);[uurr] \ar[rr] & & \Map_{\eD}(G0,Gn) \ar[dr] \\ & {\gC[\Delta^{n+1}]}(0,n+1)  \ar@/_2.5ex/@{-->}[]!R!U(0.4);[uurr] \ar[rr] & &  \wlim{W_Y}{\Map_{\eD}(G0,G-)}}
  \end{equation}
\end{prop}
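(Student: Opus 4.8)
The plan is to reduce the lifting problem \eqref{eq:cone-lift} to a pure computation with hom-spaces by systematically applying the join decomposition of Theorem \ref{thm:Phi-iso}. First I would record two elementary identifications: $\Del^n\join\Del^0\cong\Del^{n+1}$, with the cone vertex playing the role of $n+1$, and $\boundary\Del^n\join\Del^0\cong\Horn^{n+1,n+1}$. The latter holds because the non-degenerate simplices of $\boundary\Del^n\join\Del^0$, viewed inside $\Del^{n+1}$, are exactly those omitting both the top simplex $\{0,\dots,n+1\}$ and the face $\{0,\dots,n\}$ opposite the cone vertex, which is precisely the description of the outer horn. Consequently Theorem \ref{thm:Phi-iso} yields
\[ \gC[\Del^n\join Y]\cong \gC\Del^{n+1}\times_{\cattwo}\gC[\Del^0\join Y], \qquad \gC[\boundary\Del^n\join Y]\cong \gC\Horn^{n+1,n+1}\times_{\cattwo}\gC[\Del^0\join Y], \]
so that the left-hand inclusion of \eqref{eq:cone-lift} is the fibred product over $\cattwo$ of the subcomputad inclusion $\gC\Horn^{n+1,n+1}\inc\gC\Del^{n+1}$ with $\gC[\Del^0\join Y]$.

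Next I would unpack a simplicial functor out of such a fibred product using the concrete description of Remark \ref{rmk:fib-prod-simpl}. A functor $\gC[\Del^n\join Y]\to\eC$ amounts to an $X$-part $\gC\Del^n\to\eC$, a $Y$-part $\gC Y\to\eC$, and for each vertex $a\in\{0,\dots,n\}$ a cross family $\Map_{\gC\Del^{n+1}}(a,\top)\times W_Y(y)\to\Map_{\eC}(Fa,F_Yy)$ simplicially natural in $y$; transposing and taking the end over $y$ exactly as in the proof of Proposition \ref{prop:simp-func-from-join} repackages the cross family out of $a$ as a single map $\Map_{\gC\Del^{n+1}}(a,\top)\to\wlim{W_Y}{\Map_{\eC}(Fa,F-)}$ into a hom-space weighted limit, which exists in $\SSet$ with no hypothesis on $\eC$ or $\eD$. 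The key bookkeeping step is then to invoke Example \ref{ex:subcomputad-of-simplex}\ref{itm:outer-horn}, which identifies the only hom-spaces where $\gC\Horn^{n+1,n+1}$ and $\gC\Del^{n+1}$ differ as $(0,n)$, where the inclusion is $\boundary\Cube^{n-1}\inc\Cube^{n-1}$, and $(0,n+1)=(0,\top)$, where it is $\CHorn^{n,n}_0\inc\Cube^{n}$. Since every other hom-space agrees, and in particular the $X$-part and the cross families out of every $a\geq 1$ agree on the two sides, the data of $F$ (and, through $PF=Gi$, of $G$) already determines every part of a prospective lift $L$ except its action on these two cells; note also that $L$ necessarily shares the fixed $Y$-part $F_Y$.

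It remains to observe that this residual freedom is precisely the pair of dashed lifts in \eqref{eq:unpacked-cone-lift}. The $X$-part value on $\Map_{\gC\Del^{n+1}}(0,n)\cong\Cube^{n-1}$ is the lift $\Cube^{n-1}\to\Map_{\eC}(F0,Fn)$, and the transposed cross family out of $0$ on $\Map_{\gC\Del^{n+1}}(0,\top)\cong\Cube^{n}$ is the lift $\Cube^{n}\to\wlim{W_Y}{\Map_{\eC}(F0,F-)}$. The commuting faces of the cube then encode exactly the constraints on a lift: the top face says $L$ restricts to $F$, while the right faces (induced by postcomposition with $P$) together with the bottom face say $PL=G$. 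The remaining depth edges, given by postcomposition with the atomic arrow $n\to\top$ on the source side and by postcomposition with the cone legs at $n$ supplied by $F$ on the target side, encode the single nontrivial composition law linking the two new cells.

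The step I expect to be the main obstacle is the last one: verifying that any choice of the two dashed maps making the cube commute really does reassemble into a bona fide simplicial functor $L$ extending $F$ with $PL=G$, i.e.\ that simplicial functoriality imposes no constraint beyond those displayed. This reduces to checking that the composites into and out of the two new cells---most importantly the composite $\Map_{\gC\Del^{n+1}}(0,n)\times\Map_{\gC\Del^{n+1}}(n,\top)\to\Map_{\gC\Del^{n+1}}(0,\top)$ and the simplicial naturality in $y\in\gC Y$---are governed entirely by the commuting faces already drawn. This follows from the computad description of $\gC[\Del^n\join Y]$ in Proposition \ref{prop:gothic-C} once one confirms that every atomic factorisation passing through the new cells is accounted for by an edge of the cube, so that no hidden coherence needs to be imposed by hand.
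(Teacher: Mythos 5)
Your proposal is correct and follows essentially the same route as the paper's own proof: the join decomposition of Theorem \ref{thm:Phi-iso} together with the identification $\boundary\Del^n\join\Del^0\cong\Horn^{n+1,n+1}$, the hom-space comparison of Example \ref{ex:subcomputad-of-simplex}\ref{itm:outer-horn}, and the unpacking of simplicial functors out of $\gC[X\join Y]$ via Proposition \ref{prop:simp-func-from-join}. The functoriality verification you flag as the main obstacle is precisely the point the paper leaves implicit, and your computad-based justification --- composites through intermediate vertices $0<j<n$ land in the horn part where $L$ is already pinned to $F$, leaving only the composite through $n$ (the compatibility square, i.e.\ the depth edges of the cube) and the naturality in $y\in\gC Y$ absorbed by the weighted-limit formulation --- is the correct way to discharge it.
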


  \begin{proof}
Under the isomorphism established in Theorem~\ref{thm:Phi-iso} the vertical inclusion at the left of \eqref{eq:cone-lift} is isomorphic to the inclusion \[\gC[\boundary\Del^n\join\Del^0]\times_{\cattwo}\gC[\Del^0\join Y]\inc \gC[\Del^n\join\Del^0]\times_{\cattwo}\gC[\Del^0\join Y].\] Furthermore the canonical isomorphism $\Del^n\join\Del^0\cong\Del^{n+1}$ restricts to an isomorphism $\boundary\Del^n\join\Del^0\cong\Horn^{n+1,n+1}$, so this inclusion is in turn isomorphic to that obtained by applying the functor $-\times_{\cattwo}\gC[\Del^0\join Y]$ to the inclusion $\gC\Horn^{n+1,n+1}\inc\gC\Del^{n+1}$. Also Example~\ref{ex:subcomputad-of-simplex}\ref{itm:outer-horn} tells us that the simplicial subcategory $\gC\Horn^{n+1,n+1}$ differs from $\gC\Del^{n+1}$ only in as much as its functor spaces ${\gC\Horn^{n,n}}(0,n)$ and ${\gC\Horn^{n,n}}(0,n+1)$ are proper sub-spaces of the corresponding functor spaces of $\gC\Del^{n+1}$. On combining this fact with the description of simplicial functors with domain $\gC[X\join Y]$ given in Proposition~\ref{prop:simp-func-from-join}, we find that each lift in~\eqref{eq:cone-lift} corresponds to a pair consisting of a solution to the lifting problems
\[
    \xymatrix@R=2em@C=1.8em{
      {{\gC\Horn^{n+1,n+1}}(0,n)}
      \ar@{^(->}[d]\ar[r]^{F_{0,n}} & {\Map_{\eC}(F0,Fn)}\ar[d]^{P}\\
      {{\gC\Del^{n+1}}(0,n)}\ar[r]_{G_{0,n}}
      \ar@{-->}[ur]^{ L_{0,n}} & {\Map_{\eD}(G0,Gn)}
    }
        \xymatrix@R=2em@C=1.8em{
      {{\gC\Horn^{n+1,n+1}}(0,n+1)}
      \ar@{^(->}[d]\ar[r]^-{F_{0,*}} &
      {\wlim{W_Y}{\Map_{\eC}(F0,F-)}}\ar[d]^{\wlim{W_Y}{P}} \\
      {{\gC\Del^{n+1}}(0,n+1)}
      \ar@{-->}[ur]^(0.4){ L_{0,*}}\ar[r]_-{G_{0,*}} &
      {\wlim{W_Y}{\Map_{\eD}(G0,G-)}}
    }
\]
  together satisfying the condition that the following square commutes:
  \begin{equation*}
    \xymatrix@R=2em@C=4em{
      {{\gC\Del^{n+1}(0,n)}}\ar@{^(->}^-{\{n,n+1\}\circ-}[r]
      \ar@{-->}[d]_{L_{0,n}} &
      {{\gC\Del^{n+1}}(0,n+1)}\ar@{-->}[d]^{L_{0,*}} \\
      {\Map_{\eC}(F0,Fn)}\ar[r]_-{\Lambda_F\circ -} &
      {\wlim{W_Y}{\Map_{\eC}(F0,F-)}}
    }
  \end{equation*}
  Here the horizontal map at the bottom of this square is that induced by post-composition with the $W_Y$-weighted cone $\Lambda_F\colon W_Y\to \Map_{\eC}(Fn,F-)$ determined by the composite simplicial functor
  \begin{equation*}
    \xymatrix@R=0em@C=6em{
      {\gC[\Del^0\join Y]}\ar[r]^{\gC[\{n\}\join Y]} &
      {\gC[\boundary\Del^n\join Y]}\ar[r]^-{F} & {\eC}
    }
  \end{equation*}
  as in Corollary \ref{cor:collage-bijection}.
\end{proof}

The unpacked lifting property of Proposition \ref{prop:lift-analysis} may be simplified once we replace the function spaces on the left of the lifting problem~\eqref{eq:unpacked-cone-lift} with their explicit presentations discussed in Example~\ref{ex:subcomputad-of-simplex}, as follows:
  \begin{equation*}
    \vcenter{\xymatrix@R=1.5em@C=3em{
        {{\gC\Horn^{n+1,n+1}}(0,n)}
        \ar[r]^-{{\{n,n+1\}\circ-}}\ar@{^(->}[d] &
        {{\gC\Horn^{n+1,n+1}}(0,n+1)}\ar@{^(->}[d] \\
        {{\gC\Del^{n+1}}(0,n)}
        \ar[r]_-{{\{n,n+1\}\circ-}} &
        {{\gC\Del^{n+1}}(0,n+1)}
      }}\mkern20mu\cong\mkern20mu
    \vcenter{\xymatrix@R=1.5em@C=3em{
        {\boundary\Cube^{n-1}}\ar@{^(->}[r]\ar@{^(->}[d] &
        {\CHorn^{n,n}_0}\ar@{^(->}[d] \\
        {\Cube^{n-1}}\ar@{^(->}[r]_{\Cube^{n-1}\times\{1\}} &
        {\Cube^n}
      }}
  \end{equation*}
  This leads us to the following technical lemma:

\begin{lem}\label{lem:coherent-horn-lifting} There exists a bijective correspondence between lifting problems, and their solutions, of the forms depicted in the following display:
  \begin{equation}\label{eq:double-lift-equiv}
    \vcenter{\xymatrix@R=7pt@C=18pt{
        \boundary\Cube^{n-1} \ar@{^(->}[dr] \ar@{^(->}[dd] \ar[rr] & &
        A \ar[dr]^-{h}\ar[dd]|!{[dl];[dr]}\hole_(0.68){p} \\
        & \CHorn^{n,n}_0 \ar[rr] \ar@{^(->}[dd] & & B\ar[dd]^{q} \\
        \Cube^{n-1} \ar@{^(->}[dr]_{\Cube^{n-1}\times\fbv{1}}
        \ar@/_2.5ex/@{-->}[uurr]\ar [rr]|!{[ur];[dr]}\hole
        & & C\ar[dr]^-{k} \\
        & \Cube^{n} \ar@/_2ex/@{-->}[uurr]
        \ar[rr] & & D }}
    \qquad \leftrightsquigarrow \qquad
    \vcenter{\xymatrix{
        \boundary\Cube^{n-1} \ar@{^(->}[d] \ar[r] &
        B \comma h \ar[d]^-{\comma(B,q,p)} \\
        \Cube^{n-1} \ar[r]\ar@{-->}[ur] & q\comma k}}
  \end{equation}
\end{lem}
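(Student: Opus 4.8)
The plan is to unravel both lifting problems into equivalent ``element-wise'' data via the pullback definition of the comma construction (Definition~\ref{defn:comma}) together with the cartesian closure of $\sSet$, and then to observe that the cube on the left of~\eqref{eq:double-lift-equiv} records exactly the information of the square on the right. The essential geometric input is already contained in Notation~\ref{ntn:cubes}: the identification $\Cube^n = \Cube^{n-1}\times\Del^1$, under which the edge $\Cube^{n-1}\times\fbv{1}$ of the left-hand square is the inclusion at the $\fbv{1}$-endpoint, together with the decomposition $\CHorn^{n,n}_0 = (\boundary\Cube^{n-1}\times\Del^1)\cup(\Cube^{n-1}\times\fbv{0})$ exhibiting $\CHorn^{n,n}_0\inc\Cube^n$ as the Leibniz product $(\boundary\Cube^{n-1}\inc\Cube^{n-1})\leib\times(\Del^{\fbv{0}}\inc\Del^1)$. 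Note that the $\fbv{0}$-facet of $\Cube^n$ lies entirely in $\CHorn^{n,n}_0$, whereas only the boundary of the $\fbv{1}$-facet does.

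First I would record the universal properties of the two commas. Since $B\comma h$ is, by Definition~\ref{defn:comma}, the pullback of $(p_1,p_0)\colon B^\cattwo\to B\times B$ along $h\times\id_B$, and $B^\cattwo = B^{\Del^1}$, cartesian closure furnishes for every simplicial set $T$ a natural bijection between maps $T\to B\comma h$ and triples $(s\colon T\to A,\ t\colon T\to B,\ \sigma\colon T\times\Del^1\to B)$ with $\sigma|_{T\times\fbv{0}} = t$ and $\sigma|_{T\times\fbv{1}} = h\circ s$. The same reasoning identifies maps $T\to q\comma k$ with triples $(s'\colon T\to C,\ t'\colon T\to B,\ \sigma'\colon T\times\Del^1\to D)$ satisfying $\sigma'|_{T\times\fbv{0}} = q\circ t'$ and $\sigma'|_{T\times\fbv{1}} = k\circ s'$; and, unwinding Proposition~\ref{prop:trans-comma}, the map $\comma(B,q,p)$ sends $(s,t,\sigma)$ to $(p\circ s,\ t,\ q\circ\sigma)$, which is well defined precisely because $q\circ h = k\circ p$ is the commuting right-hand face of the cube.

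Next I would apply these descriptions with $T=\boundary\Cube^{n-1}$ and $T=\Cube^{n-1}$ and read off the translation. Transposing across $\Cube^n = \Cube^{n-1}\times\Del^1$, the front lift $\Cube^n\to B$ becomes a map $\Cube^{n-1}\to B^{\Del^1}$, which together with the back lift $\Cube^{n-1}\to A$ and the comma relation at the $\fbv{1}$-end assembles into the dashed lift $\Cube^{n-1}\to B\comma h$. On the solid data, the given map $b_0\colon\CHorn^{n,n}_0\to B$ splits, along the decomposition of $\CHorn^{n,n}_0$, into a path $\boundary\Cube^{n-1}\times\Del^1\to B$ and a $\fbv{0}$-component $\Cube^{n-1}\times\fbv{0}\to B$ defined on all of $\Cube^{n-1}$; the former, paired with $\boundary\Cube^{n-1}\to A$, builds the top map $u$, while the latter, paired with $\Cube^{n-1}\to C$ and the transpose of $\Cube^n\to D$, builds the bottom map $v$. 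Each commuting face of the cube becomes exactly one of the conditions needed for $u$ and $v$ to be well defined and to satisfy $\comma(B,q,p)\circ u = v\circ(\boundary\Cube^{n-1}\inc\Cube^{n-1})$, and conversely a triple solving the comma lifting problem returns the pair of cube lifts, with the coupling $\sigma|_{\times\fbv{1}}=h\circ s$ automatically providing the face identity $h\circ L_{0,n}=L_{0,*}|_{\Cube^{n-1}\times\fbv{1}}$.

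I expect the only real difficulty to be bookkeeping: keeping the two endpoints $\fbv{0}$ and $\fbv{1}$ straight so that the \emph{fully} present $\fbv{0}$-facet of $\CHorn^{n,n}_0$ supplies the $B$-coordinate $t$ over all of $\Cube^{n-1}$ --- this is what makes $v$ well defined despite the cube containing no explicit map $\Cube^{n-1}\to B$ --- while the $\fbv{1}$-coupling $h\circ s$ is absorbed into the codomain leg of the comma and so requires no separate hypothesis. Once the endpoint conventions are fixed consistently with the identifications of Example~\ref{ex:subcomputad-of-simplex}\ref{itm:outer-horn}, the bijection of data and of solutions is then immediate from the pullback universal properties.
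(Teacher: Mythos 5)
Your proof is correct and follows essentially the same route as the paper's: both identify the front inclusion $\CHorn^{n,n}_0\inc\Cube^n$ with the Leibniz product $(\boundary\Cube^{n-1}\inc\Cube^{n-1})\leib\times(\Del^{\fbv{0}}\inc\Del^1)$, transpose across the $(-)\times\Del^1\dashv(-)^{\Del^1}$ adjunction, and assemble the remaining face conditions using the pullback definitions of $B\comma h$ and $q\comma k$. The only difference is presentational: the paper performs the transposition wholesale via the Leibniz product/Leibniz exponential two-variable adjunction and then cuts down to $B\comma h$ and $q\comma k$ by two pullback arguments, whereas you unwind the same transposition coordinate-wise into explicit triples.
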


\begin{proof}
  Recall that $\Cube^n\cong\Cube^{n-1}\times\Del^1$ which restricts to sub-spaces to give isomorphisms $\CHorn^{n,n}_0 \cong \partial \Cube^{n-1} \times \Del^1 \cup \Cube^{n-1} \times \Del^{\fbv{0}}$; furthermore we also have that $\boundary \Cube^{n-1} \times \Del^1 \cap \Cube^{n-1} \times \Del^{\fbv{0}}=\boundary\Cube^{n-1}\times\Del^{\fbv{0}}$. So the left-hand vertical of the front square of the cube in~\eqref{eq:double-lift-equiv} is simply the Leibniz product $(\boundary\Cube^{n-1}\inc\Cube^{n-1})\leib\times(\Del^{\fbv{0}}\inc\Del^1)$. The Leibniz exponential of $\Del^{\fbv{0}}\inc\Del^1$ with $q \colon B \to D$ defines the map $\comma(B,q,q)$ of Proposition \ref{prop:trans-comma}, so  transposing across the two variable adjunction between Leibniz products and Leibniz exponential gives us a bijective correspondence between the lifting problem at the front of the cube in~\eqref{eq:double-lift-equiv} and the transposed lifting problem on the left of the following display:
  \begin{equation*}
    \vcenter{\xymatrix@=2em{
        {\boundary\Cube^{n-1}}\ar@{^(->}[d]\ar[r] &
        {B^{\cattwo}}\ar[d]^{\comma(B,q,q)} \\
        {\Cube^{n-1}}\ar[r]\ar@{-->}[ur] & {q\comma D}
      }}\mkern60mu
    \vcenter{\xymatrix@=2em{
        {\Cube^{n-1}}\ar@{-->}[r]\ar@{-->}[d] &
        {B^{\cattwo}}\ar[d]^{p_1} \\
        {A}\ar[r]_{h} & {B}
      }}
  \end{equation*}
  Under this transposition, the square on the right re-expresses the commutativity of the square within the cube on the left of~\eqref{eq:double-lift-equiv} comprised of the dashed lifts and the maps between their (co)domains. This is the only compatibility condition required of those lifts, so taking the pullback in that square we reduce those lifts and their compatibility condition to a single lift $\Cube^{n-1}\dashrightarrow B\comma h$. In a similar fashion, the commutativity of the squares comprising the lower and upper faces of the cube in~\eqref{eq:double-lift-equiv} transpose to conditions that posit the commutativity of squares
  \begin{equation*}
    \vcenter{\xymatrix@=2em{
        {\Cube^{n-1}}\ar[r]\ar[d] &
        {q\comma D}\ar[d]^{p_1} \\
        {C}\ar[r]_{k} & {D}
      }}\mkern25mu\text{and}\mkern25mu
    \vcenter{\xymatrix@=2em{
        {\boundary\Cube^{n-1}}\ar[r]\ar[d] &
        {B^{\cattwo}}\ar[d]^{p_1} \\
        {A}\ar[r]_{h} & {B}
      }}
  \end{equation*}
respectively. Taking pullbacks in these squares, we again see that they correspond to maps $\Cube^{n-1}\to q\comma k$ and $\boundary\Cube^{n-1}\to B\comma h$. It is now a routine matter to see that the commutativity conditions encapsulated in the sides of the cube on the left of~\eqref{eq:double-lift-equiv} correspond to the commutativity of the square on its right in which the maps going from its left-vertical to the right-vertical are those just constructed.
\end{proof}
 
Note that the lifting problem on the right of \eqref{eq:double-lift-equiv} can be solved whenever the map $\comma(B,q,p) \colon B \comma h \to q \comma k$ is a trivial fibration. The following lemma characterises those squares for which this is the case.

\begin{lem}\label{lem:coherent-horn-lifting-suff}
Consider a commutative square whose verticals are Kan fibrations between Kan complexes:
  \begin{equation*}
    \xymatrix@=2em{
      {\qA}\ar[r]^h\ar@{->>}[d]_{p} &
      {\qB}\ar@{->>}[d]^q \\
      {\qC}\ar[r]_{k} & {\qD}
    }
  \end{equation*}
Then the induced map $\comma(\qB,q,p)\colon \qB\comma h\to q\comma k$ is a trivial fibration if and only if the given square is a homotopy pullback, in the sense that the map $\qA \we\qB\times_{\qD}\qC$ is an equivalence. \end{lem}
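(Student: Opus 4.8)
The plan is to carry out the whole argument inside the $\infty$-cosmos $\Kan$ of Kan complexes (Example \ref{ex:Kan-infty-cosmos}); the commas $\qB\comma h\defeq\id_\qB\comma h$ and $q\comma k$ appearing here agree with those computed in $\qCat$ because the inclusion $\Kan\inc\qCat$ creates the cosmological limits (Proposition \ref{prop:gpdal-infty-cosmos}), and in $\Kan$ the isofibrations are the Kan fibrations while the trivial fibrations are the acyclic Kan fibrations. My first observation is that $\comma(\qB,q,p)$ is \emph{always} an isofibration: it is the comparison map of Proposition \ref{prop:trans-comma} associated to the transformation of cospans
\[
\xymatrix@R=1.2em@C=1.8em{ {\qA}\ar[r]^{h}\ar[d]_{p} & {\qB}\ar[d]_{q} & {\qB}\ar[l]_{\id}\ar[d]^{\id} \\ {\qC}\ar[r]_{k} & {\qD} & {\qB}\ar[l]^{q} }
\]
whose three vertical components $p$, $q$, $\id_\qB$ are isofibrations. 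By that same proposition the induced map is therefore an isofibration, so it is a trivial fibration if and only if it is an equivalence, and the task is to match this with the homotopy pullback condition.

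I would then introduce the canonical comparison $w\defeq(h,p)\colon\qA\to\qB\times_{\qD}\qC$ (well defined since $qh=kp$) and connect $\comma(\qB,q,p)$ to it through two auxiliary equivalences. The domain projection $p_1\colon\qB\comma h\to\qA$ is a trivial fibration: the defining pullback identifies $\qB\comma h\cong\qA\times_{\qB}\qB^\cattwo$ with $p_1$ the pullback of the codomain projection $p_1\colon\qB^\cattwo\trvfib\qB$, which is itself a trivial fibration since it admits the section given by degenerate arrows, an equivalence because $\Del^1\to\Del^0$ is. Its evident section $s\colon\qA\to\qB\comma h$, $a\mapsto(a,h(a),\id_{h(a)})$, is then an equivalence by $2$-of-$3$. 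Dually, the inclusion $j\colon\qB\times_{\qD}\qC\to q\comma k$ sending $(b,c)$ to $(c,b,\id)$ is an equivalence; this is where the hypothesis that $q$ is a Kan fibration is used. Here both $\qB\times_{\qD}\qC$ and $q\comma k$ are isofibrations over $\qC$ and $j$ lies over $\qC$; over a vertex $c$ its restriction is the inclusion of the strict fibre $q^{-1}(kc)$ into the comma $q\comma kc$ modelling the homotopy fibre, which is an equivalence exactly because $q$ is a fibration, and a fibrewise equivalence between fibrations over a common base is an equivalence.

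These two maps are tied together by the strict identity $\comma(\qB,q,p)\circ s=j\circ w$, verified by the direct computation that both functors send $a\in\qA$ to the element $(p(a),h(a),\id)$ of $q\comma k$ (using $qh(a)=kp(a)$). Since $s$ and $j$ are equivalences, the $2$-of-$3$ property yields
\[
\comma(\qB,q,p)\ \text{equiv.}\iff \comma(\qB,q,p)\circ s\ \text{equiv.}\iff j\circ w\ \text{equiv.}\iff w\ \text{equiv.},
\]
and $w$ being an equivalence is by definition the statement that the given square is a homotopy pullback. Combined with the first paragraph, this gives the desired characterisation.

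The hard part is the middle step, that $j\colon\qB\times_{\qD}\qC\to q\comma k$ is an equivalence: it is the only place the fibration hypothesis on $q$ is genuinely needed, and it reduces to the standard fact that for a Kan fibration the strict fibre computes the homotopy fibre, upgraded to a statement about total spaces by the fibrewise recognition of equivalences. Everything else is formal manipulation of Proposition \ref{prop:trans-comma} together with the $2$-of-$3$ property.
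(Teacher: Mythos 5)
Your proof is correct and is essentially the paper's own argument: both first use Proposition \ref{prop:trans-comma} in the $\infty$-cosmos $\Kan$ to see that $\comma(\qB,q,p)$ is a Kan fibration (so ``trivial fibration'' reduces to ``equivalence''), both produce the same two equivalences $\qA\we\qB\comma h$ and $\qB\times_{\qD}\qC\we q\comma k$, fit them into the identical commutative square relating $(h,p)$ to $\comma(\qB,q,p)$, and conclude by composition and cancellation of equivalences. The only divergence is in how those two vertical equivalences are justified --- the paper observes that the commas and the strict pullbacks are both homotopy pullbacks of the same cospans (the strict ones because the right legs $\id_{\qB}$ and $q$ are Kan fibrations), whereas you exhibit $s$ as a section of a trivial fibration and verify that $j$ is a fibrewise equivalence of Kan fibrations over $\qC$; these are interchangeable appeals to standard simplicial homotopy theory, so the two write-ups are the same proof in substance.
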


\begin{proof}
  Under conditions of the statement, Proposition~\ref{prop:trans-comma} applied in the $\infty$-cosmos of Kan complexes of Example \ref{ex:Kan-infty-cosmos} demonstrates that the map $\comma(\qB,q,p)\colon \qB\comma h\to q\comma k$ is a Kan fibration between Kan complexes. Now $\qB$ and $\qD$ are Kan complexes so it follows that the comma objects $\qB\comma h$ and $q\comma k$ are homotopy pullbacks of top and bottom cospans of the diagram
 \[
  \xymatrix@1{{\qA} \ar[r]^h  \ar@{->>}[d]_p & {\qB} \ar@{->>}[d]^q & {\qB}\ar@{=}[l] \ar@{=}[d] \\ {\qC}\ar[r]^{k} & {\qD} & {\qB}\ar@{->>}[l]_{q}}\] respectively. The right-hand legs of those cospans are Kan fibrations, so their pullbacks are also homotopy pullbacks and those pullback squares induce equivalences $\qA\we\qB\comma h$ and $\qB\times_{\qD}\qC\we q\comma k$. By the universal properties that define these maps,  these equivalences fit into a commutative square
  \begin{equation*}
    \xymatrix@R=1.8em@C=4em{
      {\qA}\ar[r]^-{(h,p)}\ar[d]_{\simeq} &
      {\qB\times_{\qD}\qC}\ar[d]^{\simeq} \\
      {\qB\comma h}\ar@{->>}[r]_-{\comma(\qB,q,p)} &
      {q\comma k}
    }
  \end{equation*}
  from which the stated result follows by application of the laws of composition and cancellation of equivalences.
\end{proof}

Combining these results, we conclude

\begin{cor}\label{cor:coherent-horn-lifting}
If $P \colon \eC \to \eD$ is a simplicial functor between Kan-complex-enriched categories that is a levelwise Kan fibration, then a lifting problem
\[    \xymatrix@R=2em@C=6em{
      {\gC[\boundary\Del^n\join Y]} \ar@{^(->}[d]\ar[r]^-{F} &
      {\eC}\ar[d]^{P} \\
      {\gC[\Del^n\join Y]}\ar[r]_-{G}\ar@{-->}[ur]_{L} & {\eD}
    }\]
 has a solution under the condition that the square
  \begin{equation}\label{eq:coh-horn-lift-htypb}
    \xymatrix@R=2em@C=5em{
      {\Map_{\eC}(F0,Fn)}\ar@{->>}[d]_{P}\ar[r]^-{\Lambda_F\circ{-}} &
      {\wlim{W_Y}{\Map_{\eC}(F0,F-)}}\ar@{->>}[d]^{\wlim{W_Y}{P}}\\
      {\Map_{\eD}(G0,Gn)}\ar[r]_-{\Lambda_G\circ{-}} &
      {\wlim{W_Y}{\Map_{\eD}(G0,G-)}}
    }
  \end{equation}
  is a homotopy pullback of Kan complexes. 
\end{cor}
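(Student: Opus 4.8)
The plan is to chain together the three preceding results, since almost all of the real work has already been done. First I would invoke Proposition~\ref{prop:lift-analysis}, which identifies solutions to the lifting problem in the statement with lifts in the cube~\eqref{eq:unpacked-cone-lift}. Applying the isomorphism recorded in the paragraph immediately preceding Lemma~\ref{lem:coherent-horn-lifting}, which rewrites the two left-hand faces of that cube as the cubical boundary inclusion $\boundary\Cube^{n-1}\inc\Cube^{n-1}$ and the cubical horn inclusion $\boundary\Cube^{n-1}\inc\CHorn^{n,n}_0$, presents the problem as an instance of the cube on the left of~\eqref{eq:double-lift-equiv} under the identifications $A = \Map_{\eC}(F0,Fn)$, $B = \wlim{W_Y}{\Map_{\eC}(F0,F-)}$, $C = \Map_{\eD}(G0,Gn)$, and $D = \wlim{W_Y}{\Map_{\eD}(G0,G-)}$, with $p$ the action of $P$ on the relevant hom-space, $q = \wlim{W_Y}{P}$, and $h = \Lambda_F\circ{-}$, $k = \Lambda_G\circ{-}$ the maps induced by post-composition with the pseudo cones of Corollary~\ref{cor:collage-bijection}. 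Lemma~\ref{lem:coherent-horn-lifting} then converts such a lifting problem into a single lifting problem of $\boundary\Cube^{n-1}\inc\Cube^{n-1}$ against $\comma(B,q,p)\colon B\comma h\to q\comma k$, displayed on the right of~\eqref{eq:double-lift-equiv}.

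The step requiring the most care, and the one I expect to be the main obstacle, is verifying that the square~\eqref{eq:coh-horn-lift-htypb} meets the hypotheses of Lemma~\ref{lem:coherent-horn-lifting-suff}, namely that all four objects are Kan complexes and both verticals are Kan fibrations. Since $\eC$ and $\eD$ are Kan-complex-enriched, $\Map_{\eC}(F0,Fn)$ and $\Map_{\eD}(G0,Gn)$ are Kan complexes, and the vertical $p$ is a Kan fibration because $P$ is a levelwise Kan fibration and $G0 = PF0$, $Gn = PFn$. For the remaining vertical I would work in the $\infty$-cosmos $\Kan$ of Kan complexes of Example~\ref{ex:Kan-infty-cosmos}: the weight $W_Y$ is flexible by Lemma~\ref{lem:pseudo-is-flexible}, the diagrams $\Map_{\eC}(F0,F-)$ and $\Map_{\eD}(G0,G-)$ take values in $\Kan$, and the levelwise Kan fibration $P$ induces a simplicial natural transformation between them whose components are isofibrations of $\Kan$, i.e.\ Kan fibrations. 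Proposition~\ref{prop:flexible-weights-are-htpical}\ref{itm:flexible-htpical}, applied in $\Kan$, then shows both that $\wlim{W_Y}{\Map_{\eC}(F0,F-)}$ and $\wlim{W_Y}{\Map_{\eD}(G0,G-)}$ are Kan complexes and that $\wlim{W_Y}{P}$ is a Kan fibration between them.

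With these hypotheses in place, Lemma~\ref{lem:coherent-horn-lifting-suff} gives exactly what is needed: the homotopy-pullback assumption on~\eqref{eq:coh-horn-lift-htypb} is equivalent to $\comma(B,q,p)\colon B\comma h\to q\comma k$ being a trivial fibration. Finally, since $\boundary\Cube^{n-1}\inc\Cube^{n-1}$ is a monomorphism of simplicial sets and trivial fibrations enjoy the right lifting property against all monomorphisms, the transposed lifting problem on the right of~\eqref{eq:double-lift-equiv} admits a solution. Tracing back through the bijective correspondences of Lemma~\ref{lem:coherent-horn-lifting} and Proposition~\ref{prop:lift-analysis}, this solution produces the desired lift $L$ in the original square, completing the argument.
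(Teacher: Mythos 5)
Your proposal is correct and follows essentially the same route as the paper's proof: Proposition~\ref{prop:lift-analysis} to unpack the lifting problem, Lemma~\ref{lem:coherent-horn-lifting} to reduce it to a single lift against $\comma(B,q,p)$, flexibility of $W_Y$ together with Proposition~\ref{prop:flexible-weights-are-htpical} to secure the Kan complex and Kan fibration hypotheses, and Lemma~\ref{lem:coherent-horn-lifting-suff} to convert the homotopy pullback condition into a trivial fibration. The only difference is that you spell out details the paper leaves implicit (the cube identifications, the citation of Lemma~\ref{lem:pseudo-is-flexible}, and the lifting property of trivial fibrations against monomorphisms), all of which are accurate.
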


\begin{proof}
Proposition~\ref{prop:lift-analysis} shows that the lifting problem in~\eqref{eq:cone-lift} has a solution if and only if a lifting problem of the kind analysed in Lemma~\ref{lem:coherent-horn-lifting} with right hand square~\eqref{eq:coh-horn-lift-htypb} has a solution. Note also that $W_Y$ is a flexible weight so it follows, by Proposition~\ref{prop:flexible-weights-are-htpical}, that the weighted limits on the right of~\eqref{eq:coh-horn-lift-htypb} are Kan complexes and the map between them is a Kan fibration. Consequently we may apply Lemma~\ref{lem:coherent-horn-lifting-suff} to conclude that this latter lifting problem has a solution so long as the square~\eqref{eq:coh-horn-lift-htypb} is a homotopy pullback of Kan complexes as stated. 
\end{proof}

As a special case, we now have a criterion which allows us to solve the lifting problem \eqref{eq:limit-lift-prob}:

\begin{cor}\label{cor:cone-lift}
  Suppose that $\eC$ is a Kan-complex-enriched category and that $Y$ is a simplicial set. A lifting problem
  \begin{equation}\label{eq:final-cone-lift}
    \xymatrix@R=2em@C=3.5em{
      {\gC[\Del^0\join Y]}\ar@/^2ex/[]!R!UR(0.5);[rr]!L!UL(0.5)^-{\Lambda_F}
      \ar[r]_-{\gC[\fbv{n}\join Y]} &
      {\gC[\boundary\Del^n\join Y]}\ar[r]_(0.55){F}\ar@{^(->}[d] &
      {\eC} \\
      {} & {\gC[\Del^n\join Y]}\ar@{-->}[ur] & {}
    }    
  \end{equation}
  has a solution whenever the cone $\Lambda_F$ presents $Fn$ as the pseudo homotopy limit of the restricted diagram $F\colon\gC Y\to \eC$.
\end{cor}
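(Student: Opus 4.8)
The plan is to read this Corollary as the instance of Corollary~\ref{cor:coherent-horn-lifting} in which the codomain category $\eD$ is taken to be the terminal simplicially enriched category $\catone$, whose unique hom-space is the point $\Del^0$. So viewed, $\catone$ is a Kan-complex-enriched category and the unique functor $P\colon\eC\to\catone$ is a levelwise Kan fibration, because on each hom-space it is the map $\Map_{\eC}(A,B)\to\Del^0$ out of a Kan complex, which is always a Kan fibration. Taking $G\colon\gC[\Del^n\join Y]\to\catone$ to be the unique functor, the lifting problem against $P$ in Corollary~\ref{cor:coherent-horn-lifting} becomes exactly the problem~\eqref{eq:final-cone-lift}, since a lift into $\eC$ is subject to no further constraint from the trivial target $\catone$.

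First I would compute the comparison square~\eqref{eq:coh-horn-lift-htypb} in this situation. Since every hom-space of $\catone$ is $\Del^0$, the diagram $\Map_{\catone}(G0,G-)\colon\gC Y\to\SSet$ is constant at the point, and because the cotensor $(\Del^0)^U$ of the terminal object by any simplicial set $U$ is again $\Del^0$, the end formula of Remark~\ref{rmk:simp-limit-construction} yields $\wlim{W_Y}{\Map_{\catone}(G0,G-)}\cong\Del^0$. Consequently the lower edge of~\eqref{eq:coh-horn-lift-htypb} is the identity $\Del^0\to\Del^0$.

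Next I would observe that a commutative square of Kan complexes whose lower edge is an equivalence between contractible objects is a homotopy pullback precisely when its upper edge is an equivalence: the canonical comparison to the homotopy pullback of the lower cospan is, up to the contractibility of $\Del^0$, just the top horizontal map. Thus the homotopy-pullback hypothesis of Corollary~\ref{cor:coherent-horn-lifting} reduces to asking that
\[ \Map_{\eC}(F0,Fn)\xrightarrow{\ \Lambda_F\circ-\ }\wlim{W_Y}{\Map_{\eC}(F0,F-)} \]
be an equivalence of Kan complexes.

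Finally, this last map is exactly the value at $X=F0$ of the defining property in Definition~\ref{defn:flexible-hty-limit}: the hypothesis that $\Lambda_F$ exhibits $Fn$ as the pseudo (that is, $W_Y$-weighted) homotopy limit of the restricted diagram $F\colon\gC Y\to\eC$ says precisely that $\Map_{\eC}(X,Fn)\to\wlim{W_Y}{\Map_{\eC}(X,F-)}$ is an equivalence for all $X\in\eC$. Feeding this into Corollary~\ref{cor:coherent-horn-lifting} produces the required filler. I anticipate no real obstacle; the two points deserving a line of care are the collapse $\wlim{W_Y}{\Map_{\catone}(G0,G-)}\cong\Del^0$ and the degeneration of the homotopy-pullback condition to the single equivalence above, both of which are routine.
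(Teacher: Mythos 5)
Your proposal is correct and follows essentially the same route as the paper's own proof: both specialise Corollary~\ref{cor:coherent-horn-lifting} to the unique functor $!\colon\eC\to\catone$, observe that the square \eqref{eq:coh-horn-lift-htypb} then has the point in its bottom row so that the homotopy pullback condition collapses to the top horizontal $\Map_{\eC}(F0,Fn)\to\wlim{W_Y}{\Map_{\eC}(F0,F-)}$ being an equivalence, and recognise this as an instance of the defining property of the pseudo homotopy limit. The two auxiliary checks you flag (that $P$ is a levelwise Kan fibration and that the weighted limit of the constant point diagram is a point) are indeed routine and are treated implicitly in the paper.
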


\begin{proof}
In the case of the unique simplicial functor  $!\colon\eC\to\catone$, the square \eqref{eq:coh-horn-lift-htypb} reduces to
  \begin{equation*}
    \xymatrix@R=2em@C=4em{
      {\Map_{\eC}(F0,Fn)}\ar[r]^-{\Lambda_F\circ-}\ar@{->>}[d] &
      {\wlim{W_Y}{\Map_{\eC}(F0,F-)}}\ar@{->>}[d] \\
      {1}\ar@{=}[r] & {1}
    }
  \end{equation*}
  which is a homotopy pullback when its upper horizontal map is an equivalence. By definition, however, this happens precisely when the cone $\Lambda_F$ presents $Fn$ as the pseudo homotopy limit of $F\colon\gC Y\to\eC$ as required.
\end{proof}

Corollary \ref{cor:cone-lift} proves Theorem \ref{thm:nerve-completeness}. For the reader's convenience, we summarise the argument just given.

\begin{proof}[Proof of Theorem \ref{thm:nerve-completeness}]
  The lifting property characterising the limit of $d\colon Y\to \qC$ in \eqref{eq:limit-lift-prob'} is dual to the lifting property in \eqref{eq:final-cone-lift} under the homotopy coherent nerve / realisation adjunction. Furthermore Corollary~ \ref{cor:cone-lift} tells us that this latter lifting property pertains so long as the cone $\Lambda_D\colon \gC[\Del^0\join Y]\to \eC$ displays its apex as a pseudo homotopy limit of the simplicial functor $D\colon\gC Y\to\eC$.
\end{proof}

The dual statement, recognizing quasi-categorical colimits in homotopy coherent nerves, is proven similarly or can be deduced by applying the previous result to the Kan-complex-enriched category $\eC\op$ with $\Map_{\eC\op}(A,B)\defeq \Map_{\eC}(B,A)$ and its homotopy coherent nerve $\qC\op$.

\subsection{Complete and cocomplete quasi-categories}\label{ssec:complete-qcats}

Applying Theorem~\ref{thm:nerve-completeness} and its dual we can prove directly that certain quasi-categories constructed as homotopy coherent nerves are complete and/or cocomplete.

For any $\infty$-cosmos $\eK$, we write $\qK \defeq \hN(g_*\eK)$ for the homotopy coherent nerve of its $(\infty,1)$-core $g_*\eK$ and refer to it as the \emph{quasi-category of $\infty$-categories in $\eK$.}

\begin{prop}[completeness of the quasi-category associated with an $\infty$-cosmos]\label{prop:qcat-of-cosmos-complete} For any $\infty$-cosmos $\eK$, the large quasi-category $\qK$ of $\infty$-categories in $\eK$ is small complete.
\end{prop}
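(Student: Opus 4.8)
The plan is to deduce this directly from Theorem~\ref{thm:nerve-completeness} together with the existence results for flexible weighted homotopy limits established in \S\ref{sec:flexible}. By definition $\qK = \hN(g_*\eK)$, and the $(\infty,1)$-core $g_*\eK$ of Definition~\ref{defn:infinity,1-core} is a Kan-complex-enriched category, since its hom-spaces are the groupoid cores of the functor spaces of $\eK$ and groupoid cores are Kan complexes. Thus Theorem~\ref{thm:nerve-completeness} applies with $\eC \defeq g_*\eK$: to prove that $\qK$ is small complete it suffices to show that $g_*\eK$ admits a pseudo homotopy limit of every homotopy coherent diagram $D\colon \gC[Y] \to g_*\eK$ indexed by a small simplicial set $Y$.

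First I would recall that a pseudo homotopy limit is precisely a flexible weighted homotopy limit, in the sense of Definition~\ref{defn:flexible-hty-limit}, for the weight $W_Y \colon \gC[Y] \to \SSet$ for pseudo cones introduced in Definition~\ref{defn:weight-for-pseudo-limits}. By Lemma~\ref{lem:pseudo-is-flexible} this weight $W_Y$ is flexible, and for small $Y$ the indexing simplicial category $\gC[Y]$ is small. Hence the existence of the required pseudo homotopy limits reduces to the assertion that $g_*\eK$ admits flexible weighted homotopy limits for all flexible weights on small simplicial categories.

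This last assertion is exactly Corollary~\ref{cor:infty-one-core-flex}, which tells us that the $(\infty,1)$-core of any $\infty$-cosmos admits a simplicially functorial choice of flexible weighted homotopy limits. Concretely, one takes a levelwise Kan-complex replacement $W_Y'$ of $W_Y$ as in Lemma~\ref{lem:levelwise-Kan-replacement}, forms the strict $W_Y'$-weighted limit in $\eK$ (which exists by Proposition~\ref{prop:flexible-weights-are-htpical} since $\eK$ possesses all the cosmological limit types), and invokes Proposition~\ref{prop:infty-one-core-flex} to see that this object, equipped with the composite cone along $W_Y \inc W_Y'$, is a $W_Y$-weighted homotopy limit in $g_*\eK$. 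Feeding these pseudo homotopy limit cones into Theorem~\ref{thm:nerve-completeness} then produces, for each small diagram $d\colon Y \to \qK$, a limit cone in $\qK$, which is the desired small completeness.

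I do not expect a genuine obstacle here: all of the substantive work lies in Theorem~\ref{thm:nerve-completeness} and in the construction of flexible weighted homotopy limits in the $(\infty,1)$-core, both already in hand, so the argument is ``a matter of following one's nose.'' The only points needing care are purely organizational---checking that $g_*\eK$ really is Kan-complex-enriched so that the hypotheses of Theorem~\ref{thm:nerve-completeness} are met, that the weight $W_Y$ is flexible, and that the size bookkeeping is consistent, so that \emph{small} simplicial sets $Y$ yield small simplicial categories $\gC[Y]$ and hence limits whose existence is guaranteed by the cosmological axiom~\ref{defn:cosmos}\ref{defn:cosmos:a}.
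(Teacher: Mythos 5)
Your proof is correct and follows exactly the paper's own argument: the paper likewise observes that $g_*\eK$ is Kan-complex-enriched, invokes Corollary~\ref{cor:infty-one-core-flex} for the existence of flexible weighted homotopy limits (hence pseudo homotopy limits) in the $(\infty,1)$-core, and concludes via Theorem~\ref{thm:nerve-completeness}. The extra details you supply---flexibility of $W_Y$ via Lemma~\ref{lem:pseudo-is-flexible} and the explicit Kan-replacement construction---are simply an unpacking of what Corollary~\ref{cor:infty-one-core-flex} already packages.
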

\begin{proof}
  Given an $\infty$-cosmos $\eK$, the associated Kan-complex enriched category $g_*\eK\subset\eK$, constructed by taking groupoidal cores of functor spaces as in Definition~\ref{defn:gpd-cores}, admits all flexible weighted homotopy limits by Corollary \ref{cor:infty-one-core-flex}. It follows, from Theorem~\ref{thm:nerve-completeness}, that its homotopy coherent nerve $\qK\defeq\hN(g_*\eK)$ is complete.
\end{proof}

Our next result considers the sub $\infty$-cosmos of groupoidal objects $\eK\gr\inc\eK$ of Proposition \ref{prop:gpdal-infty-cosmos}. Motivated by the fact that the Kan complexes define the groupoidal objects of $\qCat$, we might call the large quasi-category  of groupoidal $\infty$-categories in $\eK$ the quasi-category $\qS_{\eK}\defeq\hN(\eK\gr)$ of \emph{spaces in $\eK$}.

\begin{prop}[completeness of the quasi-category of spaces in an $\infty$-cosmos]\label{prop:qcat-of-space-complete} For any $\infty$-cosmos $\eK$, the large quasi-category $\qS_{\eK}$ of groupoidal $\infty$-categories in $\eK$ is complete and closed under all small limits in the quasi-category $\qK$.
\end{prop}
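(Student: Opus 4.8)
The plan is to realise $\qS_{\eK}$ as a full sub-quasi-category of $\qK$ and to compute limits on both sides as pseudo homotopy limits, as licensed by Theorem~\ref{thm:nerve-completeness}. First I would observe that the $\infty$-cosmos $\eK\gr$ of Proposition~\ref{prop:gpdal-infty-cosmos} sits inside $g_*\eK$ as a full Kan-complex-enriched subcategory: for groupoidal objects $E,E'$ the functor space $\Fun_{\eK}(E,E')$ is already a Kan complex by Lemma~\ref{lem:groupoidal-object}, so it coincides with its groupoid core $\Fun_{g_*\eK}(E,E')$. Applying the homotopy coherent nerve to this inclusion exhibits $\qS_{\eK}=\hN(\eK\gr)$ as a full sub-quasi-category of $\qK=\hN(g_*\eK)$. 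Since the functor spaces of $\eK\gr$ are Kan complexes, its own $(\infty,1)$-core is itself, i.e.\ $g_*(\eK\gr)=\eK\gr$.

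For completeness, I would apply Corollary~\ref{cor:infty-one-core-flex} to the $\infty$-cosmos $\eK\gr$: its $(\infty,1)$-core $g_*(\eK\gr)=\eK\gr$ admits a simplicially functorial choice of flexible weighted homotopy limits, and in particular admits the pseudo homotopy limit (the $W_Y$-weighted homotopy limit for the weight of Definition~\ref{defn:weight-for-pseudo-limits}) of every homotopy coherent diagram $D\colon\gC[Y]\to\eK\gr$. Theorem~\ref{thm:nerve-completeness} then yields that $\qS_{\eK}$ is small complete, exactly as in the proof of Proposition~\ref{prop:qcat-of-cosmos-complete}.

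For closure, the crux is to show that the inclusion $\eK\gr\inc g_*\eK$ creates pseudo homotopy limits. Recall from Proposition~\ref{prop:infty-one-core-flex} (and the recipe recorded immediately afterwards) that the pseudo homotopy limit of a diagram $D\colon\gC[Y]\to g_*\eK$ is computed as the strict $W_Y'$-weighted limit $\wlim{W_Y'}{D}$ taken in $\eK$, where $W_Y'$ is the level-wise Kan replacement of $W_Y$. By Proposition~\ref{prop:flexible-weights-are-htpical} this strict flexible limit is built entirely from the cosmological limit types of Definition~\ref{defn:cosmos}\ref{defn:cosmos:a} (cotensors, products, pullbacks of isofibrations, and countable inverse limits of towers of isofibrations). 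When $D$ factors through the groupoidal objects, Proposition~\ref{prop:gpdal-infty-cosmos} tells us that $\eK\gr$ is closed in $\eK$ under precisely these limit types, so $\wlim{W_Y'}{D}$ lands in $\eK\gr$ and the resulting pseudo homotopy limit cone $\Lambda\colon\gC[\Del^0\join Y]\to\eK$ factors through $\eK\gr$. Applying the same recipe inside the $\infty$-cosmos $\eK\gr$ identifies this $\Lambda$ with the pseudo homotopy limit cone of $D$ computed in $\eK\gr$. Consequently, via Theorem~\ref{thm:nerve-completeness} applied to both $\eK\gr$ and $g_*\eK$, the limit cone in $\qK$ of any small diagram $d\colon Y\to\qS_{\eK}$ is the image under $\qS_{\eK}\inc\qK$ of the limit cone computed in $\qS_{\eK}$; in particular the limit lies in $\qS_{\eK}$, which is the desired closure.

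The main obstacle I anticipate is the bookkeeping in this last step: one must trace the construction of pseudo homotopy limits through the $(\infty,1)$-core (Proposition~\ref{prop:infty-one-core-flex}) to reduce them to strict cosmological limits in $\eK$, and then verify that it is the entire limit \emph{cone} --- not merely the apex object --- that factors through $\eK\gr$, so that the two applications of Theorem~\ref{thm:nerve-completeness} produce genuinely the same cone on either side of the inclusion $\qS_{\eK}\inc\qK$.
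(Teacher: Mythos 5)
Your proof is correct and takes essentially the same route as the paper's: its (far terser) proof likewise combines the closure of $\eK\gr$ in $\eK$ under flexible weighted limits from Proposition~\ref{prop:gpdal-infty-cosmos} with Theorem~\ref{thm:nerve-completeness}, leaving implicit the reduction of pseudo homotopy limits in $g_*\eK$ to strict limits weighted by the Kan replacement $W_Y'$ (Proposition~\ref{prop:infty-one-core-flex} and Corollary~\ref{cor:infty-one-core-flex}) that you spell out. The details you supply---that the whole cone, not just the apex, factors through $\eK\gr$, and that the same cone serves as the pseudo homotopy limit cone on both sides of the inclusion---are exactly the bookkeeping the paper's two-sentence proof suppresses.
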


\begin{proof}
Proposition~\ref{prop:gpdal-infty-cosmos} tells us that the full simplicial category $\eK\gr$ of groupoidal objects in an $\infty$-cosmos $\eK$ is closed in there under flexible weighted limits. So Theorem~\ref{thm:nerve-completeness} applies to show that its nerve, the quasi-category $\qS_{\eK}\defeq\hN(\eK\gr)$ of spaces in $\eK$, is closed under all limits in the complete quasi-category $\qK$.
\end{proof}

\begin{prop}\label{prop:qcat-simplicial-model}
  If $\eM$ is a simplicial model category then the quasi-category $\qM\defeq\hN(\eM_{\mathrm{cf}})$, defined as the homotopy coherent nerve of the full simplicial subcategory of fibrant-cofibrant objects, is small complete and cocomplete.
\end{prop}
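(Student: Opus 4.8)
The plan is to verify the hypotheses of Theorem~\ref{thm:nerve-completeness} and its dual for the Kan-complex-enriched category $\eM_{\mathrm{cf}}$. First I would observe that $\eM_{\mathrm{cf}}$ really is Kan-complex-enriched: in a simplicial model category the hom-space $\Map_{\eM}(A,B)$ is a Kan complex whenever $A$ is cofibrant and $B$ is fibrant, and every object of $\eM_{\mathrm{cf}}$ is both. Since by Lemma~\ref{lem:pseudo-is-flexible} each weight $W_Y$ is flexible, it then suffices to produce, for every flexible weight $W\colon\eA\to\SSet$, a $W$-weighted homotopy limit and a $W$-weighted homotopy colimit in $\eM_{\mathrm{cf}}$ in the sense of Definition~\ref{defn:flexible-hty-limit}.

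For the limit half, given $F\colon\eA\to\eM_{\mathrm{cf}}$, I would first form the strict weighted limit $\wlim{W}{F}$ inside the complete simplicial category $\eM$. Running the cell-complex argument of Proposition~\ref{prop:flexible-weights-are-htpical}\ref{itm:flexible-relative} in $\eM$ rather than in an $\infty$-cosmos, the canonical map $\wlim{W}{F}\to\wlim{\emptyset}{F}=1$ is built as a countable inverse limit of pullbacks of products of the cotensor maps $FA^{\Del^n}\tfib FA^{\boundary\Del^n}$, each of which is a fibration by the (pullback-cotensor form of the) SM7 axiom because $FA$ is fibrant; hence $\wlim{W}{F}$ is fibrant. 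I would then choose a cofibrant replacement $L\trvfib\wlim{W}{F}$, noting that since its target is fibrant $L$ is fibrant-cofibrant and so lies in $\eM_{\mathrm{cf}}$. The homotopy universal property would follow by combining the defining isomorphism of the strict weighted limit with the equivalence induced by $L\trvfib\wlim{W}{F}$, giving for each $X\in\eM_{\mathrm{cf}}$
\begin{equation*}
  \Map_{\eM}(X,L)\we\Map_{\eM}(X,\wlim{W}{F})\cong\wlim{W}{\Map_{\eM}(X,F-)};
\end{equation*}
the first map is an equivalence because $X$ is cofibrant and $L\trvfib\wlim{W}{F}$ is a trivial fibration (SM7 again), and because all objects involved lie in $\eM_{\mathrm{cf}}$ every hom-space here coincides with the corresponding hom-space of $\eM_{\mathrm{cf}}$.

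The colimit half is formally dual: the strict weighted colimit $W\wcolim F$ in $\eM$ is cofibrant by the dual cell-complex computation, a fibrant replacement $W\wcolim F\trvcof C$ lands in $\eM_{\mathrm{cf}}$, and precomposition exhibits $C$ as a $W$-weighted homotopy colimit via $\Map_{\eM}(C,X)\we\Map_{\eM}(W\wcolim F,X)\cong\wlim{W}{\Map_{\eM}(F-,X)}$ for fibrant $X$. Specialising to $W=W_Y$ supplies pseudo homotopy limits and colimits for all simplicial sets $Y$, whence Theorem~\ref{thm:nerve-completeness} gives that $\qM=\hN(\eM_{\mathrm{cf}})$ is small complete and its dual gives small cocompleteness. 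The main obstacle — really the only non-formal point — is the fibrancy (resp.\ cofibrancy) and homotopy-invariance of the strictly computed flexible weighted (co)limit: one must check that a flexible weight is ``projectively cofibrant'' enough that $\wlim{W}{-}$ carries pointwise fibrations to fibrations, which is the model-categorical analogue of Proposition~\ref{prop:flexible-weights-are-htpical} and rests on the SM7 axiom together with the inductive cell presentation of $W$.
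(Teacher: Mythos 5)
Your proposal is correct and follows essentially the same route as the paper's proof: form the strict flexible weighted limit, observe it is fibrant via the cell-complex argument of Proposition~\ref{prop:flexible-weights-are-htpical}, pass to a cofibrant replacement landing in $\eM_{\mathrm{cf}}$, verify the homotopy universal property on mapping spaces, and invoke Theorem~\ref{thm:nerve-completeness} and its dual. The only cosmetic differences are that the paper checks the universal property by applying Ken Brown's lemma to the right Quillen functor $\Map_{\eM}(C,-)$ where you choose the replacement to be a trivial fibration and apply SM7 directly, and the paper obtains cocompleteness by running the limit argument in the dual simplicial model category $\eM\op$ rather than spelling out the formally dual argument as you do.
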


A similar result, under the additional hypothesis that $\eM$ is combinatorial, appears as \cite[4.2.4.8]{Lurie:2009fk}. At the present level of generality, this result was first proven by Barnea, Harpaz, Horel \cite[2.5.9]{BarneaHarpazHorel:2017pc}.

\begin{proof}
  In any simplicial model category $\eM$, the full subcategory $\eM_{\mathrm{cf}}$ spanned by the fibrant-cofibrant objects is Kan complex enriched.

  By the argument of Proposition~\ref{prop:flexible-weights-are-htpical}, strictly defined limits of diagrams weighted by flexible weights exist in the simplicial subcategory of fibrant objects $\eM_{\mathrm{f}}$; see also \cite{Gambino:2010wl}. But the limit object might not be cofibrant. So if $W\colon\eA\to\sSet$ is a flexible weight and $D\colon\eA\to\eM_{\mathrm{cf}}$ is a diagram of fibrant-cofibrant objects then the weighted limit $\{W,D\}$ exists in $\eM_{\mathrm{f}}$ and we may take its cofibrant replacement to give an object $\{W,D\}_{\mathrm{c}}$ in $\eM_{\mathrm{cf}}$. Notice, however, that for each cofibrant $C$ in $\eM$ the representable $\Map_{\eM}(C,-)$ is a right Quillen functor, so Ken Brown's argument applies to demonstrate that it maps the cofibrant replacement map $e\colon\{W,D\}_c\we\{W,D\}$, which is a weak equivalence of fibrant objects, to an equivalence of Kan complexes:
  \begin{equation*}
    \xymatrix@R=0ex@C=5em{
      {\Map_{\eM}(C,\{W,D\}_c)}\ar[r]^{\Map_{\eM}(C,e)}_{\simeq} &      {\Map_{\eM}(C,\{W,D\})}\ar[r]^-{\cong} & {\{W,\Map_{\eM}(C,D(-))\}}
    }
  \end{equation*}
  This suffices to demonstrate that $\{W,D\}_c$ is a $W$-weighted homotopy limit of the diagram $D$ in $\eM_{\mathrm{cf}}$. In particular, given any simplicial set $X$ and any diagram $d\colon X\to\hN(\eM_{\mathrm{cf}})$ we may deduce the existence of the pseudo homotopy limit of the dual diagram $D\colon\gC[X]\to\eM_{\mathrm{cf}}$ featured in the statement of Theorem~\ref{thm:nerve-completeness}. It follows, by application of that theorem, that the object $\{W_X,D\}_c$ provides a limit for the diagram $d$ in the quasi-category $\hN(\eM_{\mathrm{cf}})$, and thus that $\hN(\eM_{\mathrm{cf}})$ admits all limits.

  To prove the dual statement about colimits, simply observe that the homotopy coherent nerve of the dual simplicial category $\eM\op_{\mathrm{cf}}$ is isomorphic to the (simplicial) dual of the quasi-category $\hN(\eM_{\mathrm{cf}})$. Consequently, the stated result providing for the existence of colimits in $\hN(\eM_{\mathrm{cf}})$ follows by applying the argument just given to the dual model category $\eM\op$.
\end{proof}

\begin{ex}\label{ex:complete-presheafs}
  Suppose that $\eC$ is a small Kan-complex-enriched category. We equip the simplicial functor category $\SSet^{\eC\op}$ with the projective model structure, relative to the Kan model structure on $\SSet$, and observe that this makes it into a simplicial model category. Its weak equivalences, fibrations, and fibrant objects are given pointwise in the Kan model structure, and the projective cells of Definition~\ref{defn:flexible-weight} provide a generating set for its cofibrations. We shall use the notation $\hat\eC$ to denote the full simplicial subcategory of $\SSet^{\eC\op}$ spanning its projective fibrant-cofibrant objects. Proposition \ref{prop:qcat-simplicial-model} then applies to show that its homotopy coherent nerve $\hat\qC\defeq\hN\hat\eC$ is a complete and cocomplete quasi-category.
  \end{ex}
  
  Each representable functor on $\eC$ is both projective cofibrant, by definition, and projective fibrant, since the hom-spaces of $\eC$ are Kan complexes, so it follows that the Yoneda embedding restricts to a simplicial functor $\yoneda\colon\eC\to\hat\eC$, which, by the simplicial Yoneda lemma is fully faithful.   Now suppose that $e\colon F\to G$ is a simplicial natural transformation in $\hat\eC$ for which $\Map_{\hat\eC}(\yoneda c,e)\colon \Map_{\hat\eC}(\yoneda c, F)\to \Map_{\hat\eC}(\yoneda c ,G)$ is an equivalence of Kan complexes for all objects $c\in\eC$. By the simplicial Yoneda lemma that latter map is isomorphic to the component $e_c\colon F c \to G c$, so the given condition simply posits that each component of the natural transformation $e$ is an equivalence of Kan complexes. In other words, it is a weak equivalence between fibrant and cofibrant objects in the projective model structure and is thus an equivalence in $\hat\eC$.
  
  The following definitions give names for these properties:

\begin{defn}\label{defn:ho-notions}
  Suppose that $F\colon\eC\to\eD$ is a simplicial functor between Kan-complex-enriched categories. We say that $F$ is
  \begin{enumerate}[label=(\roman*)]
  \item \emph{homotopically fully-faithful} if its action $F\colon\Map_{\eC}(A,B)\to\Map_{\eD}(FA,FB)$ on each hom-space is an equivalence of Kan complexes, and
  \item \emph{homotopically strongly generating} if a $0$-arrow $e\colon A\to B$ in $\eD$ is an equivalence if and only if for all objects $C$ in $\eC$ the map $\Map_{\eD}(FC,e)\colon\Map_{\eD}(FC, A)\to\Map_{\eD}(FC,B)$ is an equivalence of Kan complexes.
  \end{enumerate}
\end{defn}

\begin{rmk}
These concepts are related to, but not the same as, the simplicially enriched variants of the usual fully-faithfulness and strong generation properties: these homotopical notions insist or infer that certain maps are equivalences, whereas their simplicially enriched cousins ask for those maps to be isomorphisms. While it is certainly the case  that any fully-faithful simplicial functor is also homotopically fully-faithful, it is not however the case, in general, that a strongly generating functor is homotopically strongly generating (or vice versa). 
\end{rmk}

Since the Yoneda functor $\yoneda\colon\eC\to\hat\eC$ is homotopically fully faithful and homotopically strongly generating it follows that  the homotopy coherent nerve $\yoneda\colon\qC\to\hat\qC$ of the Yoneda functor is a fully faithful and strongly generating functor of quasi-categories, see \cite[5.1.3.1]{Lurie:2009fk} or \cite[5.1.9]{RiehlVerity:2018oc}. This defines an embedding of the homotopy coherent nerve of any small Kan-complex-enriched category into a (small) complete and cocomplete quasi-category. In general, any fully faithful and strongly generating functor of $\infty$-categories preserves all limits known to exist in both the domain or codomain --- see \cite[5.2.9]{RiehlVerity:2018oc} --- but we don't need the full strength of that result, so we don't take time to prove this here. We apply these observations to provide the following converse to Theorem~\ref{thm:nerve-completeness}.

\begin{thm}\label{thm:nerve-completeness-converse}
  Suppose that $\eC$ is a small Kan-complex-enriched category, that $X$ is a simplicial set and that $d\colon X\to \qC$ is a diagram in the associated quasi-category $\qC\defeq \hN\eC$. If the diagram $d$ admits a limit in $\qC$ then the transposed diagram $D\colon\gC X\to\eC$ admits a pseudo homotopy limit in $\eC$. 
\end{thm}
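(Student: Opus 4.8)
The plan is to transfer the problem into the complete and cocomplete quasi-category $\hat\qC\defeq\hN\hat\eC$ of Example~\ref{ex:complete-presheafs}, where a pseudo homotopy limit is guaranteed to exist, and then to reflect it back along the Yoneda embedding. First I would construct the pseudo homotopy limit on the presheaf side. Since $W_X$ is flexible by Lemma~\ref{lem:pseudo-is-flexible} and $\hat\eC$ is the category $\eM_{\mathrm{cf}}$ of fibrant-cofibrant objects of the simplicial model category $\eM=\SSet^{\eC\op}$, the argument of Proposition~\ref{prop:qcat-simplicial-model} shows that $\hat\eC$ admits the pseudo homotopy limit of the composite diagram $\yoneda D\colon \gC X\to\hat\eC$; write $\hat L$ for its apex and $\hat\Lambda\colon\gC[\Del^0\join X]\to\hat\eC$ for the limit cone. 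By Theorem~\ref{thm:nerve-completeness}, $\hat\Lambda$ transposes to a limit cone displaying $\hat L$ as a limit of $\yoneda d\colon X\to\hat\qC$ in $\hat\qC$, where $\yoneda d$ is the transpose of $\yoneda D$.

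Next I would import the hypothesised limit. By assumption $d$ admits a limit $\ell$ in $\qC$ with limit cone $\lambda\colon\Del^0\join X\to\qC$; transposing under $\gC\dashv\hN$ and applying Corollary~\ref{cor:collage-bijection} yields a simplicial functor $\Lambda\colon\gC[\Del^0\join X]\to\eC$ with apex $\ell$ and an associated $W_X$-weighted (pseudo) cone $\mu_\Lambda$ over $D$. Post-composing $\lambda$ with $\yoneda$ produces a cone over $\yoneda d$ with apex $\yoneda\ell$, and the universal property of $\hat L$ then induces a canonical comparison $1$-arrow $c\colon\yoneda\ell\to\hat L$ in $\hat\qC$, equally a $0$-arrow of $\hat\eC$.

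The crux is to prove that $c$ is an equivalence. Here I would invoke that $\yoneda\colon\eC\to\hat\eC$ is homotopically strongly generating (Definition~\ref{defn:ho-notions}), so that it suffices to show $\Map_{\hat\eC}(\yoneda C,c)$ is an equivalence of Kan complexes for every $C\in\eC$. Both mapping spaces are to be computed as spaces of cones: using the strict enriched full faithfulness of $\yoneda$, the identification of mapping spaces in a homotopy coherent nerve with the ambient hom-spaces, and the defining property of the two limits, one obtains $\Map_{\hat\eC}(\yoneda C,\yoneda\ell)\simeq\Map_{\qC}(C,\ell)\simeq\wlim{W_X}{\Map_{\eC}(C,D-)}$ and $\Map_{\hat\eC}(\yoneda C,\hat L)\simeq\wlim{W_X}{\Map_{\hat\eC}(\yoneda C,\yoneda D-)}$, the weighted-limit (pseudo-cone) descriptions coming from the join/collage correspondence of Theorem~\ref{thm:Phi-iso} and Corollary~\ref{cor:collage-bijection}. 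Under these identifications $\Map_{\hat\eC}(\yoneda C,c)$ becomes the map induced by $\yoneda$ on hom-spaces, which is an \emph{isomorphism} because $\yoneda$ is fully faithful in the enriched sense; hence it is an equivalence and $c$ is an equivalence. I expect this to be the main obstacle, precisely because one must make the identification of quasi-categorical cone spaces with the $W_X$-weighted pseudo-cone spaces and take care that the argument is not circular: the equivalence $\Map_{\hat\eC}(\yoneda C,\yoneda\ell)\simeq\wlim{W_X}{\Map_{\eC}(C,D-)}$ must use only that $\ell$ is a genuine quasi-categorical limit together with the collage correspondence, and not the pseudo homotopy limit property we are trying to establish.

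Finally, with $c$ an equivalence we have $\yoneda\ell\simeq\hat L$ compatibly with cones, so $\yoneda\ell$ together with the image cone $\yoneda\mu_\Lambda$ displays the pseudo homotopy limit of $\yoneda D$ in $\hat\eC$. Testing the defining equivalence of Definition~\ref{defn:flexible-hty-limit} against the representable objects $\yoneda A$ for $A\in\eC$, and transporting across the strict enriched full faithfulness isomorphism $\Map_{\eC}(A,-)\cong\Map_{\hat\eC}(\yoneda A,\yoneda-)$, then shows that the comparison map $\Map_{\eC}(A,\ell)\to\wlim{W_X}{\Map_{\eC}(A,D-)}$ induced by $\mu_\Lambda$ is an equivalence for every $A$. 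Thus $\mu_\Lambda$ displays $\ell$ as a pseudo homotopy limit of $D$ in $\eC$, which is exactly what is required.
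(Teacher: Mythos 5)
Your overall architecture is the same as the paper's: embed $\eC$ into $\hat\eC$ via Yoneda, build the pseudo homotopy limit $\hat L$ of $\yoneda D$ there using the simplicial model structure, apply Theorem~\ref{thm:nerve-completeness} to see that $\hat L$ is also a quasi-categorical limit of $\yoneda d$ in $\hat\qC$, and compare with $\yoneda\ell$. But your crux step contains a genuine gap. You claim the equivalence $\Map_{\qC}(C,\ell)\simeq\wlim{W_X}{\Map_{\eC}(C,D-)}$ follows from the quasi-categorical limit property of $\ell$ ``together with the collage correspondence of Theorem~\ref{thm:Phi-iso} and Corollary~\ref{cor:collage-bijection}.'' Those results cannot deliver this: they are \emph{bijections} between strict simplicial functors $\gC[\Del^0\join X]\to\eC$ and $W_X$-weighted cones, i.e.\ vertex-level correspondences, whereas what you need is an \emph{equivalence of simplicial sets} between the quasi-categorical cone space $\Delta C\comma d$ (which is all the limit property of $\ell$ hands you, via $C\comma\ell\simeq\Delta C\comma d$) and the pseudo-cone space $\wlim{W_X}{\Map_{\eC}(C,D-)}$. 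Comparing these two differently-built spaces is essentially the content of the theorem being proven; moreover your own non-circularity caveat (``use only that $\ell$ is a quasi-categorical limit and the collage correspondence'') forbids the one legitimate route to this equivalence, which passes through $\hat L$. The paper instead never identifies the $\ell$-side with weighted cones directly: it runs the chain
\[ \Map_{\hat\eC}(\yoneda C,\yoneda\ell)\cong\Map_{\eC}(C,\ell)\simeq C\comma\ell\simeq\Delta C\comma d\simeq\Delta\yoneda C\comma\yoneda d\simeq\yoneda C\comma\hat L\simeq\Map_{\hat\eC}(\yoneda C,\hat L), \]
using full faithfulness of the quasi-categorical Yoneda embedding and the quasi-categorical universal property of $\hat L$ (supplied non-circularly by Theorem~\ref{thm:nerve-completeness}), and only on the $\hat L$-side does it invoke the pseudo homotopy limit property to reach weighted cones. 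Replacing your step with this chain repairs the argument.

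A secondary looseness: at the end you assert that, once $c$ is an equivalence, the transposed cone $\yoneda\mu_\Lambda$ displays the pseudo homotopy limit ``compatibly with cones.'' But the comparison $c$ is induced by a quasi-categorical universal property, so $(-\circ c)\hat\Lambda$ and $\yoneda\mu_\Lambda$ need only agree up to homotopy, not on the nose, and transporting the universal property onto the specific cone $\mu_\Lambda$ requires an extra (true but unproved) compatibility argument. The paper sidesteps this entirely: since the theorem only asserts \emph{existence} of a pseudo homotopy limit, it defines its candidate cone $\bar\Lambda$ as the unique cone whose Yoneda image is the strict composite $({-}\circ f)\circ\Lambda$, and verifies that one directly by factoring the induced map on mapping spaces as (post-composition with $f$, an equivalence by the displayed chain) followed by (post-composition with the pseudo homotopy limit cone of $\hat L$). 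You would do well to adopt the same move rather than insisting on $\mu_\Lambda$ itself.
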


\begin{proof}
  Let $L$ denote the pseudo homotopy limit of the diagram $\yoneda D\colon \gC X\to\hat\eC$, whose existence in $\hat\eC$ is guaranteed by Proposition \ref{prop:qcat-simplicial-model} applied to Example \ref{ex:complete-presheafs}, as displayed by a pseudo cone $\Lambda\colon W_X\to \Map_{\hat\eC}(L,\yoneda D-)$. Let $\ell$ denote the limit of the diagram $d\colon X\to\qC$ in the quasi-category $\qC$, equipped with an equivalence $\qC\comma\ell \simeq \Delta\comma d$ over $\qC$, which pulls back to define an equivalence $c\comma \ell \simeq \Delta c \comma d$ for any object $c \in \eC$ considered as $c \colon 1 \to \qC$. Applying $\yoneda\colon\qC\to\hat\qC$, there is a canonical cone with summit $\yoneda\ell$ over the diagram $\yoneda d\colon X\to\hat\qC$. As Theorem~\ref{thm:nerve-completeness} shows that the pseudo homotopy limit $L$ gives a limit of the diagram $\yoneda d$ in $\hat\qC$, this cone induces a comparison map $f \colon \yoneda \ell \to L$ in $\hat\qC$ as well as an equivalence $\hat\qC\comma{L} \simeq \Delta \comma\yoneda d$ over $\hat\qC$, which pulls back along $\yoneda c \colon 1 \to \hat\qC$ to an equivalence $\yoneda c \comma {L} \simeq  \Delta\yoneda c \comma \yoneda d$. By Corollary \refVI{cor:fun-to-hom}, the internal mapping spaces in the homotopy coherent nerve of a Kan-complex-enriched category are equivalent to the simplicial mapping spaces. Thus, by fully faithfulness of the simplicial Yoneda embedding $\yoneda\colon\eC\to\hat\eC$,
  \[ \Map_{\hat\eC}(\yoneda c, \yoneda \ell) \simeq \Map_{\eC}(c,\ell) \simeq  c \comma \ell \simeq \Delta c \comma d  \simeq \Delta\yoneda c \comma \yoneda d \simeq \yoneda c \comma L \simeq \Map_{\hat\eC}(\yoneda c,L).\]

  Now we may pre-compose the pseudo cone by the arrow $f$ to give the following commutative square
  \begin{equation*}
    \xymatrix@=2em{
      {W_X}\ar[r]^-{\Lambda}\ar@{-->}[d]_{\exists!\bar\Lambda} &
      {\Map_{\hat\eC}(L,\yoneda D-)}\ar[d]^{-\circ f} \\
      {\Map_{\eC}(\ell,D-)}\ar[r]_-{\yoneda}^-{\cong} &
      {\Map_{\hat\eC}(\yoneda\ell,\yoneda D-)}
    }
  \end{equation*}
  in which the unique existence of the pseudo cone on the left is guaranteed by the fact that the simplicial Yoneda embedding at the bottom is fully-faithful. To see that $\bar\Lambda$ presents $\ell$ as the pseudo homotopy limit of $D$ in $\eC$, we must verify that the map induced by post-composition with $\bar\Lambda$
  \[
    \xymatrix@R=0em@C=5em{
\Map_{\eC}(c,\ell) \ar[r] &
    \{W_X,\Map_{\eC}(c,D-)\}_{\gC[X]}.
  }
  \]
  is an equivalence. By construction this is equivalent to the map 
  \[
    \xymatrix@R=0em@C=5em{
\Map_{\yoneda \eC}(\yoneda c,L) \ar[r] &
    \{W_X,\Map_{\hat\eC}(\yoneda c,\yoneda D-)\}_{\gC[X]}.
  }
  \]
  induced by post-composition with  the pseudo homotopy limit cone $\Lambda$, and hence is an equivalence as desired.
\end{proof}

  \bibliographystyle{special}
  \bibliography{../../common/index}


\end{document}